\documentclass[12pt]{amsart}
\usepackage{xcolor}
\usepackage[normalem]{ulem}

\usepackage[margin=3cm]{geometry}
\usepackage{amsthm, bbm}
\usepackage{amsmath}
\usepackage{hyperref}

\numberwithin{equation}{section} 

\theoremstyle{definition}
\newtheorem{definition}{Definition}[section]
\newtheorem{theorem}[definition]{Theorem}
\newtheorem{lemma}[definition]{Lemma}
\newtheorem{corollary}[definition]{Corollary}
\newtheorem{proposition}[definition]{Proposition}
\newtheorem{remark}[definition]{Remark}

\newtheorem{assumption}[definition]{Assumption}

\newcommand{\ba}{{\bf a}}
\newcommand{\bb}{{\bf b}}
\newcommand{\bc}{{\bf c}}
\newcommand{\bd}{{\bf d}}
\newcommand{\bx}{{\bf x}}
\newcommand{\by}{{\bf y}}
\newcommand{\bu}{{\bf u}}
\newcommand{\bv}{{\bf v}}

\newcommand{\cA}{{\mathcal A}}
\newcommand{\cB}{{\mathcal B}}
\newcommand{\cC}{{\mathcal C}}

\newcommand{\cH}{{\mathcal H}}

\newcommand{\cJ}{{\mathcal J}}

\newcommand{\cL}{{\mathcal L}}

\newcommand{\cP}{{\mathcal P}}

\newcommand{\cR}{{\mathcal R}}

\newcommand{\cZ}{{\mathcal Z}}


\newcommand{\bbC}{{\mathbb C}}
\newcommand{\bbE}{{\mathbb E}}

\newcommand{\bbN}{{\mathbb N}}
\newcommand{\bbP}{{\mathbb P}}
\newcommand{\bbR}{{\mathbb R}}
\newcommand{\bbT}{{\mathbb T}}
\newcommand{\bbZ}{{\mathbb Z}}
\newcommand{\bbI}{{\mathbb I}}



\newcommand{\brF}{{\bar F}}

\newcommand{\brell}{{\bar\ell}}

\newcommand{\fa}{{\mathfrak{a}}}
\newcommand{\fb}{{\mathfrak{b}}}
\newcommand{\fc}{{\mathfrak{c}}}
\newcommand{\fR}{{\mathfrak{R}}}

\newcommand{\Var}{\text{Var}}
\newcommand{\R}{{\mathbb{R}}}
\newcommand{\T}{{\mathbb{T}}}
\newcommand{\Z}{{\mathbb{Z}}}

\newcommand{\fA}{\mathfrak{A}}
\newcommand{\fB}{\mathfrak{B}}
\newcommand{\fC}{\mathfrak{C}}
\newcommand{\fd}{\mathsf{d}}

\newcommand{\te}{{\theta}}

\newcommand{\Om}{{\Omega}}
\newcommand{\om}{{\omega}}
\newcommand{\ve}{{\varepsilon}}
\newcommand{\del}{{\delta}}

\newcommand{\sig}{{\sigma}}
\newcommand{\al}{{\alpha}}
\newcommand{\be}{{\beta}}
\newcommand{\ka}{{\kappa}}

\newcommand{\la}{{\lambda}}
\newcommand{\bQ}{{\bf Q}}



\definecolor{OliveGreen}{rgb}{0,0.6,0}

\def\DS{\displaystyle}

\newcommand{\bA}{{\mathbf{A}}}

\newcommand{\hell}{{\hat\ell}}

\newcommand{\eps}{\epsilon}

\newcommand{\brl}{\bar l}
\newcommand{\integers}{\mathbb Z}
\newcommand{\bry}{\bar y}
\newcommand{\brry}{\bar{\bar y}}

\newcommand{\fm}{\mathfrak{m}}
\newcommand{\fq}{\mathfrak{q}}
\newcommand{\sC}{\mathsf{C}}

\newcommand{\ta}{{\tilde a}}
\newcommand{\tb}{{\tilde b}}
\newcommand{\tc}{{\tilde c}}
\newcommand{\td}{{\tilde d}}

\title{Local limit theorems for expanding maps}

\begin{document}

\maketitle
 \author{\hskip4cm Dmitry Dolgopyat and Yeor Hafouta}

\tableofcontents

\begin{abstract} 
We prove local central limit theorems  for partial sums of the form \newline
$\DS S_n=\sum_{j=0}^{n-1}f_j\circ T_{j-1}\circ\cdots\circ T_1\circ T_0$ where $f_j$ are uniformly H\"older functions and $T_j$ are expanding maps. 
Using a symbolic representation a similar result follows for maps $T_j$ in a small $C^1$ neighborhood of an Axiom A map and H\"older continuous functions $f_j$. All of our results are already new when 
 all maps are the same $T_j=T$ 
but observables $(f_j)$ are different. The current paper compliments \cite{DH2} where Berry--Esseen theorems are obtained. An important  step in the proof is developing an appropriate reduction theory in the sequential case.
\end{abstract}




\renewcommand{\theequation}{\arabic{section}.\arabic{equation}}


\section{Preliminaries and main results}\label{LLT sec}
\subsection{Introduction}
\label{SSIntro}
A great discovery of the last century is that deterministic system could exhibit random behavior.
The hallmark of stochasticity is the fact that ergodic averages of deterministic systems could satisfy the 
Central Limit Theorem (CLT). The CLT states that the probability that an ergodic sum at time $N$
 belongs to an interval
of size $\sqrt{N}$ is asymptotically
the same as for the normal random variable with the same mean and variance.
In many problems one needs to see if the same conclusion holds for unit size intervals.  Such results follow from the local (central) limit theorem (LLT), which has applications to various areas of mathematics including mathematical physics
\cite{BLRB, DN16, DN-Mech, DT77, EM22, Kh49, LPRS, PS23}, number theory \cite{ADDS, Beck10, DS17}, 
geometry (\cite{LL15}), PDEs \cite{HKN}, and combinatorics (\cite{Can76,BR83, GR92, Hw98}).
Applications to dynamics include  abelian covers (\cite{BFRT, CaRa, DNP, OP19}),
suspensions flows (\cite{DN-llt}), skew products (\cite{Br05, DSL16, DDKN1, DDKN2, LB06}), and homogeneous dynamics (\cite{BeQu, BRLLT05, BRLLT23, Guiv2015, Hough2019}).
Our interest in non-autonomous
 local limit theorems is motivated among other things by applications to renormalization
(cf.  \cite{ARHW22, ARHW23, ADDS, BU}) and to random walks in random environment
(\cite{BCR16, CD24, DG13, DG21, DK20}).

Recently there was a significant interest in statistical properties of
 non-autonomous systems. 
In fact, many systems appearing in nature are time dependent due to an interaction with the outside world.
On the other hand, many powerful tools developed for studying autonomous systems are unavailable in non autonomous setting, so often a non trivial work is needed to  handle non autonomous dynamics.
In particular, the CLT for non autonomous hyperbolic systems was investigated in 
\cite{ANV, ALS09, Bk95, CLBR, CR07, DFGTV, YH Adv, YH 23, HNTV, Kifer 1998,NSV12}, see also \cite{Cogburn, Dob, Pelig, SeVa} for the CLT for inhomogeneous Markov chains.
By contrast, the LLT has received much less attention and, it has been only
established for random systems under strong additional assumptions \cite{DPZ, DFGTV, DFGTV1, HK, Nonlin, YH YT}, see also \cite{DS, MPP LLT1, MPP LLT2} for the Markovian case.
In fact, the question of local limit theorem is quite
subtle, and even in the setting of independent identically distributed (iid) random variables the local distribution 
depends on the arithmetic properties of the summands. If the summands do not take values in a lattice then the local distribution is Lebesgue, and otherwise it is the counting measure on the lattice. The case when the local distribution is Lebesgue will be refered to as the non-lattice LLT while the case when the local distribution is an appropriate counting measure will be refered to as the lattice LLT. The exact definitions are postponed to \S \ref{Sec LLT}.

In this paper for certain classes of expanding and hyperbolic maps we prove a general LLT for Birkhoff sums $S_n$ formed by a sequence of H\"older continuous functions. In fact,
we identify the complete set obstructions to the non lattice LLT for a large class of expanding maps,
 see the discussion in \S \ref{SSObstructions}. 
In the autonomous case this is done by using a set of tools called
 Livsic theory.
 In that case (see \cite{HH}) the non-lattice LLT fails only if the underlying function forming the Birkhoff sums is lattice valued, up to a coboundary. 
 In the autonomous case different notions of coboundary (measurable, $L^2$, continuous,
 H\"older, smooth) are equivalent, see \cite{PP, dLMM, Wil}, but this is 
  false in the non stationary setting.
 In the course of the proof of our main results we develop a reduction theory in the non-autonomous case, generalizing the corresponding results in the Markov case \cite{DS}.

We stress that we have no additional assumptions. In particular, we neither assume 
that the maps are random, nor that the variance of $S_n$ grows linearly in $n$. 

\subsection{Locally expanding maps}
Let $(X_j,\mathsf{d}_j)$ be metric spaces with $\text{diam}(X_j)\leq 1$. 

 In what follows we will work with the class of maps $T_j:X_j\to X_{j+1}$ considered in \cite{Nonlin} 
 (see also \cite{HK, MSU} and \cite[\S 5.2]{DH2}), which is described as follows. 
 
  \begin{assumption}\label{AssPairing} 
  {\bf (Pairing).}
 There are constants $0<\xi\leq 1$ and $\gamma>1$ such that for every two points $x,x'\in X_{j+1}$ with $\mathsf{d}_{j+1}(x,x')\leq \xi$ we can write 
 $$
T_j^{-1}\{x\}=\{y_i(x): 0\leq i\leq k_j(x)\}, \quad T_j^{-1}(x')=\{y_i(x'): 0\leq i\leq k_j(x)\}
 $$
with 
$$
\mathsf{d}_j(y_i(x),y_i(x'))\leq \gamma^{-1}\mathsf{d}_{j+1}(x,x').
$$ 
 Moreover $ \DS \sup_j \deg (T_j)<\infty$, where   $\deg(T)$ is the largest number of preimgaes that a point $x$ can have under the map $T$. 
Furthermore, the Lipschitz constant of the map $T_j$ does not exceed some constant $K_0$ which does  not depend on $j$. 
\end{assumption}

Next, for every $j$ and $n$ let 
$
T_j^n=T_{j+n-1}\circ\cdots\circ T_{j+1}\circ T_j.
$
Denote by $B_j(x,r)$ the open ball of radius $r$ in $X_j$ around  a point $x\in X_j$.

 \begin{assumption}\label{Ass n 0}
 {\bf (Covering).}
 There exists $n_0\in\bbN$ such that
 \vskip0.2cm
(i)  For every $j$ and $x\in X_j$ we have 
\begin{equation}\label{CoverLLT}
T_j^{n_0}\left(B_j(x,\xi)\right)=X_{j+n_0}.
\end{equation} 
\vskip0.2cm

(ii) For all $j$ and $y\in X_j$ there is a function $W_{j,y}:X_{j+n_0}\to B_{j}(y,\xi)$
such that 
$$
T_j^{n_0}\circ W_{j,y}=\text{id},
$$
where $\xi$ is from Assumption \ref{AssPairing}.
Moreover, the functions $W_{j,y}$ are uniformly Lipschitz. 
 \end{assumption}

 \noindent
Fix  $\beta\!\!\in\!\!(0,1]$ and let $f_j:X_j\!\!\to\!\! \bbR$ be such that 
$\DS \sup_j\|f_j\|_\beta\!\!<\!\!\infty$ where 
 $\|f_j\|_\beta\!\!=\!\sup|f_j|\!+\!
 G_{j,\beta}(f_j)$  and $G_{j,\beta}(f_j)$ is the 
H\"older constant of $f_j$ corresponding to the exponent $\be$. The main goal in this paper is to prove local limit theorems for the sequences of functions
$$
S_nf=\sum_{j=0}^{n-1}f_j\circ T_0^j
$$
considered as random variables on  $(X_0,\cB_0,\ka_0)$. Here $\cB_0$ is the Borel $\sig$-algebra of $X_0$ and $\ka_0$ belongs to a  suitable class of measures. For instance, when each $X_j$ are equipped with a reference probability measures $m_j$ such that $(T_j)_*m_j\ll m_{j+1}$ with logarithmically H\"older continuous Radon-Nikodym  derivatives then we can take $\ka_0$ to be any probability measure of the form $d\ka_0=q_0 dm_0$ with H\"older continuous density $q_0$. In the more general setting we consider two sided\footnote{Note that if there is an expanding map $T_{-1}:X_0\to X_0$ such that Assumptions \ref{AssPairing} and \ref{Ass n 0} hold with the constant sequence $(T_{-1})_{j\geq0}$ then we can always extend $(T_j)_{j\geq0}$ to a two sided sequence by setting $T_j=T_{-1}$ for $j<0$. Another example when we can extend
 dynamics to negative times is a non-stationary subshift of finite type, see \S \ref{SFT sec}.} sequences $T_j, j\in\bbZ$ such that Assumptions \ref{AssPairing} and \ref{Ass n 0} holds for all $j\in\bbZ$ and then we can take $\ka_0$ to be any measure which is absolutely continuous with respect to the time zero sequential Gibbs measure $m_0$ (see \S \ref{TPo}) with H\"older continuous density.

 \subsection{The CLT and growth of variance}\label{CLT} 
Denote $\sigma_n(\ka_0)=\sqrt{\text{Var}_{\ka_0}(S_n f)}$. If $\sigma_n=\sigma_n(\ka_0)\to\infty$ then (see \cite{CR07, DH2}) $\sigma_n^{-1}(S_nf-\ka_0(S_nf))$ converges in law to the standard normal distribution. 
By \cite[Lemma 6.3]{DH2}
 \begin{equation}
\label{EqReductionH}
f_j-\ka_0(f\circ T_0^j)=A_j+B_j-B_{j+1}\circ T_j
\end{equation}
where $A_j,B_j: X_j\to \mathbb{R}$  satisfy
$\DS \sup_j
\max(\|A_j\|_{\beta}, \|B_j\|_{ \beta})\!\!<\!\!\infty$, and
 $A_j\circ T_0^j$ is a zero mean reverse martingale. Thus, $\sigma_n(\ka_0)\not\to\infty$ if and only if 
 \begin{equation}
\label{EqVarInf}
\sum_j\text{Var}_{\kappa_0}(A_j\circ T_0^j)<\infty. 
\end{equation}  
By \cite[Remark 2.6]{DH2} and \cite[Proposition 3.3]{BulLMS} 
$\DS  \sup_n|\sigma_n^2(\ka_0)-\sigma_n^2(m_0)|<\infty$, 
so the divergence of $\sigma_n(\ka_0)$ and a decomposition \eqref{EqReductionH} with \eqref{EqVarInf} do not depend on the choice of density of $\ka_0$. One of the 
 key ingredients in the proofs of the local limit theorems is to determine when such decomposition exist  modulo a lattice, see Remark \ref{Rem Red}.



\subsection{Local limit theorems}\label{Sec LLT}

Recall that a sequence of  square integrable random variables $W_n$ with $\sig_n=\|W_n-\bbE[S_n]\|_{L^2}\to\infty$ obeys the non-lattice local central limit theorem (LLT) if for every continuous function $g:\bbR\to\bbR$ with compact support, or an indicator of a bounded interval we have 
$$
\sup_{u\in\bbR}\left|\sqrt{2\pi}{\sig_n}
\bbE[g(W_n-u)]-\left(\int g(x)dx\right)e^{-\frac{(u-\bbE[W_n])^2}{2\sig_n^2}}\right|=
 o(1).
$$

\noindent
A sequence of square integrable integer valued  random variables $W_n$  
obeys the lattice LLT if 

$$
\sup_{u\in\bbZ}\left|\sqrt{2\pi}{\sig_n}\bbP(W_n=u)
-\eta e^{-\frac{(u-\bbE[W_n])^2}{2\sig_n^2}}\right|= o(1).
$$
 To compare the two results note that the above equation is equivalent to saying 
that
for every continuous function $g:\bbR\to\bbR$ with compact support, 
$$
\sup_{u\in\bbZ}\left|\sqrt{2\pi}{\sig_n}\bbE[g(W_n-u)]
-\left(\sum_{k}g(k )\right)e^{-\frac{(u-\bbE[W_n])^2}{2\sig_n^2}}\right|= o(1).
$$

 A more general type of LLT which is valid also for reducible $(f_j)$ (defined below)
 is discussed in Section \ref{Red sec}.

\begin{definition}\label{Irr Def}
A sequence of real valued functions $(f_j)$ is reducible (to a lattice valued 
sequence) 
if there is $h\not=0$ and 
 functions $H_j: X_j\to \mathbb{R},$ $Z_j: X_j\to \mathbb{Z}$  
such that $\DS \sup_j
\|H_j\|_{\beta},<\infty$, $\DS \left(S_nH\right)_{n=1}^\infty$ is tight, and
\begin{equation}
\label{EqReduction}
f_j=\ka_0(f_j\circ T_0^j)+H_j+hZ_j.
\end{equation}


A sequence of  $\mathbb{Z}$ valued functions $(f_j)$ is reducible if it admits the
representation \eqref{EqReduction} as above with
 $h>1.$
\vskip1mm

We say that the sequence $(f_j)$ is irreducible if it is not reducible.
\end{definition}

\begin{remark}\label{Rem Red}
In the present paper we shall verify 
the tightness condition in the definition of reducibility by showing that $H_j$ admits a decomposition $H_j=A_j+B_j-B_{j+1}\circ T_j$ like in \eqref{EqReductionH} with \eqref{EqVarInf}.  
This is equivalent to $\DS \sup_n\|S_nH\|_{L^2(\ka_0)}<\infty$. As discussed in \S \ref{CLT} when \eqref{EqVarInf} fails 
 then 
$\|S_nH\|_{L^2(\ka_0)}^{-1}S_nH$ converges in law
to the standard normal distribution, whence  $S_nH$ are not tight. Thus \eqref{EqVarInf} is necessary for tightness.
Using the martingale convergence theorem we conclude that in the setup of this paper Birkhoff sums $S_nf$ of reducible sequences $(f_j)$ can be decomposed into three components: a coboundary $B_0-B_n\circ T_0^n$, a convergent Birkhoff sum $S_n H$ and a lattice valued Birkhoff sum $S_n(hZ)$. We refer to \S \ref{SSObstructions}  for an elaborated discussions on this matter.

Finally,  note that  reducibility of $(f_j)$ does not depend only on the choice of density of $\ka_0$. 
 Indeed as  discussed in \S \ref{CLT}, the divergence of $\|S_nH\|_{L^2(\ka_0)}$ depends only on $m_0$. Also, 
$|\kappa_0(f_j\circ T_0^j )-\mu_j(f_j)|=O(\delta^j)$ for some $\delta\in(0,1)$, see \cite[Remark 2.6]{DH2}, which ensures we can always absorb the change of mean in the coboundary term $B_j-B_{j+1}\circ T_j$. 
\end{remark}

Next, let $R=R(f)$ be the set of all numbers  $h\not=0$ such \eqref{EqReduction} holds with appropriate functions $A_j,B_j, Z_j$. If $(f_j)$ is 
irreducible then $R=\emptyset$. Moreover, by \cite[Theorem 6.5]{DH2}, if $\sigma(\kappa_0)\not\to\infty$ then
$R=\bbR\setminus\{0\}$ since then \eqref{EqReduction} holds  for any $h$ with $Z_j=0$. The following result completes the picture. Let $\mathsf{H}=\{1/r: r\in R\}\cup \{0\}$.

\begin{theorem}\label{Reduce thm}
 If $(f_j)$ is reducible and $\sigma_n(\ka_0)\to \infty$   then 
$$
\mathsf{H}=h_0\bbZ
$$
for some $h_0>0$. As a consequence, the number $r_0=1/h_0$ is the largest positive number such that $(f_j)$ is reducible to an $r_0\bbZ$-valued sequence. Therefore, $f_j$ can be written in the form \eqref{EqReduction} with an irreducible sequence $(Z_j)$. 
\end{theorem}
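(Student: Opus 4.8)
The plan is to show that $\mathsf{H}$ is a discrete additive subgroup of $\bbR$, since a standard fact is that every such subgroup is either $\{0\}$ or of the form $h_0\bbZ$; given that $(f_j)$ is reducible we know $\mathsf{H}\neq\{0\}$, so we would land on $\mathsf{H}=h_0\bbZ$ for some $h_0>0$. The two things to prove are therefore: (i) $\mathsf{H}$ is closed under addition (equivalently, $R$ has an appropriate ``harmonic'' closure property under $h\mapsto (1/h_1+1/h_2)^{-1}$), and (ii) $\mathsf{H}$ has no accumulation point other than $0$, i.e. $R$ is bounded away from $\infty$.

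\medskip

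\textbf{Step 1: the group property.} Suppose $h_1,h_2\in R$, so that for $i=1,2$ we have $f_j=\ka_0(f_j\circ T_0^j)+H_j^{(i)}+h_i Z_j^{(i)}$ with $\sup_j\|H_j^{(i)}\|_\beta<\infty$, $(S_nH^{(i)})$ tight, and $Z_j^{(i)}$ integer valued. I would subtract the two representations to get $0=(H_j^{(1)}-H_j^{(2)})+h_1Z_j^{(1)}-h_2Z_j^{(2)}$. The goal is to produce a representation of $f_j$ with lattice span $h_3:=(1/h_1+1/h_2)^{-1}=h_1h_2/(h_1+h_2)$, i.e. to show $1/h_1+1/h_2\in\mathsf{H}$, which combined with $0\in\mathsf{H}$ and symmetry $h\mapsto -h$ would give the subgroup structure of $\mathsf{H}$. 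Concretely, writing $1/h_3=1/h_1+1/h_2$, one wants integers $Z_j^{(3)}$ and a good function $H_j^{(3)}$ with $f_j=\ka_0(f_j\circ T_0^j)+H_j^{(3)}+h_3Z_j^{(3)}$. The natural guess is to average the two representations with appropriate weights: from $f_j = \ka_0(\cdots)+H_j^{(1)}+h_1Z_j^{(1)}$ and $f_j=\ka_0(\cdots)+H_j^{(2)}+h_2Z_j^{(2)}$, take the convex combination with weights $\frac{h_3}{h_1},\frac{h_3}{h_2}$ (which sum to $1$); then the lattice part becomes $h_3 Z_j^{(1)}+h_3 Z_j^{(2)}=h_3(Z_j^{(1)}+Z_j^{(2)})$, an $h_3\bbZ$-valued function, and the H\"older error is $\frac{h_3}{h_1}H_j^{(1)}+\frac{h_3}{h_2}H_j^{(2)}$, which still has $\sup_j\|\cdot\|_\beta<\infty$ and whose Birkhoff sums are tight (a linear combination of tight sequences, using the decomposition description in Remark \ref{Rem Red} — the $A_j$ and $B_j$ combine linearly and \eqref{EqVarInf} is preserved). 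Hence $h_3\in R$, so $1/h_3=1/h_1+1/h_2\in\mathsf{H}$. The same averaging with weight $1$ on one term handles $-h\in R$ when $h\in R$ by negating $Z_j$.

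\medskip

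\textbf{Step 2: discreteness.} This is where I expect the real work, and it is presumably where \cite[Theorem 6.5]{DH2} and the sequential reduction/Livsic theory developed in the paper enter. We must rule out $h_n\in R$ with $1/h_n\to t$ for some $t>0$, equivalently a sequence in $\mathsf{H}$ accumulating at a nonzero point; by the group property of Step 1 this is the same as ruling out arbitrarily small nonzero elements of $\mathsf{H}$, i.e. showing $R$ is bounded away from $\infty$ — there is a \emph{largest} allowed lattice span. Heuristically: if $f_j$ were reducible to $h\bbZ$ for arbitrarily large $h$, then after subtracting mean and a coboundary, $S_nf$ would be concentrated on a lattice of arbitrarily large spacing plus a tight remainder plus a coboundary; but $\sigma_n(\ka_0)\to\infty$ forces the lattice part $S_n(hZ)$ to carry the variance growth, and a lattice-valued sequence with spacing $h$ whose partial sums have variance $\to\infty$ cannot simultaneously have spacing much larger than some scale dictated by the geometry of the maps (this is the content of the obstruction analysis in \S\ref{SSObstructions}). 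I would make this rigorous by invoking the reduction theory: reducibility to $h\bbZ$ is equivalent to a ``cohomological'' condition on $(f_j)$ — something like $e^{2\pi i f_j/h}$ being a coboundary up to a tight correction in the transfer-operator sense — and the set of $h$ for which this holds is shown (via the spectral/Livsic input from \cite{DH2}) to be closed; combined with Step 1 giving a subgroup, closedness plus being a proper subgroup of $\bbR$ (proper because $(f_j)$ is reducible but, if $\mathsf H$ were all of $\bbR$, then by \cite[Theorem 6.5]{DH2} we would need $\sigma_n\not\to\infty$, contradicting the hypothesis) forces discreteness. So $\mathsf{H}=h_0\bbZ$.

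\medskip

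\textbf{Step 3: consequences.} With $\mathsf{H}=h_0\bbZ$, $h_0>0$, the largest element of $R$ is $r_0=1/h_0$ (the reciprocal of the smallest positive element of $\mathsf{H}$), so $(f_j)$ is reducible to an $r_0\bbZ$-valued sequence and $r_0$ is maximal with this property. Writing the corresponding representation $f_j=\ka_0(f_j\circ T_0^j)+H_j+r_0 Z_j$, it remains to check $(Z_j)$ is irreducible: if it were reducible, say $Z_j=\mu_j(Z_j)+\tilde H_j+k\tilde Z_j$ with integer $k>1$, substituting back would give a reduction of $(f_j)$ to an $(kr_0)\bbZ$-valued sequence (absorbing the constants $\mu_j(Z_j)$ and the change of mean into the coboundary/H\"older part as in Remark \ref{Rem Red}), so $kr_0\in R$, i.e. $1/(kr_0)\in\mathsf H=h_0\bbZ$; but $1/(kr_0)=h_0/k$ with $k>1$ is not in $h_0\bbZ$, a contradiction. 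Hence $(Z_j)$ is irreducible. The main obstacle is Step 2; Steps 1 and 3 are essentially formal manipulations once the definitions and Remark \ref{Rem Red} are in hand.
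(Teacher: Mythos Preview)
Your Step~1 works, though it is more elaborate than necessary: once one has the characterization \eqref{TimesTIntRed} of $\mathsf{H}$ as the set of $t$ for which $tf_n$ admits a decomposition into coboundary $+$ small H\"older piece $+$ integer-valued function, closure under addition and negation is immediate by adding the decompositions. The paper simply says ``it is clear that $\mathsf{H}$ is a subgroup of $\bbR$.'' Your Step~3 is fine and matches the paper.

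The gap is Step~2, and your heuristics there do not reach the actual mechanism. The argument the paper gives (Corollary~\ref{H corr}(iii)) is concrete and goes through twisted transfer operators and Proposition~\ref{PrSmVarBlock}. The key fact is: $t\in\mathsf{H}$ if and only if $\|\cL_{j,t}^n\|_\al\not\to 0$ as $n\to\infty$ (for $j$ large). One direction: if $t\in\mathsf{H}$, write $tf_j=h_j-h_{j+1}\circ T_j+g_j+2\pi Z_j$ with $\|g_j\|_\al\to 0$ and $\sup_n\text{Var}(S_{j,n}g)<\infty$; since the coboundary and integer parts are invisible to $\|\cL_{j,t}^n\|_\al$ up to bounded factors, the \emph{lower} bound in \eqref{AUpperLower} applied to the small functions $g_j$ gives $\inf_n\|\cL_{j,t}^n\|_\al>0$. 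Now for discreteness: take $t\in\mathsf{H}$ and small $w\neq 0$, and apply the \emph{upper} bound in \eqref{AUpperLower} to the functions $u_j=g_j+wf_j$ (still small in $\|\cdot\|_\al$). One gets $\|\cL_{j,t+w}^n\|_\al\leq C e^{-c\,\text{Var}(S_{j,n}(g+wf))}\leq C'e^{-c'w^2\text{Var}(S_{j,n}f)}\to 0$ since $\sigma_n\to\infty$ and $\text{Var}(S_{j,n}g)$ is bounded. Hence $t+w\notin\mathsf{H}$, so $\mathsf{H}$ is discrete.

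What you are missing is precisely the two-sided estimate of Proposition~\ref{PrSmVarBlock}: exponential bounds on $\|\cL_j^n(e^{iS_{j,n}v}\,\cdot\,)\|_\al$ in terms of $\text{Var}(S_{j,n}v)$, valid when the $v_j$ are uniformly small in H\"older norm. Your appeal to an unspecified closedness result from \cite{DH2} and the ``arbitrarily large lattice spacing'' heuristic do not substitute for this; the variance growth $\sigma_n\to\infty$ enters not through any geometric obstruction on lattice spacing but through the decay of the perturbed transfer operator at frequencies $t+w$ near (but not equal to) resonant points $t\in\mathsf{H}$.
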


\begin{theorem}\label{LLT1}
Let $(f_j)$ be an irreducible sequence of $\mathbb{R}$ valued functions.
Under Assumptions \ref{AssPairing}  and \ref{Ass n 0}
 the sequence of random variables
  $W_n=S_nf$ obeys the non-lattice LLT if  $\sig_n(\kappa_0)\to\infty$.    
\end{theorem}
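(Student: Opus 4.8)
The plan is to run the Fourier--analytic (Nagaev--Guivarc'h) method in the non-autonomous setting. By the usual smoothing and sandwiching reductions it suffices to treat a Schwartz function $g$ whose Fourier transform $\hat g$ is supported in a compact interval $[-M,M]$ and to prove that
$$
\sqrt{2\pi}\,\sig_n\,\bbE_{\ka_0}[g(W_n-u)]=\frac{\sig_n}{\sqrt{2\pi}}\int_{-M}^{M}\hat g(t)\,e^{-itu}\,\bbE_{\ka_0}\!\big[e^{itW_n}\big]\,dt
$$
tends, uniformly in $u\in\bbR$, to $\big(\int g\big)e^{-(u-\bbE[W_n])^2/2\sig_n^2}$. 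I would encode the characteristic function through twisted transfer operators: letting $\cL_j$ be the suitably normalized sequential transfer operator of $T_j$ attached to the Gibbs data of \S\ref{TPo} and setting $\cL_j^{(t)}\phi=\cL_j(e^{itf_j}\phi)$, one has $\bbE_{\ka_0}[e^{itW_n}]=\int (\cL_0^{(t)})^n q_0\,dm_n$, where $(\cL_0^{(t)})^n=\cL_{n-1}^{(t)}\circ\cdots\circ\cL_0^{(t)}$ and $d\ka_0=q_0\,dm_0$.

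Split the integral at a small $\del>0$. For $|t|\le\del$, Assumptions \ref{AssPairing} and \ref{Ass n 0} provide (through the covering time $n_0$ and a Lasota--Yorke/cone argument as in \cite{Nonlin, HK}) a uniform spectral gap for the operators $\cL_j$, so by analytic perturbation theory $(\cL_0^{(t)})^n=\la_{0,n}(t)\,\Pi_{0,n}(t)+\cR_{0,n}(t)$ with $\Pi_{0,n}(t)$ of rank one, $\|\cR_{0,n}(t)\|_\beta\le C\te^n$ for some $\te<1$, and $\la_{0,n}(t)=\exp\!\big(it\bbE[W_n]-\tfrac12 t^2\sig_n^2(1+o(1))\big)$ uniformly on $|t|\le\del$, this expansion being the one underlying the Berry--Esseen bounds of \cite{DH2}. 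After the substitution $t=s/\sig_n$, the bound $|\la_{0,n}(s/\sig_n)|\le e^{-cs^2}$, the fact that $\del\sig_n\to\infty$, and dominated convergence turn this piece into $\big(\int g\big)e^{-(u-\bbE[W_n])^2/2\sig_n^2}+o(1)$ uniformly in $u$, while the remainder contributes at most $C\sig_n\te^n\to0$.

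For $\del\le|t|\le M$ --- the heart of the proof --- I must show $\|(\cL_0^{(t)})^n\|_\beta\to0$ uniformly for $t$ in compact subsets of $\bbR\setminus\{0\}$, and fast enough that $\sig_n\|(\cL_0^{(t)})^n\|_\beta\to0$; the target is exponential decay. I would argue by contradiction through the sequential reduction (Livsic) theory announced in the abstract: if the norms failed to decay along some $n_k\to\infty$ for parameters $t_k\to t_\infty\ne0$, then from the corresponding almost-invariant functions one extracts --- using the pairing estimate to control distortion and the covering property to pass to a limit --- an approximate cohomological relation $t_\infty f_j=\psi_j-\psi_{j+1}\circ T_j+2\pi\ell_j$ with $\psi_j$ uniformly $\beta$-H\"older and $\ell_j$ integer valued, which the reduction theory upgrades to an exact relation with H\"older $\psi_j$, just as in the Markov case \cite{DS}. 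Telescoping in $j$ exhibits $t_\infty W_n$ as a bounded H\"older coboundary plus a $2\pi\bbZ$-valued Birkhoff sum, so $(f_j)$ is reducible to a $\tfrac{2\pi}{t_\infty}\bbZ$-valued sequence, contradicting irreducibility.

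The genuinely hard points are the Dolgopyat-type oscillatory cancellation estimate that produces this decay in the first place --- which must be obtained without periodic orbits, since there are none in the sequential setting, so the non-lattice obstruction has to be read off from the pairing and covering structure along long orbit segments --- together with the control of the regularity of the limiting coboundary extracted in the reduction step, in a regime where measurable, $L^2$ and H\"older coboundaries are no longer interchangeable. Once the range $\del\le|t|\le M$ is disposed of, combining it with the $|t|\le\del$ estimate is routine.
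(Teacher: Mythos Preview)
Your treatment of $|t|\le\del$ is essentially fine and matches the paper (the sequential RPF/perturbation step is Proposition~\ref{PrSmVarBlock}). The genuine gap is in the range $\del\le|t|\le M$.

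The contradiction you sketch can at most yield pointwise convergence $\|\cL_{0,t}^n\|_\beta\to 0$ for each fixed $t\neq 0$: if the norms stay bounded away from $0$ along a subsequence, one extracts a coboundary relation and contradicts irreducibility. But this argument gives no rate, and in particular cannot deliver the exponential decay you assert as ``the target''. In the autonomous setting the rate comes from quasi-compactness (spectral radius $<1$), and there is no substitute for spectral theory here; indeed the paper proves $\|\cL_{0,t}^n\|_*\le Ce^{-c\sig_n}$ only in the special case of connected $X_j$ (Proposition~\ref{ExpDecP stretched}), and makes no such claim in general. What the LLT actually needs is the integral bound $\int_{\del\le|t|\le T}\|\cL_{0,t}^n\|_*\,dt=o(\sig_n^{-1})$, and slow decay of the norm (say $1/\log n$) would satisfy your contradiction hypothesis yet fail this.

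The paper's route is substantially different and bypasses any pointwise rate. It partitions $\{\del\le|t|\le T\}$ into small intervals $J$, and for each $J$ introduces \emph{contracting blocks} $I\subset\{0,\dots,n-1\}$ on which $\sup_{t\in J}\|\cL_t^I\|_*\le 1-\eta$. If there are many such blocks, submultiplicativity already gives exponential decay (Proposition~\ref{case 1}). If there are few, then on the long complementary non-contracting blocks the reduction Lemmas~\ref{LmTDSmall}--\ref{LmTDVar} (your Livsic step, but applied \emph{locally in time} rather than globally) decompose $tf_s$ as coboundary $+$ lattice $+$ a remainder $g_s$ with $\|g_s\|_\al$ small. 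The small-$t$ machinery (Proposition~\ref{PrSmVarBlock}) then bounds $\|\cL_t^I\|_*$ on such a block by $e^{-cQ_n(h)}$ where $Q_n$ is a quadratic in $h=t-t_0$ with leading coefficient $\asymp\sig_n^2$; the elementary Lemma~\ref{Qqad L} converts this into $\int_J\|\cL_t^I\|_*\,dt=O(\sig_n^{-1})$, and a further case analysis (Propositions~\ref{case 2} and~\ref{case 3}) shows the integral is $o(\sig_n^{-1})$ unless $(f_j)$ is reducible. The convexity/integration step is the idea you are missing: it is what turns ``no obstruction'' into the quantitative $o(\sig_n^{-1})$ without ever proving a decay rate for the operator norm itself.
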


As a byproduct of the arguments of the proof of Theorem \ref{LLT1} we also 
obtain the first order Edgeworth expansions.
\begin{theorem}\label{ThEdge1}
 If $(f_j)$ is irreducible then  with $\bar S_nf=S_n f-\ka_0(S_n f)$ and $\sigma_n=\sigma_n(\kappa_0)$,
   \begin{equation}
   \label{EdgeFirst}
\sup_{t\in\bbR}\left|\ka_0(\bar S_n/\sig_n\leq t)-\Phi(t)-
  \frac{\ka_0(\bar S_n^3)(t^3-3t)}{6\sig_n^3}\varphi(t)\right|=o(\sig_n^{-1})
   \end{equation}
   where
   $\varphi(t)=\frac{1}{\sqrt{2\pi}}e^{-t^2/2}$
   is the the standard Gaussian density and 
   $\DS \Phi(t)=\int_{-\infty}^t \phi(s) ds$ is the Gaussian cumulative distribution function.
\end{theorem}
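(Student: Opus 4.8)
The plan is to derive the Edgeworth expansion from the same characteristic-function machinery that underlies the proof of the non-lattice LLT (Theorem \ref{LLT1}), following the classical route going back to Feller and adapted to the dynamical/non-autonomous setting. Write $\bar S_n f=S_nf-\ka_0(S_nf)$ and $\sig_n=\sig_n(\ka_0)$, and let $\psi_n(t)=\ka_0\bigl(e^{it\bar S_nf/\sig_n}\bigr)$ be the characteristic function of the normalized sum. The heart of the matter is a precise expansion
$$
\psi_n(t)=e^{-t^2/2}\left(1+\frac{\ka_0(\bar S_n^3f)}{6\sig_n^3}(it)^3\right)+o(\sig_n^{-1})
$$
valid uniformly for $t$ in a range $|t|\le \rho\,\sig_n$ for a small fixed $\rho>0$, together with suitable decay of $\psi_n$ outside that range. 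Once this is established, the theorem follows from the smoothing inequality (Esseen's lemma): the difference of the two distribution functions in \eqref{EdgeFirst} is controlled by $\int_{|t|\le \mathcal T}\bigl|\psi_n(t)-\widehat G_n(t)\bigr|\,\frac{dt}{|t|}+\mathcal T^{-1}$, where $G_n$ is the signed measure with the Edgeworth density $\varphi(t)+\tfrac{\ka_0(\bar S_n^3 f)}{6\sig_n^3}(t^3-3t)\varphi(t)$ and $\widehat G_n(t)=e^{-t^2/2}\bigl(1+\tfrac{\ka_0(\bar S_n^3f)}{6\sig_n^3}(it)^3\bigr)$; choosing $\mathcal T=\mathcal T_n\to\infty$ slowly and splitting the $t$-integral at $|t|\asymp\rho\sig_n$ yields the $o(\sig_n^{-1})$ bound.

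The key steps, in order, are: (1) Reduce, via the martingale decomposition \eqref{EqReductionH}, to studying $\ka_0(e^{itS_nf/\sig_n})$; the coboundary term $B_0-B_{n+1}\circ T_0^n$ and the bounded centering contribute only $O(\sig_n^{-1})$ phase corrections and can be absorbed. (2) Establish the complex/twisted transfer operator estimates in the sequential setting: for the cocycle of transfer operators $\cL_j^{(it)}$ associated to the potentials twisted by $it f_j$, show that for $|t|/\sig_n$ small the leading eigendata exist and the product over $j<n$ of the leading "eigenvalues" $\lambda_j(t/\sig_n)$ satisfies, after taking logarithms and Taylor-expanding to third order,
$$
\sum_{j=0}^{n-1}\log\lambda_j(t/\sig_n)=i\,t\,\frac{\ka_0(S_nf)}{\sig_n}-\frac{t^2}{2}+\frac{(it)^3}{6}\,\frac{\ka_0(\bar S_n^3f)}{\sig_n^3}+o(\sig_n^{-1}),
$$
with the cumulant identifications coming from differentiating the cocycle of normalized operators — this is exactly where irreducibility (hence the spectral-gap-type bound uniform in $j$, via the reduction theory and covering Assumption \ref{Ass n 0}) and the divergence $\sig_n\to\infty$ are used. (3) Control the remainder: show the non-leading part of the twisted cocycle is $O(\theta^n)$ uniformly, and bound $\psi_n(t)$ for $\rho\sig_n\le |t|\le \mathcal T_n\sig_n$ using the aperiodicity/irreducibility estimate that $\|\cL_{n}^{(it)}\cdots\cL_0^{(it)}\|\le C e^{-c n}$ on the relevant range of $t$ — this is the non-lattice obstruction analysis already invoked for Theorem \ref{LLT1}. (4) Assemble via Esseen's smoothing lemma as above, and check that the third cumulant $\ka_0(\bar S_n^3f)$ that appears naturally from the operator expansion coincides with the one in \eqref{EdgeFirst} (a direct moment computation using $\sup_j\|f_j\|_\beta<\infty$, which in particular gives $\ka_0(\bar S_n^3f)=O(n)=O(\sig_n^2)$ so that the correction term is indeed $O(\sig_n^{-1})$ and the expansion is nontrivial).

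The main obstacle I expect is Step (2)–(3): obtaining the third-order Taylor expansion of $\prod_j\lambda_j(t/\sig_n)$ \emph{with a genuinely $o(\sig_n^{-1})$, not merely $o(1)$, error}, uniformly in $t$ over a window growing like $\sig_n$. In the autonomous case this is routine because a single analytic family of operators is perturbed; in the sequential case one must track the regularity of the eigen-projections $\Pi_j(z)$ and eigenvalues $\lambda_j(z)$ in $z$ \emph{uniformly in $j$}, control the accumulated error in the telescoping product of normalized operators, and ensure the third-derivative bounds are summable against the $\sig_n^{-3}$ prefactor. This requires the uniform quasi-compactness / Lasota–Yorke estimates for the sequential complex transfer operators together with the uniform lower bound on the spectral gap that the reduction theory of this paper provides; the non-autonomous bookkeeping of error terms — in particular showing the contribution of the fluctuations of individual $\lambda_j$ around their "mean behavior" does not destroy the $o(\sig_n^{-1})$ bound — is the technically delicate point. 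Once these uniform estimates are in hand (they are essentially the same estimates needed for the LLT and for the Berry–Esseen bounds of \cite{DH2}), the rest is the standard Edgeworth assembly.
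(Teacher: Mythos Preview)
Your proposal is correct and matches the paper's approach: the paper's proof (in \S\ref{Edge1}) is the one-line observation that Theorem \ref{ThEdge1} follows by combining \cite[Proposition 7.1]{DH2} (the third-order pressure/cumulant expansion, your Step (2)), the estimate \eqref{Suff} (the large-$t$ decay coming from irreducibility, your Step (3)), and \cite[Proposition 25]{DH} with $r=1$ (the Esseen-type smoothing assembly, your Step (4)). One caveat on Step (3): the paper establishes and uses only the \emph{integral} bound $\int_{\delta\le|t|\le T}\|\cL_{0,t}^n\|_*\,dt=o(\sig_n^{-1})$ for each fixed $T$ (via the contracting-block trichotomy of Propositions \ref{case 1}--\ref{case 3}), not the pointwise exponential decay $Ce^{-cn}$ you state --- that stronger bound is available only in special cases such as connected $X_j$ (Proposition \ref{ExpDecP stretched}) --- but the integral bound is precisely what the Esseen inequality requires.
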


We note that by  \cite[Proposition 7.1]{DH2}
$\DS \frac{\ka_0(\bar S_n^3)}{\sigma_n^3}=O(\sig_n^{-1})$ and so the correction term $  \frac{\ka_0(\bar S_n^3)(t^3-3t)}{6\sig_n^3}\varphi(t)$ is of order $O(\sig_n^{-1})$.
\\

 Theorem \ref{LLT1} leads naturally to the questions how to check irreducibility.
We obtain several results in this direction, extending the results obtained in
\cite{DH} for Markov chains.

\begin{theorem}\label{LLT2}
If the spaces $X_j$ are connected and $\sig_n\to\infty$ then $(f_j)$ is irreducible. Therefore  
$W_n=S_nf$ obeys the non-lattice LLT. 
\end{theorem}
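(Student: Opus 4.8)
The plan is to argue by contradiction, showing that under the hypotheses $(f_j)$ cannot be reducible, and then to invoke Theorem~\ref{LLT1} to get the non-lattice LLT. So suppose that $(f_j)$ is reducible: there exist $h\neq 0$, functions $H_j:X_j\to\bbR$ with $\sup_j\|H_j\|_\beta<\infty$ and $(S_nH)_{n\ge 1}$ tight, and $Z_j:X_j\to\bbZ$ with
\[
f_j=\ka_0(f_j\circ T_0^j)+H_j+hZ_j .
\]
We will contradict the tightness of $(S_nH)_n$ using the assumption $\sigma_n(\ka_0)\to\infty$.

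First I would use connectedness to eliminate the lattice component. Since $hZ_j=f_j-\ka_0(f_j\circ T_0^j)-H_j$ is a difference of a constant and two uniformly $\beta$-H\"older functions, each $Z_j$ is continuous (and even if \eqref{EqReduction} were only known $\ka_0$-a.e., full support of the underlying sequential Gibbs measure would force $Z_j$ to agree a.e., hence everywhere, with a continuous function). A continuous $\bbZ$-valued function on the connected space $X_j$ is constant, so $Z_j\equiv c_j$ for some $c_j\in\bbZ$. Therefore the centered sums satisfy
\[
S_nf-\ka_0(S_nf)=\sum_{j=0}^{n-1}\bigl(f_j-\ka_0(f_j\circ T_0^j)\bigr)\circ T_0^j=\sum_{j=0}^{n-1}\bigl(H_j\circ T_0^j+hc_j\bigr)=S_nH+h\sum_{j=0}^{n-1}c_j ,
\]
so that $\sigma_n(\ka_0)^2=\Var_{\ka_0}(S_nf)=\Var_{\ka_0}(S_nH)$.

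It remains to reach a contradiction. Here I would invoke the mechanism of \S\ref{CLT} and Remark~\ref{Rem Red}: writing $H_j=A_j+B_j-B_{j+1}\circ T_j$ as in \eqref{EqReductionH}, the coboundary part $B_0-B_n\circ T_0^n$ is bounded, while the reverse-martingale part $\sum_j A_j\circ T_0^j$ is tight only if $\sum_j\Var_{\ka_0}(A_j\circ T_0^j)<\infty$ (otherwise it converges in law to a Gaussian after normalization), in which case it converges in $L^2$. Hence tightness of $(S_nH)_n$ forces $\sup_n\|S_nH\|_{L^2(\ka_0)}<\infty$, so $\sigma_n(\ka_0)^2=\Var_{\ka_0}(S_nH)$ is bounded, contradicting $\sigma_n(\ka_0)\to\infty$. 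Thus $(f_j)$ is irreducible, and Theorem~\ref{LLT1} gives the non-lattice LLT for $W_n=S_nf$. There is no serious obstacle: the essential input is Theorem~\ref{LLT1}, and connectedness enters only through the elementary fact that a continuous integer-valued function is constant; the two points requiring a little care — that $Z_j$ may be taken genuinely continuous and that tightness of $(S_nH)_n$ self-improves to $L^2$-boundedness — are both supplied by results already established in the paper.
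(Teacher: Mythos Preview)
Your argument is correct and takes a more direct route than the paper. The paper (\S\ref{PathConn}) proves Theorem~\ref{LLT2} via Proposition~\ref{ExpDecP stretched}, which establishes the stronger quantitative estimate $\sup_{\delta\le|t|\le T}\|\cL_{0,t}^n\|_*\le Ce^{-c\sigma_n}$: it reruns the contracting-block analysis of Section~\ref{SSKeyPropZN}, using connectedness (via Remark~\ref{Rem conn}) to make the integer-valued corrections $Z_{t,j,\ell}$ of Lemma~\ref{LmTDVar} constant, and then deduces irreducibility through Corollary~\ref{H corr}. You instead apply the same connectedness observation directly at the level of Definition~\ref{Irr Def}: once each $Z_j\equiv c_j$ is constant, $\Var_{\ka_0}(S_nf)=\Var_{\ka_0}(S_nH)$, and the tightness-versus-CLT dichotomy of Remark~\ref{Rem Red} forces this to be bounded, contradicting $\sigma_n\to\infty$. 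Your route is shorter and avoids the transfer-operator machinery entirely, at the price of not yielding the decay estimate~\eqref{expp}; the paper's argument also produces Proposition~\ref{PrCHom} as a byproduct, which is then reused for Theorems~\ref{Thm small} and~\ref{ThHyp}(b).
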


A partial analogue of this result in the invertible case is presented in
Theorem \ref{ThHyp}(ii) of Section \ref{Hyper Sec}. We also prove the following result.

\begin{theorem}\label{Thm small}
If $\|f_n\|_{L^\infty}\to 0$, $\DS \sup_n\|f_n\|_\beta<\infty$  and $\sig_n\to\infty$ then $(f_n)$ is irreducible and so the non-lattice LLT holds.
\end{theorem}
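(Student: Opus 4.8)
The plan is to argue by contradiction. Suppose $(f_j)$ is reducible, so there are $h\neq 0$, functions $H_j:X_j\to\bbR$ with $\sup_j\|H_j\|_\beta=:G_0<\infty$, and $Z_j:X_j\to\bbZ$ with $f_j=\ka_0(f_j\circ T_0^j)+H_j+hZ_j$ and $(S_nH)_n$ tight. Put $\phi_j:=f_j-\ka_0(f_j\circ T_0^j)=H_j+hZ_j$; then $\|\phi_j\|_{L^\infty}\to 0$ while $\sup_j\|\phi_j\|_\beta<\infty$. Since $|h|\,\|Z_j\|_{L^\infty}\le\|H_j\|_{L^\infty}+\|\phi_j\|_{L^\infty}$ is uniformly bounded, the $Z_j$ take values in a fixed finite set $\{-M,\dots,M\}$. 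Moreover, once $j$ is large enough that $\|\phi_j\|_{L^\infty}<|h|/4$, any two points in distinct level sets of $Z_j$ force an oscillation of $H_j=\phi_j-hZ_j$ of at least $|h|/2$, so by the Hölder bound $G_0$ such points are at distance at least $\rho:=(|h|/2G_0)^{1/\beta}>0$; that is, the level sets of $Z_j$ are $\rho$-separated for all large $j$.

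Next I would bring in the martingale--coboundary decompositions of \eqref{EqReductionH}. Let $\tilde A_j$ be the martingale part in the decomposition $\phi_j=\tilde A_j+\tilde B_j-\tilde B_{j+1}\circ T_j$, and let $A_j$, $A^Z_j$ be the martingale parts in the analogous decompositions of $H_j$ and of $Z_j$. Since the bounded solution of the sequential cohomological equation is unique, these decompositions depend linearly on the function decomposed, so $\tilde A_j=A_j+hA^Z_j$. On the one hand, by Remark \ref{Rem Red} the tightness of $(S_nH)$ is equivalent to $\sum_j\Var_{\ka_0}(A_j\circ T_0^j)<\infty$, hence $\|A_j\|_{L^2(m_j)}\to 0$. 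On the other hand, the decomposition \eqref{EqReductionH} is produced by an absolutely convergent series of contracting transfer-operator products applied to $(\phi_i)_{i\le j}$ (the sequential thermodynamic formalism furnished by Assumptions \ref{AssPairing} and \ref{Ass n 0}); since $\|\phi_j\|_{L^\infty}\to 0$ and $\sup_j\|\phi_j\|_\beta<\infty$, Hölder interpolation gives $\|\phi_j\|_{\beta'}\to 0$ for every $\beta'<\beta$, and then $\|\tilde B_j\|_{L^\infty}\to 0$, whence $\|\tilde A_j\|_{L^\infty}\to 0$. Combining, $\|A^Z_j\|_{L^2(m_j)}\le|h|^{-1}(\|\tilde A_j\|_{L^2(m_j)}+\|A_j\|_{L^2(m_j)})\to 0$.

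The heart of the matter, and what I expect to be the main obstacle, is then a rigidity statement: there is $\delta_0>0$, depending only on $\rho$, $M$, $G_0/|h|$ and the system, such that every integer-valued function $Z$ on some $X_j$, with values in $\{-M,\dots,M\}$, with $\|Z\|_\beta\le G_0/|h|$ and with $\rho$-separated level sets, whose martingale part satisfies $\|A^Z\|_{L^2(m_j)}<\delta_0$, is constant. The mechanism is that Assumptions \ref{AssPairing} and \ref{Ass n 0} propagate the $\rho$-separation of the level sets of $Z$ backwards along all the inverse branches of $T_j^m$, $m\ge 1$: a nonconstant such $Z$ is then too ``chunky'' relative to the dynamics for its martingale part to be small, so $\|A^Z\|_{L^2}$ is bounded away from $0$ uniformly. (This fails without the separation hypothesis --- e.g. $\one_A-\one_B$ for deep disjoint cylinders has arbitrarily small martingale part --- which is why the hypothesis $\|\phi_j\|_{L^\infty}\to 0$ is used twice: once for $\|\tilde A_j\|_{L^\infty}\to 0$ and once for the separation.) This is precisely the place where the non-autonomous reduction theory developed in the paper must be invoked; establishing it in the sequential setting is the bulk of the work.

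Granting the rigidity statement, $\|A^Z_j\|_{L^2(m_j)}\to 0$ forces $Z_j$ to be a constant $k_j\in\bbZ$ for all $j\ge N$. But then, writing $S_nf=\ka_0(S_nf)+S_nH+hS_nZ$ and noting that $\sum_{j=N}^{n-1}Z_j\circ T_0^j=\sum_{j=N}^{n-1}k_j$ is deterministic, we get $\Var_{\ka_0}(S_nZ)=\Var_{\ka_0}(S_NZ)=O(1)$; since also $\Var_{\ka_0}(S_nH)=O(1)$ by tightness (Remark \ref{Rem Red}), $\sigma_n^2=\Var_{\ka_0}(S_nf)=\Var_{\ka_0}(S_nH+hS_nZ)=O(1)$, contradicting $\sigma_n\to\infty$. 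Hence $(f_j)$ is irreducible, and the non-lattice LLT follows from Theorem \ref{LLT1}.
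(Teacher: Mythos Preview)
Your approach has a genuine gap at the ``rigidity statement.'' You correctly flag it as the main obstacle and hope that ``the non-autonomous reduction theory developed in the paper'' will supply it, but it does not: Lemmas~\ref{LmTDSmall} and~\ref{LmTDVar} run in the opposite direction, producing a decomposition \eqref{Deccomp} of $f_{j+\ell-1}$ from the assumption that twisted transfer-operator norms are \emph{large}, not a uniform lower bound on the $L^2$ norm of the martingale part of a nonconstant integer-valued function with $\rho$-separated level sets. Nothing in the paper proves or even formulates such a statement; moreover, as stated it is ambiguous, since the martingale part $A^Z_j$ in the decomposition of the sequence $(Z_k)$ depends on the whole sequence and not just on $Z_j$. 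There are also softer issues --- the inference $\|\phi_j\|_{\beta'}\to 0\Rightarrow\|\tilde B_j\|_{L^\infty}\to 0$ needs the explicit form of the sequential Gordin construction, which you do not spell out --- but without the rigidity step the argument does not close.

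The paper's proof is two lines and sidesteps all of this. By Lemma~\ref{al be} (interpolation), $\|f_n\|_{L^\infty}\to 0$ together with $\sup_n\|f_n\|_\beta<\infty$ gives $\|f_n\|_\alpha\to 0$ for every $\alpha<\beta$. Hence for all large $s$ the decomposition \eqref{Chom}, namely $tf_s=g_{s,t}+tH_s-tH_{s+1}\circ T_s+z_{s,t}$ with $\|g_{s,t}\|_\alpha$ small and $z_{s,t}$ constant, holds \emph{trivially} with $g_{s,t}=tf_s$ and $H_s=z_{s,t}=0$. Proposition~\ref{PrCHom} (which records that the exponential-decay argument of Proposition~\ref{ExpDecP stretched} requires only \eqref{Chom}, not connectedness) then gives irreducibility directly. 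The point is that once $\|f_s\|_\alpha$ is small, $tf_s$ is \emph{itself} the small term in \eqref{Chom}, and there is no integer part to analyze at all.
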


Next, we consider the lattice case.
\begin{theorem}\label{LLT Latt}
Let $(f_j)$ be an irreducible sequence  of $\mathbb{Z}$ valued functions.
Then the sequence of random variables
  $W_n=S_nf$ obeys the lattice LLT 
   if $\sig_n\to\infty$.    
\end{theorem}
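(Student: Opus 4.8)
The plan is to deduce the lattice LLT (Theorem~\ref{LLT Latt}) from the non-lattice machinery developed for Theorem~\ref{LLT1}, by passing to the characteristic function / transfer operator picture and exploiting that the relevant ``arithmetic obstruction'' is now a single Fourier mode rather than a continuum. Concretely, for integer valued $f_j$ one studies the twisted transfer operators $\cL_{j}^{it}$ obtained by inserting the weight $e^{it f_j}$, and the LLT reduces, via Fourier inversion on $\bbZ$, to controlling $\prod_{j=0}^{n-1}\cL_j^{it}$ for $t$ in the torus $[-\pi,\pi]$ acting on the H\"older densities, exactly as in the autonomous Nagaev--Guivarc'h scheme but carried out sequentially. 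The contribution of small $t$ (of size $O(\sig_n^{-1})$) gives, after the sequential spectral/perturbative analysis, the Gaussian main term with the constant $\eta$; the whole point is to show the complementary range $\{t:\ |t|\le\pi,\ |t|\ge \eps\sig_n^{-1}\}$ contributes $o(\sig_n^{-1})$.

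First I would set up the sequential perturbation theory near $t=0$: using the reduction \eqref{EqReductionH} together with the growth $\sig_n\to\infty$, obtain uniform bounds $\bigl\|\prod_{j<n}\cL_j^{it}\bigr\|\le C e^{-c\sig_n^2 t^2}$ for $|t|\le \del$, plus the asymptotic expansion producing the normal density; this is essentially a rerun of the estimates already used for Theorems~\ref{LLT1} and~\ref{ThEdge1}, so I would quote the machinery of \cite{DH2} and the reduction theory of this paper rather than redo it. Second, and this is the heart of the matter, I would treat the ``large frequency'' range $\del\le|t|\le\pi$: here one needs a uniform-in-$n$ contraction $\bigl\|\prod_{j=0}^{n-1}\cL_j^{it}\,q_0\bigr\|_{L^1}\le C\rho^n$ with $\rho<1$, or at least a bound summably small after the Fourier inversion. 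This is where irreducibility of $(f_j)$ as a $\bbZ$-valued sequence enters: irreducibility must be shown to preclude the existence of a ``resonance'' at a nonzero $t\in(-\pi,\pi]$, i.e.\ to preclude a sequential eigenfunction relation $\cL_j^{it}u_j=\la_j u_{j+1}$ with $|\la_j|=\rho(\cL_j)$ along a subsequence. The standard argument is a dichotomy: either the twisted operators contract uniformly on compact subsets of $t$ away from $0$, or one can extract (by a diagonal/compactness argument over $j$ and a careful limiting procedure) such a family of unimodular eigenfunctions, which by the covering Assumption~\ref{Ass n 0} forces $e^{it f_j}$ to be cohomologous to a constant in the sequential sense, i.e.\ $f_j=\ka_0(f_j\circ T_0^j)+H_j+(2\pi/t)Z_j$ with the tightness/\eqref{EqVarInf} property for $H_j$ --- precisely reducibility with $h=2\pi/t>1$ when $|t|<\pi$ (the endpoint $t=\pm\pi$ being excluded because $(f_j)$ is $\bbZ$-valued and would then be reducible to a $2\bbZ$-valued sequence, again contradicting irreducibility). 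Contradiction gives the needed spectral gap.

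The main obstacle I anticipate is exactly this extraction step: in the non-autonomous setting there is no single transfer operator with a genuine spectral gap, so ``uniform contraction on $\del\le|t|\le\pi$'' cannot be read off from spectral theory and must instead be proved by a quantitative Lasota--Yorke plus Doeblin--Fortet argument, and the ``eigenfunction along a subsequence'' that one extracts in the non-contracting case is only defined as a limit of finite products, so one has to argue that it still satisfies a usable cohomological identity. The reduction theory developed in this paper (the analogue of \cite{DS} in the sequential case), which translates such limiting unimodular multiplicative cocycles into decompositions of the form \eqref{EqReduction}, is what makes this rigorous; I would invoke it as the key input. Once the two frequency ranges are controlled, I would assemble the proof by Fourier inversion: write $\bbP(S_nf=u)=\frac{1}{2\pi}\int_{-\pi}^{\pi}e^{-itu}\,\ka_0\!\bigl(e^{itS_nf}\bigr)\,dt$, split at $|t|=\del\sig_n^{-1}$ and $|t|=\del$, use the Gaussian asymptotics on the innermost piece, the bound $e^{-c\sig_n^2t^2}$ on the middle piece, and the contraction $\rho^n$ (note $n\gg\log\sig_n$ since $\sig_n\le C\sqrt n$) on the outer piece, obtaining the claimed $o(\sig_n^{-1})$ error uniformly in $u\in\bbZ$. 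The constant comes out as $\eta=1$ here (full lattice $\bbZ$); I would remark that the general $r_0\bbZ$ version, giving $\eta$ in terms of the reduction of Theorem~\ref{Reduce thm}, is the content of the reducible-case LLT discussed in Section~\ref{Red sec}.
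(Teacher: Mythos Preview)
Your overall skeleton is correct and matches the paper: Fourier inversion on $\bbZ$, split at $|t|\le\del$ vs.\ $\del\le|t|\le\pi$, extract the Gaussian main term from the small-$t$ window using the sequential perturbation theory of \cite{DH2}, and show the complementary range contributes $o(\sig_n^{-1})$. The paper reduces the lattice LLT to exactly the two conditions you identify (see \S\ref{SSInt-LLT}, in particular \eqref{SuffLat}), and then observes that the analysis of \S\S\ref{SSLrgeCB}--\ref{SSFewBlocks} shows that if $\int_J\|\cL_{0,t}^n\|_*\,dt$ is not $o(\sig_n^{-1})$ for some small interval $J\subset[\del,2\pi-\del]$, then $(f_j)$ is reducible to an $h\bbZ$-valued sequence with $2\pi/h\in J$, hence $h>1$, contradicting irreducibility.

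The gap is in your treatment of the large-$t$ range. You propose a dichotomy ``uniform contraction $\|\prod_j\cL_j^{it}\|\le C\rho^n$ on $\del\le|t|\le\pi$, or else extract by compactness a family of unimodular eigenfunctions and derive reducibility''. In the genuinely non-autonomous setting neither branch works as stated. There is no single operator and no quasi-compactness to invoke, and the paper explicitly does \emph{not} obtain pointwise exponential decay in $n$ on $[\del,\pi]$; indeed $\sig_n$ may grow arbitrarily slowly, so $\rho^n$ with fixed $\rho<1$ is far stronger than the required $o(\sig_n^{-1})$ and is in general false. Nor can one extract a limiting eigenfunction along a subsequence in any useful sense: the operators $\cL_{j,t}$ change with $j$, so a diagonal/compactness argument produces no single cohomological equation. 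What the paper actually does is quite different and more delicate: it partitions $\{0,\dots,n-1\}$ into \emph{contracting blocks} (where $\sup_{t\in J}\|\cL_t^I\|_*\le 1-\eta$) and their complements, and runs a three-case analysis on the number $L_n$ of contracting blocks (Propositions~\ref{case 1}, \ref{case 2}, \ref{case 3}). On non-contracting blocks the quantitative reduction Lemmas~\ref{LmTDSmall}--\ref{LmTDVar} (not a compactness extraction) give \eqref{Reduc}, and then the crucial convexity estimate Lemma~\ref{Qqad L} together with Proposition~\ref{PrSmVarBlock} upgrades an $L^\infty(J)$ bound on $\|\cL_{0,t}^n\|_*$ to an $L^1(J)$ bound of order $\sig_n^{-1}$. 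Your proposal does not contain this mechanism, and ``invoking the reduction theory of this paper'' as a black box does not supply it: the reduction lemmas by themselves give the decomposition \eqref{Reduc} on non-contracting blocks, but the passage from that to $\int_J\|\cL_{0,t}^n\|_*\,dt=o(\sig_n^{-1})$ requires the block-counting trichotomy and the quadratic integral estimate, which are the real content of \S\ref{SSKeyPropZN}.
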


In fact, we prove a  generalized lattice LLT for general sequences of reducible functions (see Theorem \ref{LLT RED}). This result includes Theorem \ref{LLT Latt} as a particular case, and  together with Theorem \ref{LLT1} we get a complete  description  of the local distribution of ergodic sums for the sequential dynamical systems considered in this manuscript. Since the formulation of Theorem \ref{LLT RED} is more complicated it is postponed to Section \ref{Red sec}. The more complicated limiting behavior at the local scale comes from the contributions coming from the coboundary part $B_0-B_n\circ T_0^n$ and the martingale part $\DS \sum_{j=0}^{n-1}A_j\circ T_0^j$, as will be discussed later.

\subsection{Obstructions to LLT}
\label{SSObstructions}
 The results presented above show that  there are only three obstructions for local distribution of ergodic sums to be Lebesgue:
 
 (a) {\em lattice obstruction:} the individual terms could be lattice valued in which case the sums take values in the same lattice;
 
 (b) {\em summability obstruction:} the sums can converge almost surely in which case individual terms are too large 
 to ensure the universal behavior of the sum;

(c) {\em gradient (coboundary) obstruction:} if the observable  is of the form $f_j\!\!=\!\!B_j\!\!-\!\!B_{j+1}\circ T_j$ then
the sum telescopes and the variance does not grow. 

The results presented in the previous section shows that the non-lattice LLT holds unless the observable could be decomposed as a sum of three terms satisfying the conditions (a)--(c) above (i.e. the sequential observable is reducible). Each individual obstruction has been
known for a long time. 
The lattice obstruction appears even in the iid case. In fact, the  classical LLT states that the sum of iid terms with finite 
variance satisfies the non lattice LLT unless the individual terms have lattice distribution in which case the lattice LLT
holds. The coboundary obstruction comes because the terms are not independent. In fact, the martingale coboundary
decomposition developed by Gordin \cite{Gor} allows to show for a large class of weakly dependent variables, 
including  ergodic sums of elliptic Markov chains and subshifts of finite type,
that the
sum satisfies the CLT unless the observable is a coboundary.

The summability obstruction is related to the fact that the process is not stationary. In fact, for independent 
summands combining the classical Kolmogorov three series theorem and Feller--Lindenberg CLT one sees that
if the variances of individual summands are uniformly bounded then either the variance of the sum is bounded and the sum converges almost surely or the variance of sum is unbounded and the CLT holds. The same result
remains valid for additive functionals of uniformly elliptic Markov chains \cite{Dob} and for martingales over mixing 
filtrations \cite{HaHe}. Several papers handle the situation where two obstructions are present. In particular, for
additive functionals of uniformly elliptic Markov chains, if the summands depend on finitely many variables then 
the CLT holds unless the summands can be decomposed as a sum of a gradient and convergent series,
see \cite{SeVa}.
The same result holds for expanding maps \cite{CR07}. 

Concerning the LLT, the spectral approach developed by
Nagaev \cite{Na} and extended to dynamical systems setting by Guivarch and Hardy \cite{GH} shows that 
in the stationary case, both for Markov chains and hyperbolic systems, the non lattice LLT holds unless the
summands are the sums of coboundaries and lattice valued variables. In the independent  setting it was shown in \cite{D16} that if the random variables are bounded then the LLT  holds unless the summands
can be decomposed as a sum of lattice valued and convergent terms. The only work where all three obstructions
are present\footnote{As mentioned in \S \ref{SSIntro}, several papers discuss LLT for 
random systems. However, in the random setting the summability obstruction
does not appear since a stationary non zero series can not converge almost surely.} 
is \cite{DS}, there the analogues to the results of the present paper are obtained for additive
functionals over two step elliptic Markov chains.  Extending the results
of \cite{DS} to deterministic systems requires several new ideas which 
are presented in the next subsection.

\subsection{Outline of the proofs}
In general in order to  prove a non-lattice LLT  it is enough to show that 
\begin{enumerate}
\item[(i)]  $\sig_n\to\infty$ and the CLT holds;
\vskip0.2cm
\item[(ii)] There exists $\del,c,C>0$ such that  $|\bbE[e^{it S_n}]|\leq Ce^{-ct^2\sig_n^2}$ for all $t\in[-\del,\del]$;
\vskip0.2cm
\item[(iii)] For every $T>\del$ we have 
\begin{equation}\label{Charr}
\int_{\del\leq |t|\leq T}|\bbE[e^{it S_n}]|dt=o(\sig_n^{-1}).
\end{equation}
\end{enumerate}
The CLT in our case follows from \cite{DH2}
(the proof of the CLT in the setup of \cite{DH2} follows the arguments of \cite{CR07}, but the setting of
 the present paper
is  slightly different from  \cite{CR07}).
 The second condition will be verified by 
combining \cite[Proposition~7.1]{DH2}  and \cite[Proposition 25]{DH} with $r=1$. 
The main difficulty  is to verify the third condition. 
This condition was  verified in \cite{DS} for irrreducible additive functionals
of uniformly elliptic Markov chains.
 The approach of \cite{DS} relies on the so-called structure constants,
which describe the  stable and unstable holonomies of the associated $\mathbb{R}$ extension.
In the Markov case, the fact that the past and the future are independent given the present allows to combine
the cancellations described by the structure constants at different times. In the present case we only know that 
remote past and remote future are weakly dependent which does not allow us to conclude because the sum of the structure
constants could diverge arbitrary slowly. To overcome this difficulty we use a three step approach 
for proving the non-lattice LLT.

First we show in Section \ref{Sec 5} using ideas of \cite{D98, DS} that if there are no cancelations in the characteristic function
of the sum then the corresponding structure constants are small and so by adding a coboundary the terms
can be reduced to a small neighbourhood of zero. In the case where the terms are small we use 
the complex Ruelle-Perron Frobenius-Theorem 
proved in \cite{HK} using  previous works in \cite{Rug, Du09, Du11} (in fact, in the present setting we find it more convenient to 
use a version presented in \cite[Appendix~D]{DH2}).
 The complex Ruelle-Perron-Frobenius Theorem
replaces the spectral theory of \cite{Na} by allowing perturbative computation of the characteristic function
near zero (see \cite{HK, DH, DH2}). These perturbative expansions allow to handle the case where summands
are small similarly to the iid case considered in \cite{D16}. The proof of the non-lattice LLT is then obtained by
dividing the time axis into blocks (intervals) of two types--the contracting blocks 
 where
the twisted transfer operator decay
and non contracting blocks where the perturbative arguments work.
In the case there are many contracting blocks, the decay of the twisted transfer operators
is sufficient to establish the non-lattice LLT. If the number of contracting blocks is
small, so that most of the variance comes from non contracting blocks, we need an
additional argument showing that the characteristic function can not be  large on a large 
interval. This comes from convexity of sequential pressures (which it turn follows from
Proposition \ref{PrSmVarBlock}) and plays a key role in  our argument. Namely, it shows that if $J$
is a sufficiently small interval and the characteristic function $\Phi(\xi)$ satisfies
$\|\Phi\|_{L^\infty(J)}=o(1)$ then  $\|\Phi\|_{L^1(J)}=o(1/\sqrt{V_N})$   where
$V_N$ is the variance of the sum.  This estimate shows that the local limit theorem 
holds provided that characteristic function tends to zero at all non zero points. 
On the other hand, if the characteristic function does not tend to zero at some $\xi\neq 0$
then 
 we show that the observables $(f_j)$  are reducible to $\frac{2\pi}{\xi}\mathbb{Z}$
 valued random variables, concluding the proof of Theorem \ref{LLT1}, 
 which occupies 
 Section \ref{SSKeyPropZN}.

In Theorem \ref{LLT RED} we will show that in the reducible case an appropriate type of lattice LLT holds true. As noted before, for lattice-valued observables $(f_j)$ Theorem \ref{LLT RED} reduces to Theorem \ref{LLT Latt}, but the more general version takes into account also obstructions (b) and (c).
Our  proof of the generalized lattice LLT includes an additional step, which involves another application of the perturbative arguments 
 around non zero resonant points
(using ideas from \cite{HK, DH, DH2} and \cite{DS}).
Namely,
in the reducible case \eqref{Charr} fails since the characteristic function does not decay in small neighborhoods of the lattice points $\frac{2\pi}{h_0}\bbZ$, 
where $h_0$ is like in Theorem \ref{Reduce thm}. In this case each lattice point contributes a correction term and thus an appropriate  lattice LLT holds. The proof proceeds  by 
expanding $\bbE[e^{itS_n}]$ around the lattice points $\frac{2\pi}{h_0}\bbZ$. 
 Here we again rely on  the complex sequential Perron-Frobenius theorem, 
and we also use 
the ideas from \cite{DH SPA} among other ingredients.



\subsection{Plan of the paper}
The layout of the paper is the following. In Section \ref{Examples} we present several concrete examples of maps satisfying our assumptions.
 We begin with non-stationary subshifts of finite type, and then describe several examples of maps which can be modeled by such shifts, as well as special types of H\"older continuous functionals which have applications, for instance, to  products of (finite valued) random Markov dependent matrices and to random Lyapunov exponents. This class of examples complements the  class of smooth maps considered in \cite[Section 4]{DH2} for which our results also hold. 
Section~\ref{Hyper Sec} explains how to extend our results to two-sided non-stationary shifts and discuss applications to small sequential perturbations of a fixed Axiom A map. 
Section~\ref{Sec4}  provides  background needed for the proof of the local limit theorems (e.g. real and complex transfer operators, equivariant measures, characteristic functions and Lasota-Yorke inequalities). Using these tools we will prove Theorem \ref{Reduce thm}.

Section \ref{Sec 5} is devoted to reduction lemmas, which are  essential in the proof of the LLT in the irreducible case in Section \ref{SSKeyPropZN}. 
Section \ref{Red sec} analyzes the local distribution of $S_N$ in the reducible case.
Section \ref{Sec 9} discusses  two sided SFT.
It 
contains,  in particular, 
several generalizations of standard facts about subshifts of finte type to the non-stationary case, including the conditioning arguments that allow to reduce the LLT from invertible subshifts to non-invertable ones by conditioning on the past.
Finally, Section \ref{Irr Sec}  is devoted to 
checking irreducibility in specific examples.
In \S \ref{PathConn} we show that for  connected spaces, the sequential observables are always irriducible, and hence the non-lattice LLT holds. In \S \ref{SS2SideIrr} we  prove that  close  hyperbolic maps on the tori always satisfy a non-lattice LCLT.

\section{Examples and applications}\label{Examples}
In \cite[\S 4.3.1]{DH2}  a class of smooth expanding maps satisfying  our
assumptions  was described. Below we will focus on 
non autonomous subshifts of finite type and provide several additional
applications.

\subsection{Sequential topologically mixing subshifts of finite type (SFT) and their applications}\label{SFT sec}
Let $\mathcal A_j\!\!=\!\!\{1,\dots,d_j\}$ with $\DS \sup_j d_j<\infty$. Consider matrices $A^{(j)}$ of sizes $d_j\times d_{j+1}$ with $0$--$1$ entries. We suppose that there exists an $M\in\bbN$ such that for every $j$ 
 all entries of the matrix $A^{(j)}\cdot A^{(j+1)}\cdots A^{(j+M)}$ are positive.
Define 
$$
X_j=\left\{(x_{j,k})_{k=0}^\infty: \,x_{j,k}\in \cA_{j+k}, \;\; A^{(j+k)}_{x_{j,k}, x_{j,k+1}}=1\right\}.
$$
Let $T_j:X_j\to X_{j+1}$ be the left shift.  Then $X_j=T_{j-1}\circ\cdots\circ T_1\circ T_0 X_0$, and so, we can identify the $k$-th coordinate $x_{j,k}$ in $X_j$ with $x_{0,j+k}$, which from now on will just be denoted by $x_{j+k}$, and points in $X_j$ will be denoted by $(x_{j+k})_{k=0}^\infty$.

Define a metric $\mathsf{d}_j$ on $X_j$ by 
$$
\mathsf{d}_j(x,y)=2^{-\inf\{k: x_{j+k}\not= y_{j+k}\}}
$$
and we use the convention $2^{-\infty}=0$.
Then, with this metric,  the maps $T_j$ satisfy Assumptions \ref{AssPairing} and \ref{Ass n 0}. 
Thus, all of our results are true  starting with measures of the form $\ka_0=q_0d\mu_0$, where $\mu_0$ is a time zero Gibbs measure (see \S \ref{TPo} and \S 
\ref{ScSFT-Gibbs}
for the background on Gibbs measures).

Next, given a point $x=(x_{j+k})_{k\geq 0}\in X_j$ and  $0\leq r_1\leq r_2$ we denote  the corresponding cylinder by
$$
[x_{j+r_1},x_{j+r_1+1},...,x_{j+r_2}]=\{x'=(x'_{j+k})_{k\geq 0}\in X_j:\,x'_{j+k}=x_{j+k},\,\forall r_1\leq k\leq r_2\}.
$$

Related invertible sequential dynamical systems are two sided subshifts of finite type. Here we assume that the sequences $d_j$ and $A^{(j)}$, and so the  shift space $X_j$, are defined for $j\in\bbZ$ and not only for $j\geq 0$.
Set  
$$
\tilde X_0=\{(y_k)_{k=-\infty}^{\infty}:\,A_{y_k,y_{k+1}}^{(k)}=1, y_k\in\cA_k\}
\quad\text{and}\quad 
\tilde X_j=\{(y_{j+k})_{k=-\infty}^{\infty}: (y_k)_{k=-\infty}^{\infty}\in \tilde X_0\}.
$$
Define a metric on $\tilde X_j$ by setting 
$$
\mathsf{d}_j(x,y)=2^{-\inf\{|k|:\, x_{j+k}\not= y_{j+k}\}}.
$$
Let $\tilde T_j:\tilde X_j\to \tilde X_{j+1}$ be the left shift. Set 
$$
\tilde T_j^n=\tilde T_{j+n-1}\circ\cdots\circ\tilde T_{j+1}\circ\tilde T_{j}.
$$
Similarly to the one sided case, for every point $y=(y_{j+k})_{k\in\bbZ}\in \tilde X_j$ and all $r_1\leq r_2$ we denote 
$$
[y_{j+r_1},y_{j+r_1+1},...,y_{j+r_2}]=\{y'=(y'_{j+k})_{k\in\bbZ}\in Y_j:\,y'_{j+k}=y_{j+k},\,\forall r_1\leq k\leq r_2\}.
$$

The maps $\tilde T_j$ do not satisfy  Assumptions \ref{AssPairing} and \ref{Ass n 0} since they have both expanding and contracting directions.
Nevertheless,
in  Section  \ref{Sec 9}
we will explain how to prove all the results stated in the previous section for these maps with $\mu_0$ being a time zero
Gibbs measure on the two sided shift.

\subsection{Uniformly aperiodic Markov maps on the interval}\label{Markov maps}
In this section we consider maps $\tau_j:[0,1]\to[0,1]$ with the following properties.
Let $d\geq 2$ be an integer.  For each $j$, we assume that there is a collection of disjoint open sub intervals $\{I_{j,1},...,I_{j,d_j}\}$ of $[0,1]$, where $2\leq d_j\leq d$, such that the union of their closures covers $[0,1]$. 
Moreover, each set
$
\tau_j(I_{j,k}), k=1,2,...,d_j$ is a union of some of the intervals in the collection $\{I_{j+1,s}: 1\leq s\leq d_{j+1}\}$. 
We also suppose that the maps  $\tau_j|_{\bar I_{k,j}}$ are 
twice
differentiable and that there are constants $\gamma,b>1$ such that for all $j$ and $1\leq k\leq d_j$,
$$
\gamma\leq \inf_{x\in \bar I_{k,j}}|\tau_j'(x)|\leq \sup_{x\in \bar I_{k,j}}|\tau_j'(x)|\leq b.
$$
Here $\bar I_{k,j}$ is the closure of $I_{k,j}$.
The above lower bound means that the maps are uniformly expanding.

  In these circumstances, 
there exist constants $c>0, \eta\in(0,1)$ such that  for all $j,n$ and indexes $1\leq i_{j+\ell}\leq d_{j+\ell},\, \ell<n$ we have 
\begin{equation}\label{Diam}
\text{diam}\left(\bigcap_{\ell=0}^{n-1}\tau_j^{-\ell}(I_{j+\ell,i_{j+\ell}})\right)\leq c\eta^n
\end{equation}
where $\tau_j^{-\ell}A=(\tau_j^\ell)^{-1}A$ for every $j,\ell$ and a Borel set $A$. 

\begin{assumption}[Adler condition]
There is a constant $C>0$ such that 
$$
\sup_j\sup_{1\leq k\leq d_j}\sup_{x\in I_{j,k}}\frac{|\tau_j''(x)|}{(\tau_j'(x))^2}\leq C.
$$
\end{assumption}  
The following bounded distortion property is a standard 
 consequence of the Adler condition.
\begin{corollary}\label{Cor dis}
There exists a constant $C>0$  such that  for all $j,n$ and indexes $1\leq i_{j+\ell}\leq d_{j+\ell},\, \ell<n$ and all 
$\DS x,y\in \cC_{j,n}(\bar i)=\tau_j^n(\bigcap_{\ell=0}^{n-1}\tau_j^{-\ell}(I_{j+\ell, \,i_{j+\ell}}))$ we have 
$$
\left|\frac{|(\tau_j^n)'(x)|}{|(\tau_j^n)'(y)|}-1\right|\leq C|x-y|.
$$
\end{corollary}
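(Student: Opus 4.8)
The goal is to prove the bounded distortion estimate in Corollary~\ref{Cor dis}, namely that for $x,y$ in a common $n$-fold image cylinder $\cC_{j,n}(\bar i)$ one has $\left|\frac{|(\tau_j^n)'(x)|}{|(\tau_j^n)'(y)|}-1\right|\leq C|x-y|$, using only the uniform expansion bound, the diameter estimate \eqref{Diam}, and the Adler condition.

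\textbf{Approach.} The plan is the standard telescoping/chain-rule argument for bounded distortion. First I would fix $x,y\in\cC_{j,n}(\bar i)$ and, for each $0\le \ell\le n-1$, let $x_\ell$ and $y_\ell$ be the unique preimages of $x$ and $y$ under $\tau_j^{n-\ell}$ lying in the cylinder $\bigcap_{s=\ell}^{n-1}\tau_j^{-(s-\ell)}(I_{j+s,i_{j+s}})\subset X_{j+\ell}$; in particular $x_n=x$, $y_n=y$, and $x_\ell,y_\ell$ both lie in the single branch interval $I_{j+\ell,i_{j+\ell}}$ on which $\tau_{j+\ell}$ is a diffeomorphism. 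By the chain rule, $\log|(\tau_j^n)'(x)| - \log|(\tau_j^n)'(y)| = \sum_{\ell=0}^{n-1}\bigl(\log|\tau_{j+\ell}'(x_\ell)| - \log|\tau_{j+\ell}'(y_\ell)|\bigr)$. The mean value theorem applied to $\log|\tau_{j+\ell}'|$ on the interval between $x_\ell$ and $y_\ell$ bounds the $\ell$-th summand by $\sup_{I_{j+\ell,i_{j+\ell}}}\frac{|\tau_{j+\ell}''|}{|\tau_{j+\ell}'|}\cdot|x_\ell-y_\ell|$; multiplying and dividing by $|\tau_{j+\ell}'|$ and invoking the Adler condition bounds the factor $\frac{|\tau_{j+\ell}''|}{|\tau_{j+\ell}'|}$ by $C\cdot|\tau_{j+\ell}'(\zeta_\ell)|$ for the relevant point, but it is cleaner to simply note $\frac{|\tau_{j+\ell}''|}{|\tau_{j+\ell}'|}=\frac{|\tau_{j+\ell}''|}{(\tau_{j+\ell}')^2}\cdot|\tau_{j+\ell}'|\le C\cdot b$ using the uniform upper bound $|\tau_{j+\ell}'|\le b$. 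Thus the $\ell$-th term is $\le Cb\,|x_\ell-y_\ell|$.

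\textbf{Controlling the preimages.} Next I would control $|x_\ell-y_\ell|$. Since $x_\ell,y_\ell$ lie in the common cylinder $\bigcap_{s=\ell}^{n-1}\tau_{j+\ell}^{-(s-\ell)}(I_{j+s,i_{j+s}})$, which has $n-\ell$ "digits" prescribed, estimate \eqref{Diam} (with $j$ replaced by $j+\ell$ and $n$ replaced by $n-\ell$) gives $|x_\ell - y_\ell|\le c\,\eta^{\,n-\ell}$. This already yields a bound $\sum_\ell Cb\,c\,\eta^{n-\ell}\le \frac{Cbc}{1-\eta}$, i.e. a uniform distortion bound, but not one proportional to $|x-y|$. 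To get the factor $|x-y|$ one uses the uniform expansion: $\tau_{j+\ell}^{n-\ell}$ maps the interval between $x_\ell$ and $y_\ell$ onto (or into) an interval of length $|x-y|$, and since each of the $n-\ell$ iterate-branches expands by at least $\gamma>1$, we get $|x_\ell-y_\ell|\le \gamma^{-(n-\ell)}|x-y|$. (Here one must be slightly careful that the image under $\tau_{j+\ell}^{n-\ell}$ of the segment $[x_\ell,y_\ell]$ is contained in the segment $[x,y]$, which holds because each $\tau$ is monotone on each branch and the intermediate images stay within single branches; this is where the cylinder structure is used.) Summing, $\sum_{\ell=0}^{n-1}|x_\ell - y_\ell|\le |x-y|\sum_{\ell=0}^{n-1}\gamma^{-(n-\ell)}\le \frac{|x-y|}{\gamma-1}$.

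\textbf{Conclusion and the main obstacle.} Combining, $\bigl|\log|(\tau_j^n)'(x)| - \log|(\tau_j^n)'(y)|\bigr|\le \frac{Cb}{\gamma-1}|x-y|$, and then since $|e^t-1|\le |t|e^{|t|}$ and $|x-y|\le 1$ (the spaces have diameter $\le 1$), one deduces $\left|\frac{|(\tau_j^n)'(x)|}{|(\tau_j^n)'(y)|}-1\right|\le C'|x-y|$ for a new constant $C'$ depending only on $C$, $b$, $\gamma$. The main obstacle, such as it is, is purely bookkeeping: keeping track of the preimage branches $x_\ell,y_\ell$, verifying that they indeed lie in a single branch interval at each level (so that the MVT for $\log|\tau'|$ and the monotone-expansion estimate both apply), and confirming that the images of $[x_\ell,y_\ell]$ nest correctly inside $[x,y]$. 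Everything else is the routine telescoping estimate; the Adler condition is exactly what converts the second-derivative term into something summable against the geometric contraction of preimages, and the uniform expansion $\gamma>1$ is what upgrades the uniform bound to a Lipschitz-in-$|x-y|$ bound.
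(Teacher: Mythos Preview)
Your proof is correct and is precisely the standard telescoping argument the paper has in mind; the paper does not actually supply a proof of this corollary, merely stating that it is ``a standard consequence of the Adler condition.'' Your interpretation of the statement (with $x,y$ in the image $\cC_{j,n}(\bar i)$ and $(\tau_j^n)'(x)$ meaning the derivative at the unique preimage along the chosen branch) is the only one under which the Lipschitz-in-$|x-y|$ bound is true uniformly in $n$, and your argument handles it correctly.
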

This above distortion estimate implies Renyi's property: there exists $A>0$ such that
\begin{equation}\label{Renyi}
  \frac{1}{|(\tau_j^n)'(x)|}=A^{\pm 1}m(\cC_{j,n}(\bar i))  
\end{equation}
on, $\cC_{j,n}(\bar i)$,
where $m=$Lebesgue. In fact, all we need here is this distortion estimate, but we prefer to describe the setup with the more familiar Adler condition.

Our last assumption is as follows.
\begin{assumption}[Uniform mixing of the partition]
There is a constant $M\in\bbN$ such that for all $j$ and all $1\leq k\leq d_j$ and $1\leq \ell\leq d_{j+M}$ we have 
$$
I_{j,k}\cap \tau_j^{-M}(I_{j+M,\ell})\not=\emptyset.
$$
\end{assumption}



The sequential system $\tau_j$ can be lifted to a  sequential SFT, as explained in what follows.
Let $\cA_j=\{1,2,...,d_j\}$ and consider the $0-1$-valued $d_j\times d_{j+1}$ matrix $A^{(j)}$ such that $A^{(j)}_{k,\ell}=1$ iff $I_{j,k}\cap \tau_j^{-1}(I_{j+1,\ell})\not=\emptyset$ (namely, iff $I_{j+1,\ell}\subset \tau_j(I_{j,k})$). Let $T_j:X_j\to X_{j+1}$ denote the corresponding sequential SFT. Then the SFT is uniformly topologically mixing, the maps $\pi_j:X_j\to [0,1]$ given by 
$$
\pi_j(x_j,x_{j+1},...)=\bigcap_{\ell=0}^{\infty}\tau_j^{-\ell}(\bar I_{j+\ell,x_{j+\ell}})
$$
are  uniformly H\"older  and $\pi_{j+1}\circ T_j\!\!=\!\!\tau_j\circ \pi_j$. 
Consider the function $\phi_j:X_j\!\to\! \bbR$ given by 
\begin{equation}\label{Pot}
\phi_j(x)=-\ln |\tau_j'(\pi_j x)|.
\end{equation}
Then under the above assumptions  $\DS \sup_j\|\phi_j\|_\eta<\infty$. 
Let $(\mu_j)$ be a sequence of Gibbs measures  corresponding to the sequence of functions $(\phi_j)$. Then 
any limit theorem on the shift implies the same result for the system $(\tau_j)$ with respect to a measure which  is absolutely continuous with respect to $\te_0=(\pi_0)_*\mu_0$, with H\"older continuous density.

\begin{remark}
 Let $L_j$ be the transfer operator of $\tau_j$, 
 that is the operator mapping a function $g$ to the density of the measure $(\tau_j)_*(gd m)$, where $m=\text{Lebesgue}$. Then, denoting $\tau_{j,k}^{-1}=(\tau_j|_{I_{j,k}})^{-1}$ 
we have
$$
L_j g|_{I_{j+1,\ell}}=\sum_{k:\, I_{j+1,\ell}\subset \tau_j(I_{j,k})}\frac{g\circ \tau_{j,k}^{-1}}{\tau_j'\circ \tau_{j,k}^{-1}}.
$$
Let also $\cL_j$ be the operator acting on the shift given by 
$$
\cL_jg(x)=\sum_{y: T_jy=x}e^{\phi_j(y)}g(y).
$$
We define $L_j^n=L_{j+n-1}\circ\cdots\circ L_{j+1}\circ L_j$, and we define $\mathcal L_j^n$ similarly. Then
\begin{equation}\label{Relation}
   \cL_j^n(g\circ\pi_j)=(L_j^n g)\circ \pi_{j+n}.
\end{equation}
Using this relation it follows that the measure $\te_0=(\pi_0)_*\mu_0$ is absolutely continuous with respect to Lebesgue. In fact,   $(\pi_j)_*\mu_j$ is the asymptotically unique sequence of absolutely continuous measures $(\te_j)$ such that $(\tau_j)_*\te_j=\te_{j+1}$ (see \cite[Theorem 2.4(ii)]{DH2}). 
 In case $(\tau_j)$ is a two sided sequence (namely $\tau_j$ is defined for $j<0$) the shift extension is defined for all $j\in\bbZ$, as well. In these circumstances Gibbs measures are unique, and it  
 follows that the densities of the unique absolutely continuous measures $\te_j$ is $(\pi_j)_*\mu_j$.
  The idea is that (see \cite[Appendix A]{DH2} and \cite[Remarks 2.5 \& 2.6]{DH2}) both the 
 densities of the asymptotically unique and the unique (in the the two sided case) equivariant  measures $\te_j$ can be expressed by means of the operators 
 $L_j^n$ and that each Gibbs measure on the shift is constructed through a two sided extension, and it can be expressed by means of the transfer operators $\cL_j^n$.
\end{remark}


\subsection{Finite state elliptic Markov chains}
In \cite{DS} the LLT for uniformly bounded additive functionals of uniformly elliptic inhomogeneous Markov chains $(\xi_j)$ was obtained. However, these result do not apply to functions $f_j$ of the entire path $(\xi_j)$. In this section we derive the LLT for H\"older continuous functions of the entire path of finite state Markov chains. Additionally,  
we will give several examples of how such functions naturally arise.

For each $j$, let $\cA_j$ be a finite set of size $d_j$. Suppose $\DS \sup_jd_j<\infty$. Let $(\xi_j)$ be a Markov chain, such that $\xi_j$ takes values in $\cA_j$. To fix the notation, let us focus on one  sided Markov chains sequences $(\xi_j)_{j\geq 0}$. The main idea below will be a reduction to a one sided sequential SFT, and a simple modification of the argument will yield a reduction of the LLT's for two sided chains $(\xi_j)_{j\in\bbZ}$ to a two-sided seqeuntial SFT.
 
For $x\in \cA_j$ and $y\in\cA_{j+1}$ let
$$
p_j(x,y)=\bbP(\xi_{j+1}=y|\xi_j=x)
$$
and suppose that 
\begin{equation}\label{LowerBound}
 \inf_j\min\left\{p_j(x,y): p_{j}(x,y)>0, x\in \cA_j, y\in \cA_{j+1}\right\}>0.
\end{equation}
We also assume that  the chain is uniformly elliptic: that is, there exists $M\in \bbN$ and $\ve_0>0$ such that for all $j$ and all $x\in \cA_j$ and $y\in \cA_{j+M}$ we have 
\begin{equation}\label{UE}
   \bbP(\xi_{j+M}=y|\xi_j=x)\geq \ve_0. 
\end{equation}

Define a metric $\mathsf{d}_j$ on the infinite product 
$\DS \textbf{A}_j=\prod_{k=0}^\infty\cA_{j+k}$ by 
$$
\mathsf{d}_j(\bar x,\bar y)=2^{-\inf\{k\geq 0: x_{j+k}\not=y_{j+k}\}}
$$
where 
$\bar x=(x_j,x_{j+1},\dots)$, $\bar y=(y_j,y_{j+1},\dots)$, 
and we use the convention $2^{-\infty}=0$. 

Let $f_j:\textbf{A}_j\to\bbR$ be a sequence of H\"older continuous functions with respect to some given exponent $\al\in(0,1]$, whose H\"older norms are uniformly bounded in $j$. Consider the random variable $Y_j=f_j(\xi_j,\xi_{j+1},\dots)$.
Set 
$
\DS S_n=\sum_{j=0}^{n-1}Y_j.
$
As it will be explained below, as an application of our main results we get the following result.

\begin{theorem}\label{LCLT MC}
Either $(f_j)$ is reducible or $S_n$ obeys  non-lattice local limit theorem. In the reducible case $S_n$ obeys  generalized non-lattice LLT 
(Theorem \ref{LLT RED} from Section \ref{Red sec}).
\end{theorem}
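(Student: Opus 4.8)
The plan is to derive Theorem \ref{LCLT MC} as a corollary of Theorem \ref{LLT1} (and Theorem \ref{LLT RED} in the reducible case) by realizing the additive functional $S_n=\sum_{j=0}^{n-1}f_j(\xi_j,\xi_{j+1},\dots)$ of the Markov chain $(\xi_j)$ as a Birkhoff sum $\sum_{j=0}^{n-1}g_j\circ T_0^j$ over a suitable sequential topologically mixing SFT, starting from a measure of the form $\kappa_0=q_0\,d\mu_0$ with $\mu_0$ a time-zero Gibbs measure. Concretely, I would let $\cA_j$ be the state space of $\xi_j$ and define the $0$--$1$ matrices $A^{(j)}$ by $A^{(j)}_{x,y}=1$ iff $p_j(x,y)>0$; condition \eqref{UE} guarantees that the products $A^{(j)}\cdots A^{(j+M)}$ have all entries positive, so the associated one-sided sequential SFT $(X_j,T_j)$ satisfies Assumptions \ref{AssPairing} and \ref{Ass n 0} (with the metric $\mathsf d_j(\bar x,\bar y)=2^{-\inf\{k:x_{j+k}\neq y_{j+k}\}}$, which is exactly the metric built into \S\ref{SFT sec}). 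Under this identification $\textbf A_j$ and $X_j$ agree as sets once we discard inadmissible words, and the H\"older functions $f_j:\textbf A_j\to\bbR$ restrict to H\"older functions $g_j:X_j\to\bbR$ with $\sup_j\|g_j\|_\al<\infty$, so $S_n=S_ng$ is precisely the object treated by Theorems \ref{LLT1}, \ref{LLT Latt} and \ref{LLT RED}.

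The second ingredient is to match the law of the Markov chain path with a Gibbs measure on the shift. The potential is the log-transition function: set $\psi_j(\bar x)=\ln p_j(x_j,x_{j+1})$, which is locally constant (depends only on the first two coordinates) and hence trivially H\"older with $\sup_j\|\psi_j\|_\al<\infty$ by \eqref{LowerBound}. One checks that the transfer operators $\cL_j g(\bar x)=\sum_{T_j\bar y=\bar x}e^{\psi_j(\bar y)}g(\bar y)$ are (sub)stochastic in the appropriate sequential sense, so that the sequence of Gibbs measures $(\mu_j)$ associated to $(\psi_j)$ — whose existence and properties are recalled in \S\ref{TPo} and \S\ref{ScSFT-Gibbs} — has the property that $\mu_0$ is (up to a H\"older density coming from the initial distribution of $\xi_0$, which is absorbed into $q_0$) the law of the path $(\xi_0,\xi_1,\dots)$. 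Thus $S_n$ under $\bbP$ has the same distribution as $S_ng$ under $\kappa_0$, and the conclusions of the main theorems transfer verbatim: either $(g_j)=(f_j)$ is irreducible and, provided $\sigma_n\to\infty$, Theorem \ref{LLT1} gives the non-lattice LLT; or $(f_j)$ is reducible and Theorem \ref{LLT RED} gives the generalized LLT. The case $\sigma_n\not\to\infty$ falls under reducibility with $Z_j=0$ by the discussion preceding Theorem \ref{Reduce thm}, so the dichotomy in the statement is exhaustive.

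The main obstacle I anticipate is bookkeeping around the Gibbs-measure identification rather than anything deep: one must verify that the ``time-zero Gibbs measure for the potential $\ln p_j(x_j,x_{j+1})$'' really is the path measure of the chain (including correctly handling the marginal of $\xi_0$ — for a genuinely nonstationary chain the initial distribution is extra data that must enter through the density $q_0$, and one should check $q_0$ is bounded away from $0$ and $\infty$, hence H\"older), and that the normalization conventions for sequential transfer operators in \S\ref{TPo} are compatible with the Markovian normalization $\sum_y p_j(x,y)=1$. A secondary point is the two-sided case: for $(\xi_j)_{j\in\bbZ}$ one passes to the two-sided sequential SFT of \S\ref{SFT sec} and invokes the conditioning arguments of Section \ref{Sec 9} (reducing the invertible LLT to the one-sided one by conditioning on the past), exactly as announced there; I would state this as a remark and refer forward. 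Finally, I would note in passing that ellipticity \eqref{UE} is used only to obtain uniform topological mixing of the SFT (the positivity of $A^{(j)}\cdots A^{(j+M)}$) and the uniform lower bound \eqref{LowerBound} only to get uniformly H\"older potentials; no assumption on growth of $\sigma_n$ is needed for the dichotomy itself, in line with the paper's standing policy of making no extra hypotheses.
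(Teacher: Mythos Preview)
Your proposal is correct and matches the paper's approach: reduce to the sequential SFT with incidence matrices $A^{(j)}_{x,y}=\bbI(p_j(x,y)>0)$, identify the path law of $(\xi_j)$ with a Gibbs measure for the locally constant potential $\phi_j(\bar x)=\ln p_j(x_j,x_{j+1})$, and then invoke the main LLT theorems. The paper carries out exactly this, with Lemma~\ref{Simple Lemma} and Proposition~\ref{PrMarkovGibbs} providing the verification you flagged as the main obstacle; in particular the paper shows the path measure itself satisfies the Gibbs property (so effectively $q_0\equiv 1$ rather than a nontrivial density coming from the initial distribution), the uniform lower bound on $\bbP(\xi_j=x)$ from Lemma~\ref{Simple Lemma} being what absorbs the initial data into the Gibbs constant.
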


 Theorem 
\ref{LCLT MC} relies on the two auxiliary facts presented below.

\begin{lemma}\label{Simple Lemma}
  Under \eqref{UE} we have 
\begin{equation}\label{LoweBound 2}
\ve_0'=:\inf_j\min_{x\in \cA_j}\bbP(\xi_j=x)>0. 
\end{equation}
\end{lemma}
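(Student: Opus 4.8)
The plan is to split the time axis into two regimes: the ``stationary-like'' regime $j\ge M$, where the uniform ellipticity bound \eqref{UE} applies directly, and the finite initial segment $0\le j<M$, which is treated by a separate soft argument.

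First, for $j\ge M$ I would condition on $\xi_{j-M}$ and combine the Markov property with \eqref{UE}: for every $x\in\cA_j$,
$$
\bbP(\xi_j=x)=\sum_{z\in\cA_{j-M}}\bbP(\xi_{j-M}=z)\,\bbP(\xi_j=x\mid\xi_{j-M}=z)\ \ge\ \ve_0\sum_{z\in\cA_{j-M}}\bbP(\xi_{j-M}=z)\ =\ \ve_0 .
$$
Hence $\min_{x\in\cA_j}\bbP(\xi_j=x)\ge\ve_0$ uniformly over all $j\ge M$. Second, for $0\le j<M$ there are only finitely many pairs $(j,x)$ with $x\in\cA_j$, and since $\xi_j$ takes values in $\cA_j$ (each symbol of $\cA_j$ is attained with positive probability) every $\bbP(\xi_j=x)$ is strictly positive; therefore $c_1:=\min_{0\le j<M}\min_{x\in\cA_j}\bbP(\xi_j=x)>0$. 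Combining the two regimes gives $\ve_0'\ge\min(\ve_0,c_1)>0$, which is the assertion.

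The only delicate point is the reading of ``$\xi_j$ takes values in $\cA_j$'' on the initial segment, and this is not a genuine obstacle: if one does not wish to assume that $\cA_j$ is exactly the support of $\xi_j$, one can instead propagate positivity \emph{forward} from $\xi_0$ in a quantitative way using \eqref{LowerBound}. Namely, if $\bbP(\xi_j=x)>0$ then $\sum_z\bbP(\xi_{j-1}=z)p_{j-1}(z,x)>0$, so there is $z$ with $\bbP(\xi_{j-1}=z)>0$ and $p_{j-1}(z,x)>0$, whence $\bbP(\xi_j=x)\ge c_0\,\bbP(\xi_{j-1}=z)$ with $c_0$ the constant from \eqref{LowerBound}; iterating down to time $0$ yields $\bbP(\xi_j=x)\ge c_0^{\,M}\min_{y\in\cA_0}\bbP(\xi_0=y)>0$ for all $0\le j<M$. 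Either way the proof is short, and the substance is entirely in the one-line conditioning estimate for $j\ge M$.
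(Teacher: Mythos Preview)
Your proof is correct and follows the same approach as the paper: the conditioning estimate for $j\ge M$ and the finiteness argument for $0\le j<M$ are exactly what the paper does. Your additional remark justifying positivity on the initial segment via \eqref{LowerBound} is a nice touch that the paper leaves implicit.
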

\begin{proof}
For $j>M$ we have
$$
\bbP(\xi_j=y)=\sum_{x\in \cA_{j-M}}\bbP(\xi_{j-M}=x)\bbP(\xi_j=y|\xi_{j-M}=x)\geq \ve_0\sum_{x}\bbP(\xi_{j-M}=x)=\ve_0.
$$
For $j\leq M$ we use that the set $\{\bbP(\xi_k=x), k\leq M, x\in\cA_k\}$ contains at most $d_1d_2\cdots d_{M}$ values, which are all positive 
\end{proof}

Next, let $A^{(j)}$ be a $d_j\times d_{j+1}$ matrix with $0\!-\!1$ values such that its $(x,y)$ entry is $1$ if and only if $p_j(x,y)\!\!>\!\!0$. Consider the   sequential one sided subshift of finite type generated by the sets $\cA_j$ and the matrices $A^{(j)}$.
By \eqref{UE},  this sequential SFT is aperiodic.

Next, let $\mu_j$ be the measure on the infinite product $\textbf{A}_j=\cA_j\times \cA_{j+1}\cdots$ induced by the process $(\xi_j,\xi_{j+1},...)$. 
 Then $\mu_j$ is supported on $X_j$.

\begin{proposition}   
\label{PrMarkovGibbs}
The sequence $(\mu_j)_{j\geq0}$ is a sequence of  Gibbs measures on $X_j$
corresponding to the potential 
$\DS
\phi_j(x_j,x_{j+1},...)=\ln p_j(x_j,x_{j+1}).
$
\end{proposition}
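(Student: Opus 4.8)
The plan is to verify directly that the path measure $\mu_j$ satisfies the defining properties of a sequential Gibbs measure for the potential $\phi_j(x)=\ln p_j(x_j,x_{j+1})$, as set up in \S\ref{ScSFT-Gibbs}. Two things must be checked: that $(\mu_j)$ is consistent under the shifts, $(T_j)_*\mu_j=\mu_{j+1}$, and that it obeys the uniform Gibbs estimate (equivalently, that the transfer operators $\cL_j$ admit the appropriate eigendata with $\mu_j$ as the associated invariant measure). A preliminary point, needed for any version of Gibbs theory to apply, is that $(\phi_j)$ is uniformly H\"older: by \eqref{LowerBound} every nonvanishing $p_j(x,y)$ lies in $[\ve_1,1]$ for some $\ve_1>0$, so $\phi_j$ is bounded uniformly in $j$, and since $\phi_j$ depends only on the first two coordinates it is locally constant on cylinders of diameter $\le 1/2$, whence $\sup_j\|\phi_j\|_\al<\infty$. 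The consistency $(T_j)_*\mu_j=\mu_{j+1}$ is immediate, since $\mu_j$ is by definition the law of $(\xi_j,\xi_{j+1},\dots)$ and the shift $T_j$ carries it to the law of $(\xi_{j+1},\xi_{j+2},\dots)$.

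For the Gibbs estimate I would compute, for $x=(x_{j+k})_{k\ge0}\in X_j$,
$$
\mu_j\big([x_j,x_{j+1},\dots,x_{j+n-1}]\big)=\bbP(\xi_j=x_j,\dots,\xi_{j+n-1}=x_{j+n-1})=\bbP(\xi_j=x_j)\prod_{k=0}^{n-2}p_{j+k}(x_{j+k},x_{j+k+1}),
$$
while $\sum_{k=0}^{n-1}\phi_{j+k}(T_j^kx)=\sum_{k=0}^{n-1}\ln p_{j+k}(x_{j+k},x_{j+k+1})$. Hence, with the trivial normalization, the ratio of the cylinder mass to $\exp\big(\sum_{k=0}^{n-1}\phi_{j+k}(T_j^kx)\big)$ equals $\bbP(\xi_j=x_j)\big/p_{j+n-1}(x_{j+n-1},x_{j+n})$. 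The numerator lies in $[\ve_0',1]$ by Lemma \ref{Simple Lemma}, and the denominator lies in $[\ve_1,1]$ because $x\in X_j$ forces $A^{(j+n-1)}_{x_{j+n-1},x_{j+n}}=1$, i.e. $p_{j+n-1}(x_{j+n-1},x_{j+n})>0$; so the ratio is bounded above and below uniformly in $j,n,x$, which is exactly the Gibbs property (with vanishing sequential pressure).

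Equivalently, if the definition in \S\ref{ScSFT-Gibbs} is phrased through transfer-operator eigendata, one checks that $h_j(x):=\bbP(\xi_j=x_j)$ is uniformly H\"older and, by Lemma \ref{Simple Lemma}, bounded away from $0$ and $\infty$; that $\cL_jh_j=h_{j+1}$, since $\cL_jh_j(x)=\sum_a p_j(a,x_{j+1})\bbP(\xi_j=a)=\bbP(\xi_{j+1}=x_{j+1})=h_{j+1}(x)$, so the leading eigenvalues are all equal to $1$; and that the conformal measures $\nu_j:=\mu_j/h_j$ satisfy $\cL_j^*\nu_{j+1}=\nu_j$ together with $\mu_j=h_j\nu_j$ and $\nu_j(h_j)=1$. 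The last identity follows from the Bayes formula for the conditional law of $\xi_j$ given $(\xi_{j+1},\xi_{j+2},\dots)$, namely $\bbP(\xi_j=a\mid\xi_{j+1}=b,\dots)=h_j(a)p_j(a,b)/h_{j+1}(b)$, which is exactly the Markov property backward in time. Either route identifies $(\mu_j)$ as a (by \S\ref{ScSFT-Gibbs}, asymptotically unique) sequence of Gibbs measures for $(\phi_j)$, and the aperiodicity needed for the general theory is the one noted right after \eqref{UE}.

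I do not expect a genuine obstacle here: the content is the elementary computation above plus the uniform two-sided bounds supplied by Lemma \ref{Simple Lemma} and \eqref{LowerBound}. The only care required is bookkeeping — matching the precise normalization convention for "Gibbs measure" adopted in \S\ref{ScSFT-Gibbs} (pressure normalization, one-sided versus two-sided extensions) and keeping track of the harmless ``off-by-one'' boundary discrepancies between a length-$n$ cylinder and the Birkhoff sum $S_n\phi_j$, together with the initial-marginal factor $\bbP(\xi_j=x_j)$, all of which are controlled uniformly.
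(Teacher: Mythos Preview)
Your proof is correct and follows essentially the same approach as the paper: verify H\"older regularity of $\phi_j$, check shift-consistency $(T_j)_*\mu_j=\mu_{j+1}$, and then compute the cylinder mass explicitly to obtain the Gibbs bounds via Lemma~\ref{Simple Lemma} and the lower bound \eqref{LowerBound}. The additional transfer-operator paragraph is extra (the paper omits it), but the core argument is the same.
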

\begin{proof}
First, by \eqref{LowerBound} the functions $\phi_j$ are uniformly bounded. Since they depend only on the first two coordinates, they are also uniformly H\"older continuous. Hence, such a Gibbs measure indeed exists. Now, to see why $\mu_j$ is the desired Gibbs measure,  by the definition of Gibbs measures
 (see  \S \ref{ScSFT-Gibbs}), we need to show that  $(T_j)_*\mu_j=\mu_{j+1}$
    and  the Gibbs property holds, that is, there is a constant $C\geq 1$ such that 
    for all $j$,\;\; $(x_j,x_{j+1},...)\!\in\! X_j$ and $n\geq0$,
            $\DS
C^{-1}e^{S_{j,n}\phi(x)}\leq\mu_j([x_j,x_{j+1},...,x_{j+n-1}])\leq Ce^{S_{j,n}\phi(x)}
    $
    where $\DS S_{j,n}\phi=\sum_{k=0}^{n-1}\phi_{j+k}\circ T_j^{k}$.
    
    To see why $(T_j)_*\mu_j=\mu_{j+1}$ we observe that both measure coincide with the law of $(\xi_{j+1},\xi_{j+2},...)$.  To prove the Gibbs property,  we have  
    $$
\mu_j([x_j,x_{j+1},...,x_{j+n-1}])=\bbP(\xi_j=x_j)
\prod_{k=0}^{n-2}p_{j+k}(x_{j+k},x_{j+k+1})
$$
$$
=\bbP(\xi_j=x_j)(p_{j+n-1}(x_{j+n-1},x_{j+n}))^{-1}e^{S_{j,n}\phi(x)}
    $$
where for $n=1$ the above product of $n-1$ terms should be interpreted as $1$.
Combining this with Lemma \ref{Simple Lemma} and taking into account that  
  $p_j(x_j,x_{j+1})\in [\ve_0,1]$ we see that
    $$
\ve_0'e^{S_{j,n}\phi(x)}\leq \mu_j([x_j,x_{j+1},...,x_{j+n-1}])\leq 
\ve_0^{-1}e^{S_{j,n}\phi(x)}
    $$
    and the lemma follows.
\end{proof}

In view of the  Proposition \ref{PrMarkovGibbs},
Theorem \ref{LCLT MC} follows  from Theorem \ref{LCLT two sided} below.

\begin{remark}
In the case of two sided Markov chains $(\xi_j)_{j\in\bbZ}$ the same argument shows that the distribution of the entire path  $(\xi_j)_{j\in\bbZ}$ coincides with the (now unique, see  Theorem \ref{ThSFT-Gibbs} in \S \ref{ScSFT-Gibbs}) Gibbs measure at time $0$ of the two sided sequence of two sided shifts. This point emphasizes the reason there are no unique Gibbs measures in the one sided case: we can always change the initial distribution (i.e. the law of $\xi_0$) without changing its support. This results with a wide range of different Gibbs measures. On the other hand, for two sided chains there is no initial condition, and so the resulting Gibbs measures are unique.
\end{remark}

\newpage

\subsubsection*{\textbf{Examples of H\"older functions}}

\subsubsection{Recursive sequences and series}
\,
Let us suppose that $(\xi_j)$ is  a finite state uniformly elliptic Markov chain with values in $\bbR$ such that $\DS \sup_j\|\xi_j\|_{L^\infty}<\infty$. 

We begin with a specific example of a linear statistic.
Define recursively 
$$
X_{j+1}=aX_j+\xi_{j+1}
$$
where $a\in(0,1)$. Then, for all $j\geq 0$ and $n$ we have
$\DS
X_{j}=a^{j+n}X_{-j-n}+\sum_{k=0}^{j+n}a^k\xi_{j-k}.
$
We thus see that the only bounded solution to this recurrence relation is 
$$
X_j=f_j(\xi_j,\xi_{j+1},...)=\sum_{k=0}^\infty a^k \xi_{j-k}.
$$
Notice that the functions $f_j$ are H\"older continuous with respect to the metric introduced earlier since $\DS \sup_{j}\|\xi_{j}\|_{L^\infty}<\infty$.

More generally, if $\xi_j$ takes values in $\{1,2,...,d_j\}$ for $d_j\in\bbN$ and 
$\DS \sum_{k=0}^\infty a_k<\infty$ is a series with exponential tails
then we can consider
$$
f_j(\xi_j, \xi_{j+1},...)=\sum_{k=0}^\infty a_k\xi_{j-k}
$$
or for two sided exponentially decaying
sequences  $(a_k)$,
$$
f_j(\dots ,\xi_{j-1},\xi_j, \xi_{j+1},...)=\sum_{k=-\infty}^\infty a_k\xi_{j-k}.
$$
 We note that similar examples appear in \cite{PSZ24}, however, our set up is more flexible  since we do not require stationarity and we can also replace
linear statistics $\xi_k$ by nonlinear 
smooth
functions $g_k(\xi_{k-r}, \dots, \xi_k, \dots \xi_{k+r}).$

\subsubsection{Products of random positive matrices}\label{McApp}
 Fix some integer $d>1$
and
let $(\xi_j)$ be a sequence of random  $d\times d$ matrices with positive entries, which are uniformly bounded and bounded away from the origin.
Then the arguments in \cite[Ch. 4]{HK} yield that
for every realization of $(\xi_j)$ the sequential Perron-Frobenius  theorem holds.
Namely, denote $\Xi_{j,n}=\xi_{j+n-1}\cdots\xi_{j+1}\cdot \xi_j$. Then there are  two uniformly bounded sequences of random vectors $\nu_j$ and $h_j$  and a sequence of strictly positive random variables (all three depend on the entire orbit $(\xi_k)$) such that, a.s.
\begin{equation}\label{Mat RPF}
\left\|\frac{\Xi_{j,n}}{\la_{j,n}}-\nu_j\otimes h_{j+n}\right\|\leq C\del^n
\end{equation}
for some constants $C>0$ and $\del\in(0,1)$. Moreover 
$\nu_j\cdot h_j=\nu_j\cdot u_j=1$. Furthermore, 
$\DS
\xi_j h_j=\la_j h_{j+1}
$
and 
$\DS
\xi_j^*\nu_{j+1}=\la_j\nu_j.
$
By \cite[Lemma A.2]{DG18},  $\la_j$ are uniformly H\"older  functions of the path $(\xi_k)$ with respect to the distance  $\mathsf{d}_j$ defined in the previous section.



Next, since $\la_j$ is uniformly bounded and bounded away from the origin we get that the functions $\Pi_j=\ln\la_j$ are also uniformly H\"older continuous. Thus, all the results stated in this paper hold true for 
$$
\ln\|\Xi_{0,n}\|\,\text{ and }\,\ln([\Xi_{0,n}]_{k,s}), \,\,1\leq k,s\leq d.
$$
Indeed, by \eqref{Mat RPF}, 
 studying those expressions  reduces to proving the corresponding results for the Birkhoff sums 
$\DS \sum_{j=1}^n \Pi_j$, 
which is exactly the type of sums studied in this paper.

\subsubsection{Lyapunov exponent of nonstationary sequences of random hyperbolic matrices}

Let $d>1$ and let $A$ be a hyperbolic matrix with distinct eigenvalues $\la_1,...,\la_d$. Suppose that for some $k<d$ we have $\la_1<\la_2<...<\la_k<1<\la_{k+1}<...<\la_{d}$. Let $h_j$ be the corresponding eigenvalues.

Now, let $(A_j)$ be a sequence of matrices such that $\DS \sup_j \|A_j-A\|\leq \ve$. Then, if $\ve$ is small enough there are numbers $\la_{j,1}<\la_{j,2}<...<\la_{j,k}<1<\la_{j, k+1}<...<\la_{j,d}$
and vectors $h_{j,i}$ such that 
$$
A_{j}h_{j,i}=\la_{j,i}h_{j+1,i}.
$$
Moreover, $\DS \sup_j|\la_{j,i}-\la_i|$ and 
$\DS \sup_j\|h_{j,i}-h_{i}\|$ converge to $0$ as $\ve\to 0$.

Now, the sequence $(A_j)$ is uniformly hyperbolic and the sequences 
$(\la_{1,j})_j,...,(\la_{d,j})_j$ can be viewed as its sequential Lyapunov exponents. Moreover,  
the one dimensional  spaces $H_{i,j}=\text{span}\{h_{i,j}\}$ can be viewed as its sequential Lyapunov spaces.
Next,  $\la_{i,j}$ and $h_{i,j}$ 
can be approximated exponentially fast in $n$ by functions of 
$$(A_{j-n},A_{j-n+1},...,A_j,A_{j+1},...,A_{j+n}),$$ uniformly in $j$. 

Finally, let us consider a uniformly elliptic Markov chain $(A_k)$ such that each $A_k$ is a  perturbation of $A$ and it can take at most $L$ values, for some fixed $L$. Then the random variables $\la_{i,j}$ and $h_{i,j}$ are H\"older continuous functions of the whole orbit of the chain $(A_k)$ (uniformly in $k$).

\subsection{Expanding maps on $\bbT ^d$}
Suppose that there is a number $\gamma>1$ such that  for  each  $j$ there is a partition of $[0,1)^d$ into rectangles $R_{j,k}, 1\leq k\leq d_j$ such that for each $k$ the map $T_{j}|R_{j,k}\to [0,1)^d$ is contracting by at least $\gamma$, and has a full image $[0,1)^d$. We also assume that $\sup_j d_j<\infty$. Then Assumption \ref{AssPairing} holds with $\xi=1$ (since we can pair two arbitrary points). Therefore, Assumption \ref{Ass n 0} trivially holds with $n_0=1$.

Thus applying Theorem \ref{LLT2} we obtain
\begin{corollary}
For expanding maps of $\T^d$ if $\sig_n\to\infty$ then $(f_j)$ is irreducible and 
$S_nf$ obeys the non-lattice LLT. 
\end{corollary}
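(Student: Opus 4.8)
The plan is to read this off Theorem~\ref{LLT2}; the substance of the proof is merely to check that the maps $T_j$ described here satisfy Assumptions~\ref{AssPairing} and~\ref{Ass n 0} and that the phase spaces are connected. I normalize the (quotient) metric on $X_j=\T^d$ so that $\mathrm{diam}(X_j)\le 1$, and I write $V_{j,k}\colon\T^d\to R_{j,k}$ for the inverse branch of $T_j$ associated with the $k$-th cell (well defined since $T_j|_{R_{j,k}}$ is a bijection onto the whole torus), so that $V_{j,k}$ contracts $\mathsf d_j$ by a factor at least $\gam$, i.e. $\mathsf d_j(V_{j,k}(x),V_{j,k}(x'))\le\gam^{-1}\mathsf d_{j+1}(x,x')$.

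First I would verify the Pairing Assumption~\ref{AssPairing} with $\xi=1$. Since $B_{j+1}(x,1)=X_{j+1}$, for any two points $x,x'\in X_{j+1}$ one may simply enumerate $T_j^{-1}\{x\}=\{V_{j,k}(x):1\le k\le d_j\}$ and $T_j^{-1}\{x'\}=\{V_{j,k}(x'):1\le k\le d_j\}$ using the common index set $\{1,\dots,d_j\}$, and the required contraction $\mathsf d_j(V_{j,k}(x),V_{j,k}(x'))\le\gam^{-1}\mathsf d_{j+1}(x,x')$ holds by hypothesis. The degree $\deg(T_j)=d_j$ is bounded by $\sup_j d_j<\infty$, and the forward Lipschitz bound $K_0$ for $T_j$ is part of the standing regularity of the branches (uniformly bounded branch derivatives), which is the natural reading of the hypotheses and which I take as given. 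The one point deserving a line of care is the interaction of the torus metric with the cell partition: one should note that the restriction of the quotient metric to each cell $R_{j,k}$ coincides with the intrinsic Euclidean metric there, so that the contraction estimate for $V_{j,k}$ is genuinely with respect to $\mathsf d_j$; this is harmless and can be arranged by passing to a fixed finite refinement of the partitions if necessary.

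Next I would verify the Covering Assumption~\ref{Ass n 0} with $n_0=1$. Part~(i) is immediate, since $T_j(B_j(x,1))=T_j(X_j)=\T^d=X_{j+1}$ because $T_j$ is onto. For part~(ii), given $y\in X_j$ let $k(y)$ be an index with $y\in\bar R_{j,k(y)}$ and put $W_{j,y}:=V_{j,k(y)}$; then $W_{j,y}\colon X_{j+1}\to R_{j,k(y)}\subset X_j=B_j(y,1)$ and $T_j\circ W_{j,y}=\mathrm{id}$, and the maps $W_{j,y}$ are $\gam^{-1}$-Lipschitz, hence uniformly Lipschitz. Thus both standing assumptions hold, so Theorems~\ref{LLT1} and~\ref{LLT2} are applicable.

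Finally, since each $X_j=\T^d$ is connected and we assume $\sig_n(\ka_0)\to\infty$, Theorem~\ref{LLT2} applies verbatim and yields that $(f_j)$ is irreducible and that $W_n=S_nf$ obeys the non-lattice LLT (with $\ka_0$ any admissible reference measure for this setting). I do not anticipate a real obstacle here: the whole argument is a routine verification that the framework applies, the only mildly delicate bookkeeping being the compatibility of the torus metric with the cell partition mentioned above, together with the (standing) uniform Lipschitz control on the maps $T_j$.
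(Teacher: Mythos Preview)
Your proposal is correct and follows exactly the paper's approach: the paper verifies Assumptions~\ref{AssPairing} and~\ref{Ass n 0} (with $\xi=1$ and $n_0=1$) in the paragraph immediately preceding the corollary, and then simply states that the corollary follows from Theorem~\ref{LLT2}. You in fact supply more detail than the paper does, including the explicit construction of $W_{j,y}$ and the honest observation that the uniform forward Lipschitz bound $K_0$ must be taken as part of the standing hypotheses (the paper does not address this point explicitly).
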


\section{Two sided shifts} 
\label{Hyper Sec}
\subsection{The result}
Let $\psi_j:\tilde X_j\to\bbR$ be a sequence of functions on the two sided shift spaces 
$\tilde X_j$ such that 
$\DS \sup_j\|\psi_j\|_\al<\infty$ for some $\al\in(0,1]$. Let $\gamma_0$ be the Gibbs measure at time $0$ corresponding to the sequence $(\psi_j)$ (see Section \ref{Sec 9}). Let $\ka_0$ be a probability measure on $\tilde X_0$ with H\"older continuous density with respect to the measure $\gamma_0$. Let 
$$
S_n=\sum_{j=0}^{n-1}\psi_j\circ \tilde T_0^j.
$$
\begin{theorem}\label{LCLT two sided}
If $(\psi_j)$ is irreducible then
$S_n$ obeys the non-lattice LLT when considered as a sequence of random variables on the probability space $(\tilde X_0,\text{Borel},\ka_0)$. Moreover, the first order expansions 
 \eqref{EdgeFirst} hold.
If  $(\psi_j)$ is reducible then the non-lattice LLT  of Section \ref{Red sec} holds.
\end{theorem}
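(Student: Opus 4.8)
The plan is to reduce Theorem~\ref{LCLT two sided} to its one-sided counterparts --- the non-lattice LLT (Theorem~\ref{LLT1}), the Edgeworth expansion (Theorem~\ref{ThEdge1}), and the generalized lattice LLT of Section~\ref{Red sec} --- by means of the conditioning arguments of Section~\ref{Sec 9}. The first step is a non-autonomous version of Sinai's cohomology lemma: using the uniform topological mixing of the two-sided SFT and $\sup_j d_j<\infty$, one produces $\hat\psi_j\colon X_j\to\bbR$ with $\sup_j\|\hat\psi_j\|_{\be'}<\infty$ for some $\be'\in(0,1]$, and uniformly H\"older $u_j\colon\tilde X_j\to\bbR$, such that $\psi_j=\hat\psi_j\circ\pi_j+u_j-u_{j+1}\circ\tilde T_j$, where $\pi_j\colon\tilde X_j\to X_j$ is the projection onto the non-negative coordinates and $u_j=\sum_{k\ge0}(\psi_{j+k}\circ\tilde T_j^k-\psi_{j+k}\circ\tilde T_j^k\circ r_j)$ for a H\"older choice $r_j$ of admissible past extension; convergence and the uniform bounds follow from the uniform contraction of past coordinates under forward iteration, all constants uniform in $j$ because $\sup_j\|\psi_j\|_\al<\infty$. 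Consequently $S_n=\hat S_n\circ\pi_0+u_0-u_n\circ\tilde T_0^n$ with $\hat S_n:=\sum_{j=0}^{n-1}\hat\psi_j\circ T_0^j$, the error being a uniformly bounded H\"older coboundary; hence, by Remark~\ref{Rem Red}, $\sig_n(\ka_0)\to\infty$ for $S_n$ is equivalent to the analogous growth for $\hat S_n$, and reducibility of $(\psi_j)$ on $\tilde X_0$ is equivalent to reducibility of $(\hat\psi_j)$ on $X_0$ (pulling a lattice decomposition back along $\pi_0$ is immediate from the bounded-coboundary invariance; descending one through $\pi_0$ uses the conditioning of H\"older functions on the future $\sigma$-algebra, established in Section~\ref{Sec 9}).

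Next I would transport the problem to $X_0$ by conditioning on the past. Section~\ref{Sec 9} supplies that $(\pi_0)_*\gamma_0=\mu_0$, the time-$0$ one-sided Gibbs measure, and that the $\gamma_0$-conditional measures along the fibers of $\pi_0$ are uniformly equivalent to $\mu_0$ with uniformly H\"older densities (a consequence of the Gibbs property); thus $(\pi_0)_*\ka_0=\hat q_0\,d\mu_0$ with $\hat q_0$ H\"older. Conditioning on the past then gives, for each fixed $t$,
$$
\bbE_{\ka_0}[e^{itS_n}]=\bbE_{\mu_0}[\rho_t\,e^{it\hat S_n}\,e^{-it\hat u_n}]+O(|t|\del^n),
$$
where $\del\in(0,1)$, $\hat u_n$ is the $\pi_0$-measurable part of $u_n\circ\tilde T_0^n$, and $\rho_t$ --- built from $\hat q_0$ and the bounded function $\bbE_{\ka_0}[e^{itu_0}\mid\pi_0]$ --- is uniformly bounded and H\"older. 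Since $u_n$ decays exponentially in the symbolic metric, $\hat u_n$ is, up to a controlled error, a function of the coordinates of index at least $m$ for some $m$ slightly below $n$; absorbing it together with the last $n-m$ factors $e^{it\hat\psi_j}$ into a bounded H\"older function $\Psi_t$ on $X_m$ leaves a single standard quantity $\mu_m(\Psi_t\cdot\cL^{(it)}_{0\to m}\rho_t)$. In short, the characteristic function of $S_n$ differs from that of a one-sided Birkhoff sum only by a bounded H\"older change of initial density and a bounded multiplier at the final time --- precisely the modifications that the sequential transfer-operator calculus of \cite{DH2, DH} and of Section~\ref{Sec 5} absorbs, just as it absorbs a non-constant density $q_0\ne1$ in the one-sided proofs.

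With this reduction in hand, it remains to run the standard three-step argument for $\hat S_n$ on $(X_0,\mu_0)$ with the modified initial vector: (i) the CLT together with $\sig_n\to\infty$, from \cite{DH2}; (ii) $|\bbE[e^{itS_n}]|\le Ce^{-ct^2\sig_n^2}$ for $|t|\le\del$, from the complex sequential Perron--Frobenius theorem of \cite{DH2, DH}; (iii) $\int_{\del\le|t|\le T}|\bbE[e^{itS_n}]|\,dt=o(\sig_n^{-1})$, from the reduction lemmas of Section~\ref{Sec 5}. In the irreducible case this yields the non-lattice LLT as in Theorem~\ref{LLT1}, and the first-order Edgeworth expansion \eqref{EdgeFirst} follows as a byproduct of the same estimates carried out directly for $S_n$, so that the coefficient $\ka_0(\bar S_n^3)$ is the genuine third cumulant of $S_n$ (a formal substitution from $\hat S_n$ would be off by $O(\sig_n^{-1})$ because of the boundary terms). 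In the reducible case, the twisted transfer operator fails to decay near $\frac{2\pi}{h_0}\bbZ$ with $h_0$ as in Theorem~\ref{Reduce thm}, and expanding $\bbE_{\ka_0}[e^{itS_n}]$ around those points --- again via the conditioned complex Perron--Frobenius theorem --- produces the correction terms of the generalized lattice LLT of Theorem~\ref{LLT RED}.

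I expect the main obstacle to lie in the conditioning machinery of Section~\ref{Sec 9}: re-deriving, with all constants uniform in the time index, the two-sided-to-one-sided facts used above (the non-autonomous Sinai lemma; $(\pi_0)_*\gamma_0=\mu_0$; the uniform H\"older equivalence of the $\gamma_0$-conditional measures on the stable fibers; the descent of lattice decompositions through $\pi_0$), in a setting that lacks both a single transfer operator with a spectral gap and uniqueness of Gibbs measures, so the classical arguments of Bowen and Parry--Pollicott must be reworked quantitatively. A secondary, bookkeeping-type difficulty is to check that the localized boundary contributions $u_0$ and $u_n\circ\tilde T_0^n$ genuinely fall within the class of perturbations the one-sided estimates allow --- in particular that they do not spoil the $o(\sig_n^{-1})$ error in \eqref{EdgeFirst}, and that the resonant lattice $h_0$ and the correction terms computed on $X_0$ coincide with those for $S_n$ on $\tilde X_0$.
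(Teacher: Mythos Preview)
Your proposal is correct and follows essentially the same route as the paper: reduce to the one-sided shift via the sequential Sinai lemma (Lemma~\ref{Sinai}), establish that conditioning on the past yields uniformly H\"older densities with respect to $\mu_j$ (Proposition~\ref{Cond Dens Lemma}), prove that reducibility of $(\psi_j)$ and of $(\phi_j)$ are equivalent (Proposition~\ref{Red2sided}), and then control the characteristic function of $S_n\psi$ by the one-sided transfer-operator norm $\|\cL_{0,t}^n\|_{\alpha/2}$, to which Theorems~\ref{LLT1}, \ref{ThEdge1} and \ref{LLT RED} apply. The paper's execution is slightly leaner than yours in the irreducible case --- it goes directly to the bound $|\gamma_0(e^{itS_n\psi})|\le C\|\cL_{0,t}^n\|_{\alpha/2}$ (equation~\eqref{CharEst}) rather than tracking an explicit formula with $\rho_t$, $\hat u_n$, $\Psi_t$ --- but the content is the same, and you have correctly anticipated both the main technical inputs and where the work lies.
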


\subsection{Applications to small sequential perturbations of a single hyperbolic map}
Let $T$ be a diffeomorphism of a compact connected 
smooth Riemannian manifold $M$. 
We assume that $T$ preserves a locally maximal basic hyperbolic set $\Lambda$
such that $T$ is topologically mixing on $\Lambda.$
Next, consider a sequence of maps $T_j$ such that $d_{C^1}(T_j,T)\leq \ve$ for some $\ve$ small enough.
Then (see e.g. \cite[Appendix C]{DH2})
there are sets $\Lambda_j$ and H\"older homeomorphisms 
$h_j:\! \Lambda\!\to\!\Lambda_j$ which conjugate $(T_j)$ to $T$, that is
  \begin{equation}\label{Cong}
 T_j\Lambda_j=\Lambda_{j+1}\, \text{ and }\, T_j\circ h_j=h_{j+1}\circ T.
 \end{equation}

\noindent
Let $\mu_0$ be a time zero Gibbs measure for the sequence $(T_j)$ corresponding to a sequence of potentials $(\phi_j)$,
see \cite[Appendix C]{DH2}.
Consider a sequence of functions $f_j\!:\!M\!\to\!\bbR$ such that $\DS \sup_j\|f_j\|_\al<\infty$ for some $\al\!\!\in\!\!(0,1]$. 
Let $\DS S_n=S_nf=\sum_{j=0}^{n-1}f_j\circ T_0^j$ and consider $S_n$ as random variables on the space $(M,\text{Borel},\mu_0)$.

\begin{theorem}
\label{ThHyp}
(a) If $\ve$ is small enough then either \eqref{EqReduction} holds almost surely with some $h>0$ and a uniformly bounded sequence $H_j$  such that $\DS (S_nH)_{n=1}^\infty$ is tight, or the non-lattice
LLT and the first order expansions hold.

(b) If $T$ is an Anosov map of a torus, 
and $\text{Var}(S_n)\to\infty$ 
 then the non-lattice LLT and the first order expansions hold.
\end{theorem}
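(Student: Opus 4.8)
The plan is to reduce both parts to Theorem~\ref{LCLT two sided} via the symbolic coding. First I would use a Markov partition for $(T,\La)$ together with the structural-stability conjugacies $h_j$ from \eqref{Cong} to build, for $\ve$ small, a two-sided non-stationary subshift of finite type $(\tilde X_j,\tilde T_j)$ and uniformly H\"older coding maps $\vf_j:\tilde X_j\to\La_j$ intertwining the shift with $(T_j)$, i.e.\ $\vf_{j+1}\circ\tilde T_j=T_j\circ\vf_j$; the uniform topological mixing of $T$ on $\La$ supplies the uniform aperiodicity condition on the transition matrices, and closeness of $T_j$ to $T$ guarantees the matrices can be taken $j$-independent. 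Pulling back, set $\psi_j=f_j\circ\vf_j$, which is uniformly H\"older with $\sup_j\|\psi_j\|_\al<\infty$, and observe that the Gibbs measure $\mu_0$ for $(T_j,\phi_j)$ is the pushforward under $\vf_0$ of the Gibbs measure $\gam_0$ for the pulled-back potential sequence. Since $\vf_0$ conjugates the dynamics and transports $\gam_0$ to $\mu_0$, the sequence $S_nf=\sum_{j<n}f_j\circ T_0^j$ on $(M,\mu_0)$ has the same distribution as $\sum_{j<n}\psi_j\circ\tilde T_0^j$ on $(\tilde X_0,\gam_0)$, and $\text{Var}_{\mu_0}(S_nf)=\text{Var}_{\gam_0}(S_n\psi)$. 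Now apply Theorem~\ref{LCLT two sided}: if $(\psi_j)$ is irreducible we get the non-lattice LLT and the first-order Edgeworth expansion \eqref{EdgeFirst}, and if $(\psi_j)$ is reducible, \eqref{EqReduction} holds on the shift and, transporting back through $\vf_0$, yields \eqref{EqReduction} for $(f_j)$ $\mu_0$-a.s.\ with the same $H_j$ (now $H_j\circ\vf_j^{-1}$ on $\La_j$) and tightness of $(S_nH)$ inherited via Remark~\ref{Rem Red}. This proves part (a).

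For part (b) I would invoke irreducibility automatically from connectedness. When $T$ is an Anosov diffeomorphism of a torus $\bbT^d$ (so $\La=\bbT^d$ and each $T_j$ is still Anosov on $\bbT^d$ for $\ve$ small), the manifold is connected, and the relevant statement is the two-sided analogue of Theorem~\ref{LLT2}, namely Theorem~\ref{ThHyp}(ii) referenced after Theorem~\ref{LLT2} and proved in \S\ref{SS2SideIrr}: for close hyperbolic maps on tori, if $\text{Var}(S_n)=\sig_n^2\to\infty$ then $(f_j)$ (equivalently $(\psi_j)$) is irreducible. Feeding this into part (a) removes the reducible alternative, so the non-lattice LLT and the first-order expansions hold. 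The hypothesis $\text{Var}(S_n)\to\infty$ is exactly the condition $\sig_n(\mu_0)\to\infty$, which by the discussion in \S\ref{CLT} is independent of the choice of H\"older density.

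I expect the main obstacle to be purely at the level of the symbolic reduction in part (a): one must check carefully that the pullbacks $\psi_j=f_j\circ\vf_j$ have \emph{uniformly} bounded H\"older norms (the H\"older exponent and constant of $\vf_j$ must be controlled uniformly in $j$, which follows from the $C^1$-closeness of $T_j$ to $T$ and the standard construction of the conjugacies, but requires the uniform hyperbolicity constants), and that the pushed-forward Gibbs measures match up — i.e.\ that $\vf_0{}_*\gam_0$ is genuinely the time-zero Gibbs measure $\mu_0$ of the excerpt and not merely an equivalent one; this is where one uses the uniqueness of two-sided sequential Gibbs measures and the transfer-operator description of their densities alluded to in \S\ref{Markov maps}. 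Once the coding is set up cleanly, everything else is a direct citation of Theorem~\ref{LCLT two sided} and the irreducibility results of \S\ref{Irr Sec}. A minor additional point is to note that in part (b), when $(f_j)$ happens to be a coboundary or otherwise degenerate so that $\sig_n\not\to\infty$, the hypothesis $\text{Var}(S_n)\to\infty$ simply excludes that case, so there is nothing further to verify.
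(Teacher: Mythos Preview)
Your treatment of part (a) is essentially the paper's argument: reduce via the structural-stability conjugacies \eqref{Cong} to a single map $T$, take a Markov partition to obtain a two-sided SFT coding $\pi$, pull back $f_j$ and the Gibbs potentials, and then invoke Theorem~\ref{LCLT two sided}. The paper handles the ``measure matching'' issue you flag by proving Proposition~\ref{PrOneToOne} (that $\pi$ is one-to-one $\bar\mu_0$-a.e.), which is exactly the point you identify as the main technical obstacle; so your outline and the paper's proof coincide.

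Part (b), however, has a genuine gap. You write that you would ``invoke irreducibility automatically from connectedness,'' citing a two-sided analogue of Theorem~\ref{LLT2}. But Theorem~\ref{LLT2} and its proof in \S\ref{PathConn} apply to the \emph{expanding, one-sided} setting where the spaces $X_j$ themselves are connected; after passing to the symbolic coding in part (a), the shift spaces $\tilde X_j$ are Cantor sets, so the connectedness hypothesis fails outright and that argument does not transfer. Connectedness of $\bbT^d$ alone does not force the integer-valued functions $Z_s$ in the reduction \eqref{Reduc} to be constant on the symbolic side, which is what the proof of Proposition~\ref{ExpDecP stretched} uses. What you are citing as ``Theorem~\ref{ThHyp}(ii) \dots\ proved in \S\ref{SS2SideIrr}'' \emph{is} part (b) itself, so this is circular as a proof proposal.

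The actual argument in \S\ref{SS2SideIrr} is substantially deeper than a connectedness appeal. It first uses the Franks--Manning theorem to reduce to a \emph{linear} hyperbolic automorphism $\cA$ of $\bbT^d$, then introduces geometric $(n,R)$-rectangles on the torus lifting to $(n,\ell)$-rectangles on the shift, and shows that the temporal distance $D_f$ on such rectangles, which a priori lies near $h\bbZ$ by Lemma~\ref{LmTDSmall}, is governed by an integer linear functional $\fm_n(k)=\langle k,\fq_n\rangle$ with $\fq_n\in\bbZ^d$. The recursion $\fq_{n-1}=\cA^*\fq_n$ together with hyperbolicity of $\cA$ (no nonzero integer vector can stay bounded under $\cA^*$-iteration) forces $\fq_n=0$, hence $\fm_n\equiv 0$, which via Lemma~\ref{LmTDVar} gives \eqref{Chom} with $Z_s$ constant and then Proposition~\ref{PrCHom} yields irreducibility. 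None of this structure is visible in your proposal; the linearity of $\cA$ and the integer-lattice rigidity are the essential new inputs that replace the role connectedness plays in the one-sided case.
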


Part (a) follows directly from Theorem \ref{LLT1}.
Indeed due to \eqref{Cong}, we may assume that $T_j=T$ for all $j.$ In this case $\Lambda$
admits a Markov partition $\Pi=\{\Pi_j\}_{j=1}^{m}$ which allows to construct a
Markov coding map
$\pi: \Sigma\to \Lambda$ 
where 
$$ \Sigma=\{\omega: \in \{1,\dots ,m\}^\integers: T \Pi_{\omega_n}\cap \Pi_{\omega_{n+1}}
\neq\emptyset\}\quad\text{and}\quad 
\pi(\omega)=\bigcap_{n\in \integers} T^{-n} \Pi_{\omega_n} .$$
Also by construction $\mu_n(B)=\bar\mu_n(\pi^{-1} B)$ where 
$\bar\mu_n$ are Gibbs measures for potentials $\bar\phi_n=\phi_n\circ \pi.$
Now Theorem \ref{ThHyp}(a) follows from 

\begin{proposition}
\label{PrOneToOne}
$\Pi$ can be constructed in such a way that 
$\pi$ is one-to-one $\bar\mu_0$ almost everywhere.
\end{proposition}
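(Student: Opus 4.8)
\textbf{Proof proposal for Proposition \ref{PrOneToOne}.}

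The plan is to adapt the classical Bowen construction of Markov partitions for Axiom A diffeomorphisms, where one shows that the coding map is one-to-one off a set which is both meager and of zero measure for every equilibrium state. First I would recall the construction: starting from a cover of $\Lambda$ by small rectangles $\cR = \{R_1,\dots,R_N\}$ whose boundaries $\partial R_i = \partial^s R_i \cup \partial^u R_i$ consist of pieces of local stable and unstable manifolds, Bowen's refinement produces a Markov partition $\Pi = \{\Pi_1,\dots,\Pi_m\}$ such that the coding map $\pi:\Sigma\to\Lambda$ fails to be injective only on the set $D = \bigcup_{n\in\integers} T^{-n}\partial\Pi$, where $\partial\Pi = \bigcup_j (\partial^s\Pi_j\cup\partial^u\Pi_j)$. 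So it suffices to show $\bar\mu_0(\pi^{-1}D)=0$, equivalently $\mu_0(D)=0$.

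The key point is that $\partial^s\Pi$ is a finite union of pieces of local stable manifolds and $\partial^u\Pi$ a finite union of pieces of local unstable manifolds, and that Bowen's construction allows one to arrange $T(\partial^s\Pi)\subseteq \partial^s\Pi$ and $T^{-1}(\partial^u\Pi)\subseteq\partial^u\Pi$ (forward invariance of the stable boundary, backward invariance of the unstable boundary). Consequently $D^s := \bigcup_{n\ge 0} T^{-n}\partial^s\Pi$ is a forward-decreasing union, and $\mu_0(T^{-n}\partial^s\Pi) = \mu_1(T(T^{-n}\partial^s\Pi)\cap\Lambda_1)\le\mu_1(\partial^s\Pi)$ after transporting through the conjugacy; the analogous statement holds for the unstable boundary using $T^{-1}$. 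Thus the whole matter reduces to showing that a single boundary piece has measure zero: $\mu_0(\partial^s\Pi)=\mu_0(\partial^u\Pi)=0$. For this I would use the product structure of sequential Gibbs measures (established in the two-sided setup of Section \ref{Sec 9}): conditioned on the unstable coordinate, the Gibbs measure gives a measure on local stable sets that is nonatomic and assigns zero mass to the proper closed subset cut out by $\partial^s\Pi$, because $\partial^s\Pi$ meets each unstable leaf in a lower-dimensional set, and symmetrically. Here one exploits topological mixing to guarantee that no stable or unstable leaf is entirely contained in the boundary, so the boundary has empty interior in each leaf; combined with the Gibbs (quasi-Bernoulli) property of the conditional measures this forces zero mass.

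The main obstacle I anticipate is carrying out the boundary-invariance and the product-structure arguments in the \emph{non-autonomous} setting: although the conjugacy \eqref{Cong} lets us pretend $T_j=T$ for the combinatorial part of the argument, the measures $\mu_j$ form a sequential family of Gibbs states rather than a single invariant measure, so the ``zero measure of the boundary'' statement must be proved uniformly along the sequence, and the step $\mu_0(T^{-n}\partial^s\Pi)\le\mu_1(\partial^s\Pi)$ must be justified through the equivariance $(T_j)_*\mu_j$-type relation together with the H\"older conjugacies $h_j$. I would handle this by first transferring everything to the fixed space $\Sigma$ via $\pi$, where the boundary becomes a fixed cylinder-type subset of $\Sigma$ independent of $j$, and then invoking the uniform Gibbs bounds of Section \ref{Sec 9} (the constant $C$ in the Gibbs property being independent of $j$) to get a uniform estimate on the measure of shrinking neighborhoods of the boundary, concluding via a Borel--Cantelli / monotone-limit argument that $\bar\mu_0(\pi^{-1}D)=0$. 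A secondary technical point is ensuring the refinement producing $\Pi$ from $\cR$ can be done once and for all (it can, since it only uses $T$ and the hyperbolic structure on $\Lambda$), so that the coding combinatorics are genuinely time-independent.
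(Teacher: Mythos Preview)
The overall reduction you describe---show that the stable and unstable boundaries of the Markov partition have zero Gibbs measure and then use the invariance $T(\partial^s\Pi)\subset\partial^s\Pi$, $T^{-1}(\partial^u\Pi)\subset\partial^u\Pi$ to handle all iterates---matches the paper. Where you diverge is in \emph{how} you prove $\mu_j(\partial^s\Pi)=\mu_j(\partial^u\Pi)=0$, and this is where the proposal is incomplete.

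Your plan is to disintegrate the Gibbs measure along unstable leaves and argue that $\partial^s\Pi$ is ``lower-dimensional'' in each leaf. For a general H\"older potential there is no absolute continuity of the conditionals with respect to leaf Lebesgue measure, so dimensionality alone is not enough; what you would actually need is that the conditional measures on unstable leaves are \emph{nonatomic} (since $\partial^s\Pi$ meets each local unstable leaf in finitely many points). You do not establish this, and doing so uniformly over the sequential family $\{\mu_j\}$ is essentially as hard as the original problem. Moreover, your claim that after passing to $\Sigma$ the boundary becomes ``a fixed cylinder-type subset'' is not right: $\pi^{-1}(\partial\Pi)$ is precisely the set of sequences with non-unique coding, which is not a finite union of cylinders and has no simple symbolic description, so the Gibbs-bound/Borel--Cantelli route you sketch at the end is not set up correctly.

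The paper bypasses disintegration entirely with a short mixing argument. Take $x$ in the interior of a rectangle and a cylinder $C\ni\omega$ with $\pi(\omega)=x$ so small that $\pi(C)\cap\Pi^u=\emptyset$; backward invariance of $\Pi^u$ then gives $\pi(C)\cap T^{-k}\Pi^u=\emptyset$ for all $k\ge 0$. Lemma~\ref{balls} provides $\inf_j\mu_j(C)\ge c>0$, and the uniform mixing of the sequential system yields $\mu_{n-k}(T^{-k}\Pi^u\cap C)\to\mu_n(\Pi^u)\,\mu_{n-k}(C)$ as $k\to\infty$; since the left side vanishes identically and $\mu_{n-k}(C)\ge c$, one concludes $\mu_n(\Pi^u)=0$ for every $n$. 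This uses only tools already in the paper and avoids any leafwise analysis.
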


Namely we can use the Markov partitions constructed by Bowen. In the case 
$\phi_n$ do
not depend on $n$, Proposition \ref{PrOneToOne} can be found in \cite[page 64]{Bowen}. 
The result in the non stationary case can be obtained using similar ideas and we provide
it below for completeness.

\begin{proof}
Let $\Pi$ be a Markov partition with sufficiently small diameter that produces the coding, and let $\Pi^s$ and $\Pi^{u}$ be the boundaries in the  stable and unstable directions, respectively. Then, since the map $\pi$ is one to one outside the set 
$\DS \mathcal R=\bigcup_{k\in\mathbb Z}T^k(\Pi^s\cup \Pi^u)$, it is enough to show that 
for all $j$ we have $\mu_j(\Pi^u)=\mu_j(\Pi^s)=0$. By replacing $T$ with $T^{-1}$ it is enough to show that $\mu_j(\Pi^u)=0$.

 We note that in Bowen's constructions the rectangles are closures of their interiors (in the induced topology). Take a point $x$ in the interior of one of the rectangles
and let $\omega$ be a point with $\pi(\omega)=x.$ 
Now take a cylinder $C$ containing $\omega$ such that the diameter of 
$\pi(C)$ is so small that $\pi(C)\cap \Pi^u=\emptyset.$
Since $\Pi^u$ is backward invariant we also have $\pi(C) \cap T^{-k} \Pi^u=\emptyset$
for all $k>0.$


By Lemma \ref{balls}  from \S \ref{SSMnN}
there is a constant $c>0$ such that $\DS \inf_j\mu_j(C)\geq c$. 
Thus,   for every measurable set $W$  and every $n$ we have 
$$
\lim_{k\to\infty}|\mu_{n-k}(T^{-k}W\cap C)-\mu_{n}(W)\mu_{n-k}(C)|=0.
$$
Indeed, 
we can  approximate $W$ by images of cylinders and
use the uniform mixing for H\"older functions on the one sided shift. However, 
 as noted above
$\mu_{n-k}(T^{-k}\Pi^u \cap C)\!\!=\!\!0$. So
$\DS
\lim_{k\to\infty}|\mu_{n}(\Pi^u)\mu_{n-k}(C)|\!\!=\!\!0,
$
but since $\mu_{n-k}(C)\!\geq\! c\!\!>\!\!0$ we conclude that $\mu_{n}(\Pi^u)=0$. 
\end{proof}

 Part (b) of Theorem \ref{ThHyp} is proven in \S \ref{SS2SideIrr}.

\section{Background}\label{Sec4}

\subsection{Transfer operators and Gibbs measures}\label{TPo}
We recall the construction of the classes of Gibbs measures $\mu_j$ with respect to which our theorems hold. 

Suppose first that $X_j$ is equipped with a Borel probability measure $m_j$ such that $(T_j)_*m_j\ll m_{j+1}$. Moreover, we assume that the functions 
$\DS \phi_{j}=-\ln\left(\frac{d (T_j)_*m_j}{dm_{j+1}}\right)$ satisfy 
$\DS \sup_j\|\phi_j\|_\beta\!\!<\!\!\infty$ for some H\"older exponent $\beta$. 
Applying \cite[Theorem 2.4]{DH2} we see that there is a sequence of  H\"older continuous positive functions $h_j:X_j\!\!\to\!\! \bbR$ with exponent $\beta$ such that 
the sequence of measures given by $d\mu_j\!\!=\!\! h_j d m_j$ is  the asymptotically unique sequence of absolutely continuous measures such that $(T_j)_*\mu_j=\mu_{j+1}$ (i.e. it is equivariant). If the sequence $(T_j)$ is  two sided 
(that is $T_j$ is defined for all $j\in\bbZ$) then this sequence is unique and not only asymptotically unique (see \cite[Remarks 2.5 \& 2.6]{DH2}).

When there are no underlying reference measure $m_j$ we need first to construct such measures. For this reason we need to work with two sided sequences of maps $T_j:X_j\to X_{j+1}, j\in\bbZ$ (see Footnote 1). On the other hand, even if reference measures exist one might be interested in proving limit theorems for singular  measures
(e.g. measures of maximal entropy in the autonomous case). This is related to the theory of Gibbs states, and in what follows we will give  a quick remainder of the construction of Gibbs measures in  our setting.
Take a sequence $\phi_j:X_j\to\bbR$ of H\"older continuous functions with exponent $\beta$ such that $\DS \sup_j\|\phi_j\|_\beta<\infty$. 
 Let the operator $L_j$ map a function $h:X_j\to\bbR$ to a function $L_jh:X_{j+1}\to\bbR$ given by the formula 
$$
L_jg(x)=\sum_{y:\, T_jy=x}e^{\phi_j(y)}g(y).
$$
Then as proven in \cite[Theorem 3.3]{Nonlin} (see also \cite[\S 5.2]{DH2}) there is a sequence of functions $h_j:X_j\to \bbR$ such that $\DS \inf_j\min_{x\in X_j} h_j>0$ and $\DS \sup_j\|h_j\|_{\beta}<\infty$, a sequence of probability  measures $\nu_j$ on $X_j$ such that $\nu_j(h_j)=1$ and a sequence of positive numbers $\la_j$ such that  $\DS 0<\inf_j\la_j\leq \sup_j\la_j<\infty$ and the following holds: 
$$
L_j h_j=\la_j h_{j+1}, \quad L_j^*\nu_{j+1}=\la_j \nu_j.
$$
Moreover, there are $C_0>0$ and $\del\in (0,1)$ such that for all $n$ and $j$ and all H\"older continuous functions $g$ with exponent $\beta$,
\begin{equation}\label{ExpConvSTF0}
    \|(\la_{j,n})^{-1}L_j^n g-\nu_j(g)h_{j+n}\|_{\be}\leq C_0\|g\|_{\be} \del^n.
\end{equation}
 Here
$$
L_{j}^n=L_{j+n-1}\circ\cdots\circ L_{j+1}\circ L_j, \la_{j,n}=\la_{j+n-1}\cdots \la_{j+1}\la_j.
$$
Then the  sequential Gibbs measures $(\mu_j)$ corresponding to the sequence of potentials $(\phi_j)_{j\in\bbZ}$ are given by $\mu_j=h_j d\nu_j$. In the case when $T_j$ are subshifts of finite type $\mu_j$ is the unique sequence of measures satisfying the Gibbs property (see Section \ref{Sec 9}). 
Let us define
$$
 \cL_j(h)=\cL_j(h\cdot h_j)/\la_jh_{j+1}=\sum_{y: T_jy=x}e^{g_j(y)}h(y)
 $$
 where  $g_j=\phi_j+\ln h_j-\ln (h_j\circ T_j)-\ln \la_j$. Note that
 $\DS \sup_j \|g_j\|_\beta<\infty$, 
 $\cL_j\textbf{1}=\textbf{1}$, where $\textbf{1}$ denotes the function taking the constant value $1$ (regardless of its domain),  $\cL_j^*\mu_{j+1}=\mu_j$ and that  the following duality relation holds
\begin{equation}\label{dual}
\int_{X_j}(f\circ T_j)\cdot gd\mu_j=\int_{X_{j+1}}f\cdot (\cL_j g) d\mu_{j+1}  
 \end{equation}
 for all bounded measurable functions $f$ and $g$. In fact,  when 
 $(T_j)_*m_j\ll m_{j+1}$, taking the functions 
$\DS \phi_{j}=-\ln\left(\frac{d (T_j)_*m_j}{dm_{j+1}}\right)$ we get that $\nu_j=m_j$ and that $\mu_j$ is the unique sequence of absolutely continuous equivariant measures discussed above (in this case $\la_j=1$ for all $j$). In the one sided non-singular case the proof of \cite[Theorem 2.4]{DH2} was based on proving the above results only with $j\geq 0$, and in that case $\la_j=1$ and $(\mu_j)_{j\geq 0}$ is the asymptotically unique sequence of absolutely continuous equivariant measures. Thus \eqref{ExpConvSTF0} and \eqref{dual} hold in all cases.

\subsection{Maps and Norms} 
\label{SSMnN}
We record some consequences of Assumptions \ref{AssPairing}  and \ref{Ass n 0}.

 \begin{lemma}\label{Dcover}
 \label{InvBranch}
 (cf. \cite[Lemma 2.1]{MSU})
 For all $j$ and $n$ and every $y\in X_j$ there is a function $Z_{j,y,n}:B_{j+n}(T_j^ny,\xi)\to B_{j}(y,\xi \gamma^{-n})$ such that: 
\vskip0.2cm 
(i)
$\DS
\mathsf{d}_{j+k}(T_j^k(Z_{j,y,n}x), T_j^k y)<\xi,\,\forall\,\,0\leq k<n,\,\,x\in B_{j+n}(T_j^ny,\xi);
$
\vskip0.2cm
(ii)
$\DS
T_j^n\circ Z_{j,y,n}=id;
$
\vskip0.2cm
(iii) If $\mathsf{d}_{j+n}(x,x')<\xi$ then 
$\DS
\mathsf{d}_j(Z_{j,y,n}x, Z_{j,y,n}x')\leq \gamma^{-n} d_{j+n}(x,x')
$
(and so the  Lipschitz constant of
$Z_{j,y,n}$ does not exceed $C\gamma^{-n}$ for some constant $C>0$);
\vskip0.2cm
(iv)
$
Z_{j,y,n+m}=Z_{j,y,n}\circ Z_{j+n, T_j^n y,m}.
$
\end{lemma}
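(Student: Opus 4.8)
The plan is to build the maps $Z_{j,y,n}$ by iterating the pairing mechanism from Assumption \ref{AssPairing} one step at a time, starting from the base of the orbit segment and working forward. First I would set up the single-step building block: fix $j$ and $y \in X_j$, and let $z = T_j y \in X_{j+1}$. For any $x \in B_{j+1}(z,\xi)$, Assumption \ref{AssPairing} applied to the pair $(z,x)$ produces an enumeration $T_j^{-1}\{z\} = \{y_i(z)\}$, $T_j^{-1}\{x\} = \{y_i(x)\}$ with $\mathsf{d}_j(y_i(z),y_i(x)) \le \gamma^{-1}\mathsf{d}_{j+1}(z,x)$. Since $y \in T_j^{-1}\{z\}$, we have $y = y_{i_0}(z)$ for some index $i_0$; define $Z_{j,y,1}(x) = y_{i_0}(x)$. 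This $Z_{j,y,1}$ satisfies $T_j \circ Z_{j,y,1} = \mathrm{id}$ by construction, it maps into $B_j(y, \gamma^{-1}\xi) \subseteq B_j(y,\xi)$ because $\mathsf{d}_j(Z_{j,y,1}(x), y) \le \gamma^{-1}\mathsf{d}_{j+1}(z,x) \le \gamma^{-1}\xi$, and the Lipschitz bound (iii) with $n=1$ follows from the same pairing inequality applied to two points $x,x'$ (here one needs that the pairing indexation is consistent, i.e.\ the branch through $y$ is tracked continuously — this is exactly the content of the $\mathsf{d}_{j+1}(x,x') \le \xi$ hypothesis together with $\gamma > 1$).

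Next I would define $Z_{j,y,n}$ by the composition dictated by property (iv), namely inductively
$$
Z_{j,y,n+1} = Z_{j,y,1} \circ Z_{j+1, T_j y, n},
$$
or equivalently by composing single-step inverse branches along the orbit $y, T_j y, T_j^2 y, \dots, T_j^{n-1}y$. The domain works out: $Z_{j+1,T_jy,n}$ is defined on $B_{j+n+1}(T_j^{n+1}y, \xi)$ and (by the inductive hypothesis, part (i)) lands in $B_{j+1}(T_j y, \xi)$, which is the domain of $Z_{j,y,1}$; so the composite is defined on $B_{j+n+1}(T_j^{n+1}y,\xi)$ as required. Property (ii), $T_j^{n+1} \circ Z_{j,y,n+1} = \mathrm{id}$, is immediate from $T_j \circ Z_{j,y,1} = \mathrm{id}$ and the inductive form of (ii). For the Lipschitz estimate (iii): each single-step branch contracts by $\gamma^{-1}$ (provided the argument stays within the $\xi$-ball so that Assumption \ref{AssPairing} applies), so composing $n$ of them contracts by $\gamma^{-n}$; the nesting $Z_{j,y,n+1}$ maps into $B_j(y, \gamma^{-(n+1)}\xi)$, giving both the range statement and the claimed bound on the Lipschitz constant. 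Property (i), $\mathsf{d}_{j+k}(T_j^k(Z_{j,y,n}x), T_j^k y) < \xi$ for $0 \le k < n$, follows because $T_j^k \circ Z_{j,y,n} = Z_{j+k, T_j^k y, n-k}$ lands in $B_{j+k}(T_j^k y, \gamma^{-(n-k)}\xi) \subseteq B_{j+k}(T_j^k y, \xi)$.

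The main obstacle — and the only place requiring genuine care — is verifying that the single-step pairing can legitimately be iterated, i.e.\ that at every stage of the composition the relevant pair of points is within distance $\xi$ so that Assumption \ref{AssPairing} is applicable, and that the branch indices chosen at successive steps are mutually consistent (the branch through $T_j^k y$ at level $k$ must be the one whose forward image under the remaining dynamics tracks $x$). This is handled by proving (i) and (iii) simultaneously with the induction on $n$: the contraction estimate (iii) guarantees that all intermediate points stay in the $\xi$-balls, which in turn licenses the next application of the pairing assumption. A secondary technical point is checking that the enumeration $\{y_i(\cdot)\}$ in Assumption \ref{AssPairing} can be chosen measurably/continuously in $x$ so that $Z_{j,y,1}$ is well-defined as a genuine map (not just pointwise); this follows since for $\mathsf{d}_{j+1}(x,x') \le \xi$ the two fibers are in $\gamma^{-1}\mathsf{d}_{j+1}(x,x')$-correspondence, and since $\deg(T_j)$ is uniformly bounded and $\gamma > 1$, there is a well-defined local section through $y$. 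Everything else is routine bookkeeping with the composition identity (iv), which is true essentially by the definition of $Z_{j,y,n}$ as an $n$-fold composition.
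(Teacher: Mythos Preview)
Your proposal is correct and follows essentially the same approach as the paper: define the single-step inverse branch $Z_{j,y,1}$ via the pairing assumption and then compose $n$ such maps along the orbit of $y$. The paper's proof is much terser (it simply writes down the composition and asserts that (i)--(iv) hold), whereas you supply the inductive verification and flag the branch-consistency issue, but the underlying construction is identical.
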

\begin{proof}
Define $\cZ_{j,y,1}$ as follows. Let $x=T_j y$. Then there is an index $i$ such that $y_i(x)=y$. Set $\cZ_{j,y,1}=y_i$. Then properties (i)-(iv) hold with 

 $\DS
Z_{j,y,n}=\cZ_{j+n-1,T_j^n y, 1}\circ\cdots\circ \cZ_{j+2, T_j^2 y, 1}\circ \cZ_{j+1, T_j y, 1}\circ \cZ_{j,y,1}.
 $
\end{proof}

We need the following result (c.f. \cite[Lemma 2.1]{MSU}).
 \begin{lemma}\label{Dcover}
 Under Assumption \ref{Ass n 0}(i) we have the following. For every $0<r<\xi$ set $m_r=n_0+n_r$, $n_r=\left[\frac{\ln \xi-\ln r}{\ln \gamma}\right]$.
Then for every $j$ and $y\in X_j$ we have 
\begin{equation}\label{Cover r}
T_j^{m_r}\left(B_j(y,r)\right)=X_{j+m_r}.
\end{equation}
 \end{lemma}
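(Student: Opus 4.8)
The plan is to reduce the covering statement for balls of radius $r$ to the full-scale covering property in Assumption \ref{Ass n 0}(i), by first contracting the given ball $B_j(y,r)$ to a ball of radius $\xi$ using the expansion of the maps $T_j$, and then applying the existing covering property at that intermediate time. Concretely, set $n_r=\left[\frac{\ln\xi-\ln r}{\ln\gamma}\right]$ and $m_r=n_0+n_r$. The first step is to show
$$
T_j^{n_r}\left(B_j(y,r)\right)\supseteq B_{j+n_r}\left(T_j^{n_r}y,\xi\right).
$$
For this I would use the inverse branch $Z_{j,y,n_r}:B_{j+n_r}(T_j^{n_r}y,\xi)\to B_j(y,\xi\gamma^{-n_r})$ from Lemma \ref{InvBranch}. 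By the choice of $n_r$ we have $\gamma^{-n_r}\leq r/\xi$ (this is exactly what the integer-part formula $n_r=\left[\frac{\ln\xi-\ln r}{\ln\gamma}\right]$ guarantees, since then $n_r\geq\frac{\ln\xi-\ln r}{\ln\gamma}$ up to the floor, so one must check the direction carefully: actually $n_r\leq\frac{\ln\xi-\ln r}{\ln\gamma}$, giving $\gamma^{-n_r}\geq r/\xi$, so I would instead take $n_r=\left\lceil\frac{\ln\xi-\ln r}{\ln\gamma}\right\rceil$ or absorb the discrepancy into a constant — this is the one routine subtlety to get right). Granting $\xi\gamma^{-n_r}\leq r$, property (ii) of Lemma \ref{InvBranch} gives $T_j^{n_r}\circ Z_{j,y,n_r}=\mathrm{id}$, and the image of $Z_{j,y,n_r}$ lies in $B_j(y,\xi\gamma^{-n_r})\subseteq B_j(y,r)$, so every point of $B_{j+n_r}(T_j^{n_r}y,\xi)$ has a preimage inside $B_j(y,r)$; that is exactly the displayed inclusion.

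The second step is to compose with $T_{j+n_r}^{n_0}$. By Assumption \ref{Ass n 0}(i) applied at time $j+n_r$ to the point $x=T_j^{n_r}y$,
$$
T_{j+n_r}^{n_0}\left(B_{j+n_r}(T_j^{n_r}y,\xi)\right)=X_{j+n_r+n_0}=X_{j+m_r}.
$$
Combining with the first step and using the cocycle identity $T_j^{m_r}=T_{j+n_r}^{n_0}\circ T_j^{n_r}$, we get
$$
X_{j+m_r}=T_{j+n_r}^{n_0}\left(B_{j+n_r}(T_j^{n_r}y,\xi)\right)\subseteq T_{j+n_r}^{n_0}\left(T_j^{n_r}\left(B_j(y,r)\right)\right)=T_j^{m_r}\left(B_j(y,r)\right),
$$
and the reverse inclusion $T_j^{m_r}(B_j(y,r))\subseteq X_{j+m_r}$ is trivial since $T_j^{m_r}$ maps into $X_{j+m_r}$. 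This proves \eqref{Cover r}.

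I do not expect any serious obstacle here; this is essentially a bookkeeping lemma. The only place requiring care is the choice of $n_r$: one must ensure the exponent is large enough that $\xi\gamma^{-n_r}\leq r$ (so that the image of the inverse branch is genuinely contained in $B_j(y,r)$), and I would double-check the floor-versus-ceiling convention against the paper's intended statement, possibly noting that an off-by-one in $n_r$ only changes $m_r$ by a bounded amount and is harmless for all later applications. A secondary point worth a sentence is that Lemma \ref{InvBranch} is stated for arbitrary $n$, so applying it with $n=n_r$ is immediate, and property (iii) (Lipschitz bound) is not even needed — only (ii) and the containment of the image in the small ball.
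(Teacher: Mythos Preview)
Your proposal is correct and follows essentially the same approach as the paper: use the inverse branch $Z_{j,y,n_r}$ from Lemma~\ref{InvBranch} to show $B_{j+n_r}(T_j^{n_r}y,\xi)\subset T_j^{n_r}(B_j(y,r))$, then apply Assumption~\ref{Ass n 0}(i) and compose. Your caution about the floor-versus-ceiling issue is well placed: the paper's own proof actually takes $n_r$ to be ``the smallest positive integer so that $\gamma^{-n_r}\xi\leq r$'' (i.e.\ the ceiling), which is slightly at odds with the floor in the statement, and as you note this off-by-one is harmless for all later uses.
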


\begin{proof}
Let $Z_{j,y,n}$ be the functions from Lemma \ref{InvBranch}. 
By Lemma \ref{InvBranch}(i) 
$$
Z_{j,y,n}(B_{j+n}(T_j^ny,\xi))\subset B_{j}(y,\xi \gamma^{-n}).
$$
Hence Lemma \ref{InvBranch}(ii) gives
$\DS
B_{j+n}(T_j^ny,\xi)=T_{j}^n\circ Z_{j,y,n}\left(B_{j+n}(T_j^ny,\xi)\right)\subset T_{j}^n\left(B_{j}(y,\xi \gamma^{-n})\right).
$
Now let $n=n_r$ be the smallest positive integer so that $\gamma^{-n_r}\xi\leq r$. Then 
$$
B_{j+n_r}(T_j^{n_r}y,\xi)\subset B_j(y,r).
$$
Thus by Assumption \ref{Ass n 0} (i),
$$
X_{j+n_0+n_r}=T_{j+n_r}^{n_0}\left(B_{j+n_r}(T_j^{n_r}y,\xi)\right)\subset  T_{j+n_r}^{n_0}\circ T_{j}^n \left(B_{j}(y,\xi \gamma^{-n})\right)=T_{j}^{n_0+n_r} \left(B_{j}(y,\xi \gamma^{-n})\right)
$$
proving the desired result.
\end{proof}

\begin{lemma}\label{al be}
Let $u_n:X_n\to\bbR$ be a sequence of functions so that 
$$
\lim_{n\to\infty}\|u_n\|_{L^1(\mu_n)}=0\,\,
\text{ and }\,\,\|u\|_\beta=\sup_n\|u_n\|_\beta<\infty.
$$
Then 
$\DS
\lim_{n\to\infty}\|u_n\|_\al=0$
for all $\al<\beta$.
\end{lemma}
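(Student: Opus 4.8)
The statement is a standard interpolation-type result: we want to upgrade $L^1$ convergence to $\|\cdot\|_\al$ convergence, using the uniform bound on the stronger H\"older norm $\|\cdot\|_\beta$. The plan is to prove first that the sup-norm goes to zero, and then interpolate the H\"older seminorm.

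\textbf{Step 1: $\|u_n\|_{L^\infty}\to 0$.} Suppose not; then there is $\ve>0$ and a subsequence along which $|u_n(x_n)|\geq\ve$ for some $x_n\in X_n$. Since $\sup_n G_{n,\beta}(u_n)\leq\|u\|_\beta$, on the ball $B_n(x_n, r)$ with $r=(\ve/(2\|u\|_\beta))^{1/\beta}$ (a radius independent of $n$, which we may assume is $\le\xi$) we have $|u_n|\geq\ve/2$. Now I would use Lemma \ref{Dcover}: there is an $m=m_r$, independent of $n$, so that $T_{n-m}^m(B_{n-m}(y,r))=X_n$ for every $y$; equivalently every ball of radius $r$ is, up to the bounded-degree covering, mapped onto all of $X_n$ in $m$ steps. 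What we actually need is the reverse: that a ball of fixed radius carries a definite amount of $\mu_n$-mass. This is exactly the content of Lemma \ref{balls} (referenced in \S\ref{SSMnN} in the proof of Proposition \ref{PrOneToOne}), which gives $\inf_j\mu_j(B_j(y,r))\geq c(r)>0$. Hence $\|u_n\|_{L^1(\mu_n)}\geq (\ve/2)\,c(r)>0$, contradicting the hypothesis. Therefore $\|u_n\|_{L^\infty}\to0$.

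\textbf{Step 2: interpolate the H\"older seminorm.} Fix $\al<\beta$. For $x,y\in X_n$ with $\mathsf{d}_n(x,y)\le 1$ write
$$
\frac{|u_n(x)-u_n(y)|}{\mathsf{d}_n(x,y)^\al}
=\left(\frac{|u_n(x)-u_n(y)|}{\mathsf{d}_n(x,y)^\beta}\right)^{\al/\beta}
|u_n(x)-u_n(y)|^{1-\al/\beta}
\le G_{n,\beta}(u_n)^{\al/\beta}\,(2\|u_n\|_{L^\infty})^{1-\al/\beta}.
$$
Since $G_{n,\beta}(u_n)\le\|u\|_\beta$ uniformly and $\|u_n\|_{L^\infty}\to0$ by Step 1, the right-hand side tends to $0$ uniformly in $x,y$, so $G_{n,\al}(u_n)\to0$. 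Combined with $\|u_n\|_{L^\infty}\to0$ this gives $\|u_n\|_\al=\|u_n\|_{L^\infty}+G_{n,\al}(u_n)\to0$, as claimed.

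\textbf{Main obstacle.} The only non-formal ingredient is Step 1, and specifically the lower bound $\inf_j\mu_j(B_j(y,r))\geq c(r)>0$ on the mass of balls of fixed radius. This is a quantitative non-degeneracy property of the Gibbs measures $\mu_j$ and follows from the covering Assumption \ref{Ass n 0} together with the Gibbs/equivariance properties and the uniform bounds on $h_j$, $\la_j$ recalled in \S\ref{TPo}; it is recorded as Lemma \ref{balls}. The H\"older interpolation in Step 2 is routine. One should also take a moment to note the radius $r$ in Step 1 can be taken $\le\xi$ (shrink $\ve$ if necessary), so that Lemma \ref{Dcover} and Lemma \ref{balls} apply.
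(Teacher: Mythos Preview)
Your proof is correct and follows essentially the same two-step strategy as the paper: first use Lemma~\ref{balls} (uniform lower bound on $\mu_n$-mass of balls) together with the uniform $\beta$-H\"older bound to upgrade $L^1$ convergence to sup-norm convergence, then interpolate to get the $\alpha$-H\"older seminorm. The only cosmetic differences are that the paper does the two steps in the opposite order and uses an additive small/large distance splitting in place of your multiplicative interpolation identity; also, the restriction $r\le\xi$ you mention is unnecessary since Lemma~\ref{balls} holds for every $r>0$.
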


In order to prove the lemma we need the following result,    whose proof proceeds exactly like the proof of \cite[Lemma 5.10.3]{HK}.
\begin{lemma}\label{balls}
For every $r>0$ there exists $\eta_r>0$ such that for every $j$ and all $x\in X_j$ we have 
$$
\mu_j(B_j(x,r))\geq \eta_r
$$
where $B_j(x,r)$ is the open ball of radius $r$ around $x$ in $X_j$. 
\end{lemma}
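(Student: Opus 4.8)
The plan is to combine the uniform covering estimate of Lemma \ref{Dcover} with the conformality of the Gibbs measures $\mu_j$ expressed through the normalized transfer operators $\cL_j$ from \S\ref{TPo}. Since $r\mapsto\mu_j(B_j(x,r))$ is nondecreasing, for $r\geq\xi$ one may take $\eta_r=\eta_{\xi/2}$, so it suffices to treat $0<r<\xi$.

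Fix $0<r<\xi$ and let $m_r=n_0+n_r$ be the integer from Lemma \ref{Dcover}; it depends on $r$ but not on $j$ nor on $x\in X_j$, and Lemma \ref{Dcover} gives $T_j^{m_r}(B_j(x,r))=X_{j+m_r}$ for all $j$ and all $x\in X_j$. Iterating the duality relation \eqref{dual} with $f=\one$ yields, for every bounded measurable $g$,
$$
\int_{X_j} g\, d\mu_j=\int_{X_{j+m_r}} \bigl(\cL_j^{m_r}g\bigr)\, d\mu_{j+m_r},
$$
where $\cL_j^{m_r}=\cL_{j+m_r-1}\circ\cdots\circ\cL_j$ and $\cL_j^{m_r}g(z)=\sum_{y:\,T_j^{m_r}y=z} e^{g_{j,m_r}(y)}g(y)$ with $g_{j,m_r}=\sum_{k=0}^{m_r-1}g_{j+k}\circ T_j^k$. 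Since $\sup_k\|g_k\|_\beta<\infty$, we have $|g_{j,m_r}|\leq m_r\sup_k\|g_k\|_{L^\infty}=:C_r$, a constant depending only on $r$.

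Now apply this with $g=\one_{B_j(x,r)}$. For any $z\in X_{j+m_r}$ the covering property provides at least one preimage $y\in B_j(x,r)$ with $T_j^{m_r}y=z$, so that single term contributes at least $e^{-C_r}$; hence $\cL_j^{m_r}\one_{B_j(x,r)}(z)\geq e^{-C_r}$ for every $z\in X_{j+m_r}$. Therefore
$$
\mu_j(B_j(x,r))=\int_{X_{j+m_r}}\cL_j^{m_r}\one_{B_j(x,r)}\, d\mu_{j+m_r}\geq e^{-C_r},
$$
and one takes $\eta_r=e^{-C_r}$. There is essentially no difficulty beyond assembling these ingredients; the one point that must be stressed is that $m_r$, and hence $C_r$, are \emph{uniform} in $j$ and $x$ — which is precisely what Lemma \ref{Dcover} (built on the uniform covering Assumption \ref{Ass n 0}) delivers — together with the facts $\cL_j\one=\one$ and $\cL_j^*\mu_{j+1}=\mu_j$, valid in all cases considered (singular or absolutely continuous) as recorded at the end of \S\ref{TPo}.
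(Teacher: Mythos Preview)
Your proof is correct. The paper itself does not give a proof of this lemma; it only remarks that it ``proceeds exactly like the proof of \cite[Lemma 5.10.3]{HK}''. Your argument---using Lemma \ref{Dcover} to guarantee a preimage in the ball for every point of $X_{j+m_r}$, and then reading off a uniform lower bound for $\cL_j^{m_r}\one_{B_j(x,r)}$ via the uniform bound on the normalized potentials $g_k$---is precisely the standard route and almost certainly what the cited reference does.
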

\noindent
Relying on the above result the proof is elementary, and it is included for 
completeness.
\begin{proof}[Proof of Lemma \ref{al be}]
We first show that if $\|u_n\|_\infty\to 0$ then $G_{n,\al}(u_n)\to 0$.
Let $\ve>0$.   Observe that  
   $$
|u_n(x)-u_n(y)|\leq \|u\|_\beta (\mathsf{d}_n(x,y))^{\al}(d_n(x,y))^{\beta-\al}.
   $$
   Let $\del=\left(\ve/\|u\|_\beta\right)^{\frac1{\beta-\al}}$.
   Thus, if $d_n(x,y)\leq \del$ we have 
   $$
|u_n(x)-u_n(y)|\leq\|u_n\|_\beta(\mathsf{d}(x,y))^\beta \leq \ve (\mathsf{d}_n(x,y))^{\al}.
   $$
   On the other hand, if $\mathsf{d}_n(x,y)>\del$ then 
   $$
|u_n(x)-u_n(y)|\leq 2\|u_n\|_\infty\leq 2\|u_n\|_\infty (\mathsf{d}_n(x,y))^{\al}\del^{-\al}.
   $$
   Hence, if $n$ is large enough to insure that $2\|u_n\|_\infty\leq \ve \del^\al$, then  the estimate
   $$
|u_n(x)-u_n(y)|\leq \ve(\mathsf{d}_n(x,y))^\al
   $$
   holds true  also when $\mathsf{d}_n(x,y)\geq \del$.
   We conclude that 
   $\DS
\lim_{n\to\infty}G_{n,\al}(u_n)=0.
   $

  In order to complete the proof of the lemma it suffices to show that if 
  $\DS  \sup_n \|u_n\|_\be<\infty$ and $\|u_n\|_{L^1(\mu_n)}\to 0$ then $\|u_n\|_\infty\to 0$. Fix $\ve>0$ and $x\in X_n$. Let $B_n(x,\ve)$ be the ball of radius $\ve$ around $x$ in $X_n$.
  By Lemma \ref{balls}  there is a constant $c_\ve>0$ which depends only on $\ve$ such that 
  $\DS
\mu_n(B_n(x,\ve))\geq \eta_\ve.
  $
Since $\|u\|_\be<\infty$ we have
\begin{equation}
\label{AvOfU}
\left|u_n(x)-\frac{1}{\mu_n(B_n(x,\ve))}  \int_{B_n(x, \ve)}
u_n(y)d\mu_n(y)\right|\leq \|u_n\|_\beta \ve^\be. 
\end{equation}
Since $\|u_n\|_{L^1(\mu_n)}\to 0$ and $\DS \inf_{x,n}\mu_n(B_n(x,\ve))>0$, for a fixed $\ve$, the second term 
in the  LHS of \eqref{AvOfU} converges to $0$ as $n\to\infty$.
Letting $n\to\infty$ we see that 
$\DS 
\limsup_{n\to\infty}\|u_n\|_\infty\leq \ve^\be.
$
Since $\ve$ is arbitrary we conclude that $\|u_n\|_\infty\to 0$, 
   and the proof of the lemma is complete.
\end{proof}

\subsection{Lasota Yorke inequalities}\label{SecPrep}
In this section we will make some preparations 
for the proof of Theorem \ref{LLT1}. Denote by $\cL_{j,t}^k$ the operators defined by 
$$
\cL_{j,t}^k h=\cL_{j}^k(e^{itS_{j,k}f}h).
$$
Note that $\cL_{j,t}^k=\cL_{j+k-1,t}\circ\cdots\circ\cL_{j,t}$, where 
$\cL_{\ell,t}:=\cL_{\ell,t}^1$.
Fix $\alpha\in (0,1].$ Let $G(h)=G_\alpha(h)$ be the Holder constant of $h.$ 
The following result was essentially obtained in \cite[Lemma 5.6.1]{HK}. 

\begin{lemma}\label{ll1}
 Given $T>0$
there are $C_1>0$ and $\theta_1\in(0,1)$ such that for $|t|\leq T$ 
$$ G_\al(\cL_{n,t}^k h)\leq C_1 \left[\|h\|_\infty+\theta_1^k G_\alpha(h)\right]. $$
\end{lemma}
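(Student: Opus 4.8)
The statement is a standard Lasota–Yorke (Doeblin–Fortet) inequality for the twisted transfer operators $\cL_{n,t}^k$, and the natural approach is induction on $k$ using a one-step estimate together with the uniform contraction already built into the dynamics. First I would establish the one-step bound: for a single operator $\cL_{\ell,t}$ one writes, for $x,x'\in X_{\ell+1}$ with $\mathsf{d}_{\ell+1}(x,x')\le\xi$,
$$
\cL_{\ell,t}h(x)-\cL_{\ell,t}h(x')=\sum_{i}\Bigl(e^{g_\ell(y_i(x))+itf_\ell(y_i(x))}h(y_i(x))-e^{g_\ell(y_i(x'))+itf_\ell(y_i(x'))}h(y_i(x'))\Bigr),
$$
pairing preimages via Assumption \ref{AssPairing}. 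Each summand is split in the usual telescoping way into (a) the variation of $h$ along the inverse branch, which by the $\gamma^{-1}$ contraction of $\mathsf{d}_\ell(y_i(x),y_i(x'))$ and the H\"older property of $h$ contributes a factor $\le G_\al(h)\gamma^{-\al}\mathsf{d}_{\ell+1}(x,x')^\al$ per branch; and (b) the variation of the weight $e^{g_\ell+itf_\ell}$, which is H\"older with a constant controlled by $\sup_j\|g_j\|_\beta$, $\sup_j\|f_j\|_\beta$ and $T$ (since $|t|\le T$), contributing a factor proportional to $\|h\|_\infty$ times $\mathsf{d}_{\ell+1}(x,x')^{\min(\al,\beta)}$. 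Since $\cL_\ell\mathbf 1=\mathbf 1$ and $|e^{itf}|=1$, the number of preimages and the weights $e^{g_\ell}$ sum to $1$, so one obtains a clean bound
$$
G_\al(\cL_{\ell,t}h)\le \gamma^{-\al}G_\al(h)+\tilde C\|h\|_\infty
$$
for a constant $\tilde C=\tilde C(T)$, using also $\|\cL_{\ell,t}h\|_\infty\le\|h\|_\infty$.

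Next I would iterate. Writing $h^{(k)}=\cL_{n,t}^k h = \cL_{n+k-1,t}\cdots\cL_{n,t}h$ and applying the one-step estimate to $h^{(k)}=\cL_{n+k-1,t}h^{(k-1)}$ gives
$$
G_\al(h^{(k)})\le \gamma^{-\al}G_\al(h^{(k-1)})+\tilde C\|h^{(k-1)}\|_\infty\le \gamma^{-\al}G_\al(h^{(k-1)})+\tilde C\|h\|_\infty,
$$
where the last inequality uses $\|h^{(k-1)}\|_\infty\le\|h\|_\infty$. Unrolling the recursion yields
$$
G_\al(h^{(k)})\le \gamma^{-\al k}G_\al(h)+\tilde C\|h\|_\infty\sum_{m=0}^{k-1}\gamma^{-\al m}\le \gamma^{-\al k}G_\al(h)+\frac{\tilde C}{1-\gamma^{-\al}}\|h\|_\infty,
$$
so the claim holds with $\theta_1=\gamma^{-\al}\in(0,1)$ (recall $\gamma>1$) and $C_1=\max\bigl(1,\tilde C/(1-\gamma^{-\al})\bigr)$. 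If one prefers the exponent $\min(\al,\beta)$ appearing in step (b) to be absorbed, note that on a space of diameter $\le 1$ a function H\"older of exponent $\beta\ge\al$ is also H\"older of exponent $\al$ with a comparable constant, so everything can be phrased uniformly with exponent $\al$.

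\textbf{Main obstacle.} The only genuinely delicate point is the bookkeeping in the one-step weight estimate: one must check that the H\"older constant of $x\mapsto e^{g_\ell(y_i(x))+itf_\ell(y_i(x))}$ is bounded \emph{uniformly in $\ell$ and in $i$}, and that after summing over the (uniformly bounded number of) preimages one still gets a bound depending only on $T$ and the uniform H\"older norms $\sup_j\|g_j\|_\beta$, $\sup_j\|f_j\|_\beta$, not on $\ell$. This is where Assumption \ref{AssPairing} (the uniform pairing with contraction $\gamma^{-1}$, $\sup_j\deg(T_j)<\infty$) and the uniform bounds $\sup_j\|g_j\|_\beta<\infty$ established in \S\ref{TPo} are used; the factor $e^{|t|\,\mathrm{osc}\,f_\ell}\le e^{2T\sup_j\|f_j\|_\infty}$ supplies the $T$-dependence of $C_1$. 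Since all of these are uniform, the induction closes and the constants $C_1,\theta_1$ are independent of $n$ and $k$, as required. This is essentially \cite[Lemma 5.6.1]{HK} adapted to the present (sequential, normalized) setting, so I would present the argument concisely.
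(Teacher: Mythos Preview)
Your argument is correct and is exactly the standard Lasota--Yorke estimate: the paper does not give its own proof of this lemma but simply cites \cite[Lemma 5.6.1]{HK}, and the proof there proceeds precisely via the one-step telescoping/pairing bound followed by iteration that you describe. Your handling of the bookkeeping (uniformity in $\ell$, the case $\mathsf{d}(x,x')>\xi$ absorbed into the $\|h\|_\infty$ term, and the reduction from exponent $\beta$ to $\alpha$ on a space of diameter $\le 1$) is also right.
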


Let $\|h\|_{\alpha, T}=\max\left(\|h\|_\infty, \frac{G_\alpha(h)}{2 C_1}\right).$
We shall abbreviate $\|\cdot\|_{\alpha, T}=\|\cdot\|_*.$

We will need the following result.

\begin{lemma}\label{ll2}
(a) For all $j\in\bbZ$ and $t\in\bbR$ we have $\DS \|\cL_{j, t}^k h \|_\infty\leq \| \cL_{j}^k |h|\|_\infty. $
\vskip0.1cm
(b) $\forall \epsilon\in(0,\frac12)$ $\exists k_1=k_1(\eps)$ and $\eta(\eps)>0$ such that for  all $j$ we have the following: if $\|h\|_*\leq 1$ and 
$|h(x)|\leq 1-\eps$  for some $x\in X_j$ then $\|\cL_{j}^{k_1} h\|_\infty \leq 1-\eta(\eps). $
\end{lemma}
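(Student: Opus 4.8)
\textbf{Proof plan for Lemma \ref{ll2}.}

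For part (a), the plan is straightforward: the operator $\cL_j$ is positive (its coefficients $e^{g_j(y)}$ are positive) and, since $\cL_j\one=\one$, it is also a contraction on $L^\infty$ in the following strong sense: pointwise $|\cL_{j,t}^k h(x)| = |\cL_j^k(e^{itS_{j,k}f}h)(x)| \le \cL_j^k(|h|)(x)$ because $\cL_j^k$ acts by taking a convex combination (with weights summing to $1$) of the values $e^{itS_{j,k}f(y)}h(y)$, and $|e^{itS_{j,k}f(y)}h(y)| = |h(y)|$. Taking the supremum over $x$ gives the claim. This needs only the triangle inequality for the sum defining $\cL_j^k$ together with positivity; no Lasota--Yorke estimate is used here.

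For part (b), the idea is a standard ``gain from one bad preimage propagates by mixing'' argument. Suppose $\|h\|_* \le 1$ (so in particular $\|h\|_\infty \le 1$ and $G_\al(h) \le 2C_1$) and $|h(x_0)| \le 1-\eps$ for some $x_0 \in X_j$. By the Hölder bound, $|h| \le 1 - \eps/2$ on the ball $B_j(x_0, r_\eps)$ where $r_\eps = (\eps/(4C_1))^{1/\al}$ (chosen so that $G_\al(h)\, r_\eps^\al \le \eps/2$). Now I would use the covering property in the form of Lemma \ref{Dcover}: there is an integer $m = m_{r_\eps}$, depending only on $\eps$ (through $r_\eps$), such that $T_j^{m}(B_j(x_0, r_\eps)) = X_{j+m}$ for every $j$. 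Fix any target point $z \in X_{j+m}$. Writing $\cL_j^{m} h(z) = \sum_{y : T_j^{m} y = z} e^{S_{j,m} g(y)} h(y)$, the surjectivity above guarantees at least one preimage $y_* \in B_j(x_0, r_\eps)$, at which $|h(y_*)| \le 1 - \eps/2$. The weights $w(y) = e^{S_{j,m} g(y)}$ are nonnegative, sum to $1$ (since $\cL_j^{m}\one = \one$), and — this is the key quantitative point — the single weight $w(y_*)$ is bounded below by a constant $\rho = \rho(\eps) > 0$ independent of $j$ and $z$: this follows because $\sup_j \|g_j\|_\beta < \infty$ forces $|S_{j,m} g| \le m \sup_j \|g_j\|_\infty =: D_m$, hence each individual weight among the at most $(\deg)^m$ preimages satisfies $w(y_*) \ge e^{-D_m}/(\text{number of preimages}) \ge e^{-2D_m}$, say, using $\sup_j \deg(T_j) < \infty$. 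Therefore
$$
|\cL_j^{m} h(z)| \le \sum_{y} w(y) |h(y)| \le (1 - w(y_*)) \cdot 1 + w(y_*)(1 - \eps/2) = 1 - w(y_*)\,\eps/2 \le 1 - \rho\eps/2.
$$
Taking $k_1 = m_{r_\eps}$ and $\eta(\eps) = \rho(\eps)\eps/2$, and noting the bound is uniform in $z \in X_{j+m}$ and in $j$, gives $\|\cL_j^{k_1} h\|_\infty \le 1 - \eta(\eps)$.

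The main obstacle — really the only delicate point — is obtaining the uniform-in-$j$ lower bound $\rho(\eps)$ on the weight $w(y_*)$ attached to the good preimage. This is where one genuinely needs the hypotheses that $\deg(T_j)$ is uniformly bounded and that $\sup_j\|g_j\|_\beta < \infty$; with these, the crude counting/boundedness estimate above suffices. One should also be a little careful that Lemma \ref{Dcover} is stated for radii $r < \xi$, so one takes $r_\eps = \min\big((\eps/(4C_1))^{1/\al},\, \xi/2\big)$, and that $m_{r_\eps}$ indeed depends only on $\eps$ and the fixed structural constants $n_0, \xi, \gamma$; both are immediate from the statement of that lemma. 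Everything else is the convexity estimate displayed above.
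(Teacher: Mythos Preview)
Your proof is correct and follows essentially the same approach as the paper's: part~(a) is identical, and part~(b) uses the same three ingredients (H\"older continuity to pass from the point $x_0$ to a ball where $|h|\le 1-\eps/2$, Lemma~\ref{Dcover} to guarantee a preimage in that ball, and the uniform bound $e^{S_{j,m}g}\ge e^{-m\sup_j\|g_j\|_\infty}$ for the weight), combined via the same convexity estimate. One cosmetic remark: since $|S_{j,m}g|\le D_m$ you get $w(y_*)\ge e^{-D_m}$ directly, so the detour through ``$/(\text{number of preimages})\ge e^{-2D_m}$'' is unnecessary (and slightly garbled), though harmless to the conclusion.
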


\begin{proof}
  (a)   We have 
  $\DS
|\cL_{j,t}^kh|=|\cL_j^k(e^{it S_{j,k}f}h)|\leq \cL_j^k(|h|). 
  $
\vskip0.2cm
  (b)
  Since $\cL_{j}^k\textbf{1}\!\!=\!\!\textbf{1}$ (for all $j\!\!\in\!\!\bbZ$ and $k\!\!\in\!\!\bbN$), 
 for every  $j$ and $k\!\in\!\bbN$, a function $h:X_{j+k}\to\bbR$ with $\|h\|_*\leq 1$ and points $x\in X_{j+k}$ and $y\in X_j$ such that $T_j^{k} y\!\!=\!\!x$ we have 
  \begin{equation}\label{Up}
   |\cL_j^kh(x)|\leq 1-e^{S_{j,k}g(y)}+|h(y)|e^{S_{j,k}g(y)}.
  \end{equation}

  Now, suppose that for some $y_0\in X_{j}$ we have $|h(y_0)|\leq 1-\eps$. Let $k\in\bbN$ be large enough (in a way that will be specified later). Fix some $x\in X_{j+k}$.
  By Lemma \ref{Dcover}, for every $r>0$ there is a point $y_r\in X_j$ such that $\mathsf{d}_j(y_0,y_r)<r$ and $T_j^{m_r}y_r=x$ (where $m_r$ was defined in Lemma \ref{Dcover}). Then, as $\|h\|_*\leq 1$ we have 
  $$
|h(y_r)-h(y_0)|\leq 2C_1\mathsf{d}_j^\al(y_r,y_0)\leq 2C_1r^\al.
  $$
  Now, let us take $k=m_r$ for some $r$ which will be determined soon.
 By \eqref{Up} applied with $y=y_r$ we see that 
  $$
  |\cL_j^{k}(x)|\leq (1-e^{S_{j,k}g(y_r)})+|h(y_r)|\leq 1-e^{S_{j,k}(y_r)}+(|h(y_0)|+2C_1r^\al)e^{S_{j,k}g(y_r)}$$$$\leq 
  1-e^{S_{j,k}g(y_r)}+(1-\eps+2C_1r^\al)e^{S_{j,k}g(y_r)}=1-e^{S_{j,k}(y_r)}(\eps-2C_1r^\al).
  $$
  Next, let us take the largest $r=r(\eps)$ such that $\eps-2C_1r^\al\leq \eps/2$. Let 
  $\DS c=\sup_j \|g_j\|_\infty$. Then $e^{S_{j,k}g(y_r)}\geq e^{-c k}$. We conclude that with $k=m_{r(\eps)}$ we have  
  $$
  \sup_{x\in X_{j+k}}|\cL_j^{k}(x)|\leq 1-C(\eps)\eps
  $$
  with $C(\eps)=\frac12 e^{-cm_{r(\eps)}}$. 
  Thus, the result holds  with $\eta(\eps)=C(\eps)\eps$.
    \end{proof}

\begin{remark}
It follows from the formula for $m_r$ in Lemma \ref{Dcover} that we can take  $\eta(\eps)=\eps C(\eps)$, where
$
C(\eps)=A(C_1,n_0,\gamma,c)\eps^{\frac{c}{\al \ln \gamma}},
$\;
 $\DS c=\sup_j\|g_j\|_\infty$ and $A(C_1,n_0,\gamma,c)$ depends continuously only on $C_1,n_0,\gamma$ and $c$ (and can easily be estimated).
However, we will not use this precise form because we will always work with a fixed $\eps$.
\end{remark}
We will constantly use the following two corollaries of the previous two lemmata.

\begin{corollary}
\label{CorNonExpanding}
Let $k_0=k_0(C_1)$ be the first positive integer $k$ such that $2C_1\del^k\leq 1$. Then
\begin{equation}\label{Norm bound}
\sup_{|t|\leq T}\sup_{j}\sup_{k\geq k_0}\|\cL_{j,t}^k\|_{*}\leq 1
\end{equation}
where $\|\cL_{j,t}^k\|_{*}$ is the operator norm with respect to the norm $\|\cdot\|_*$. 
\end{corollary}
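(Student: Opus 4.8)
\textbf{Plan of proof for Corollary \ref{CorNonExpanding}.}
The statement asks for a uniform bound $\|\cL_{j,t}^k\|_*\le 1$ for all $|t|\le T$, all $j$, and all $k\ge k_0$, where $k_0$ is the least $k$ with $2C_1\delta^k\le 1$. The plan is to combine the two parts of Lemma \ref{ll2} with the Hölder-constant estimate of Lemma \ref{ll1}. Fix $|t|\le T$, $j\in\bbZ$, $k\ge k_0$, and let $h:X_{j+k}\to\bbR$ with $\|h\|_*\le 1$; I must show $\|\cL_{j,t}^k h\|_*\le 1$, i.e. both $\|\cL_{j,t}^k h\|_\infty\le 1$ and $\frac{G_\alpha(\cL_{j,t}^k h)}{2C_1}\le 1$.

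For the sup-norm part, the key point is that $\cL_j^k\textbf{1}=\textbf{1}$ together with Lemma \ref{ll2}(a): since $\|h\|_*\le 1$ forces $\|h\|_\infty\le 1$, we get pointwise $|\cL_{j,t}^k h|\le \cL_j^k|h|\le \cL_j^k\textbf{1}=\textbf{1}$, hence $\|\cL_{j,t}^k h\|_\infty\le 1$. (Note Lemma \ref{ll2}(b) is not actually needed for this corollary; it is the tool for the genuine contraction statement in the next corollary, and I would not invoke it here.) For the Hölder-constant part, I apply Lemma \ref{ll1} with $n=j$: writing $\cL_{j,t}^k = \cL_{j+k_0,t}^{\,k-k_0}\circ \cL_{j,t}^{k_0}$ — actually more directly, Lemma \ref{ll1} gives
$$G_\alpha(\cL_{j,t}^k h)\le C_1\bigl[\|h\|_\infty+\theta_1^k G_\alpha(h)\bigr].$$
Here one must be careful: the $\theta_1$ in Lemma \ref{ll1} need not equal the $\delta$ of \eqref{ExpConvSTF0} that defines $k_0$. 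This is the one genuine subtlety. To make the argument clean I would instead use the contraction rate coming from the inverse-branch Lipschitz bound: by Lemma \ref{InvBranch}(iii) the relevant geometric contraction factor is $\gamma^{-1}$, and the proof of Lemma \ref{ll1} in fact yields the bound with $\theta_1$ any fixed number in $(\max(\delta,\gamma^{-\alpha}),1)$ at the cost of enlarging $C_1$; since $C_1$ is exactly the constant entering the definition of $\|\cdot\|_*$ and of $k_0$, one arranges $\theta_1=\delta$ (or redefines $\delta$ to be the max of the two rates, as is implicitly done throughout). With that normalization, for $k\ge k_0$ we have $\theta_1^k\le\delta^{k_0}\le \frac{1}{2C_1}$, and since $\|h\|_\infty\le 1$ and $G_\alpha(h)\le 2C_1\|h\|_*\le 2C_1$,
$$\frac{G_\alpha(\cL_{j,t}^k h)}{2C_1}\le \frac{C_1}{2C_1}\|h\|_\infty+\frac{C_1\theta_1^k}{2C_1}\,G_\alpha(h)\le \tfrac12\cdot 1+\tfrac12\cdot\tfrac{1}{2C_1}\cdot 2C_1 = 1.$$
Wait — this gives $\le 1$ only if I am slightly more careful with constants; in fact taking $k_0$ so that $2C_1\theta_1^{k_0}\le 1$ (rather than $2C_1\delta^{k_0}\le1$) makes the second term $\le \frac12$ and the bound closes with room to spare. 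I would therefore phrase $k_0$ in terms of the rate $\theta_1$ appearing in Lemma \ref{ll1} (which by the previous remark we may take to be $\delta$), so the statement as written is exactly what is needed.

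Putting the two halves together, $\|\cL_{j,t}^k h\|_* = \max\bigl(\|\cL_{j,t}^k h\|_\infty,\ \tfrac{1}{2C_1}G_\alpha(\cL_{j,t}^k h)\bigr)\le 1$, and taking the supremum over $\|h\|_*\le 1$ gives $\|\cL_{j,t}^k\|_*\le 1$; since the estimate was uniform in $j$ and $|t|\le T$, taking those suprema yields \eqref{Norm bound}. The only real obstacle is the bookkeeping of which contraction rate ($\delta$ from \eqref{ExpConvSTF0} versus $\gamma^{-\alpha}$ versus the $\theta_1$ of Lemma \ref{ll1}) is used to define $k_0$, and the resolution is simply to fix one rate $\theta_1\in(0,1)$ dominating all the relevant ones and define $k_0$ and the norm constant $C_1$ consistently with it; with that convention the proof is a two-line combination of Lemma \ref{ll1} and Lemma \ref{ll2}(a).
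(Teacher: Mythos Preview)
Your proposal is correct and takes essentially the same approach as the paper, which simply says ``the result follows by combining Lemmata \ref{ll1} and \ref{ll2}(a)''; you have unpacked exactly that combination (sup-norm bound from $\cL_j^k\textbf{1}=\textbf{1}$ and Lemma \ref{ll2}(a), H\"older-constant bound from Lemma \ref{ll1}, then $\|\cdot\|_*=\max$ of the two). Your observation that the rate in the definition of $k_0$ should be the $\theta_1$ of Lemma \ref{ll1} rather than the $\delta$ of \eqref{ExpConvSTF0} is a valid bookkeeping point; with that convention your arithmetic $\frac{G_\alpha(\cL_{j,t}^k h)}{2C_1}\le \tfrac12+C_1\theta_1^k\le 1$ closes exactly as needed, and you are also right that Lemma \ref{ll2}(b) plays no role here.
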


\begin{proof}
The result follows by combining Lemmata \ref{ll1} and \ref{ll2}(a).
\end{proof}

\begin{corollary}\label{MainCor}
Given $\eps\in(0,\frac12)$ there exists $k_2=k_2(\eps)\in \bbN$ with the following properties.
If for some $l$, $m\geq k_0=k_0(C_1)$ and $t\in[-T,T]$ we have 
$\|\cL_{l,t}^{k_2+m}\|_{*}>1- \eta(\eps)$ (where  $\eta(\eps)$ comes from Lemma \ref{ll2}), then there exist a function $h$ with $\|h\|_*\leq 1$ such that 
$\DS {\min_x} |\cL_{l+k_2,t}^{m}h{(x)}|>1-\eps$. 
\end{corollary}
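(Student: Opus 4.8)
The plan is to extract a near-maximiser $g$ of the operator $\cL_{l,t}^{k_2+m}$ and then promote the bound on its $\|\cdot\|_*$-norm to a pointwise bound on its modulus, the promotion proceeding in two steps governed respectively by Lemma~\ref{ll1} and Lemma~\ref{ll2}(b). By definition of the operator norm, fix $g$ with $\|g\|_*\le 1$ and $\|\cL_{l,t}^{k_2+m}g\|_*>1-\eta(\eps)$. Provided $k_2\ge k_0(C_1)$, Corollary~\ref{CorNonExpanding} gives $\|h\|_*\le 1$ for $h:=\cL_{l,t}^{k_2}g$, and $\cL_{l+k_2,t}^{m}h=\cL_{l,t}^{k_2+m}g$, so everything reduces to controlling $\cL_{l,t}^{k_2+m}g$ pointwise.

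First, a H\"older upgrade: after many iterations the H\"older part of a $\|\cdot\|_*$-bounded function is negligible. Applying Lemma~\ref{ll1} to the last $r$ of the $k_2+m$ iterations,
\[
G_\al(\cL_{l,t}^{k_2+m}g)\le C_1\bigl(\|\cL_{l,t}^{k_2+m-r}g\|_\infty+\theta_1^{r}G_\al(\cL_{l,t}^{k_2+m-r}g)\bigr)\le C_1\bigl(1+2C_1\theta_1^{r}\bigr)
\]
for every $r\le k_2+m-k_0$ (using $\|\cL_{l,t}^{k_2+m-r}g\|_\infty\le1$ and $G_\al(\cL_{l,t}^{k_2+m-r}g)\le 2C_1$, the latter from Corollary~\ref{CorNonExpanding}), so $G_\al(\cL_{l,t}^{k_2+m}g)/(2C_1)\le\tfrac12+C_1\theta_1^{r}$. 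Since $\eta(\eps)<\tfrac12$ (the proof of Lemma~\ref{ll2}(b) yields $\eta(\eps)=\tfrac12 e^{-ck_1}\eps$, $c=\sup_j\|g_j\|_\infty$), taking $k_2$ large enough that $C_1\theta_1^{k_2-k_0}<\tfrac12-\eta(\eps)$ keeps the normalised H\"older seminorm strictly below $1-\eta(\eps)$; as $\|\cdot\|_*$ is the maximum of the sup-norm and the normalised H\"older seminorm, this forces $\|\cL_{l,t}^{k_2+m}g\|_\infty>1-\eta(\eps)$.

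Second, convert a large sup-norm into a pointwise lower bound on the function one block of $k_1$ iterations earlier. Inside the sum defining the operator the real weights dominate the twisted phases, so $\cL_{j}^{k_1}|\phi|\ge|\cL_{j,t}^{k_1}\phi|$ pointwise (a pointwise version of Lemma~\ref{ll2}(a)); hence if $\|\phi\|_*\le1$ and $\|\cL_{j}^{k_1}|\phi|\|_\infty>1-\eta(\eps)$, the contrapositive of Lemma~\ref{ll2}(b) gives $|\phi|>1-\eps$ everywhere. I would apply this with $\phi:=\cL_{l,t}^{k_2+m-k_1}g$, $j=l+k_2+m-k_1$: taking also $k_2\ge k_1(\eps)+k_0$ we have $k_2+m-k_1\ge k_0$, so $\|\phi\|_*\le1$, while $\cL_{j}^{k_1}|\phi|\ge|\cL_{l,t}^{k_2+m}g|$, whose sup-norm exceeds $1-\eta(\eps)$ by the first step. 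Thus $|\cL_{l,t}^{k_2+m-k_1}g|>1-\eps$ on all of $X_{l+k_2+m-k_1}$, and rewriting $\cL_{l,t}^{k_2+m-k_1}g=\cL_{l+k_2-k_1,\,t}^{m}\bigl(\cL_{l,t}^{k_2-k_1}g\bigr)$ (with $\|\cL_{l,t}^{k_2-k_1}g\|_*\le1$ since $k_2-k_1\ge k_0$) exhibits a function of the required type.

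The step I expect to be the main obstacle is precisely this norm-versus-iteration bookkeeping. The chain ``$\|\cdot\|_*$ large'' $\to$ ``$\|\cdot\|_\infty$ large'' $\to$ ``modulus $>1-\eps$ everywhere'' costs a H\"older-smoothing budget and then a block of $k_1(\eps)$ probing iterations, both of which have to be absorbed inside the $k_2$ iterations preceding the last $m$; this is exactly what dictates the choice of $k_2=k_2(\eps)$ (large enough that $C_1\theta_1^{k_2-k_0}<\tfrac12-\eta(\eps)$ and $k_2\ge k_1(\eps)+k_0$). One must keep track of the space $X_{l+\,\cdot}$ carrying each intermediate function so that the final rewriting lands the conclusion at the index $l+k_2$ demanded by the statement (equivalently, one absorbs the $k_1$-shift into the definition of $k_2$, or argues in contrapositive form with $\cL_{l,t}^{k_2+m}$ as the target); aside from this, everything is a matter of the two displayed estimates.
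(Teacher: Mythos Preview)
Your proof is correct and uses the same two ingredients as the paper (Lemma~\ref{ll1} to kill the H\"older part so that the $\|\cdot\|_*$-bound becomes a sup-norm bound, then the contrapositive of Lemma~\ref{ll2}(b) to upgrade this to a uniform pointwise lower bound). The paper organises the bookkeeping slightly differently: it splits $\cL_{l,t}^{k_2+m}=\cL_{l+m,t}^{k_2}\circ\cL_{l,t}^{m}$, sets $H=\cL_{l,t}^{m}h$ (so $\|H\|_*\le 1$ since $m\ge k_0$), and runs both the H\"older step and Lemma~\ref{ll2}(b) on the outer $k_2$-block acting on $H$; because $k_2\ge k_1$ and the untwisted operator $\cL^{r}$ is sup-norm-nonexpanding, the contrapositive of Lemma~\ref{ll2}(b) lands directly on $H$, yielding $\min_x|\cL_{l,t}^{m}h(x)|>1-\eps$ with no residual $k_1$-shift to absorb (at the cost of the conclusion sitting at index $l$ rather than $l+k_2$, the same kind of harmless index relabelling you flagged at the end).
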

\begin{proof}
Let $k_2^*(\eps)$ be the smallest positive  integer $k$ so that $C_1\te_1^{k}\leq \frac12(1-\eta(\eps))$, where $\eta(\eps)$ comes from Lemma \ref{ll2}. Then by Lemma \ref{ll1} for every function $H$ such that $\|H\|_*\leq1$ and all $j\in\bbZ$, $s\geq  k_2^*(\eps)$ and $t\in[-T,T]$ we have 
\begin{equation}\label{Upppp}
 \frac{G(\cL_{j,t}^{s}H)}{2C_1}\leq 1-\eta(\eps).
\end{equation}
Next,  take $k_2(\eps)\!\!=\!\!\max(k_2^*(\eps), k_1(\eps))$, where $k_1(\eps)$ comes from Lemma \ref{ll2}(b). 
Suppose that $\|\cL_{j,t}^{k_2(\eps)+m}\|_{*}\!>\!1\!\!-\!\! \eta(\eps)$. Then  there is $h$ such that $\|h\|_*\!\leq\!\! 1$ and $\|\cL_{j,t}^{k_2(\eps)+m}h\|_*
\!>\!1\!\!-\!\! \eta(\eps)$. Set $H=\cL_{j,t}^m h$. Then
$$
\|\cL_{j,t}^{k_2(\eps)+m}h\|_*=\|\cL_{j+m,t}^{k_2(\eps)}H\|_*=\max\left(\|\cL_{j+m,t}^{k_2(\eps)}H\|_\infty, \frac{G(\cL_{j+m,t}^{k_2(\eps)}H)}{2C_1}\right)>1-\eta(\eps).
$$
Now, since $\|h\|_*\leq 1$ and $m\geq k_0$, it follows from \eqref{Norm bound} that $\|H\|_*\leq 1$. Thus, since $k_2(\eps)\geq k_2^*(\eps)$ we conclude from \eqref{Upppp} that 
$$
 \frac{G(\cL_{j+m,t}^{k_2(\eps)}H)}{2C_1}\leq 1-\eta(\eps).
$$
Hence 
$\DS
\|\cL_{j+m,t}^{k_2(\eps)}H\|_\infty>1-\eta(\eps),
$
and so
by Lemma \ref{ll2}(a),
$\DS
\|\cL_{j+m}^{k_2(\eps)}H\|_\infty>1-\eta(\eps).
$
Hence, since $k_2(\eps)\geq k_1(\eps)$ by (the contrapositive of)  
Lemma \ref{ll2}(b) we have
$$
\min_{x\in X_{j+m}}|H(x)|=\min_{x\in X_{j+m}}\left|\cL_{j,t}^m h(x)\right|
>1-\eps
$$
and the proof of the corollary is complete.
\end{proof}

\subsection{Integral of characteristic function and LLT}
\label{SSInt-LLT}

Arguing
like in \cite[\S 2.2]{HK}, in order 
to prove a non-lattice LLT starting with a measure of the form $\ka_0=q_0d\mu_0$ with $\|q_0\|_\al<\infty$ it suffices to prove the following:
\vskip0.2cm
(i) there are constants $\delta,c_2,C_2>0$ such that   for all $|t|\leq \delta$  and all $n$
\begin{equation}\label{Small t}
\left\|\mathcal{L}_{0,t}^n\right\|_*\leq C_2 e^{-c_2\sig_n^2 t^2}.
\end{equation}
\vskip0.2cm

(ii) for each $T>\delta$  we have
\begin{equation}\label{Suff}
\int_{\del\leq |t|\leq T}\|\cL_{0, t}^n\|_{*}dt=o(\sig_n^{-1}).   
\end{equation}

\noindent
Similarly, in order 
to prove a lattice LLT for integer valued observables $f_j$ it suffices to  prove (i) and 
 \begin{equation}\label{SuffLat}
\mathrm{(ii)'} \quad \quad \int_{\del\leq t\leq 2\pi-\del}\|\cL_{0, t}^n\|_{*}dt=o(\sig_n^{-1}).   
\end{equation}


We begin with \eqref{Small t}.

\begin{proposition}\label{PrSmVarBlock}
There are positive constants $\del_0,c_1,c_2,C_1,C_2$ such that for every finite sequence of functions    
$(v_j)_{j=n}^{n+m-1}$ with $a:=\max\|v_j\|_{\al}\leq \del_0$ 
  we have the following. 
Set $\cA_{j}(g)=\cL_j(e^{iv_j}g)$ and 
$\DS
\cA_{n}^m=\cA_{n+m-1}\circ \cdots\circ\cA_{n+1}\circ\cA_n.
$
Then
\begin{equation}
\label{AUpperLower}
C_1e^{-c_1\mathrm{Var}(S_{n,m} v)}\leq \|\cA_{n}^{m} \|_\al \leq C_2  e^{ -c_2 \mathrm{Var}(S_{n,m} v)}
\end{equation} 
where $\DS S_{n,m}v=\sum_{k=0}^{m-1}v_{n+k}\circ T_n^k$.
\end{proposition}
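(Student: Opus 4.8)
The strategy is to treat $\cA_n^m$ as a small perturbation of $\cL_n^m$ and to carry out a perturbative expansion in the small parameter $a=\max\|v_j\|_\alpha$, using that $\cL_j\mathbf{1}=\mathbf{1}$ and the duality relation \eqref{dual}. First I would write $\cA_n^m h = \cL_n^m(e^{iS_{n,m}v}h)$ and expand the exponential. The leading contribution is $\cL_n^m h=h$ when $h=\mathbf 1$; the point is to track the first and second order terms. Concretely, set $\Lambda_{n,m} = \log\|\cA_n^m\mathbf 1\|$ or, better, analyze $\mu_{n+m}(\cA_n^m\mathbf 1)=\mu_n(e^{iS_{n,m}v})=\mathbb{E}_{\mu_n}[e^{iS_{n,m}v}]$ via \eqref{dual}. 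By the standard second-order Taylor expansion of the log-characteristic function, $\log\mathbb{E}_{\mu_n}[e^{iS_{n,m}v}] = i\mu_n(S_{n,m}v) - \tfrac12\mathrm{Var}_{\mu_n}(S_{n,m}v) + O(a^3 m)$, but this alone does not control the operator norm $\|\cA_n^m\|_\alpha$; I would need the full Lasota–Yorke / complex Ruelle–Perron–Frobenius machinery. The cleaner route is to invoke the complex RPF theorem in the form of \cite[Appendix D]{DH2}: for $a$ small enough there are a leading eigenvalue $\lambda_{n,m}(v)$, a leading eigenfunction $h_{n,m}^{(v)}$ close to $\mathbf 1$, and a spectral gap, so that $\cA_n^m h = \lambda_{n,m}(v)\,\nu(h)\,h_{n+m}^{(v)} + (\text{exponentially small remainder})$, uniformly in $n,m$. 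Then $\|\cA_n^m\|_\alpha \asymp |\lambda_{n,m}(v)|$ up to multiplicative constants, since the eigenfunctions and eigenmeasures are uniformly bounded above and below.

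So the crux reduces to showing $|\lambda_{n,m}(v)| = e^{-c\,\mathrm{Var}(S_{n,m}v)}$ up to the constants $c_1,c_2$ and $C_1,C_2$. Writing $\lambda_{n,m}(v)=\prod_{k=0}^{m-1}\lambda_{n+k}(v)$ for the block of one-step perturbed leading eigenvalues (here using that the perturbed operators compose and that the gap persists), I would expand each $\log\lambda_{n+k}(v)$ to second order in $v$. The first-order term is purely imaginary (a mean contribution), and the real part of the second-order term equals $-\tfrac12$ times a "block variance" type quantity. The key identity to establish is that $-\log|\lambda_{n,m}(v)| = \tfrac12\,\mathrm{Var}_{\mu_n}(S_{n,m}v) + O(a^3 m)$ — this is precisely the convexity-of-pressure computation alluded to in \S\ref{SSObstructions}, and it follows by differentiating the pressure $t\mapsto \log\lambda_{n,m}(tv)$ twice at $t=0$ and controlling the third derivative uniformly (the third-derivative bound is where $\sup_j\|g_j\|_\infty<\infty$, Assumption \ref{AssPairing} and the uniform spectral gap \eqref{ExpConvSTF0} enter). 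Since $a\le\delta_0$, the error term $O(a^3 m)$ is dominated by $a\cdot\mathrm{Var}(S_{n,m}v)$ up to constants only if $\mathrm{Var}(S_{n,m}v)\gtrsim a^2 m$; in the regime where the variance is small compared to $a^2 m$ one instead uses directly that $|\lambda_{n+k}(v)|\le 1$ (contraction from Corollary \ref{CorNonExpanding}, applied with the $v_j$ in place of $tf_j$) to get the upper bound $C_2$ trivially, and the lower bound from $|\lambda_{n+k}(v)|\ge 1 - C a^2$ per step. Combining the two regimes yields both inequalities in \eqref{AUpperLower} with possibly adjusted constants.

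The main obstacle I anticipate is making the perturbative expansion genuinely uniform in the block position $n$ and block length $m$ simultaneously, i.e. establishing that the complex RPF theorem and the second-order pressure expansion hold with constants independent of $n$ and $m$ when the perturbation is only a block of $m$ consecutive functions embedded in a longer (possibly infinite) sequence. This requires checking that the operators $\cA_j=\cL_j(e^{iv_j}\cdot)$ satisfy Lasota–Yorke inequalities with uniform constants for all $\|v_j\|_\alpha\le\delta_0$ (which follows from Lemma \ref{ll1} and Lemma \ref{ll2} applied to the perturbed operators, since $e^{iv_j}$ with small $\|v_j\|_\alpha$ perturbs the cocycle $g_j$ only slightly), and then that the cone/projective contraction argument underlying the complex RPF theorem propagates through the block with a gap that does not degrade. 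A secondary technical point is relating $\|\cA_n^m\|_\alpha$ — the operator norm on all of the Hölder space — to the size of the leading eigenvalue; this needs the uniform lower bound on eigenfunctions (from Lemma \ref{balls} and the Gibbs property) together with the uniform spectral gap so that the leading term dominates the remainder for all $m$ large, and for the finitely many small $m$ one absorbs everything into the constants $C_1,C_2$.
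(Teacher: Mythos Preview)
Your overall strategy---use the complex RPF theorem to reduce $\|\cA_n^m\|_\alpha$ to $|\la_{n,m}(v)|$, then Taylor-expand the log-eigenvalue to second order---is exactly the paper's approach. The gap is in your error control. You state the Taylor remainder as $O(a^3 m)$, which amounts to the trivial bound $|\Pi_{n,m}'''|\le Cm$ (each one-step pressure contributing a bounded third derivative). Your two-regime split then fails in the intermediate range where $V:=\mathrm{Var}(S_{n,m}v)$ is unbounded but subextensive, say $V\asymp a^2\sqrt m$---nothing in the non-stationary setting rules this out. In your ``$V\gtrsim a^2 m$'' branch the error $O(a^3 m)$ swamps the main term $\tfrac12 V$. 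In your ``$V\ll a^2 m$'' branch the per-step estimates $|\la_{n+k}(v)|\le 1$ and $\ge 1-Ca^2$ yield only $e^{-O(a^2 m)}\le |\la_{n,m}(v)|\le C$; since $a^2 m\gg V\to\infty$, the lower bound sits far below $C_1 e^{-c_1 V}$ and the upper bound $\le C$ far above $C_2 e^{-c_2 V}$. Neither inequality in \eqref{AUpperLower} follows.

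The missing ingredient is the sharper third-derivative bound (equation \eqref{3 der bound} in the paper)
\[
|\Pi_{n,m}'''(it)|\le C_3\bigl(1+\mathrm{Var}(S_{n,m}u)\bigr),
\]
which the paper imports from \cite[Corollary 7.5 and Proposition 7.1]{DH2}. This does \emph{not} come from the spectral gap alone; it is obtained by a further decomposition of $\{n,\dots,n+m-1\}$ into sub-blocks on each of which the variance is of order one, and then bounding the third logarithmic derivative block by block. With this estimate the Taylor remainder becomes $O(|t|^3(1+\mathrm{Var}))$, which for small but \emph{fixed} $t$ is absorbed into $\tfrac14 t^2\,\mathrm{Var}$ plus a bounded constant, uniformly in $n$ and $m$---no regime split is needed. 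The paper then evaluates at a fixed $t=r_0$ with $u_j=v_j/r_0$. For the lower bound when $m$ is so small that the RPF remainder in \eqref{RPFabove} is not yet negligible, the paper uses the separate elementary Lemma~\ref{LB lemma} rather than the expansion.
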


\begin{corollary}
There exists $\del>0$ such \eqref{Small t} holds for all $t\in[-\del,\del]$ and all $n$.    
\end{corollary}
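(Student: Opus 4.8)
The plan is to derive \eqref{Small t} for $t$ near $0$ as an immediate consequence of Proposition \ref{PrSmVarBlock}, after a routine bookkeeping step that converts the twisted transfer operators $\cL_{0,t}^n$ into the form $\cA_0^n$ appearing in that proposition. First I would fix the H\"older exponent $\al$ and recall that $\cL_{0,t}^n h = \cL_0^n(e^{itS_{0,n}f}h)$, where $S_{0,n}f = \sum_{k=0}^{n-1} f_k\circ T_0^k$. Writing $v_k = t f_k$, we have $S_{0,n}v = t\, S_{0,n}f$ and $\cL_{0,t}^n = \cA_0^n$ with $\cA_j(g) = \cL_j(e^{iv_j}g)$. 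Since $\sup_j\|f_j\|_\beta < \infty$ and $\beta \geq \al$, the quantity $a := \max_{0\le k<n}\|v_k\|_\al = |t|\max_k\|f_k\|_\al \le |t|\,\sup_j\|f_j\|_\al$ is at most $\del_0$ provided we take $\del := \del_0 / \sup_j\|f_j\|_\al$ (if this supremum is zero the statement is trivial). Thus for all $|t|\le\del$ and all $n$, Proposition \ref{PrSmVarBlock} applies with $n$ there equal to $0$ and $m$ there equal to $n$.

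Next I would invoke the upper bound in \eqref{AUpperLower}, which gives
\begin{equation*}
\|\cL_{0,t}^n\|_\al = \|\cA_0^n\|_\al \le C_2\, e^{-c_2 \mathrm{Var}(S_{0,n}v)} = C_2\, e^{-c_2 t^2\, \mathrm{Var}(S_{0,n}f)} = C_2\, e^{-c_2 t^2 \sig_n^2},
\end{equation*}
using $\mathrm{Var}(S_{0,n}v) = t^2\,\mathrm{Var}(S_{0,n}f)$ and the definition $\sig_n^2 = \sig_n^2(\ka_0)$ — or rather $\sig_n^2(m_0)$, the variance with respect to the Gibbs measure $m_0$, which by \S\ref{CLT} differs from $\sig_n^2(\ka_0)$ by a bounded amount. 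The only mild subtlety is that Proposition \ref{PrSmVarBlock} is phrased with $\Var$ meaning the variance relative to the Gibbs measure built from the $\cL_j$ (equivalently $\mu_0$), so I would note that $\sup_n|\sig_n^2(\ka_0) - \sig_n^2(m_0)| < \infty$, hence replacing $\sig_n^2(m_0)$ by $\sig_n^2(\ka_0)$ only changes the constant $C_2$ by a bounded multiplicative factor. This yields \eqref{Small t} with the same $c_2$ and an adjusted $C_2$, for all $|t|\le\del$ and all $n$.

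The step I expect to carry essentially no difficulty here — all the analytic work is hidden inside Proposition \ref{PrSmVarBlock}, whose proof (the genuinely hard part, via perturbative complex Ruelle–Perron–Frobenius estimates and convexity of sequential pressure) is established separately. The corollary itself is just the observation that a bound on small twists follows by rescaling the observable so that its H\"older norm lands below the threshold $\del_0$, together with the quadratic scaling of variance in $t$. If anything needs care, it is making sure the identification $\cL_{0,t}^n = \cA_0^n$ is exact (no stray normalization factors), which is immediate from the definitions $\cL_{j,t}^k h = \cL_j^k(e^{itS_{j,k}f}h)$ and $\cL_j^k = \cL_{j+k-1}\circ\cdots\circ\cL_j$ together with the cocycle identity $S_{0,n}f = \sum_{k=0}^{n-1} f_k\circ T_0^k$ splitting multiplicatively across the composition.
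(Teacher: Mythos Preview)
Your proposal is correct and follows essentially the same approach as the paper: set $v_j=tf_j$, choose $\del=\del_0/\sup_j\|f_j\|_\al$, and read off \eqref{Small t} from the upper bound in Proposition \ref{PrSmVarBlock} using $\mathrm{Var}(S_{0,n}(tf))=t^2\sig_n^2$. You add a useful remark the paper omits, namely that passing between $\sig_n^2(\ka_0)$ and the Gibbs-measure variance costs only a bounded additive error, absorbed into the constant $C_2$.
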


\begin{proof}
We apply Proposition \ref{PrSmVarBlock}    with functions of the form $v_j\!\!=\!\!tf_j$. Let $\|f\|\!\!=\!\!\sup_j\|f_j\|_\al$. Now \eqref{Small t} follows from the upper bound in Proposition \ref{PrSmVarBlock} if $|t|\|f\|\leq \del_0$.  
\end{proof}

In the course of the proof of Proposition \ref{PrSmVarBlock} we need the following lemma.

\begin{lemma}\label{LB lemma}
There exists a constant $b>1$ with the following property.
Let $v_j$ for $ n\leq j\leq n+m-1$ be functions such that 
$\DS \max_j\|v_j\|_\al\leq 1$. Let the operators $R_{t}$ be given by
$R_t(h)=\cL_{n}^m(he^{itS_{n,m}v})$. Then for every $t\in\bbR$,
$$
\|R_t\|_\al\geq b^{-m}(1+|t|)^{-1}.
$$
\end{lemma}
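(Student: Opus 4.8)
The plan is to exploit the duality relation \eqref{dual} together with the fact that $\cL_n^m$ preserves the constant function $\textbf{1}$, so as to test $R_t$ against $\textbf{1}$ and extract a lower bound on $\|R_t\|_\al$ from the modulus of an oscillatory integral. Concretely, since $\cL_n^{m,*}\mu_{n+m}=\mu_n$, for any bounded $h:X_n\to\bbR$ we have
$$
\int_{X_{n+m}} R_t(h)\, d\mu_{n+m}=\int_{X_n} h\, e^{itS_{n,m}v}\, d\mu_n.
$$
Taking $h=e^{-itS_{n,m}v}$ gives $\int R_t(e^{-itS_{n,m}v})\,d\mu_{n+m}=1$, so $\|R_t(e^{-itS_{n,m}v})\|_\infty\geq 1$, and hence $\|R_t\|_\al\geq \|e^{-itS_{n,m}v}\|_\al^{-1}$. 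Thus everything reduces to bounding the H\"older norm $\|e^{-itS_{n,m}v}\|_\al$ from above by $b^m(1+|t|)$ for a suitable $b>1$.

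The next step is the estimate on $\|e^{-itS_{n,m}v}\|_\al$. The sup norm is $1$. For the H\"older seminorm, first note that $S_{n,m}v=\sum_{k=0}^{m-1} v_{n+k}\circ T_n^k$, and since each $T_j$ is Lipschitz with constant $\leq K_0$ (Assumption \ref{AssPairing}), the map $T_n^k$ has Lipschitz constant $\leq K_0^k$, so $G_\al(v_{n+k}\circ T_n^k)\leq G_\al(v_{n+k})K_0^{k\al}\leq K_0^{k\al}$ using $\|v_j\|_\al\leq 1$. Hence $G_\al(S_{n,m}v)\leq \sum_{k=0}^{m-1}K_0^{k\al}\leq C K_0^{m\al}$ (or $\leq m$ if $K_0\le 1$), which is $\leq \tilde b^{\,m}$ for $\tilde b=\max(1,K_0^\al)+1$, say. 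Then, using $|e^{ia}-e^{ib}|\leq |a-b|$, we get $|e^{-itS_{n,m}v(x)}-e^{-itS_{n,m}v(y)}|\leq |t|\,|S_{n,m}v(x)-S_{n,m}v(y)|\leq |t|\,\tilde b^{\,m}\,\mathsf d_n(x,y)^\al$, so $G_\al(e^{-itS_{n,m}v})\leq |t|\,\tilde b^{\,m}$ and therefore $\|e^{-itS_{n,m}v}\|_\al\leq 1+|t|\,\tilde b^{\,m}\leq \tilde b^{\,m}(1+|t|)$. Combining with the previous paragraph, $\|R_t\|_\al\geq \tilde b^{-m}(1+|t|)^{-1}$, which is the claim with $b=\tilde b$.

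The only mild subtlety is matching the norm conventions: the displayed inequality is stated for $\|\cdot\|_\al$ (sup norm plus H\"older seminorm, with exponent $\al$), while several of the operator bounds earlier in the paper are phrased in terms of the equivalent norm $\|\cdot\|_*$; since these differ only by the fixed constant $2C_1$ in front of the seminorm, one can freely pass between them at the cost of adjusting $b$, and I would simply state the argument directly for $\|\cdot\|_\al$ as above. I do not expect a genuine obstacle here: the lemma is a soft lower bound, and the testing-against-constants trick via \eqref{dual} combined with the crude Lipschitz bound on the Birkhoff sum of $v_j$ suffices. The one thing to be careful about is that the bound must be uniform in $n$ and in $m$ with the $b^{-m}$ rate — this is automatic because $K_0$ and hence $\tilde b$ are uniform in $j$ by Assumption \ref{AssPairing}.
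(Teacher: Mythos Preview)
Your proof is correct and follows essentially the same approach as the paper: test $R_t$ against $e^{-itS_{n,m}v}$ and then bound $\|e^{-itS_{n,m}v}\|_\al\leq b^m(1+|t|)$. The paper's version is slightly more direct in that it simply observes $R_t(e^{-itS_{n,m}v})=\cL_n^m(\textbf{1})=\textbf{1}$, so $\|R_t(e^{-itS_{n,m}v})\|_\al=1$ exactly, avoiding the detour through duality and integration against $\mu_{n+m}$.
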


\begin{proof}
We have 
$\DS
R_t(e^{-itS_{n,m}v})=\cL_{n}^m\textbf{1}=\textbf{1}.
$
Therefore,
$\DS
1=\|\textbf{1}\|_\al\leq\|R_t\|_\al\|e^{-itS_{n,m}v}\|_\al.
$
To complete the proof, we note that 
$$
\|e^{-itS_{n,m}v}\|_\al\leq 1+|t|G_\al(S_{n,m}v)\leq 1+|t|b^m
$$
for some $b>1$.
\end{proof}

\begin{proof}[Proof of Proposition \ref{PrSmVarBlock}]
The proof of the proposition uses ideas from \cite{HK} and \cite{DH2}.
We provide most of the details
for the sake of completeness.

First, the norm of the operator $\cA_n^m$ is  invariant with respect to replacing $v_j$ with $v_j-c_j$ for a constant $c_j$. Thus,  
it is enough to prove the proposition when $\mu_j(v_j)=0$ for all $j$. 
Moreover,  setting $v_j=0$ for $j\not\in\{n,m+1,...,n+m+1\}$ we can always assume that $v_j$ is defined for all $j$.

Next, we recall a few analytic tools that are crucial for the proof of both lower and upper bounds. 
Denote by $\cH_j$ the space of H\"older continuous functions $u_j$ on 
$X_j$ with exponent $\al$. Let $\cH_j^0\subset \cH_j$ be the subset of functions  such that $\mu_j(u_j)=0$.
Consider the operators 
    $\cR_{j,z,u}, z\in\bbC$ given by
$$
\cR_{j,z,u}(h)=\cL_j(he^{zu_j})
$$
where $u=(u_j)_{j=0}^\infty$ considered as a point in the 
Banach space 
$\DS 
\prod_{j=0}^\infty\cH_j^0$ equipped with the norm 
$\DS \|u\|\!\!=\!\!\sup_j\|u_j\|_\al$. 
These operators are analytic in both $z$ and $u$.
Thus  combining \eqref{ExpConvSTF0} and
\footnote{Note that in the case of a two sided sequence $(T_j)_{j\in\bbZ}$, by applying \cite[Theorem 3.3]{Nonlin} we get \eqref{RPFabove} with any fixed sequence $u$ with $\|u\|<\infty$ with constants depending also on $\|u\|$. In this case the result will follow from the analysis below with the choice $u_j=v_j/\|v\|$ (s.t. $\|u\|=1$). }  \cite[Theorem D.2]{DH2}, there are constants $\del_0$, $C>0$ and $\delta\in(0,1)$ which depend only on the maps $T_j$ such that for every complex $z$ with $|z|\leq \del_0$ and  a sequence $u$ with $\|u\|\leq \del_0$ the following holds: there are uniformly bounded and analytic in $z$ (and $u$) triplets $\la_j(z)\!\!=\!\!\la_j(z;u)\!\in\!\bbC$, $h_j^{(z)}\!\!=\!\!h_j^{(z;u)}\!\in\! \cH_j$ and $\nu_j^{(z)}\!\!=\!\!\nu_j^{(z;u)}\!\in\!\cH_j^*$ 
so that for all $n$ and $m$,
\begin{equation}\label{RPFabove}
\left\|\cR_{n,z,u}^m/\la_{n,m}(z)-\nu_{n}^{(z)}\otimes h_{n+m}^{(z)}\right\|_\al\leq C\del^m
\end{equation}
where $\DS \la_{n,m}(z)=\prod_{k=n}^{n+m-1}\la_k(z)$ and $\nu\otimes h$ is the operator $g\to \nu(g)h$. 
Moreover 
$$\nu_n^{(z)}(h_n^{(z)})=\nu_n^{(z)}(\textbf{1})=1\quad
\la_n(0)=1, \quad h_n^{(0)}=\textbf{1}\quad\text{and} \quad\nu_n^{(0)}=\mu_j.$$
  Henceforth we omit the subscript $u$.

Next,  let $\Pi_j(z)=\ln \la_j(z)$,\;  $\DS\Pi_{n,m}(z)=\sum_{j=n}^{n+m-1}\Pi_j(z)$,
and
$
\bar\Lambda_{n,m}(z)=\ln\bbE_{\mu_n}[e^{zS_{n,m}u}].
$
Then by \cite[Corollary 7.5]{DH2}  there exists $r_0>0$  and $Q<\infty$  such that if $|z|\leq r_0$ 
then for all  $n$ and $m$, 
 $$
\left|\Pi_{n,m}^{''}(z)-\bar\Lambda_{n,m}''(z)\right|\leq Q.
 $$
 Taking $z=0$ we see that 
\begin{equation}\label{2 der est}
\left|\Pi_{n,m}^{''}(0)-\text{Var}(S_{n,m}u)\right|\leq Q.
 \end{equation}
Applying \cite[Corollary 7.5]{DH2}, now with the third derivatives,  we see that if $r_0>0$ is small enough and $|z|\leq r_0$ then for all $n$ and $m$ we have
 \begin{equation}\label{3 der est}
 \left|\Pi_{n,m}^{'''}(z)-\bar\Lambda_{n,m}'''(z)\right|\leq Q_3    \end{equation}
 for some constant $Q_3$. Next, we claim that 
  there are constants $\del_1>0$ and $C_3>0$ such that if $|t|\leq \del_1$ then
\begin{equation}\label{3 der bound}
 |\Pi_{n,m}'''(it)|\leq C_3(1+\|S_{n,m} u\|_{L^2}^2).   
\end{equation}

 If $\|S_{n,m} u\|_{L^2}^2\geq A$ for a sufficiently large constant $A$ then \eqref{3 der bound} is proven exactly like \cite[(7.2)]{DH2}. 
Namely, we decompose the set $\{n,n+1,...,n+m-1\}$ into blocks such that the variance of the sums along the indexes in each one of the individual blocks is bounded above and below by two sufficiently large positive constants, and then repeat the proof of \cite[(7.2)]{DH2} with this block partition instead of the block partition of $\{0,1,2,...,n-1\}$ that resulted in \cite[(7.2)]{DH2} 
(the only difference here is that in  \cite[(7.2)]{DH2} we had $m=0$).

 It remains to prove \eqref{3 der bound} with some  
 $\del_1\!=\! \del_1(A)$ and $C_3\!=\!C_3(A)$ when $\|S_{n,m}u\|_{L^2}^2\!\leq\! A$.
In this case, using \eqref{3 der est} it is enough to bound $|\bar\Lambda_{n,m}'''(it)|$ by some constant uniformly in $n,m$ and $t\in[-\del_1,\del_1]$ for an appropriate $\del_1$. 
Using the formula 
$$
(\ln f)'''=\frac{f'''}{f^2}-\frac{2f'f''}{f^3}-\frac{2f'f''}{f^2}+
\frac{2(f')^3}{f^3}
$$
with the function $f(t)=\bbE[e^{it S_{n,m} u}]$ and noticing that 
for $|t|\leq \del_1(A)$ and $\del_1(A)$ small enough we have $|f(t)-1|\leq \frac12$, we see that there is a constant $C=C(A)$ such that
$$
|\bar\Lambda_{n,m}'''(it)|\leq C.
$$

Next, using \eqref{3 der bound} and the Lagrange form of the second order Taylor remainder of the function $\Pi_{n,m}$ around the origin we see that there are constants
$C_4,C_5$ such that
\begin{equation}\label{Pi Taylor}
\left|\Pi_{n,m}(t)+\frac{t^2}2 \text{Var}(S_{n,m} u)\right|\leq C_4t^2+C_5|t|^3(1+\text{Var}(S_{n,m} u))
\end{equation}
where 
 we used that $\Pi_{n,m}(0)\!\!=\!\!\ln\la_{n,m}(0)\!\!=\!\!0$  and 
 $\Pi_{n,m}'(0)\!\!=\!\!\mu_{n}(S_{n,m} u)\!\!=\!\!0$ (see \cite[Theorem~4.1(b)]{Nonlin}).
 \vskip1mm

 We can now complete the proof of the proposition.
    We start with the upper bound in \eqref{AUpperLower}.
 Without loss of generality we may assume that 
$\text{Var}(S_{n,m}u)\geq C_0$ where $C_0$ is a sufficiently large constant since
when the variance is smaller the required estimate could be always ensured by taking sufficiently large $C_2.$

    By \eqref{RPFabove} and the uniform boundedness of the triplets,  there is a constant $C_2>0$ such that if $|t|\leq r_0$ then
    $\DS
\|R_{n,it}\|\leq C_2|\la_{n,m}(it)|=C_2|e^{\Pi_{n,m}(it)}|.
    $
    Finally, by \eqref{Pi Taylor}
    there 
   exists $0<\del_2<\del_1$ such that if $|t|\leq \del_2$ then (for $C_0$ large enough),
    $$
|e^{\Pi_{n,m}(it)}|\leq e^{-\frac14 t^2 \text{Var}(S_{n,m}u)}.
    $$
  
Next, given  a sequence $(v_j)$ we take $u_j=v_j/r_0$. If $\sup_j\|v_j\|_\al\leq r_0 \del_0$ then $\|u\|\leq\del_0$ and so the above estimates hold. Moreover,  
    $\DS
\cA_{n}^m=R_{n,ir_0}^n
    $
    and $\text{Var}(S_{n,m}u)=r_0^{-2}\text{Var}(S_{n,m}v)$.
  Therefore, the upper bound holds with the above $C_2$ and $c_2=\frac14r_0^{-2}$.

Next, we prove the lower bound. 
Note first  that 
$
\left(\!h_{n+m}^{(z)}\otimes\nu_{n}^{(z)}\!\right)\!\!(\textbf{1})\!\!=\!\!\nu_{n}(\textbf{1})h_{n+m}^{(z)}\!\!=\!\!h_{n+m}^{(z)}.
$
Since $h_{n+m}^{(0)}=\textbf{1}$,
 the uniform boundedness and analyticity of the triplets gives
$$
\left|\left(h_{n+m}^{(z)}\otimes\nu_{n}^{(z)}\right)(\textbf{1})-1\right|\leq A|z|
$$
for some constant $A$. Therefore, there exists $\del_3>0$ such that if $|z|\leq \del_3$ then 
$$
\left|\left(h_{n+m}^{(z)}\otimes\nu_{n}^{(z)}\right)(\textbf{1})\right|\geq \frac12.
$$
Hence, if $m$ is large enough to ensure that 
\begin{equation}
\label{MSmall14}
C\del^m<\frac14,
\end{equation}
 then by \eqref{RPFabove}
\begin{equation}\label{Lower}
\|\cR_{n,z}^m\textbf{1}\|_\al\geq \frac14|\la_{n,m}(z)|.
\end{equation}
Now, by \eqref{Pi Taylor}, if $|t|\leq\del_4$ and $\del_4$ is small enough then
\begin{equation}
\label{LambdaLower}
|\la_{n,m}(it)|=|e^{\Pi_{n,m}(it)}|\geq e^{-t^2[C''+\frac{1}4\text{Var}(S_{n,m}u)]}
\end{equation}
where $C''>0$ is a positive constant.
Now the lower  bound in \eqref{AUpperLower} in case \eqref{MSmall14}
  is obtained as follows. Suppose that $\DS \sup_j\|v_j\|_\al\leq \delta_4 \delta_0$.
  Then the lower bound is obtained by taking 
  $t=\delta_4$, $u_j=v_j/\delta_4$ and using
\eqref{Lower}, \eqref{LambdaLower} and that $\|R_{n,it}^m\|_\al\geq \|R_{n,it}^m\textbf{1}\|$.

The lower bound when $C\del^m\!\!\geq\!\! \frac14$ (with $C$ sufficiently small) follows from Lemma~\ref{LB lemma}, taking into account that in this case $\|S_{n,m} v\|\leq a m\leq m$ (assuming $a\leq 1$).
\end{proof}

\subsection{Corange}
 Here we prove Theorem \ref{Reduce thm}. To simplify the proof we assume that $\mu_k(f_k)=0$ for all $k$.

Recall the definition of the set $\mathsf{H}$ in Theorem \ref{Reduce thm}. 
Taking into account Remark \ref{Rem Red} 
and Lemma \ref{al be},  
$\mathsf{H}$ is the set of all real numbers $t$ such that for all $n$ we have
\begin{equation}
\label{TimesTIntRed}
tf_n=h_n-h_{n+1}\circ T_n+g_n+Z_n
\end{equation}
where $h_n,g_n$ and $Z_n$ are functions such that $\DS \sup_n\|h_n\|_\al\!\!<\!\!\infty$, 
$\DS \mu_n(g_n)\!\!=\!\!0$, $\DS \sup_n\text{Var}(S_n g)\!\!<\!\!\infty$, 
$\DS \|g_n\|_\al  \to 0
$
and $Z_n$ is integer valued.
 It is clear that $\mathsf{H}$ is a subgroup of $\bbR$.

 Theorem \ref{Reduce thm} follows from the following corollary of Proposition \ref{PrSmVarBlock}.

\begin{corollary}\label{H corr}
\,

(i) If $\|S_nf\|_{L^2}\not\to \infty$ then $\mathsf{H}=\bbR$.
\vskip0.2cm

(ii) If $(f_j)$ is irreducible then $\mathsf{H}=\{0\}$.
\vskip0.2cm

(iii) If $(f_j)$ is reducible and $\|S_nf\|_{L^2}\to \infty$    then 
$\DS
\mathsf{H}=t_0\bbZ
$
for some $t_0>0$. 

As a consequence, the number $h_0=1/t_0$ is the largest positive number such that $(f_j)$ is reducible to an $h_0\bbZ$-valued sequence. 
\end{corollary}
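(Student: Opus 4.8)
The plan is to show that $\mathsf{H}$ is a closed subgroup of $\bbR$ and then invoke the classification of such subgroups. Since we already know $\mathsf{H}$ is a subgroup of $\bbR$, a closed subgroup of $\bbR$ is either $\{0\}$, all of $\bbR$, or $t_0\bbZ$ for some $t_0>0$; the three cases of the corollary then correspond exactly to these three possibilities, so the whole argument reduces to proving \emph{closedness} of $\mathsf{H}$ together with identifying which alternative occurs in each hypothesis. Part (i) is essentially already in hand: if $\|S_nf\|_{L^2}\not\to\infty$ then by the discussion in \S\ref{CLT} (see also Remark \ref{Rem Red}) the decomposition \eqref{TimesTIntRed} holds for \emph{every} $t$ with $Z_n\equiv 0$, so $\mathsf{H}=\bbR$. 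Part (ii) is the definition of irreducibility restated: if $\mathsf{H}\ni t\neq 0$ then $(f_j)$ is reducible to a $\frac{1}{t}\bbZ$-valued sequence (rescale \eqref{TimesTIntRed} by $1/t$), contradicting irreducibility, so $\mathsf{H}=\{0\}$. Hence the real content is part (iii), and there the only thing left to prove, given part (i) and the subgroup property, is that $\mathsf{H}$ is closed and $\mathsf{H}\neq\bbR$ (the latter being immediate from part (ii) applied to the irreducible factor, or directly: if $\mathsf{H}=\bbR$ then $t f_n$ is cohomologous to an integer-valued function plus a convergent term for \emph{all} $t$, which forces $\|S_nf\|_{L^2}\not\to\infty$ by letting $t\to 0$, contradiction).

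To prove closedness, suppose $t_k\in\mathsf{H}$ and $t_k\to t$. For each $k$ we have a decomposition \eqref{TimesTIntRed}:
$$
t_k f_n = h_n^{(k)} - h_{n+1}^{(k)}\circ T_n + g_n^{(k)} + Z_n^{(k)}
$$
with uniform (in $n$) control on $\|h_n^{(k)}\|_\al$, with $\mu_n(g_n^{(k)})=0$, $\sup_n\mathrm{Var}(S_n g^{(k)})<\infty$, $\|g_n^{(k)}\|_\al\to 0$ as $n\to\infty$, and $Z_n^{(k)}$ integer valued. The difficulty is that the $\al$-norm bounds on $h_n^{(k)}$ and the variance bounds on $g^{(k)}$ are \emph{not} a priori uniform in $k$, so one cannot simply pass to a subsequential limit. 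The standard way around this (cf.\ the analogous arguments in \cite{DS} and in \S\ref{CLT}) is to characterize membership in $\mathsf{H}$ \emph{spectrally}, via the twisted transfer operators $\cL_{0,t}^n$: namely, $t\in\mathsf{H}$ if and only if $\|\cL_{0,t}^n\|_*$ does not decay, equivalently $\limsup_n\|\cL_{0,t}^n\|_* > 0$ (or, more precisely, $\|\cL_{0,t}^n\|_*$ stays bounded below along a subsequence). Indeed, if $t\in\mathsf{H}$ then conjugating $\cL_{0,t}^n$ by the coboundary $e^{-ith_n}$ and absorbing the integer part (which does not affect $e^{it(\cdot)}$) reduces $\cL_{0,t}^n$ to the operator $\cA_0^n$ built from the small functions $g_n$ with $\|g_n\|_\al\to 0$ and $\sup_n\mathrm{Var}(S_n g)<\infty$; by Proposition \ref{PrSmVarBlock} the norm of $\cA_0^n$ is bounded below by $C_1 e^{-c_1\mathrm{Var}(S_n g)}\geq C_1 e^{-c_1\sup_n\mathrm{Var}(S_n g)}>0$, uniformly in $n$. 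Conversely, if $\|\cL_{0,t}^n\|_*$ does not decay, one runs the reduction lemmas of Section \ref{Sec 5} (this is where the main work of the paper lies) to extract the decomposition \eqref{TimesTIntRed}.

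Granting this spectral characterization, closedness of $\mathsf{H}$ follows by a continuity/compactness argument: the operators $\cL_{0,t}^n$ depend continuously on $t$ (uniformly in $n$, on compact $t$-intervals, by the Lasota--Yorke bounds of \S\ref{SecPrep} and analyticity of $z\mapsto \cR_{0,z}^n$), and the set of $t$ for which $\limsup_n\|\cL_{0,t}^n\|_*>0$ is closed once one has a quantitative lower bound that is uniform along the approximating sequence—which is exactly what Proposition \ref{PrSmVarBlock} supplies, since all the approximants $t_k f_n$ give rise, after reduction, to small-function operators whose variances are controlled by $\mathrm{Var}(S_n(t_k f - \text{coboundary}))$, and this quantity is continuous in $t_k$ and bounded on the convergent sequence $\{t_k\}\cup\{t\}$. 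I expect the \textbf{main obstacle} to be precisely this uniformity: showing that the lower bound $C_1 e^{-c_1 c}$ ($c=\sup_n\mathrm{Var}(S_n g^{(k)})$) can be made uniform in $k$, i.e.\ that the reducing cochains $h_n^{(k)}$ and the "defect variances" do not blow up as $t_k\to t$. This is handled by noting that $h_n^{(k)}$ can be taken to be an explicit function of $t_k$ and the dynamics (via the logarithm of the leading eigenfunction $h_n^{(it_k)}$ of the twisted operator, cf.\ \eqref{RPFabove}), which is analytic—hence uniformly bounded—in $t_k$ on a neighborhood of $t$; the remaining "genuinely lattice" part $Z_n^{(k)}$ is discrete-valued and the integer lattice it sits in varies continuously, forcing $t\in\mathsf{H}$ in the limit. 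Once $\mathsf{H}$ is closed, the classification of closed subgroups of $\bbR$ finishes parts (i)--(iii), and the final sentence of (iii) (that $h_0=1/t_0$ is the largest positive period) is then immediate from the definition of $\mathsf{H}=\{1/r:r\in R\}\cup\{0\}$.
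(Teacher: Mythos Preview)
Your overall strategy—show $\mathsf{H}$ is a closed subgroup and classify—is reasonable, but the execution has a real gap and is much less direct than the paper's argument.

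The paper does \emph{not} prove closedness of $\mathsf{H}$ by a limiting argument. Instead it proves \emph{discreteness} directly: for every $t\in\mathsf{H}$ there is a fixed $\delta_1>0$ (independent of $t$) such that $t+w\notin\mathsf{H}$ for all $0<|w|\le\delta_1$. This single step gives both closedness and $\mathsf{H}\neq\bbR$ at once. The mechanism is as follows. Given $t\in\mathsf{H}$, take the decomposition $tf_n=h_n-h_{n+1}\circ T_n+g_n+Z_n$ with $\|g_n\|_\alpha\to 0$ and $\sup_n\mathrm{Var}(S_ng)<\infty$. Then for any $w$, after conjugating away the coboundary, $\cL_{j,t+w}^n$ is (up to bounded factors) the operator built from the functions $g_n+wf_n$. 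For $|w|\le\delta_1$ these functions are small in $\|\cdot\|_\alpha$, so the \emph{upper} bound in Proposition~\ref{PrSmVarBlock} applies and gives $\|\cL_{j,t+w}^n\|\le C e^{-c\,\mathrm{Var}(S_{j,n}(g+wf))}$. Since $\mathrm{Var}(S_{j,n}g)$ is bounded while $\mathrm{Var}(S_{j,n}f)\to\infty$, the right side tends to $0$. Combined with the easy implication ``$s\in\mathsf{H}\Rightarrow\|\cL_{j,s}^n\|\not\to 0$'' (your lower-bound argument, also from Proposition~\ref{PrSmVarBlock}), this forces $t+w\notin\mathsf{H}$.

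Your route, by contrast, hinges on the \emph{converse} implication ``$\|\cL_{0,t}^n\|\not\to 0\Rightarrow t\in\mathsf{H}$'' to turn the spectral picture into closedness. That converse is essentially the content of Sections~\ref{Sec 5}--\ref{SSKeyPropZN} (the reduction lemmas and the analysis of the case of few contracting blocks), so invoking it here makes the argument circular—or at best imports the hardest part of the paper into what should be a short corollary of Proposition~\ref{PrSmVarBlock}. Moreover, even granting the spectral characterization, your closedness argument is incomplete: the set $\{t:\limsup_n\|\cL_{0,t}^n\|_*>0\}$ is not closed merely because $t\mapsto\|\cL_{0,t}^n\|_*$ is continuous for each $n$; you would need the lower bounds to be uniform in $k$ along $t_k\to t$, and your suggestion that the reducing cochains $h_n^{(k)}$ can be taken as analytic eigenfunctions of the twisted operator does not work here, since the perturbation theory \eqref{RPFabove} is only available for \emph{small} potentials, not for $t_k f_n$ with $t_k$ bounded away from $0$. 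Finally, your separate argument that $\mathsf{H}=\bbR$ forces bounded variance (``let $t\to 0$'') fails for the same uniformity reason: nothing prevents the constants in the decomposition of $tf_n$ from blowing up as $t\to 0$.

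The fix is simple: drop the closedness-via-limits plan entirely and prove discreteness as above, using only the two-sided estimate of Proposition~\ref{PrSmVarBlock}.
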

\begin{proof}
(i) If the variance does not diverge to $\infty$ then by \cite[Theorem 6.5]{DH2}  (applied with with $m_0=\mu_0$) we see that $(f_j)$ is decomposed as a sum of a center tight sequence and a coboundary. Thus for every real $t$ the function $tf_j$ has decomposition \eqref{TimesTIntRed} (with $Z_j=0)$ with $g_j$ being the martingale part. Since 
$\DS
\sum_j\text{Var}(g_j)<\infty
$
we see that $\|g_n\|_{L^2(\mu_j)}\!\to\! 0$ and so by Lemma \ref{al be} we have $\|g_n\|_\al\to 0$. We thus conclude that $\mathsf{H}=\bbR$.
\vskip0.2cm
(ii) If $t\not=0$ belongs to $\mathsf{H}$ then $f$ must be reducible to an $h\bbZ$-valued sequence, with $h=1/|t|$. 
\vskip0.2cm
(iii) 
We claim first that if $t\in \mathsf{H}$ then for all  $j$ large enough  the norms $\|\cL_{j,t}^n\|_\al$ do not converge to $0$ as $n\to\infty$.  Since $\DS \sup_{n\geq j}\|g_j\|_\al\to 0$ as $n\to\infty$, for $j$ large enough  we can apply  Proposition \ref{PrSmVarBlock} with these functions to conclude 
that 
$$
\|\cA_{j,t}^n\|_\al\geq C_1e^{-c_2\text{Var}(S_{j,n}g)}
$$
where $\cA_{j,t}^n$ is the operator given by
$$
\cA_{j,t}^n(q)=\cL_j^n(e^{it S_{j,n}g}q).
$$
Since $\DS \sup_n\text{Var}(S_{j,n}g)<\infty$ we get
$\DS
\inf_{n}\|\cA_{j,t}^n\|_\al>0.
$
To finish the proof of the claim, note that 
$\DS
\cA_{j,t}^n(q)=e^{ih_{j+n}}\cL_{j,t}^n(qe^{-ih_j})
$
and so
$\DS
 \|\cA_{j,t}^n\|_\al\leq C\|\cL_{j,t}^n\|_\al
$
for some constant $C>0$.

Now, in order to complete the proof of the corollary, it is enough to show that there exists $\del_1>0$ such that for every $t\in \mathsf{H}$ and all $w\in[-\del_1,\del_1]\setminus\{0\}$ for all $j$ we have 
\begin{equation}\label{this will}
\lim_{n\to\infty}\|\cL_{j,t+w}^n\|_\al=0.
\end{equation}
\eqref{this will} shows that $t+w\not\in \mathsf{H}$. It  follows that $\mathsf{H}$ is a non empty discrete subgroup of $\mathbb{R}$,   whence $\mathsf{H}=t_0 \bbZ$
for some $t_0\in \bbR_+$,
completing the proof of the proposition.

In order to prove \eqref{this will}, let $\DS \|f\|=\sup_j\|f_j\|_\al$. Define $\del_1=\frac{\del_0}{2\|f\|+2}$, where $\del_0$ comes from Proposition \ref{PrSmVarBlock}. Thus, when $j$ is large enough and $|w|\leq \del_1$ we can  apply Proposition \ref{PrSmVarBlock} with the functions $u_j=g_j+wf_j$ to conclude that 
the operator 
$$
\cB_{j,t,w}^n(q)=\cL_j^n(qe^{it S_{j,n}g+iwS_{j,n}f})
\quad\text{satisfies}\quad
\|\cB_{j,t,w}^n\|_\al\leq C_2'e^{-c_2w^2\text{Var}(S_{j,n}f)}
$$
where  we have used that the variance of $S_{j,n}g$ is bounded in $n$. The desired estimate
\eqref{this will} follows since
$\DS
\lim_{n\to\infty}\text{Var}(S_{j,n}f)=\infty
$
and 
$\DS
\cL_{j,t+w}^n(q)=e^{-ih_{j+n}}\cB_{j,t,w}^n(qe^{ih_j}).
$
  \end{proof}









\section{Reduction lemmas}\label{Sec 5}
 Fix an integer $m_0\geq 0$. In all of the paper we will take $m_0=0$ except for 
 \S \ref{SS2SideIrr} where we take $m_0$ to be sufficiently large.
Next, take some $j$ and $\ell$ such that $\ell=k+m$ for some integers $k\geq 0$ and 
$0\leq m\leq m_0$. Let $y_1$ and $y_2$ be  two inverse branches of $T_j^k$. Let $w_1, w_2, v_1,v_2$ be inverse branches of $T_{j+k}^m$. 
Define the
{\em temporal distance function with gap $m$ by }
$$\Delta_{j,\ell,k,m}(x', x'', y_1, y_2,w_1,w_2,v_1,v_2)
$$
$$
=S_{j,\ell}(y_1\circ w_1(x'))+S_{j,\ell}(y_2\circ v_1 (x''))-S_{j,\ell}(y_1\circ w_2 (x''))-S_{j,\ell}(y_2\circ v_2(x')). $$
Note that this function is defined only for choices of $x',x'',y_i, v_i, w_i$  for which the above compositions are well defined.
 Namely we have four orbits such that 
the first and the third orbits as well
as the second and the fourth orbits  have the same itineraries up to time $j+k$,
while
the first and the fourth orbits as well
as the second and the third orbits  have the same itineraries after time $j+\ell$. During $m$ 
iterations between times $j+k$ and $j+\ell$ we do not impose any restrictions, hence the 
word {\em gap $m$} in the above definition.

\begin{lemma}
\label{LmTDSmall} 
For every $0<\delta<T$
there exist  constants $\gamma_1, \te_2\in(0,1)$ such that for every positive integer $L$ and $\eps\in(0,\frac12)$ such that 
\begin{equation}
\label{L-Epsilon}
\eps\leq \gamma_1^L 
\end{equation}
 for every $j$ and $\ell\leq L+1$ such that $\ell=k+m$, $0\leq m\leq m_0$ we have the following.
If  for some nonzero real $t$ such that $\del \leq |t|\leq T$ there exists $h$ with 
$\|h\|_{\beta, T}\leq 1$ and
\begin{equation}
\label{NormAlmostOne}
|\cL_{j,t}^{\ell} h (x)|\geq 1-\eps \quad \forall x\in X_{j+\ell}
\end{equation}
then \,
 (a)  $\forall x', x'', y_i,w_i,v_i$ as above
\begin{equation}
\label{TempDecay}
\left|\text{dist}\left(\Delta_{j,\ell,k,m}{(x', x'', y_1, y_2,w_1,w_2,v_1,v_2)},\frac{2\pi}{t}\bbZ\right)\right|\leq C_2 \theta_2^\ell;
\end{equation}
(b) Fix $\bar l\leq \ell/2$. If $T_{j+\ell-\brl}^{\brl}(y')=T_{j+\ell-\brl}^{\brl}(y'')=x$
and $z$ is an inverse branch of $T_{j}^{\ell-\brl}$ (with 
 both $y'$ and $y''$ belonging to its domain) then
\begin{equation}
\label{Cycle}
\left|\text{dist}\left(S_{j,\ell}(z(y'))-S_{j,\ell}(z(y'')),\frac{2\pi}{t}\bbZ \right)\right|\leq C_2\theta_2^\ell.
\end{equation}

\end{lemma}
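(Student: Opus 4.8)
The plan is to exploit that $\cL_j^\ell\one=\one$, so that for any $x\in X_{j+\ell}$
$$
\cL_{j,t}^\ell h(x)=\sum_{y:\,T_j^\ell y=x}w_{j,\ell}(y)\,e^{itS_{j,\ell}f(y)}\,h(y),\qquad w_{j,\ell}(y)=e^{S_{j,\ell}g(y)}\ge 0,\ \ \sum_y w_{j,\ell}(y)=1,
$$
is a \emph{complex convex combination} of numbers in the closed unit disc (since $\|h\|_{\beta,T}\le 1$ forces $\|h\|_\infty\le1$). I would first record the elementary fact that if $a_k\ge 0$, $\sum_k a_k=1$, $|z_k|\le 1$ and $\bigl|\sum_k a_k z_k\bigr|\ge 1-\eps$, then, writing $u=\sum_k a_k z_k/\bigl|\sum_k a_k z_k\bigr|$, one has $\sum_k a_k|z_k-u|^2\le 2\eps$ and hence $|z_k-u|\le\sqrt{2\eps/a_k}$ for every $k$ (proof: $\sum_k a_k(1-\mathrm{Re}(\bar u z_k))\le\eps$ with nonnegative summands, and $|z_k-u|^2\le 2(1-\mathrm{Re}(\bar u z_k))$). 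Because $\cL_j\one=\one$ and $\sup_j\|g_j\|_\beta<\infty$, every single–step weight is bounded below by a constant $c_0=c_0(\gamma,\beta,\deg)\in(0,1)$, so $w_{j,\ell}(y)\ge c_0^\ell\ge c_0^{L+1}$ for \emph{every} $T_j^\ell$-preimage $y$ of $x$ when $\ell\le L+1$. Applying the elementary fact to \eqref{NormAlmostOne} at a fixed $x\in X_{j+\ell}$ produces a unit vector $u(x)$ with
$$
\bigl|e^{itS_{j,\ell}f(y)}h(y)-u(x)\bigr|\le\varrho:=\sqrt{2\eps}\,c_0^{-(L+1)/2}\qquad\text{for every }y\text{ with }T_j^\ell y=x ,
$$
and in particular $|h(y)|\ge 1-\varrho$. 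Using \eqref{L-Epsilon}, if $\gamma_1$ is chosen small enough relative to $c_0$ and $\theta_2$ (concretely $\gamma_1\le c_0\theta_2^2$), then $\varrho\le C\theta_2^\ell$.

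For part (a) I would apply the previous step at both $x'$ and $x''$ (both lie in $X_{j+\ell}$, so \eqref{NormAlmostOne} is available there). Set $p_1=y_1\!\circ\! w_1(x')$, $p_2=y_2\!\circ\! v_1(x'')$, $p_3=y_1\!\circ\! w_2(x'')$, $p_4=y_2\!\circ\! v_2(x')$; these are $T_j^\ell$-preimages of $x',x'',x'',x'$ respectively. Writing $z_i=e^{itS_{j,\ell}f(p_i)}h(p_i)$, we get $z_1,z_4$ within $\varrho$ of $u(x')$ and $z_2,z_3$ within $\varrho$ of $u(x'')$, all of modulus $\ge 1-\varrho\ge\tfrac12$, hence $\bigl|\tfrac{z_1z_2}{z_3z_4}-1\bigr|\le C\varrho$. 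On the other hand $\tfrac{z_1z_2}{z_3z_4}=e^{it\Delta_{j,\ell,k,m}}\cdot\tfrac{h(p_1)h(p_2)}{h(p_3)h(p_4)}$, and the pairs $p_1,p_3$ (both obtained via the inverse branch $y_1$ of $T_j^k$) and $p_2,p_4$ (both via $y_2$) share their first $k$ symbols, so by Assumption~\ref{AssPairing} / Lemma~\ref{InvBranch}(iii) $\mathsf d_j(p_1,p_3),\mathsf d_j(p_2,p_4)\le C\gamma^{-k}$; combined with $G_\beta(h)\le 2C_1$ and $|h|\ge\tfrac12$ this gives $\bigl|\tfrac{h(p_1)h(p_2)}{h(p_3)h(p_4)}-1\bigr|\le C\gamma^{-k\beta}$. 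Therefore $|e^{it\Delta_{j,\ell,k,m}}-1|\le C(\varrho+\gamma^{-k\beta})$, and since $k=\ell-m\ge\ell-m_0$ and $|t|\ge\del$, translating back, $\mathrm{dist}\bigl(\Delta_{j,\ell,k,m},\tfrac{2\pi}{t}\bbZ\bigr)\le\tfrac{\pi}{2\del}|e^{it\Delta_{j,\ell,k,m}}-1|\le C_2\theta_2^\ell$ as soon as $\theta_2\ge\gamma^{-\beta}$; this is \eqref{TempDecay}.

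For part (b) I would apply the first step at $x$: since $z(y')$ and $z(y'')$ are both $T_j^\ell$-preimages of $x$, the numbers $e^{itS_{j,\ell}f(z(y'))}h(z(y'))$ and $e^{itS_{j,\ell}f(z(y''))}h(z(y''))$ are within $\varrho$ of the same $u(x)$, so their ratio is within $C\varrho$ of $1$; and since $z(y'),z(y'')$ lie in the image of the inverse branch $z$ of $T_j^{\ell-\brl}$, they share their first $\ell-\brl\ge\ell/2$ symbols, whence $\mathsf d_j(z(y'),z(y''))\le C\gamma^{-\ell/2}$ and $\bigl|\tfrac{h(z(y'))}{h(z(y''))}-1\bigr|\le C\gamma^{-\ell\beta/2}$. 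Combining exactly as above, $|e^{it(S_{j,\ell}f(z(y'))-S_{j,\ell}f(z(y'')))}-1|\le C(\varrho+\gamma^{-\ell\beta/2})$, which yields \eqref{Cycle} provided $\theta_2\ge\gamma^{-\beta/2}$. Assembling the requirements on the constants, it suffices to fix any $\theta_2\in(\gamma^{-\beta/2},1)$ (note $\gamma^{-\beta/2}>\gamma^{-\beta}$) and then $\gamma_1\in(0,c_0\theta_2^2)$, with $C_2$ absorbing the factors $\tfrac1\del$, $\gamma^{m_0\beta}$, $C_1$ and $c_0^{-1/2}$.

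The individual computations are routine; the one point that requires care is the uniformity of $\varrho\le C\theta_2^\ell$ as $\ell$ ranges over $\{1,\dots,L+1\}$ under the single hypothesis $\eps\le\gamma_1^L$. The lower bound $w_{j,\ell}(y)\ge c_0^\ell$ on the branch weights degrades exponentially in $\ell$, so it is essential that the admissible $\eps$ be super-exponentially small in $L$ (hence in $\ell$); calibrating how small $\gamma_1$ must be taken relative to $c_0$ and $\theta_2$ (and, for the H\"older terms, relative to $\gamma^{-\beta}$) is the only delicate step. The bookkeeping of which of the four orbits share which initial symbol blocks, and the resulting Lipschitz comparisons of $h$, follows the classical temporal–distance argument (cf. \cite{D98, DS}).
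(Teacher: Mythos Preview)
Your proof is correct and follows essentially the same route as the paper's. Both arguments rest on (i) the convex-combination structure $\cL_{j,t}^\ell h(x)=\sum_y w_{j,\ell}(y)\,e^{itS_{j,\ell}f(y)}h(y)$ with $w_{j,\ell}(y)\ge c_0^\ell$, (ii) the observation that a near-unit convex combination of unit-disc numbers forces all summands to align, and (iii) cancelling the $h$-contribution using H\"older continuity along the contracting inverse branches $y_1,y_2$ (resp.\ $z$). The paper packages step (ii) via the $\sin^2$ identity after first proving $\inf|h|\ge 1-e^{-\mathfrak{m}\ell-2}$ and extracting a well-defined phase $\phi=\arg h$, whereas you obtain the same alignment directly from the elementary inequality $\sum a_k|z_k-u|^2\le 2\eps$ and then form the multiplicative ratio $z_1z_2/(z_3z_4)$; your version avoids the separate lower bound on $|h|$ (you get $|h|\ge 1-\varrho$ at preimages automatically), which is a mild simplification but not a different idea.
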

Note that \eqref{TempDecay} can be written as
\begin{equation}\label{TempDecay1}
\Delta_{j,\ell,k,m}(x', x'', y_1, y_2,w_1,w_2,v_1,v_2)=
\end{equation}
$$ (2\pi/t) \mathfrak{m}_{t,j,\ell,k,m}(x', x'', y_1, y_2,w_1,w_2,v_1,v_2)+O(\theta_2^\ell)
$$
for some integer valued function $\mathfrak{m}_{t,j,\ell,k,m}$, while \eqref{Cycle} can be written as
\begin{equation}\label{Cycle1}
S_{j,\ell}(z(y'))-S_{j,\ell}(z(y''))=(2\pi/t) \mathfrak{m}_{t,j,\ell}(x,z,y',y'')+O(\theta_2^\ell)
\end{equation}
for some integer valued function $\mathfrak{m}_{t,j,\ell}$.

We will also need the following result.
\begin{lemma}
\label{LmTDVar}
There is a sequence of functions $H_k:X_k\to\bbR$ such that  
$\DS \sup_{k}\|H_k\|_{\alpha}<\infty$, constants $C>0$, $\theta<1$ and
$ c(\ell)\to 0$ as $\ell\to \infty$
with  the following properties.

Suppose that  either \eqref{Cycle} holds   for some $t$ with $\del\leq |t|\leq T$ and 
some  $j$, $\ell\geq 2n_0$ and all relevant choices of $\bar l, x,y'$ and $y''$ as described in Lemma \ref{LmTDSmall}(b), or \eqref{TempDecay} holds for some  $t$ with $\del\leq |t|\leq T$, some $j$, $\ell\geq 2n_0$, $m=n_0+1$ and all possible choices of $x',x'', w_i,v_i, w_i$. Then
\begin{equation}\label{Deccomp}
f_{j+\ell-1}=g_{j,t,\ell}+H_{j+\ell-1}-H_{j+\ell}\circ T_{j+\ell-1}+(2\pi/t) Z_{t,j,\ell}
\end{equation}
where $Z_{t,j,\ell}$ is integer valued, 
$\DS \sup_{t,j,\ell}\|g_{t,j,\ell}\|_{ \infty}\leq C\te^\ell$, and
$\DS \sup_{t,j,\ell}\|g_{t,j,\ell}\|_{\al}\leq c(\ell)$.

Moreover, the image of the function $Z_{t,j,\ell}$  is contained in either  the image of the function $\mathfrak{m}_{t,j,\ell}$ from \eqref{Cycle1} or  the image of the function $\mathfrak{m}_{t,j,\ell,k,n_0+1}, k=\ell-n_0-1$ from \eqref{TempDecay1}, depending on the case.
\end{lemma}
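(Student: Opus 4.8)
The plan is to upgrade the ``one-step'' decay estimate \eqref{TempDecay} (or \eqref{Cycle}) into a genuine cohomological decomposition \eqref{Deccomp} of the individual observable $f_{j+\ell-1}$. The key point is that the temporal distance function $\Delta_{j,\ell,k,m}$ measures the failure of $S_{j,\ell}f$ to be cohomologous to a function that depends only on the ``boundary'' data of orbits, and the standard Livsic-type argument turns smallness of $\Delta$ (modulo $\frac{2\pi}{t}\bbZ$) into an approximate coboundary representation. Concretely, I would first fix inverse branches: using Assumption \ref{Ass n 0} and Lemma \ref{InvBranch}, pick for each $k$ and each $x\in X_k$ a distinguished preimage point via the functions $W_{k,y}$ (or $Z_{k,y,n}$); this gives a ``reference orbit'' through every point. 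Then I would define $H_k$ by summing the differences between $S$ along a genuine orbit and $S$ along the reference orbit built from these fixed branches — the telescoping structure of such a sum is exactly what produces the coboundary form $H_{j+\ell-1}-H_{j+\ell}\circ T_{j+\ell-1}$, with the remainder controlled by \eqref{TempDecay1}/\eqref{Cycle1}.

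Next I would carry out the bookkeeping. Applying the hypothesis with $m=n_0+1$ (case (a)) or with the appropriate $\bar l$ (case (b)), every comparison of $S_{j,\ell}$ along two orbits that agree outside a window is, up to $O(\theta_2^\ell)$, an integer multiple of $\frac{2\pi}{t}$ governed by $\mathfrak{m}_{t,j,\ell,k,n_0+1}$ or $\mathfrak{m}_{t,j,\ell}$. Peeling off the last coordinate of $S_{j,\ell}f$ — i.e. isolating $f_{j+\ell-1}\circ T_j^{\ell-1}$ — and using that the tail $f_{j+\ell-1}$ evaluated along the reference branch differs from its value along an arbitrary branch by a quantity that is simultaneously (i) H\"older-small because the branches contract (Lemma \ref{InvBranch}(iii)), giving the bound $\|g_{t,j,\ell}\|_\infty\le C\theta^\ell$, and (ii) controlled in H\"older norm by $c(\ell)\to0$ via the uniform H\"older bound $\sup_j\|f_j\|_\beta<\infty$ together with Lemma \ref{al be} (or a direct interpolation as in its proof), yields the splitting $f_{j+\ell-1}=g_{t,j,\ell}+\big(H_{j+\ell-1}-H_{j+\ell}\circ T_{j+\ell-1}\big)+\frac{2\pi}{t}Z_{t,j,\ell}$ with $Z_{t,j,\ell}$ integer valued. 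The uniform bound $\sup_k\|H_k\|_\alpha<\infty$ follows because each $H_k$ is a geometrically convergent sum of H\"older terms with uniformly bounded norms and contraction rate $\gamma^{-1}$, exactly as in the classical stationary Livsic construction.

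Finally, I would check the last assertion on the image of $Z_{t,j,\ell}$: by construction $\frac{2\pi}{t}Z_{t,j,\ell}$ is, up to the negligible error absorbed into $g_{t,j,\ell}$, one of the differences appearing in \eqref{Cycle1} or in \eqref{TempDecay1} with $k=\ell-n_0-1$, so its values lie in the image of $\mathfrak{m}_{t,j,\ell}$ or of $\mathfrak{m}_{t,j,\ell,k,n_0+1}$ respectively; one must verify that the rounding operation (``take the nearest integer multiple of $\frac{2\pi}{t}$'') is unambiguous, which holds once $\ell$ is large enough that $C_2\theta_2^\ell<\frac{\pi}{T}\le\frac{\pi}{|t|}$, and for the finitely many small $\ell$ the statement is vacuous or can be absorbed into the constant $C$.

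\textbf{Main obstacle.} The delicate part is not the telescoping but ensuring that the \emph{same} sequence $(H_k)$ works uniformly over all $t\in[\delta,T]$, all $j$, and all $\ell$, and that the two cases (temporal-distance version and cycle version) feed into one common construction. The cleanest route is to build $H_k$ from the geometry alone — fixed inverse branches of $T_k$ — so that it is manifestly independent of $t,j,\ell$, and then show \emph{a posteriori} that whichever hypothesis is assumed forces the residual term to be $\frac{2\pi}{t}\bbZ$-valued modulo $O(\theta^\ell)$. Making the H\"older estimate $\|g_{t,j,\ell}\|_\alpha\le c(\ell)$ with $c(\ell)$ independent of $t$ (only the sup-norm decays exponentially; the H\"older seminorm need only go to zero) is the other point requiring care, and this is precisely where the interpolation argument underlying Lemma \ref{al be} is invoked.
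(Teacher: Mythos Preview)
Your plan is correct and follows essentially the same route as the paper: build $H_k$ purely from the geometry by fixing a reference point and using the inverse branches $Z_{j,y,n}$ and $W_{j,y}$ from Assumption~\ref{Ass n 0} and Lemma~\ref{InvBranch} to define a reference section $\alpha'_k:X_k\to X_0$, then set $H_k=S_{0,k}\circ\alpha'_k-S_{0,k}(a)$; the telescoping identity reduces \eqref{Deccomp} to controlling $D_{j,\ell}:=S_{j,\ell+1}\circ\alpha_{j,\ell+1}\circ T_{j+\ell}-S_{j,\ell+1}\circ\alpha_{j,\ell}$, which is then recognised as a special instance of $\Delta_{j,\ell,k,n_0+1}$ or of the cycle difference in \eqref{Cycle1}, and the H\"older bound on $g_{t,j,\ell}$ follows from Lemma~\ref{al be} exactly as you indicate. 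The one bookkeeping step you should make explicit is that the natural coboundary involves auxiliary functions $H_{j,\ell}$ depending on both $j$ and $\ell$, and a separate estimate (of order $\theta^\ell$) is needed to replace these by the single-index functions $H_{j+\ell}$; this is where the crucial feature that $(H_k)$ is independent of $t,j,\ell$ actually enters the computation.
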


\begin{remark}\label{Rem conn}
If the spaces $X_k$ are connected we can take $Z_{t,j,\ell}$ to be a constant. Indeed, since all the functions $f_{j+\ell-1},g_{j,t,\ell},H_{j+\ell-1}, H_{j+\ell}\circ T_{j+\ell-1}$ are continuous we see that $Z_{t,j,\ell}$ is continuous and thus constant.
\end{remark}

\begin{remark}
It is a natural question whether in general one can arrange that $g_{j,t,\ell}$ and $Z_{j,t,\ell}$ 
will depend only on $j+\ell$ and $t$. However, the important part of the lemma is that the coboundary terms $H$ depend only on $j+\ell$, which will allow us to take the same coboundary parts 
 when \eqref{Deccomp} holds for both $j$ and $j+1$. 
This will yield the desired cancellation for  the sums $f_{j+\ell-1}+f_{j+\ell}\circ T_{j+\ell-1}$. Similarly, this will enable us to obtain an appropriate cancellation for ergodic sums $S_{j,m}$ when \eqref{Deccomp} holds 
with $j+s$ for all $0\leq s<m$. Such cancellations will be crucial for decomposing the summands inside
 such blocks into three components: coboudnaries, small terms  and a lattice valued variable.
The lattice valued variables will disappear after multiplication by $t$ and taking the exponents. The sum with small terms will be dealt with similarly to the case of small $t's$. 
The heart of the proof of Theorem \ref{LLT1} is to execute this idea precisely.
\end{remark}


\begin{proof}[Proof of Lemma \ref{LmTDSmall}]
Let
 $\DS  \fm=\sup_j\sup|g_j|$ and take $\ve$ and $L$ such that 
\begin{equation}\label{eps L rel}
 \ve\leq e^{-(3\fm+3)L}.
\end{equation}
 That is, we take $\gamma_1$ in the statement of the lemma equal to $e^{-3\fm-3}$.
Fix $h$ and $\ell\leq L+1$ such that \eqref{NormAlmostOne} holds. 
We claim that
\begin{equation}\label{h bd}
    \inf_{y\in X_j} |h(y)|\geq 1-e^{-\fm\ell-2}.
\end{equation}
To prove \eqref{h bd}, suppose that  $\exists y\in X_{j}$ such that
$|h(y)|<1-e^{-\fm\ell-2} $. Let $x=T_j^\ell y$. Then 
$$
|\cL_{j,t}^\ell h(x)|\leq \left(\cL_{j}^\ell|h|\right)(x)
\leq 1-e^{S_{j,\ell}g(y)}+(1-e^{-\fm\ell-2})e^{S_{j,\ell}g(y)}\leq
 1-e^{-\fm\ell-2} e^{-\fm\ell}<1-\ve
$$
where in the second inequality we have used \eqref{Up} with $k=\ell$. However, the latter estimate contradicts  \eqref{NormAlmostOne}.
Write $h(y)=r(y) e^{i \phi(y)}$ with $\DS \inf_{y\in X_j}r(y)\geq 1-e^{-\fm\ell-2}$. 
 Notice  that each inverse 
branch $y$
of $T_j^\ell$ has a Lipschitz constant non-exceeding $C\te^\ell$ for some constants $C>0$ and $\te\in(0,1)$. Thus,  
 $\phi$ is H\"older continuous and
$G(\phi\circ y)\leq C' \theta^{\al\ell}$ for some constant $C'$.

Next, we have
$$ (\cL_{j,t}^\ell h)(x)=\sum_{T_j^\ell y=x} e^{[S_{j,\ell}g+it S_{j,\ell}f+i\phi](y)} r(\phi(y)). $$
Note that for any probability measure $\nu$ on a probability space $\Om$ and  measurable functions $q:\Om\to \bbR$  and $r:\Om\to [0,\infty)$ 
$$
\left|\int_{\Om} r(\om)e^{iq(\om)}d\nu(\om)\right|^2=1-2\int_{\Om}\int_{\Om}\sin^2\left(\frac12(q(\om_1)-q(\om_2)\right)r(\om_1)r(\om_1)d\nu(\om_1)d\nu(\om_2).
$$
Since 
$\DS \sum_{y:\, T_j^\ell y=x} e^{S_{j,\ell}g(y)}=1$, we can define a probability measure 
$\nu=\nu_x$ on $X_j$ by\\
$\DS
\nu_x(A)=\sum_{y\in A:\, T_j^\ell y=x}e^{S_{j,\ell}g(y)}.
$
Then 
$\DS
\cL_{j, t}^\ell h(x)=\int re^{i (tS_{j,\ell}f+\phi)}d\nu_x.
$
Fix some $x\in X_{j+\ell}$. 
Since
$|\cL_{j, t}^\ell h(x)| \geq 1-\ve$ and $r=|h|\geq 1-\ve_\ell$, where $\ve_\ell=e^{-\fm\ell-2}\leq \frac12$, we see that 
$$
\sum_{y',y''}e^{S_{j,\ell}g(y')+S_{j,\ell}g(y'')}\sin^2\left(\frac12\left(tS_{j,\ell}f(y')-tS_{j,\ell}f(y'')+\phi(y')-\phi(y'')\right)\right)\leq \frac{\ve(2-\ve)}{2(1-\ve_\ell)^2}\leq 4\ve
$$
where the sum it taken over all points $y',y''$ in $X_j$ such that $T_{j}^\ell y'=T_{j}^\ell y''=x$.
Using also that $e^{S_{j,\ell}g}\geq e^{-\fm\ell}$  we see that 
for every $y',y''$  as above we have
$$
\sin^2\left(\frac12\left(t S_{j,\ell}f(y')-t S_{j,\ell}f(y'')+\phi(y')-\phi(y'')\right)\right)\leq 4\ve e^{2\fm\ell}\leq 4e^{-\fm\ell -2}\leq e^{-\fm\ell}
$$
where in the  second inequality we have  used \eqref{eps L rel}.
Therefore, with $\theta=e^{-\fm}$, for any two inverse branches $y'(\cdot)$ and $y''(\cdot)$ of $T_{j}^\ell$  (with $x$ belonging to their image) we have
\begin{equation}
\label{Colin}
t S_{j,\ell}f(y'(x))+\phi(y'(x)) -t S_{j,\ell}f(y''(x))-\phi(y''(x))\in 2\pi \integers+ O(\theta^\ell). 
\end{equation}

The above arguments show that \eqref{Colin} holds  uniformly in $x$ because of our assumption \eqref{NormAlmostOne}.
Now let $y',\tilde y'$ and $y'',\tilde y''$ be two pairs of inverse branches of $x'$ and $x''$ of the form $y'=y_1\circ w_1$, $\tilde y'=y_1\circ w_2$, $y''=y_2\circ v_2$ and $\tilde y''=y_2\circ v_1$, with $y_i,w_i$ and $v_i$ like in the definition of $\Delta_{j,\ell,k,m}$.
Thus, 
$$ 
\Delta_{j,\ell,k,m}(x', x'',y_1,y_2,w_1,w_2,v_1,v_2)+
$$
$$
[\phi(y_1\circ w_1(x'))-\phi(y_1\circ w_2 (x''))]-[\phi(y_2\circ v_2(x'))-\phi(y_2\circ v_1 (x''))]\in \frac{2\pi \integers}{t}+ 
O(\theta^\ell). $$ 
Since 
$$\phi(y_1\circ w_1(x'))-\phi(y_1\circ w_2 (x''))=O(\theta^{\beta \ell}), \quad
\phi(y_2\circ v_2(x'))-\phi(y_2\circ v_1 (x''))=O(\theta^{\beta k})=O(\theta^{\beta \ell})$$
we conclude that 
$\Delta_{j,\ell,k,m}(x', x'', y', y'')$ 
is $O(\theta^{\beta \ell})$ close to $\frac{2\pi \integers}{t}.$
This proves (a).

To prove (b) we use \eqref{Colin} with $y', y''$ replaced by $z\circ y'$ and $z\circ y''$ to get
$$ t S_{j,\ell}f(z(y'(x)))+\phi(z(y'(x))) -t S_{j,\ell}f(z(y''((x)))-\phi(z(y''(x)))\in 2\pi \integers+ O(\theta^\ell). $$
Therefore
$$ t S_{j,\ell}f(z(y'(x)))-t S_{j,\ell}f(z(y''(x)))\in 2\pi \integers+ O\left(\theta^{\beta(\ell-\brl)}\right). $$
In other words there is an integer valued function 
$m(\bry, \brry)$ such that 
\begin{equation}\label{HInteger}
    S_{j,\ell}f(z(y'(x)))-S_{j,\ell}f(z(y''(x)))
=\frac{2\pi m(y', y'')}{t}+ O\left(\theta^{\beta(\ell-\brl)}\right).
\end{equation}
This proves (b).
\end{proof}

\begin{proof}[Proof of Lemma \ref{LmTDVar}.]
In order to  present the idea of the proof in the simplest possible setting we
first  prove the lemma for sequential topologically mixing subshift of finite type.  
Then we explain the modifications needed in the general case.

In the case of  a sequential subshift we  take  $\xi$ in Assumption \ref{AssPairing} such that $\mathsf{d}_j(x,y)<\xi$ is equivalent to $x_0=y_0$ (e.g. any $1/4<\xi<1/2$ will do).
Take an arbitrary point $a=(a_0,a_1,....)$. 

 Write $q=j+\ell.$
For every two symbols $u\in\cA_{q-n_0}$ and $v\in\cA_{q}$ 
choose some admissible path $P_{q,u,v}$ of length $n_0$ from $u$ to $v$ (not including $u$ and $v$).
Define a function $\al'_{q}:X_{q}\to X_0$ by  
$$
\al'_{q}(x)=[a_0,a_1,...,a_{q-n_0},P_{q, a_{q-n_0}, x_{q}}, x]:=
(a_0,a_1,...,a_{q-n_0},P_{q, a_{q-n_0}, x_{q}}, x_{q}, x_{q+1},\dots)
$$
where $x=(x_{q},x_{q+1}...)\in X_{q}$.
Then the function $x\to\al'_{q}(x)$ is Lipschitz  and 
\begin{equation}\label{al prime lip}
\mathsf{d}_0(\al'_{q}(x),\al'_{q}(x'))\leq C2^{-q}\mathsf{d}_{q}(x,x')
\end{equation}
for some constant $C=C_{n_0}$ which depends only on $n_0$. Indeed, if for some $s$ we have $x_{q+k}=x'_{q+k}$ for all $k\leq s$ then 
$\DS
P_{q, a_{q-n_0}, x_{q}}=P_{q, a_{q-n_0}, x_{q+\ell}'}
$
and so the first $q+s$ coordinates of $\al'_{q}(x)$ and $\al'_{q}(x')$ coincide. On the other hand, if $x_{q}\not=x_{q}'$ then we have
$\mathsf{d}_0(\al'_{q}(x),\al'_{q}(x'))=2^{-(q-n_0)}=2^{n_0}2^{-q}
\mathsf{d}_q(x,x')$ (as $\mathsf{d}_q(x,x')=1$).
Next, let
$$
\al_{j,\ell}=T_0^j\circ \al'_{j+\ell},\quad R_{j,\ell}=S_{j,\ell}\circ  \al_{j,\ell}
$$
and 
$$
H_{j,\ell}=R_{j,\ell}-S_{j,\ell}\circ T_0^{j}(a)=S_{j,\ell}\circ T_0^j\circ \al'_{j+\ell}-S_{j,\ell}\circ T_0^{j} (a).
$$
Let the functions $R_ k$ and $H_k$ be given by
$$
R_{k}=S_{0,k}\circ \al'_{k}
\quad\text{and}\quad
H_{k}=R_{k}-S_{0,k}(a)=S_{0,k}\circ\al'_{k}-S_{0,k}(a).
$$
Then $\DS \sup_k\|H_k\|_{\beta}<\infty$ since the first $k-n_0$ coordinates of $\al'_{k}(x)$ coincide with those of 
$a$ and $\DS \sup_k \|f_k\|_{\be}<\infty$, and if for some $s$ and points $x,x'\in X_{j+\ell}$ we have $x_{j+\ell+m}=x_{j+\ell+m}',\,m\leq s$ then $\al_{j+\ell}'(x)$ and $\al_{j+\ell}'(x')$ have the same $j+\ell+s$ first coordinates. \\

We  claim next that
\begin{equation}\label{C1}
\left\|\left(H_{j,\ell+1}\circ T_{j+\ell}-H_{j,\ell}\right)-\left(H_{j+\ell+1}\circ T_{j+\ell}-H_{j+\ell}\right)\right\|_{\beta}=O(\te_3^\ell)
\end{equation}
for some $\te_3\in(0,1)$.
Indeed, since
\begin{equation}\label{Cocycle}
S_{j,\ell}\circ T_0^j=S_{0,j+\ell}-S_{0,j}
\end{equation}
we have
\begin{equation}\label{est1}
R_{j,\ell+1}\circ T_{j+\ell}-R_{j,\ell}=\left(S_{0,j+\ell+1}-S_{0,j}\right)\circ\al'_{j+\ell+1}\circ T_{j+\ell}-\left(S_{0,j+\ell}-S_{0,j}\right)\circ\al'_{j+\ell}
\end{equation}
$$
=R_{j+\ell+1}\circ T_{j+\ell}-R_{j+\ell}+\left(S_{0,j}\circ\al'_{j+\ell}-S_{0,j}\circ\al'_{j+\ell+1}\circ T_{j+\ell}\right).
$$
Since the points $\al'_{j+\ell}(x)$ and $\al'_{j+\ell+1}\circ T_{j+\ell}(x)$ have the same $j+\ell-n_0$ first coordinates and $\DS \sup_k\|f_k\|_\beta<\infty$ we see that 
$$
\sup|S_{0,j}\circ\al'_{j+\ell}-S_{0,j}\circ\al'_{j+\ell+1}\circ T_{j+\ell}|=O(2^{-\beta\ell}).
$$
Using also \eqref{al prime lip} we get
\begin{equation}\label{est2}
\|S_{0,j}\circ\al'_{j+\ell}-S_{0,j}\circ\al'_{j+\ell+1}\circ T_{j+\ell}\|_{\be}=O(2^{-\beta\ell}).
\end{equation}
In order to complete the proof of \eqref{C1}, we note that
\begin{equation}\label{H to R }
H_{j,\ell+1}\circ T_{j+\ell}-H_{j,\ell}=R_{j,\ell+1}\circ T_{j+\ell}-R_{j,\ell}+(S_{j,\ell}T_0^{j}a-S_{j,\ell+1}T_0^{j}a)
\end{equation}
$$
=R_{j,\ell+1}\circ T_{j+\ell}-R_{j,\ell}-f_{j+\ell}(T_0^{j+\ell}a)
\,\,\,\text{ and }
$$
$$
H_{j+\ell+1}\circ T_{j+\ell}-H_{j+\ell}=R_{j+\ell+1}\circ T_{j+\ell}-R_{j,\ell}+(S_{0,j+\ell}a-S_{0,j+\ell+1}a)$$
$$=R_{j+\ell+1}\circ T_{j+\ell}-R_{j+\ell}-f_{j+\ell}(T_0^{j+\ell}a).
$$
 Hence the expression inside the absolute value on the LHS of \eqref{C1} equals to
 
$\DS \left[R_{j,\ell+1}\circ T_{j+\ell}-R_{j,\ell}\right]-\left[R_{j+\ell+1}\circ T_{j+\ell}-R_{j+\ell}\right] $
which together with \eqref{est1} and \eqref{est2} yields \eqref{C1}. \\

In view of \eqref{C1} and \eqref{H to R } it is enough to prove \eqref{Deccomp} with $H_{j,\ell-1}$ and $H_{j,\ell}$ instead of $H_{j+\ell-1}$ and $H_{j+\ell}$, respectively. To prove \eqref{Deccomp} with these functions we write 
\begin{equation}\label{Start}
R_{j,\ell+1}\circ T_{j+\ell}-R_{j,\ell}=S_{j,\ell+1}\circ \al_{j,\ell+1}\circ T_{j+\ell}-S_{j,\ell}\circ \al_{j,\ell}
\end{equation}
$$
=S_{j,\ell+1}\circ \al_{j,\ell}-S_{j,\ell}\circ \al_{j,\ell}+D_{j,\ell}=f_{j+\ell}\circ T_j^{\ell}\circ \al_{j,\ell}+D_{j,\ell}=f_{j+\ell}+D_{j,\ell}
$$
where 
$$
D_{j,\ell}:=S_{j,\ell+1}\circ \al_{j,\ell+1}\circ T_{j+\ell}-S_{j,\ell+1}\circ \al_{j,\ell}
$$
and in the last equality he have used that $T_j^\ell\circ \al_{j,\ell}=\mathrm{id}$.

Next, we claim that 
\begin{equation}\label{C2}
D_{j,\ell}=\frac{2\pi Z_{j,\ell}}{t}+ \fR_{j,\ell}
\end{equation}
where $Z_{j,\ell}$ is an integer valued function and  $\fR_{j,\ell}$ is a function such that 
$\sup|\fR_{j,\ell}|\!\!=\!\!O(\theta_2^\ell)$.
Let us complete he proof of  \eqref{Deccomp} based on the validity of \eqref{C2}. 
By \eqref{Start} and \eqref{C1} it is enough to show that 
\begin{equation}\label{c3}
    \|\fR_{j,\ell}\|_\al\leq c(\ell)
\end{equation} 
where $c(\ell)$ satisfies $c(\ell)\to 0$ as $\ell\to \infty$.

Since $\al_{j,\ell}=\al'_{j+\ell}\circ T_0^j$, \eqref{al prime lip}  gives 
$\DS \sup_{j,\ell}\|D_{j,\ell}\|_\beta<\infty$. Since
$$
\sup|t\fR_{j,\ell}|=\sup|tD_{j,\ell}-2\pi Z_{j,\ell}|=O(\te_2^\ell).
$$
and $\DS \sup_{j,\ell}\|D_{j,\ell}\|_\beta<\infty$, we see that if $\ell$ is large enough then  $Z_{j,\ell}$ must be
 constant on balls or radius $r$ for some positive constant $r$. (Indeed if $x_1$ and $x_2$ are close then 
 $|t||D_{j,\ell}(x_1)-D_{j,\ell}(x_2)|<C\theta_2^\ell$, so $|Z_{j,\ell}(x_1)-Z_{j,\ell}(x_2)|<1$, 
 meaning $Z_{j,\ell}(x_1)=Z_{j,\ell}(x_1)$.) 
 We conclude that $\|Z_{j,\ell}\|_{\beta}\leq C$ for some constant $C$ which does not  depend on $j$ or $\ell$. Therefore 
 $\DS \sup_j\sup\|\fR_{j,\ell}\|_\beta<\infty$. Since  $\sup|\fR_{j,\ell}|\to 0$ as $\ell\to \infty$, 
Lemma \ref{al be} implies
    that  
 $\|\fR_{j,\ell}\|_\al\leq c(\ell)$  for some sequence $c(\ell)$ so that $c(\ell)\to 0$, and \eqref{c3} follows. 

 In order complete
the proof of the lemma for subshifts it remains to prove \eqref{C2}. Set
 $$
z(y)=[a_j,a_{j+1},...,a_{j+\ell-n_0-2},y]=(a_j,...,a_{j+\ell-n_0-2},y_{j+\ell-n_0{-1}},y_{j+\ell-n_0},...)
 $$
 which is defined for all words $y=(y_k)_{k\geq j+\ell-n_0}$ such that $a_{j+\ell-2-n_0}$ and $y_{j+\ell-n_0-1}$ are linked.  
 Let us also set 
 $$
y'(x)=(a_{j+\ell-n_0-1},P_{j+\ell-1, a_{j+\ell-n_0-1},x_{j+\ell-1}},x_{j+\ell-1}, x_{j+\ell},...)
 $$
 and 
 $$
y''(x)=(a_{j+\ell-n_0-1},a_{j+\ell-n_0},P_{j+\ell, a_{j+\ell-n_0},x_{j+\ell}}, x_{j+\ell},x_{j+\ell+1},...).
 $$
 Then 
 $$
\al_{j,\ell-1}(x)=z(y'(x))
 \quad
\text{and} \quad
\al_{j,\ell}(T_{j+\ell-1}x)=z(y''(x)).
 $$
 Notice that $z(\cdot)$ is an inverse branch of $T_{j}^{\ell-n_0-2}$ and that 
 $$T_{j+\ell-n_0-1}^{n_0+1}y'(x)=T_{j+\ell-n_0-1}^{n_0+1}y''(x)=T_{j+\ell-1}x.$$
Thus under \eqref{Cycle} (applied with the point $x'=T_{j+\ell-1}x$, 
 and $\brl=n_0+1$) we have
\begin{equation*}
D_{j,\ell-1}=S_{j,\ell}\circ \al_{j,\ell}\circ T_{j+\ell-1}-S_{j,\ell}\circ \al_{j,\ell-1}=\frac{2\pi Z}{t}+\fR
\end{equation*}
where $Z=Z_{j,\ell-1}$ is an integer valued function and  $\fR=\fR_{j,\ell-1}$ is a function such that 
$\sup|\fR|=O(\theta_2^\ell)$.
This completes the proof of \eqref{C2}  under \eqref{Cycle} for sequential subshifts. 
If instead \eqref{TempDecay} holds with $m=n_0+1$, 
then for every point $x$ we have
$$
D_{j,\ell-1}(x)=\Delta_{j,\ell,k,m}(x', x'', y_1, y_2,w_1,w_2,v_1,v_2)
$$
where $x'=T_{j+\ell}x$, $x''=T_{j+\ell}a$, $y_1=y_2$ coincide with the inverse branch corresponding to the cylinder $[a_j,...,a_{j+\ell-n_0-1}]$,  $w_1$ is the inverse branch corresponding to the cylinder $[a_{j+\ell-n_0}P_{j+\ell-n_0,a_{j+\ell-n_0},x_{j+\ell+1}}]$, $v_2$ is inverse branch corresponding to the cylinder $[P_{j+\ell-n_0-1,a_{j+\ell-n_0-1},x_{j+\ell}},x_{j+\ell}]$ and $w_2=v_1$ coincide with the inverse branch corresponding to the cylinder $[a_{j+\ell-n_0},...,a_{j+\ell}]$.
Indeed, we have that $y_1\circ w_2=y_2\circ v_1$. This
finishes the proof of the lemma for sequential subshifts.
\\


The proof for the more general maps proceeds as follows. First, by Lemma \ref{InvBranch} for every $j, n$ and $y\in X_j$ there is an inverse branch $Z_{j,y,k}:B_{j+k}(T_j^k y,\xi)\to X_{j+k}$ of $T_j^j$ such that 
$$
\text{dist}\left(T_j^{s}(Z_{j,y,k}x), T_j^s y\right)<\xi
$$
for all $s\leq k$ and $x\in B_{j+k}(T_j^k y,\xi)$. The map
$Z_{j,y,k}$ corresponds to the inverse of the map $x\to [y_k,...,y_{k+n-1},x]$ in the case of a subshift, where $y=(y_k,y_{k+1},...)$ and $x=(x_{k+n},x_{k+n+1},...)$. 
Let us  define 
$$
\al'_{j+\ell}=Z_{0,x_0,j+\ell-n_0}\circ S_{j+\ell-n_0,T_0^{j+\ell-n_0}x_0, n_0}=Z_{0,x_0,j}\circ Z_{j, T_0^j x_0, \ell-n_0}\circ W_{j+\ell-n_0,T_0^{j+\ell-n_0}x_0}
$$
where  $W_{j+\ell-n_0,T_0^{j+\ell-n_0}x_0}$ is the right inverse of $T_{j+\ell-n_0}^{n_0}$ from Assumption \ref{Ass n 0}.
Then the proof of the lemma proceeds like in the case of a subshift of finite type with the above definition of the function $\al'_{j+\ell}$, using the  properties of the inverse branches from Lemma~\ref{InvBranch}.
\end{proof}

\section{Local limit theorem in the irreducible case}\label{SSKeyPropZN}

 Here we prove Theorems \ref{LLT1} and \ref{LLT Latt}.  To simplify the proofs we will assume that $\ka_0(f_j\circ T_0^j)=0$ for all $j$, that is, $\bbE[S_n]=0$ for all $n$. This could always be achieved by subtracting a 
constant from $f_j.$


\subsection{Contracting blocks}
\label{SSCBDef}
Fix some $T>\del$ and partition $\{|t|: \del\leq |t|\leq T\}$ into intervals  of small length $\del_1$ (yet to be determined).
 In order to prove \eqref{Suff} it is sufficient to show that if $\del_1$ is small enough then 
for every interval $J$ whose length is smaller than $\delta_1$ we have 

\begin{equation}\label{Suff J}
 \int_{J}\|\cL_{0, t}^n \|_{*}dt
=o(\sig_n^{-1}).
\end{equation}

Let us introduce  a simplifying notation. Given an interval 
of positive integers \\$I=\{a,a+1,...,a+d-1\}$  we write
$\DS
\cL_{t}^{I}=\cL_{a,t}^{d}=\cL_{a+d-1,t}\circ\cdots\circ\cL_{a+1,t}\circ \cL_{a,t}
$
and
$
\DS S_I=S_If=\sum_{j\in I}f_j\circ T_0^j.
$
Henceforth we will refer to a finite interval in the integers as a ``block". The length of a block is the number of integers in the block.

 Fix a small $\ve\in(0,1)$ (that will be determined latter).
We say that a block $I$ is {\em contracting} if 
\begin{equation}
\label{DefContrBl}
\DS \sup_{t\in J}\|\cL_{t}^{I}\|_{*}\leq 1-\eta(\ve)
\end{equation}
where $\eta(\ve)>0$ comes from Lemma \ref{ll2}.
\begin{lemma}\label{SubLemm}
If $I=\{a+1,a+2,...,a+d\}$ is  a non contracting block of size larger than $2k_0$ 
 (where $k_0$ comes from \eqref{Norm bound}) 
and $I''\subset I$ is a sub-block such that $I\setminus I''$ is composed of a union of two disjoint blocks whose lengths not less than $k_0$ 
then $I''$ is a non-contracting block.
\end{lemma}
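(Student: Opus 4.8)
The plan is to deduce this directly from submultiplicativity of the operator norm $\|\cdot\|_*$ together with the uniform contraction bound \eqref{Norm bound}.

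First I would fix the combinatorial picture. Since $I''$ is a block (an interval) and $I\setminus I''$ is a disjoint union of two blocks, those two blocks must be the portion of $I$ strictly to the left of $I''$ and the portion strictly to its right; call them $I_1$ and $I_2$, so that $I=I_1\cup I''\cup I_2$ as consecutive blocks with $\min(|I_1|,|I_2|)\geq k_0$ by hypothesis. From the definition of $\cL_t^{(\cdot)}$ over a block this gives the factorization $\cL_t^{I}=\cL_t^{I_2}\circ\cL_t^{I''}\circ\cL_t^{I_1}$ for every real $t$.

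Next I would invoke Corollary \ref{CorNonExpanding}: because $J\subset\{\del\leq|t|\leq T\}$ and $|I_1|,|I_2|\geq k_0$, we have $\|\cL_t^{I_1}\|_*\leq 1$ and $\|\cL_t^{I_2}\|_*\leq 1$ for all $t\in J$. Submultiplicativity of $\|\cdot\|_*$ then gives, for every $t\in J$,
\begin{equation*}
\|\cL_t^{I}\|_*\;\leq\;\|\cL_t^{I_2}\|_*\,\|\cL_t^{I''}\|_*\,\|\cL_t^{I_1}\|_*\;\leq\;\|\cL_t^{I''}\|_*.
\end{equation*}
Taking the supremum over $t\in J$ and using that $I$ is non-contracting, i.e. $\sup_{t\in J}\|\cL_t^{I}\|_*>1-\eta(\ve)$, I would conclude $\sup_{t\in J}\|\cL_t^{I''}\|_*>1-\eta(\ve)$, which is exactly the definition of $I''$ being non-contracting.

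Here there is no real difficulty: the argument is a one-line estimate, and the only thing to watch is that the bound \eqref{Norm bound} is applicable precisely because the two outer blocks $I_1$ and $I_2$ have length at least $k_0$ — this, rather than anything about $I''$ itself, is what the hypotheses are there to guarantee (and it forces $|I|>2k_0$ automatically).
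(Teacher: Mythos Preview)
Your proof is correct and essentially identical to the paper's: both decompose $I$ into three consecutive blocks, use submultiplicativity of $\|\cdot\|_*$, and apply \eqref{Norm bound} (Corollary \ref{CorNonExpanding}) to the two outer blocks of length $\geq k_0$ to conclude $\|\cL_t^{I''}\|_* \geq \|\cL_t^{I}\|_* > 1-\eta(\ve)$ for some $t\in J$. The only cosmetic difference is that the paper picks a single $t\in J$ witnessing $\|\cL_t^I\|_*>1-\eta(\ve)$ and works with it, while you take the supremum over $J$; these are equivalent.
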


\begin{proof}
Decompose $I=I'\cup I''\cup I'''$ where the blocks $I',I'',I'''$ are disjoint and are ordered so that $I'$ is to the left of $I''$ and $I''$ is to the left of $I'''$.  Since $I$ is non-contracting there exists $t\in J$ such that $1-\eta(\ve)<\|\cL_{t}^{I}\|_{*}$. On the other hand, by sub-multiplicativity of operator norms we have 
\begin{equation}\label{BBY}
1-\eta(\ve)<\|\cL_{t}^{I}\|_{*}=\|\cL_{t}^{I'''}\circ\cL_{t}^{I''}\circ\cL_t^{I'}\|_*\leq 
\|\cL_{t}^{I'''}\|_{*}\|\cL_{t}^{I''}\|_{*}\|\cL_{t}^{I'}\|_{*}.
\end{equation}
Since the lengths of $I'$ and $I'''$  are at least $k_0$ by \eqref{Norm bound} 
we have
$\DS
\|\cL_{t}^{I'}\|_{*}\!\leq\! 1$ and $\|\cL_{t}^{I'''}\|_{*}\!\leq\! 1 .$
Thus by \eqref{BBY} we have 
$\DS
1-\eta(\ve)<\|\cL_{t}^{I''}\|_{*}.
$
Therefore the block $I''$ is non-contracting.
\end{proof}

Combining Lemmata  \ref{LmTDVar} and \ref{SubLemm} together with Corollary \ref{MainCor} we
obtain the following result.

\begin{corollary}\label{CutLemm}
 Let $H_{k}$ be the functions   from Lemma \ref{LmTDVar}. Let $k_0$ be 
 from
 \eqref{Norm bound} and $k_2(\cdot)$ be  from Corollary \ref{MainCor}.
If $\ve$ is small enough then 
there exists $L=L(\ve)\geq \max(k_0,2n_0+1)$ 
such that $L(\ve)\to \infty$ as $\ve\to 0$ with the following properties. 
If $I=\{a,a+1,...,a+d-1\}$ 
is  a non contracting block such that $d>2k_0+k_2(\ve)+L(\ve)$ 
then for every $s\in I$ with 
$$
a+k_0+L(\ve)+k_2(\ve)\leq s\leq a+d-k_0-1
$$
and all $t\in J$ we can write
\begin{equation}\label{Reduc}
tf_{s}=tg_s+tH_{s}-tH_{s+1}\circ T_s+2\pi Z_s
\end{equation}
where $\DS \|g_{s}\|_{ \al}=O(\del_1)+ c(L)$ where $c(L)\to 0$  as $L\to \infty$ (and $\del_1$ is the length of the interval $J$  in \eqref{Suff J}). 
\end{corollary}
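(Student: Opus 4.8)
The plan is to obtain the decomposition \eqref{Reduc} by chaining three of the preceding results. Corollary \ref{MainCor} turns a non-contracting sub-block into a function $h$ with $\|h\|_*\le1$ on which a twisted transfer operator is almost norm-preserving; Lemma \ref{LmTDSmall}(b) then forces the associated cycle sums to lie within $O(\theta_2^\ell)$ of $\tfrac{2\pi}{t}\bbZ$; and Lemma \ref{LmTDVar} converts this into the cohomological decomposition \eqref{Deccomp} of the single term $f_{j+\ell-1}$, with the crucial feature that the coboundary functions $H$ depend only on the index. The hypothesis that $I$ is non-contracting is transferred to the relevant sub-block via Lemma \ref{SubLemm}, and a final perturbation argument upgrades the conclusion from one frequency $t_0\in J$ to all of $J$.

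First I would fix parameters. With $\del,T$ as given, let $\gamma_1,\theta_2\in(0,1)$ be from Lemma \ref{LmTDSmall}, let $k_0$ be from \eqref{Norm bound} and $k_2(\cdot)$ from Corollary \ref{MainCor}, and set $L=L(\ve)=\lfloor\ln\ve/\ln\gamma_1\rfloor$, so that $\ve\le\gamma_1^{L}$, $L(\ve)\to\infty$ as $\ve\to0$, and (after shrinking $\ve$) $L(\ve)\ge\max(k_0,2n_0+1)$. Since $m_0=0$ in this section, Lemma \ref{LmTDSmall} is used with $\ell=k$ and only part (b) is needed; we also take $c(\cdot)$ from Lemma \ref{LmTDVar} non-increasing. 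Now fix a non-contracting block $I=\{a,\dots,a+d-1\}$ with $d>2k_0+k_2(\ve)+L(\ve)$ and $s$ with $a+k_0+L+k_2\le s\le a+d-k_0-1$, put $\ell=L$, $j=s-\ell+1$, and consider the sub-block $I''=\{j-k_2,\dots,s\}$ of length $k_2+\ell$ with left endpoint $l:=j-k_2$. Its complement in $I$ is the disjoint union of $\{a,\dots,l-1\}$ and $\{s+1,\dots,a+d-1\}$, whose lengths are $\ge k_0$ by the constraints on $s$; as $d>2k_0$ too, Lemma \ref{SubLemm} gives that $I''$ is non-contracting, so there is $t_0\in J$ with $\|\cL_{l,t_0}^{k_2+\ell}\|_{*}>1-\eta(\ve)$.

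Since $\ell=L\ge k_0$, Corollary \ref{MainCor} applied with this $l$, with $m=\ell$ and $t=t_0$ produces $h$ with $\|h\|_*\le1$ and $\min_x|\cL_{j,t_0}^{\ell}h(x)|>1-\ve$, i.e. \eqref{NormAlmostOne} for the block $\{j,\dots,j+\ell-1\}$. As $\ell=L\le L+1$, $\ve\le\gamma_1^{L}$, $\del\le|t_0|\le T$ and $\ell\ge2n_0$, Lemma \ref{LmTDSmall}(b) yields \eqref{Cycle} for $t_0,j,\ell$ and all admissible $\bar l,x,y',y''$, and Lemma \ref{LmTDVar} then gives a sequence $(H_k)$ with $\sup_k\|H_k\|_\al<\infty$ and
\begin{equation*}
f_s=g_{s,t_0}+H_s-H_{s+1}\circ T_s+\frac{2\pi}{t_0}Z_{s,t_0},
\end{equation*}
with $Z_{s,t_0}$ integer valued, $\|g_{s,t_0}\|_\al\le c(\ell)=c(L)$, and $\|Z_{s,t_0}\|_\al\le C$ for a constant $C$ independent of $s,j,\ell,t_0$ (the proof of Lemma \ref{LmTDVar} shows $Z$ is constant on balls of a fixed positive radius). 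Multiplying by an arbitrary $t\in J$ and writing $\tfrac{t}{t_0}=1+\tfrac{t-t_0}{t_0}$ gives
\begin{equation*}
tf_s=\Bigl(tg_{s,t_0}+\tfrac{2\pi(t-t_0)}{t_0}Z_{s,t_0}\Bigr)+tH_s-tH_{s+1}\circ T_s+2\pi Z_{s,t_0},
\end{equation*}
so \eqref{Reduc} holds with $Z_s:=Z_{s,t_0}$ (integer valued, independent of $t$) and $g_s:=g_{s,t_0}+\tfrac{2\pi(t-t_0)}{t\,t_0}Z_{s,t_0}$; since $|t-t_0|\le\del_1$ and $|t|,|t_0|\ge\del$, we get $\|g_s\|_\al\le c(L)+\tfrac{2\pi C}{\del^{2}}\del_1=c(L)+O(\del_1)$.

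I expect the main obstacle to be the index bookkeeping that makes the three lemmas fit together at the single scale $L=L(\ve)$: the sub-block $I''$ to which Corollary \ref{MainCor} is applied must be long enough on both sides inside $I$ for Lemma \ref{SubLemm} to apply, yet its total length $k_2+\ell$ must keep $\ell\le L+1$ (with $\ell\ge2n_0$) so that Lemma \ref{LmTDSmall} still applies, its error decaying like $\theta_2^\ell$ while its hypothesis requires $\ve\le\gamma_1^\ell$. A secondary but essential point, which makes the $t_0\to t$ perturbation contribute only $O(\del_1)$ rather than $O(L\,\del_1)$ to $\|g_s\|_\al$, is that the integer-valued $Z$ delivered by Lemma \ref{LmTDVar} has Hölder norm bounded independently of the block length.
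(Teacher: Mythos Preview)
Your proof is correct and follows essentially the same route as the paper's: both choose $L(\ve)$ so that $\ve\le\gamma_1^{L}$, use Lemma~\ref{SubLemm} to transfer non-contraction to the sub-block $\{l,\dots,l+k_2+L-1\}$, apply Corollary~\ref{MainCor} to obtain \eqref{NormAlmostOne} on the length-$L$ block starting at $j=l+k_2$, and then invoke Lemma~\ref{LmTDVar} (via Lemma~\ref{LmTDSmall}(b), which the paper leaves implicit) to obtain the decomposition for $f_{j+L-1}=f_s$. The only substantive difference is in the passage from the single frequency $t_0$ to all $t\in J$: the paper writes $tf_s=t_0f_s+(t-t_0)f_s$ and absorbs $(t-t_0)(f_s+H_{s+1}\circ T_s-H_s)$ into $g_s$, whereas you multiply the decomposition by $t/t_0$ and absorb $\tfrac{2\pi(t-t_0)}{t_0}Z_{s,t_0}$ into $tg_s$; your version relies on the uniform bound $\|Z_{s,t_0}\|_\al\le C$, which is indeed proved inside Lemma~\ref{LmTDVar} but not stated there, while the paper's version uses only the stated bounds on $f$ and $H$.
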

\begin{remark}
The functions $g_s$ and $Z_s$ can also depend on $t,\ve$ and $I$, but it is really important  for the next steps that the functions $H_s$ do not depend on $I$. 
\end{remark}

\begin{proof}
For each $\ve$,  take $L=[a|\ln \ve|]$ where $a$ is sufficiently large so that \eqref{L-Epsilon} holds.
In the following arguments, in order to simplify the notations we write $L=L(\ve)$ and $k_2=k_2(\ve)$.

Take $l\in I$ with $a+k_0\leq l$ and $l+k_2+L-1<a+d-1-k_0$. By Lemma \ref{SubLemm} applied with the sub-block $I''=\{l,l+1,...,l+k_2+L-1\}$ we have 
$\DS
\sup_{t\in J}\|\cL_{l,t}^{k_2+L}\|_*>1-\eta(\ve).
$
Fix some $t'\in J$ such that 
$\DS
\|\cL_{l,t'}^{k_2+L}\|_*>1-\eta(\ve).
$
By Corollary \ref{MainCor} applied with $m=L$ there is a function $h$ with $\|h\|_*=1$ and 
$$
\min_{x\in X_{l+k_2+L}}|\cL_{l+k_2,t'}^{L}h(x)|> 1-\ve.
$$
 Therefore, by Lemma \ref{LmTDVar}, with $j=l+k_2$ and $\ell=L=L(\ve)$ we have 
$$
t'f_{l+k_2+L-1}=t'f_{j+L-1}=
t'g_{t',j,L}+t'H_{j+L-1}-t'H_{j+L}\circ T_{j+L}+2\pi Z_{t',j,L}
$$
where $Z_{t',j,L}$ is integer valued and $\|g_{t',j,L}\|_\al\leq c(L)$ with $c(L)\to 0$ as $L\to \infty$. Now, if $t\in J$ then we can write
$$
tf_{j+L-1}=t'f_{j+L-1}+(t-t')f_{j+L-1}=
g_{t,j,L}+tH_{j+L-1}-tH_{j+L}\circ T_{j+L-1}+2\pi Z_{t',j,L}
$$
where 
$$
g_{t,j,L}=t'g_{t',J,L}+(t-t')\left(f_{j+L-1}+H_{j+L}\circ T_{j+L-1}-H_{j+L-1}\right).
$$
Since the length of the interval $J$ does not exceed $\del_1$ and the $\|\cdot\|_\al$ norms of the functions $f_k$ and $H_{k}$ are uniformly bounded we have 
$\DS
\|g_{t,j,L}\|_{\al}\leq c(L)+C\del_1
$
for some constant $C$.  

To finish the proof, note that any $s\in I$ can be written as $s=j+L-1=l+k_2+L-1$ for some $l$ with the above properties when $a+k_0+L+k_2\leq s\leq a+d-k_0-1$.
\end{proof}

The last  key tool 
needed for the proof of \eqref{Suff J}  is the following  simple fact. 
\begin{lemma}\label{Qqad L}
 Let $Q(h)=ah^2+bh+c$ be a quadratic function with $a>0$ 
  and $\cJ$ be an interval.
  Then there is an absolute constant $C>0$ such that
 $$
 \int_{\cJ} e^{-Q(h)}dh\leq 
 \frac{C}{\sqrt{a}} \exp\left[-{  \min_{\cJ}} Q(h)\right].
 $$
\end{lemma}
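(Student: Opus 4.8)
The plan is to reduce the estimate to the standard Gaussian integral bound by completing the square. Write $Q(h)=a(h-h_*)^2+Q(h_*)$ where $h_*=-b/(2a)$ is the vertex of the parabola; since $a>0$ this is the global minimum of $Q$ on $\bbR$, and $Q(h_*)=c-b^2/(4a)$. The point of the lemma, however, is that the bound involves $\min_{\cJ}Q$ rather than $\min_{\bbR}Q$, so I must be careful to only discard the part of the integral that is genuinely harmless.

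The key steps, in order, are as follows. First, substitute $u=\sqrt{a}\,(h-h_*)$, so that $\int_{\cJ}e^{-Q(h)}\,dh=e^{-Q(h_*)}a^{-1/2}\int_{\sqrt a(\cJ-h_*)}e^{-u^2}\,du\le e^{-Q(h_*)}a^{-1/2}\int_{\bbR}e^{-u^2}\,du=\sqrt{\pi/a}\;e^{-Q(h_*)}$. This already gives the claimed form with $C=\sqrt\pi$ \emph{provided} $h_*\in\cJ$, since then $\min_{\cJ}Q=Q(h_*)$. Second, handle the case $h_*\notin\cJ$: then $Q$ is monotone on $\cJ$ (because $\cJ$ lies entirely on one side of the vertex), so $\min_{\cJ}Q$ is attained at the endpoint $h_0$ of $\cJ$ nearest to $h_*$. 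Translating, we may assume without loss of generality that $\cJ\subseteq[h_0,\infty)$ with $h_0\ge h_*$; then for $h\in\cJ$ we have $Q(h)=a(h-h_*)^2+Q(h_*)\ge a(h-h_0)^2+Q(h_0)$ — this inequality holds because $(h-h_*)^2-(h-h_0)^2=(h_0-h_*)(2h-h_0-h_*)\ge 0$ when $h\ge h_0\ge h_*$, and $Q(h_0)-Q(h_*)=a(h_0-h_*)^2$ absorbs the difference exactly. Hence $\int_{\cJ}e^{-Q(h)}\,dh\le e^{-Q(h_0)}\int_{h_0}^\infty e^{-a(h-h_0)^2}\,dh=\tfrac12\sqrt{\pi/a}\;e^{-Q(h_0)}=\tfrac12\sqrt{\pi/a}\;e^{-\min_{\cJ}Q}$.

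Combining the two cases, the bound holds with the absolute constant $C=\sqrt\pi$ (in fact one can take $C=1$ after a crude estimate, but this is immaterial). I do not expect any genuine obstacle here: the only point requiring a moment's care is that the exponent in the conclusion is $\min_{\cJ}Q$, not $\min_{\bbR}Q$, which forces the case distinction on whether the vertex $h_*$ lies inside $\cJ$; the monotonicity argument in the second case is the one nontrivial observation, and it is completely elementary.
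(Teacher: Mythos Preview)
Your proof is correct and follows essentially the same approach as the paper: both complete the square (equivalently, change variables to reduce to $Q(h)=h^2$) and then split into two cases according to whether the vertex lies in, or near, $\cJ$. The only minor difference is in the tail case: the paper uses the standard Gaussian tail bound $\int_A^\infty e^{-h^2}\,dh\le e^{-A^2}/(2A)$ for $A\ge 1$, whereas you use the quadratic domination $Q(h)\ge a(h-h_0)^2+Q(h_0)$ to reduce directly to a half-Gaussian integral; both are equally elementary and yield the same conclusion.
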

\begin{proof}
By linear change of variables we can reduce the problem to the case $Q(h)=h^2.$
Now there are two cases:

(1) If $[-1, 1]\cap \cJ\neq\emptyset$ 
then the result follows because $\DS \int_{\mathbb{R}} e^{-h^2} dh<\infty.$

(2) If $[-1, 1]\cap \cJ=\emptyset$ then the result follows since for $A\geq 1$

\hskip2cm
$\DS \int_A^{\infty} e^{-h^2} dh<\int_A^\infty \frac{2h}{2A} e^{-h^2} dh=\frac{e^{-A^2}}{2A}<e^{-A^2}. $
\end{proof}

Next, let 
$$
D(\ve)=4\left(L(\ve)+n_0+k_0+k_2(\ve)\right)
$$
where $L(\ve)$ comes from Corollary \ref{CutLemm}, $k_2(\ve)$ comes from Corollary \ref{MainCor}, $n_0$ comes  from  Assumption \ref{Ass n 0} and $k_0$ comes from \eqref{Norm bound}. 
Let $L_n$ be the maximal number of contracting blocks contained in 
$I_n=\{0,1,...,n-1\}$, 
 such that 
the distance between consecutive blocks is at least $k_0$ and the 
length of each block is between $D(\ve)$ and $2D(\ve)$.

 Let $\fB\!\!=\!\!\{B_1, B_2, \dots, B_{L_n}\}$ be a corresponding set of contracting blocks
separated by at least $k_0$, 
and $\fA\!\!=\!\!\{A_1, \!\dots\!, A_{\tilde L_n}\}$ be a partition into intervals of the compliment of the union of the blocks $B_j$ in $I_n$, ordered so that $A_j$ is to the left of $A_{j+1}$. 
Thus
$\tilde L_n\!\!\in\!\! \{L_n\!-\!1, L_n, L_n\!+\!1\}.$

We will  prove of \eqref{Suff J} by considering three cases  depending on the size of $L_n.$

\subsection{Large number of contracting blocks}
\label{SSLrgeCB}

The first case is when $L_n$ is at least of logarithmic order in $\sig_n$. More precisely, we have the following result.

\begin{proposition}\label{case 1}
    There is a constant $c=c(\ve)>0$ such that \eqref{Suff J} holds if $L_n\geq c\ln\sig_n$. 
\end{proposition}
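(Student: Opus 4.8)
The plan is to exploit submultiplicativity of the operator norm $\|\cdot\|_*$ together with the fact that each of the $L_n$ contracting blocks in $\fB$ supplies a uniform contraction factor $1-\eta(\ve)$; once $L_n$ is of logarithmic size in $\sig_n$, the resulting product already beats $\sig_n^{-1}$, with room to spare.

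First I would fix $t\in J$ and write $\cL_{0,t}^n$ as a composition of the operators $\cL_t^{B_i}$, $1\le i\le L_n$, and $\cL_t^{A_j}$, $1\le j\le \tilde L_n$, read off in the order in which the corresponding subintervals sit inside $I_n=\{0,\dots,n-1\}$. Submultiplicativity gives
\[
\|\cL_{0,t}^n\|_*\le\prod_{i=1}^{L_n}\|\cL_t^{B_i}\|_*\cdot\prod_{j=1}^{\tilde L_n}\|\cL_t^{A_j}\|_*.
\]
By the definition \eqref{DefContrBl} of a contracting block, $\|\cL_t^{B_i}\|_*\le 1-\eta(\ve)$ for every $i$ and every $t\in J$, so these factors contribute $(1-\eta(\ve))^{L_n}$. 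For the complementary pieces: every $A_j$ lying strictly between two consecutive blocks has length at least $k_0$ (the blocks of $\fB$ are $k_0$-separated), hence $\|\cL_t^{A_j}\|_*\le 1$ by \eqref{Norm bound}; and the at most two remaining pieces, those adjacent to the endpoints of $I_n$, have uniformly bounded length contributions, so $\|\cL_t^{A_j}\|_*\le C_3'$ for an $n$-independent constant $C_3'$ coming from the single-step bounds in Lemmata \ref{ll1} and \ref{ll2}(a). Setting $C_3:=(C_3')^2$ this yields
\[
\sup_{t\in J}\|\cL_{0,t}^n\|_*\le C_3\,(1-\eta(\ve))^{L_n}.
\]

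Next I would integrate over $J$, using that $|J|\le\del_1$ and that $L_n\ge c\ln\sig_n$: since $1-\eta(\ve)\in(0,1)$,
\[
\int_J\|\cL_{0,t}^n\|_*\,dt\le\del_1 C_3\,(1-\eta(\ve))^{L_n}\le\del_1 C_3\,\sig_n^{-c|\ln(1-\eta(\ve))|}.
\]
It then suffices to choose $c=c(\ve)$ with $c\,|\ln(1-\eta(\ve))|>1$, for instance $c(\ve):=2/|\ln(1-\eta(\ve))|$; recalling $\sig_n\to\infty$, the right-hand side is $o(\sig_n^{-1})$, which is exactly \eqref{Suff J} for every block $J$ of length at most $\del_1$, hence the proposition.

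I do not expect a genuine obstacle in this case — this is the \emph{easy} branch of the three-case analysis, in which the sheer number of contracting blocks does all the work. The only points requiring a little care are bookkeeping: verifying that the complementary intervals between two consecutive contracting blocks really have length at least $k_0$ (so that \eqref{Norm bound} applies), and absorbing the at most two short boundary intervals into the harmless $n$-independent constant $C_3$. The substantive difficulties of proving \eqref{Suff J} are postponed to the two remaining cases, where most of the variance comes from non-contracting blocks.
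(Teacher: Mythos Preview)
Your proof is correct and follows essentially the same approach as the paper: submultiplicativity plus the contraction \eqref{DefContrBl} on each $B_i$ and the bound \eqref{Norm bound} on the intermediate $A_j$'s yield $\|\cL_{0,t}^n\|_*\le C(1-\eta(\ve))^{L_n}$, and then one chooses $c$ large enough. In fact you are slightly more careful than the paper, which simply writes $\prod_k\|\cL_t^{A_k}\|_*\le 1$ without singling out the (at most two) boundary intervals that could have length below $k_0$; your handling of these via a harmless constant $C_3$ is the right patch.
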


\begin{proof}
By the submultiplicativity of operator norms, 
Corollary \ref{CorNonExpanding} and 
\eqref{DefContrBl},
 $\forall t\in J$ we have
$$
\|\cL_{0,t}^n\|_*\leq \left(\prod_{k}\|\cL_{t}^{A_{k}}\|_*\right)\cdot \left(\prod_{j}\|\cL_{t}^{B_{j}}\|_*\right)\leq 
\left(1-\eta(\ve)\right)^{L_n}.
$$
Hence \eqref{Suff J} holds when $L_n\geq c\ln\sig_n$  for $c$ large enough. 
\end{proof}

\subsection{Moderate number of contracting blocks}

It remains to consider the case where $L_n\leq c\ln \sig_n$ for $c=c(\ve)$ from Proposition \ref{case 1}.

\begin{proposition}\label{case 2}
There is $\ve_0>0$ such that   \eqref{Suff J} holds if $\ve<\ve_0$ and $L_n\to\infty$ but
$L_n\leq c\ln \sig_n$,
   where $c=c(\ve)$ comes from Proposition \ref{case 1}.
\end{proposition}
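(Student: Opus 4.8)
The plan is to prove $\int_J\|\cL_{0,t}^n\|_*\,dt=o(\sig_n^{-1})$ for every block $J$ of length $\le\del_1$ (with $\del_1$ small, fixed depending on $\ve$) by establishing on $J$ a bound of the form $\|\cL_{0,t}^n\|_*\le C_3^{\,L_n+2}\,e^{-c_2 t^2 W_n}$, where $C_3$ is a constant independent of $\ve$ and $W_n$ is an ``effective variance'' harvested from the non-contracting blocks. Granting this: since $L_n\le c\ln\sig_n$ the prefactor is at most polynomial in $\sig_n$, i.e. $C_3^{\,L_n+2}\le C_3^2\,V_n^{q(\ve)}$ with $V_n=\sig_n^2$ and $q(\ve)$ a constant; Lemma \ref{Qqad L} applied to $Q(t)=c_2 W_n t^2$ together with $J\subset\{|t|\ge\del\}$ then gives $\int_J\|\cL_{0,t}^n\|_*\,dt\le C\,V_n^{q(\ve)}W_n^{-1/2}e^{-c_2\del^2 W_n}$, which is $o(V_n^{-1/2})=o(\sig_n^{-1})$ as soon as $W_n\gtrsim V_n$ (indeed any $W_n$ with $W_n/\ln V_n\to\infty$ suffices). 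So the two steps are: (a) produce the operator bound, and (b) show $W_n\gtrsim V_n$; step (b) is the crux.

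For (a) I would decompose $I_n=\bigsqcup_j B_j\sqcup\bigsqcup_k A_k$ into the contracting blocks and the complementary blocks. Each $B_j$ contributes a factor $\le 1-\eta(\ve)\le 1$ by \eqref{DefContrBl}, each $A_k$ of length $\ge k_0$ contributes $\le 1$ by \eqref{Norm bound}, and the at most two complementary blocks of length $<k_0$ contribute only a bounded factor. For a long $A_k$ I would isolate its ``deep interior'' $A_k^g$, obtained by deleting from each end a bounded chunk of length $\ge k_0$. By maximality of the family $\fB$, for $s$ in the deep interior the sub-block $\{s-k_2(\ve)-L(\ve)+1,\dots,s\}$ must be non-contracting: otherwise, extending it by $k_0$ on each side produces, via sub-multiplicativity and \eqref{Norm bound}, a contracting block of length in $[D(\ve),2D(\ve)]$ that is $k_0$-separated from all the $B_j$'s and could be adjoined to $\fB$. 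Hence the mechanism of Corollary \ref{CutLemm} applies, giving for every $t\in J$ and $s\in A_k^g$ a decomposition $tf_s=tg_s+tH_s-tH_{s+1}\circ T_s+2\pi Z_s$ with $\|g_s\|_\al\le C\del_1+c(L(\ve))$, with $Z_s$ integer-valued and uniformly bounded, and with $(H_s)$ a globally-defined sequence (independent of $k$ and $t$) satisfying $\sup_s\|H_s\|_\al<\infty$. Writing $M_s$ for multiplication by $e^{itH_s}$ (so $\|M_s^{\pm1}\|_*\le C_T$, $C_T$ independent of $\ve$) and $\cB_{s,t}(h)=\cL_s(e^{itg_s}h)$, one has $\cL_{s,t}=M_{s+1}^{-1}\cB_{s,t}M_s$, so along $A_k^g$ the coboundary telescopes and Proposition \ref{PrSmVarBlock}, applied with the functions $tg_s$ (admissible since $\|tg_s\|_\al\le T(C\del_1+c(L(\ve)))\le\del_0$ once $\del_1$ is small and $\ve$ small), together with the equivalence of the $\|\cdot\|_*$ and $\|\cdot\|_\al$ norms, gives $\|\cL_t^{A_k^g}\|_*\le C_3\,e^{-c_2 t^2\,\text{Var}(S_{A_k^g}g)}$. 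Multiplying over all blocks yields the claimed bound with $W_n=\sum_k\text{Var}(S_{A_k^g}g)$.

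The main obstacle is (b), the lower bound $W_n\gtrsim V_n$, and here I would argue by contradiction using irreducibility (the standing hypothesis of Theorem \ref{LLT1}). Suppose $W_n$ does not grow like a fixed multiple of $V_n$. Using the martingale--coboundary decomposition \eqref{EqReductionH} to write each $g_s$ on $A_k^g$ as $g_s=A_s+(\text{coboundary})-(2\pi/t)Z_s+\text{const}$, using exponential decay of correlations of H\"older functions to replace $\text{Var}(\sum_k S_{A_k^g}g)$ by $\sum_k\text{Var}(S_{A_k^g}g)=W_n$ up to $O(L_n)=o(V_n)$ cross terms, and using that the indices outside $\bigsqcup_k A_k^g$ number only $O(\ln\sig_n)$ and hence contribute $o(V_n)$ to $\text{Var}(S_nf)$, one obtains that $S_nf$ coincides, up to a term of variance $o(V_n)$ and an $\tfrac{2\pi}{t}\bbZ$-valued Birkhoff sum, with a sum of reverse-martingale differences of variance $o(V_n)$; running the argument behind Corollary \ref{H corr}(iii)/Theorem \ref{Reduce thm} then forces $(f_j)$ to be reducible, a contradiction. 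Hence $W_n\gtrsim V_n$ with implied constant depending on $\ve$, and since $V_n=\sig_n^2\to\infty$ the estimate of step (a) combined with Lemma \ref{Qqad L} completes the proof. The hypothesis $L_n\to\infty$ is used to keep the number of blocks — and hence the accumulated errors and cross terms — of lower order than $V_n$; the complementary regime where $L_n$ stays bounded is treated in the remaining case.
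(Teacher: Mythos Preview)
Your step (b) is a genuine gap, and it is precisely where your approach diverges from the paper's. You discard the contraction factor $(1-\eta(\ve))^{L_n}$ coming from the contracting blocks and try to compensate by a lower bound $W_n=\sum_k\text{Var}(S_{A_k^g}g)\gtrsim V_n$. But there is no reason this should hold: the whole point of the reduction in Corollary~\ref{CutLemm} is that $g_s$ is \emph{small} in $\|\cdot\|_\al$, and the ``large'' part of $f_s$ is carried by the integer-valued piece $(2\pi/t)Z_s$, not by $g_s$. Your proposed contradiction via irreducibility fails on three counts. First, $W_n=o(V_n)$ along a subsequence is vastly weaker than what reducibility requires, namely a decomposition \eqref{EqReduction} with $\sup_n\text{Var}(S_nH)<\infty$; a subsequence along which the non-lattice part has variance $o(V_n)$ does not give tightness. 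Second, the functions $g_s,Z_s$ are only defined for $s$ in the deep interiors $A_k^g$; the complement has size $\asymp L_n\to\infty$, so it cannot be absorbed into a fixed bounded coboundary to produce a global decomposition of $(f_j)$. Third, $g_s$ and $Z_s$ depend on the choice of $t'=t'(s)\in J$ in the proof of Corollary~\ref{CutLemm} and on the maximal family $\fB$ (which varies with $n$), so they are not a fixed sequence to which the reducibility notion applies. (Relatedly, the exponent you write as $c_2t^2W_n$ is really $c_2\,\text{Var}(\sum_s t g_{s,t})$ with $tg_{s,t}$ affine in $t$ with $s$-dependent slope, so it is not of the form $t^2\cdot(\text{constant})$.)

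The paper's proof is simpler and avoids all of this by \emph{keeping} the contracting factor. By submultiplicativity and \eqref{Norm bound},
\[
\|\cL_{0,t}^n\|_*\le (1-\eta(\ve))^{L_n}\,\|\cL_t^{A_{m_n}}\|_*
\]
for any single complementary block $A_{m_n}$. A pigeonhole argument (Lemma~\ref{CombLemma}) selects $A_{m_n}$ with $\text{Var}(S_{A_{m_n}}f)\ge\sig_n^2/(16L_n)$. On its deep interior one writes $t=t_0+h$ and applies Proposition~\ref{PrSmVarBlock} to $v_s=g_{t_0,s}+hf_s$; the exponent is the nonnegative quadratic $Q_n(h)=\text{Var}\big(S_{A_{m_n}'}(g+hf)\big)$ whose leading coefficient is $\text{Var}(S_{A_{m_n}'}f)\gtrsim\sig_n^2/L_n$ --- a quantity we control directly, unlike $\text{Var}(Sg)$. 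Lemma~\ref{Qqad L} then gives $\int_J\|\cL_t^{A_{m_n}}\|_*\,dt=O(\sqrt{L_n}/\sig_n)$, and multiplying by $(1-\eta(\ve))^{L_n}$ with $L_n\to\infty$ yields $o(\sig_n^{-1})$. Note that irreducibility is not invoked in this case; it enters only in Proposition~\ref{case 3}.
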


We first need the following result.

\begin{lemma}\label{A lemma}
Let
$A=\{a,a+1,...,b\} \subset\{0,1,...,n-1\}$ be a block of length greater or equal to $4D(\ve)+1$, which 
does not intersect contracting blocks from $\mathfrak{B}$. Define $a'=a+2 k_0+L(\ve)+k_2(\ve)$ and 
$b'=b-2k_0-1$, and set $A'=\{a',a'+1,...,b'\}$. Then for every 
$s\in A'$ and all $t\in J$ we can write 
\begin{equation}\label{deccomp}
    tf_s=g_{t,s}+H_s-H_{s+1}\circ T_s+2\pi Z_s
\end{equation}
where $\|g_{t,s}\|_\al \leq C(\ve)+c_0\del_1$ for some $C(\ve)$ such that $C(\ve)\to 0$ as $\ve\to 0$,  $c_0$ is a constant (here $\del_1$ is the length of the underlying interval $J$) and $Z_s$ are integer valued.
\end{lemma}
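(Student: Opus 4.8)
The plan is to deduce the decomposition directly from Corollary~\ref{CutLemm}; the real content of the lemma is to show that, for a prescribed $s\in A'$, one can fit a non-contracting block of the right length inside $A$ in such a way that $s$ lands in the admissible range of that corollary. So the first thing I would do is reduce the statement to exhibiting, for each $s\in A'$ and each $t\in J$, a non-contracting block $I=\{a_I,\dots,a_I+D(\ve)-1\}\subseteq A$ with $a_I+k_0+L(\ve)+k_2(\ve)\le s\le a_I+D(\ve)-k_0-1$.

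The key observation is that maximality of $\mathfrak{B}$ forces suitably placed sub-blocks of $A$ to be non-contracting. Indeed, since $A$ is disjoint from every $B_j\in\mathfrak{B}$ and both are intervals, each $B_j$ lies entirely to the left or entirely to the right of $A$; hence any block $I$ of length $D(\ve)$ contained in the $k_0$-interior $\{a+k_0,\dots,b-k_0\}$ of $A$ is separated from every $B_j$ by at least the $k_0$ integers of $A$ adjacent to $\partial A$, i.e.\ by at least $k_0$. If such an $I$ were contracting, then $\mathfrak{B}\cup\{I\}$ would be a family of $L_n+1$ contracting blocks, each of length in $[D(\ve),2D(\ve)]$ and pairwise separated by at least $k_0$, contradicting the maximality of $L_n$. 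Therefore every such $I$ is non-contracting.

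Next, for a fixed $s\in A'=\{a',\dots,b'\}$ I would choose the left endpoint $a_I$ so that simultaneously $a+k_0\le a_I\le b-k_0-D(\ve)+1$ (so $I$ lies in the $k_0$-interior of $A$ and the previous step applies) and $a_I+k_0+L(\ve)+k_2(\ve)\le s\le a_I+D(\ve)-k_0-1$ (so Corollary~\ref{CutLemm} applies to the pair $I,s$). This amounts to placing the integer $a_I$ in the intersection of two explicit intervals, and a direct check using $a'=a+2k_0+L(\ve)+k_2(\ve)$, $b'=b-2k_0-1$, $D(\ve)=4(L(\ve)+n_0+k_0+k_2(\ve))$ and $|A|\ge 4D(\ve)+1$ shows this intersection is nonempty. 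With this $I$ and $s$, Corollary~\ref{CutLemm} yields, for all $t\in J$,
$$
tf_s=tg_s+tH_s-tH_{s+1}\circ T_s+2\pi Z_s,\qquad \|g_s\|_\al=O(\del_1)+c(L(\ve)),
$$
where the $H_k$ are the block-independent functions of Lemma~\ref{LmTDVar} and $Z_s$ is integer valued. Putting $g_{t,s}:=tg_s$ and absorbing the prefactor $t$ (which has modulus at most $T$) into the coboundary term gives exactly the asserted decomposition, with $\|g_{t,s}\|_\al\le T\,c(L(\ve))+c_0\del_1=:C(\ve)+c_0\del_1$; here $C(\ve)=T\,c(L(\ve))\to 0$ as $\ve\to 0$, since $L(\ve)\to\infty$ and $c(L)\to 0$ as $L\to\infty$.

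I expect the only real obstacle to be the arithmetic bookkeeping in the third step: one has to track the nested constants $k_0,n_0,k_2(\ve),L(\ve),D(\ve)$ and verify that the margins $a'-a$ and $b-b'$, together with the hypothesis $|A|\ge 4D(\ve)+1$, are exactly enough to meet the two pairs of inequalities at once. Apart from that, no new analytic ingredient is needed: everything rests on Corollary~\ref{CutLemm} (itself built on Lemma~\ref{LmTDVar} and Corollary~\ref{MainCor}) together with the maximality defining the family $\mathfrak{B}$.
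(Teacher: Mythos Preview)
Your proposal is correct and follows the same strategy as the paper's proof: maximality of $\mathfrak{B}$ forces a suitably placed length-$D(\ve)$ sub-block of $A$ to be non-contracting, and then Corollary~\ref{CutLemm} yields the decomposition. Your version is more careful about positioning the block so that $s$ lies in the admissible range of Corollary~\ref{CutLemm} (the paper simply points to ``the block of length $D(\ve)$ ending at $s$''), but the idea and ingredients are identical.
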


\begin{proof}
 Let $s\in A'.$ Then, since $\mathfrak{B}$ is maximal, the block of length $D(\ve)$ ending at $s$
is non contracting. Therefore the result follows from Corollary \ref{CutLemm}.
\end{proof}

We also need the following result. 
\begin{lemma}\label{CombLemma}
Suppose that $L_n=o\left(\sig_n^2\right)$. Then 
 there exists $ m=m_n\in \{1, \dots, \tilde L_n\}$ such that
\vskip0.2cm

(i) For all $n$ large enough we have 
$\DS
\|S_{A_{m}}\|_{L^2}\geq \frac{\sig_n}{4\sqrt{L_n}}$ $\DS \left(\text{and so } |A_{m}|\geq \frac{\sig_n}{4\sqrt{L_n}\|f\|_\infty} \right)
$
where $\DS \|f\|_\infty=\sup_j\|f_j\|_\infty$ and $|A_m|$ is the size of $A_m$.
\vskip0.2cm

(ii)
Write $A_{m_n}=\{a_n,a_{n}+1,..., b_n\}$ and set 
$A_{m_n}'=\{a_n',a_n'+1,...,b_n'\}$, where 
$$a_n'=a_n+ 2 k_0+L(\ve)+k_2(\ve)\,\, \text{ and }\,\,
b_n'=b_n- 2k_0-1.
$$
Then, if $n$ is large enough, \eqref{deccomp} holds for every $s\in A_{m_n}'$ and all $t\in J$.

\end{lemma}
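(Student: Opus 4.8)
The plan is to read off all the variance bookkeeping from the martingale--coboundary decomposition \eqref{EqReductionH}. Since we have normalized $\ka_0(f_j\circ T_0^j)=0$, that decomposition gives, for every block $I=\{a,a+1,\dots,b\}\subset I_n$, the identity $S_I f=M_I+B_a\circ T_0^a-B_{b+1}\circ T_0^{b+1}$, where $M_I:=\sum_{j\in I}A_j\circ T_0^j$ is a zero mean reverse martingale (the coboundary terms telescope). Writing $v_I:=\|M_I\|_{L^2}^2=\sum_{j\in I}\text{Var}(A_j\circ T_0^j)$, which is a plain sum over $j\in I$ and hence additive over disjoint blocks, and using that the coboundary part is bounded by $2\|B\|_\infty$ with $\|B\|_\infty:=\sup_k\|B_k\|_\infty<\infty$ (finite by \eqref{EqReductionH}), one gets the uniform comparison $\bigl|\,\|S_I f\|_{L^2}-\sqrt{v_I}\,\bigr|\le 2\|B\|_\infty$ valid for every block $I\subset I_n$.

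First I would apply this with $I=I_n$: since $\|S_{I_n}f\|_{L^2}=\sig_n$ and $\sig_n\to\infty$ (which is forced by $L_n\ge1$ together with $L_n=o(\sig_n^2)$), it gives $v_{I_n}=\sig_n^2(1+o(1))$. Next, $I_n$ is the disjoint union of the blocks $A_1,\dots,A_{\tilde L_n}$ and the contracting blocks $B_1,\dots,B_{L_n}$, and for each contracting block $v_{B_j}\le|B_j|\,(\sup_k\|A_k\|_\beta)^2\le 2D(\ve)(\sup_k\|A_k\|_\beta)^2$ because $|B_j|\le 2D(\ve)$. Hence the total contribution of the contracting blocks is $O(D(\ve)L_n)=o(\sig_n^2)$, so $\sum_m v_{A_m}=v_{I_n}-\sum_j v_{B_j}=\sig_n^2(1+o(1))\ge\tfrac12\sig_n^2$ for $n$ large. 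Since $\tilde L_n\le L_n+1\le 2L_n$, the pigeonhole principle then produces an index $m_n\in\{1,\dots,\tilde L_n\}$ with $v_{A_{m_n}}\ge \sig_n^2/(4L_n)$. Plugging this back into the comparison inequality gives $\|S_{A_{m_n}}\|_{L^2}\ge\sig_n/(2\sqrt{L_n})-2\|B\|_\infty$, and since $L_n=o(\sig_n^2)$ forces $\sig_n/\sqrt{L_n}\to\infty$ the bounded term is absorbed, yielding $\|S_{A_{m_n}}\|_{L^2}\ge\sig_n/(4\sqrt{L_n})$ for $n$ large; the lower bound on $|A_{m_n}|$ is then immediate from $\|S_{A_{m_n}}\|_{L^2}\le|A_{m_n}|\,\|f\|_\infty$. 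This proves (i) (the degenerate case $L_n=0$, where $A_1=I_n$, being vacuous).

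For (ii) I would observe that the lower bound on $|A_{m_n}|$ from (i) tends to infinity (again because $\sig_n/\sqrt{L_n}\to\infty$), so $|A_{m_n}|\ge 4D(\ve)+1$ for all $n$ large enough; and $A_{m_n}$, being one of the connected components of $I_n\setminus\bigcup_j B_j$, is disjoint from every contracting block of $\mathfrak B$. Thus Lemma \ref{A lemma} applies directly with $A=A_{m_n}$ (its $a',b'$ being exactly the $a_n',b_n'$ of the present statement), and produces the decomposition \eqref{deccomp} for every $s\in A_{m_n}'$ and all $t\in J$.

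The only delicate point -- and the step I expect to require the most care -- is the variance accounting in the second paragraph: one must be sure that replacing $\sig_n^2=\text{Var}(S_n)$ by $\sum_m v_{A_m}$ costs only $o(\sig_n^2)$. This is precisely where the hypotheses enter: the uniform bound $|B_j|\le 2D(\ve)$ together with $\sup_k\|A_k\|_\beta<\infty$ makes the total contribution of the contracting blocks $O(D(\ve)L_n)$, and $L_n=o(\sig_n^2)$ makes it negligible; the same hypothesis, through $\sig_n/\sqrt{L_n}\to\infty$, is what absorbs the bounded coboundary terms both in the final estimate of (i) and in the ``$|A_{m_n}|\to\infty$'' step of (ii). Everything else is the pigeonhole principle and a verbatim appeal to Lemma \ref{A lemma}.
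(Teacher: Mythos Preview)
Your argument is correct and takes a genuinely different route from the paper's proof. The paper expands $\sig_n^2=\sum_C\|S_C\|_{L^2}^2+2\sum_{l_1<l_2}\text{Cov}(S_{C_{l_1}},S_{C_{l_2}})$ over the combined family $\mathfrak C=\mathfrak A\cup\mathfrak B$ and then controls the cross terms by the exponential decay of correlations (\cite[Remark~2.6]{DH2}), obtaining $\sum_l\|S_{A_l}\|_{L^2}^2=\sig_n^2+o(\sig_n^2)$ before invoking pigeonhole. You instead pass through the martingale--coboundary decomposition \eqref{EqReductionH}: the point is that $v_I=\|M_I\|_{L^2}^2$ is \emph{exactly} additive over disjoint blocks (by orthogonality of martingale differences), so no covariance computation is needed at all; the bounded coboundary then transfers the additive estimate on $v_{A_m}$ back to $\|S_{A_m}\|_{L^2}$. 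Your approach is cleaner and slightly more robust (it would work even without exponential mixing, as long as the Gordin decomposition with bounded $B_j$ is available), while the paper's approach is more self-contained in that it avoids importing \eqref{EqReductionH} into Section~\ref{SSKeyPropZN}. Part (ii) is handled identically in both proofs via Lemma~\ref{A lemma}. One cosmetic remark: you use $A_k$ both for the martingale differences from \eqref{EqReductionH} and (implicitly) for the gap blocks $A_m$; in a write-up you might rename one of them to avoid confusion.
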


\begin{proof}
Let $\fC=\fA\cup \fB.$ Then 
$\DS
S_n=\sum_{ C\in \fC}S_{C}=\sum_{k=1}^{L_n}S_{B_k}+\sum_{l=1}^{\tilde L_n}S_{A_l}.
$
Thus
$$ 
\sig_n^2=\|S_n\|_{L^2}^2=\sum_k \|S_{B_k}\|_{L^2}^2+\sum_l \|S_{A_l}\|_{L^2}^2+2\sum_{l_1<l_2} \mathrm{Cov}(S_{C_{l_1}}, S_{C_{l_2}}).
$$ 
Now, since the size of each block $B_j$ is at most $2D(\ve)$ and  $\DS \|f\|_\infty=\sup_k\|f_k\|_\infty<\infty$,
$$
\sum_{1\leq k\leq L_n}\|S_{B_k}\|_{L^2}^2\leq \left(2D(\ve)\|f\|_\infty\right)^2 L_n
=o(\sig_n^2).
$$
Next,  for every sequence of random variables $(\xi_j)$ such that $|\text{Cov}(\xi_n,\xi_{n+k})|\leq C\del^k$ for $C>0$ and $\del\in(0,1)$ we have the following.  For all $a<b$ and $k,m>0$,
$$
|\text{Cov}(\xi_{a}+...+\xi_{b}, \xi_{b+k}+...+\xi_{b+k+m})|\leq \sum_{j=a}^b\sum_{s=k}^{\infty}|\text{Cov}(\xi_j,\xi_{b+s})|\leq C\sum_{j=a}^b\sum_{s\geq k}\del^{b+s-j}
$$
$$
\leq C_\del\sum_{j=a}^b\del^k\del^{b-j}=C_{\del}\del^k\sum_{j=0}^{b-a}\del^j\leq C_\del(1-\del)^{-1}
$$
where $C_\del=C/(1-\del)$.
Applying this with $\xi_j=f_j\circ T_0^j$ (and using the exponential decay of correlations, see \cite[Theorem 3.3]{Nonlin} or \cite[Remark 2.6]{DH2}), we see that there is a constant $R>0$ such that
for each $l_1$, 
$$\left| \sum_{C_{l_2}: l_2>l_1} \mathrm{Cov}\left(S_{C_{l_1}}, S_{C_{l_2}}\right)\right|\leq R.$$
Therefore
$$
\sig_n^2=\sum_{1\leq k\leq  \tilde L_n}\|S_{A_k}\|_{L^2}^2+o(\sig_n^2)+O(L_n)=
\sum_{1\leq k\leq  \tilde L_n}\|S_{A_k}\|_{L^2}^2+o(\sig_n^2).
$$
Thus, if $n$ is large enough then 
there is at least one index $m$ such that 
$$
\|S_{A_m}\|_{L^2}\geq \frac{\sig_n}{4\sqrt{L_n}}.
$$
Next,   by the triangle inequality
$
\DS \|S_{A_{m}}\|_{L^2}\leq \
\sup_{j}\|f_j\|_{L^2(\mu_j)}|A_m|\leq \|f\|_\infty |A_m|
$
and so
$\DS
|A_m|\!\!\geq\!\! \frac{\sig_n}{4{\sqrt{L_n}}\|f\|_\infty}.
$
Thus property (i) holds.
Property (ii) follows from  Lemma~\ref{A lemma}.
\end{proof}

   To complete the proof of Proposition \ref{case 2}, we will prove the following result.

  \begin{lemma}\label{Norm Lem}
  There is $\ve_0>0$ such that if the length $\del_1$ of $J$ satisfies $\del_1<\ve_0$ and if $\ve<\ve_0$ then
      $\DS
\int_{J}\|\cL_{t}^{A_{m_n}}\|_*dt=O\left(\sqrt{L_n}\sig_n^{-1}\right).
      $
  \end{lemma}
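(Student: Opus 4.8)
The plan is to apply Corollary \ref{CutLemm} (in the form given by Lemma \ref{A lemma}) on the block $A_{m_n}'$ to reduce $\cL_t^{A_{m_n}'}$ to an operator of the type handled by Proposition \ref{PrSmVarBlock}, and then use the variance bound from Lemma \ref{CombLemma}(i) together with the integration estimate of Lemma \ref{Qqad L}. First I would write $A_{m_n} = \{a_n, \dots, b_n\}$ and split it as $A_{m_n} = A^{\mathrm{left}} \cup A_{m_n}' \cup A^{\mathrm{right}}$, where $A^{\mathrm{left}} = \{a_n, \dots, a_n'-1\}$ and $A^{\mathrm{right}} = \{b_n'+1, \dots, b_n\}$ have lengths $2k_0 + L(\ve) + k_2(\ve)$ and $2k_0+1$ respectively. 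By submultiplicativity of the $\|\cdot\|_*$-operator norm and Corollary \ref{CorNonExpanding}, the contributions of $A^{\mathrm{left}}$ and $A^{\mathrm{right}}$ are bounded by $1$ (note their lengths exceed $k_0$), so
$$
\|\cL_t^{A_{m_n}}\|_* \leq \|\cL_t^{A_{m_n}'}\|_*.
$$
Thus it suffices to bound $\int_J \|\cL_t^{A_{m_n}'}\|_* \, dt$.

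Next I would use Lemma \ref{CombLemma}(ii): for every $s \in A_{m_n}'$ and $t \in J$ we have $t f_s = g_{t,s} + H_s - H_{s+1}\circ T_s + 2\pi Z_s$ with $\|g_{t,s}\|_\al \leq C(\ve) + c_0 \del_1$ and $Z_s$ integer valued. Therefore, writing $A_{m_n}' = \{a_n', \dots, b_n'\}$ and using $e^{2\pi i Z_s} = 1$,
$$
\cL_t^{A_{m_n}'}(q) = e^{i H_{a_n'}} \cdot \big[ \cA_{a_n'}^{|A_{m_n}'|}(q e^{-i H_{a_n'}}) \big] \cdot (\text{telescoped coboundary factor}),
$$
more precisely the twisting by $t S_{A_{m_n}'} f$ is conjugate, up to the uniformly bounded multiplicative factors $e^{\pm i H_{a_n'}}$ and $e^{\pm i H_{b_n'+1}\circ T_{b_n'}^{\cdots}}$ (each with $\|\cdot\|_\al$ uniformly bounded since $\sup_k \|H_k\|_\al < \infty$), to the operator $\cA_{a_n'}^{|A_{m_n}'|}$ twisting by $S_{A_{m_n}'} g$ with $g = (g_{t,s})$. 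Hence there is a constant $C$ with $\|\cL_t^{A_{m_n}'}\|_* \leq C \|\cA_{a_n'}^{|A_{m_n}'|}\|_\al$. Choosing $\ve_0$ small enough that $C(\ve_0) + c_0 \ve_0 \leq \del_0$ (the threshold from Proposition \ref{PrSmVarBlock}), the hypothesis $\ve < \ve_0$, $\del_1 < \ve_0$ lets us apply the upper bound in \eqref{AUpperLower}:
$$
\|\cA_{a_n'}^{|A_{m_n}'|}\|_\al \leq C_2 \, e^{-c_2 \mathrm{Var}(S_{A_{m_n}'} g)}.
$$

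Finally I would relate $\mathrm{Var}(S_{A_{m_n}'} g)$ back to $\sig_n$. Using the decomposition \eqref{deccomp} once more, $S_{A_{m_n}'} g = t S_{A_{m_n}'} f - (\text{coboundary}) - 2\pi(\text{integer sum})$; the coboundary $H_{a_n'} - H_{b_n'+1}\circ T^{\cdots}$ is uniformly bounded, and the integer-valued sum does not affect variance, so $\mathrm{Var}(S_{A_{m_n}'} g)$ differs from $t^2 \, \mathrm{Var}(S_{A_{m_n}'} f)$ by an $O(1)$ error (using $|\mathrm{Var}(X+Y) - \mathrm{Var}(X)| \le O(\sqrt{\mathrm{Var}(X)}) + O(1)$ when $\|Y\|_{L^\infty}=O(1)$, and absorbing the lower-order term). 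Moreover $\mathrm{Var}(S_{A_{m_n}'} f)$ and $\mathrm{Var}(S_{A_{m_n}} f) = \|S_{A_{m_n}}\|_{L^2}^2$ differ by $O(1)$ as well, since $A_{m_n} \setminus A_{m_n}'$ has bounded length. By Lemma \ref{CombLemma}(i), $\|S_{A_{m_n}}\|_{L^2}^2 \geq \sig_n^2/(16 L_n)$. Therefore, on $J$ (where $\del \leq |t| \leq T$), as a function of $t$ the exponent $c_2 \mathrm{Var}(S_{A_{m_n}'} g)$ is bounded below by a quadratic $c_2 \del^2 \sig_n^2/(16 L_n) - O(1)$; applying Lemma \ref{Qqad L} with this quadratic (whose leading coefficient is $\asymp \mathrm{Var}(S_{A_{m_n}'}f) \asymp \sig_n^2/L_n$) gives
$$
\int_J \|\cL_t^{A_{m_n}'}\|_* \, dt \leq C_2 C \int_J e^{-c_2 \mathrm{Var}(S_{A_{m_n}'} g)}\, dt \leq C' \frac{\sqrt{L_n}}{\sig_n} \cdot e^{-c_2 \min_J \mathrm{Var}(S_{A_{m_n}'}g)} \cdot e^{O(1)} = O\!\left(\frac{\sqrt{L_n}}{\sig_n}\right),
$$
which is the claim. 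The main obstacle I anticipate is the careful bookkeeping in the second step: verifying that the reduction \eqref{deccomp} genuinely conjugates $\cL_t^{A_{m_n}'}$ into an operator of the form $\cA_{a_n'}^{m}$ for $\cA_j(h) = \cL_j(e^{i v_j} h)$ with a \emph{single} small sequence $(v_j) = (g_{t,s})$ — one must check that the coboundary terms telescope cleanly across the whole block (this is exactly why Lemma \ref{LmTDVar} was arranged so that $H_k$ depends only on the index $k$, cf. the Remark after that lemma) and that the small-norm hypothesis $\max_s \|g_{t,s}\|_\al \leq \del_0$ holds uniformly for $t \in J$ once $\ve$ and $\del_1$ are small. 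Everything else is a routine combination of submultiplicativity, Proposition \ref{PrSmVarBlock}, and Lemma \ref{Qqad L}.
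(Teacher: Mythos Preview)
Your first two steps—reducing from $A_{m_n}$ to $A_{m_n}'$ via submultiplicativity and Corollary \ref{CorNonExpanding}, and then conjugating $\cL_t^{A_{m_n}'}$ through the telescoping coboundary $(H_s)$ to an operator $\cA$ twisting by the small functions $g_{t,s}$—are correct and match the paper's proof exactly.

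The gap is in your third step. You claim that $\mathrm{Var}(S_{A_{m_n}'} g_t)$ differs from $t^2\,\mathrm{Var}(S_{A_{m_n}'} f)$ by $O(1)$ because ``the integer-valued sum does not affect variance''. This is false: the functions $Z_s$ are integer-valued but not constant, so $2\pi\sum_s Z_s\circ T_0^{\,\cdot}$ is a non-trivial random variable that certainly contributes to the variance. (You may be conflating this with the fact that $e^{2\pi i Z_s}=1$, which is how the integer part disappears in the operator conjugation—but that has nothing to do with variances.) In fact the whole point of \eqref{deccomp} is that the $g_{t,s}$ are \emph{small}, $\|g_{t,s}\|_\al\le C(\ve)+c_0\del_1$, so $\mathrm{Var}(S_{A_{m_n}'} g_t)$ can be much smaller than $t^2\,\mathrm{Var}(S_{A_{m_n}'} f)$; it can even stay bounded as $\sig_n\to\infty$ (this is exactly the reducible scenario treated in Proposition \ref{case 3}). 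Your lower bound on the exponent therefore fails, and with it the integration estimate.

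The paper avoids estimating $\mathrm{Var}(S g_t)$ altogether. Write $t=t_0+h$ with $t_0$ the centre of $J$ and apply \eqref{deccomp} \emph{only at $t_0$}, so that
\[
tf_s=(g_{t_0,s}+hf_s)+t_0H_s-t_0H_{s+1}\circ T_s+2\pi Z_{t_0,s}.
\]
Now the sequence fed into Proposition \ref{PrSmVarBlock} is $v_s=g_{t_0,s}+hf_s$ with $g_{t_0,s}$ \emph{fixed} (independent of $h$), and $V_n(h):=\mathrm{Var}(S_{A_{m_n}'}v)=\|S_{A_{m_n}'}(g_{t_0}+hf)\|_{L^2}^2$ is a genuine quadratic in $h$ with leading coefficient $\mathrm{Var}(S_{A_{m_n}'}f)$. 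Since $V_n(h)\ge 0$ automatically, Lemma \ref{Qqad L} gives
\[
\int_J e^{-c_2 V_n(h)}\,dh\ \le\ \frac{C}{\sqrt{\mathrm{Var}(S_{A_{m_n}'}f)}}\ \le\ \frac{C'\sqrt{L_n}}{\sig_n}
\]
directly from Lemma \ref{CombLemma}(i), with no need to locate or bound the minimum of the quadratic.
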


\begin{proof}[Proof of Lemma \ref{Norm Lem}]
Let $A_{m_n}'$ be defined in Lemma \ref{CombLemma}. 
Then we can write 
$$
A_{m_n}=U_n\cup A_{m_n}'\cup V_n
$$
for blocks $U_n$ and $V_n$ such that  $U_n, A_{m_n}', V_n$ are disjoint, $U_n$ is to the left of $A_{m_n}'$ and $V_n$ is to its right. Moreover, $V_n$ is of size $2k_0+1$ and $U_n$ is of size $2k_0+L(\ve)+k_2(\ve)$. Thus by \eqref{Norm bound},
$$
\sup_{t\in J}
\max\left(\|\cL_t^{U_n}\|_*,\|\cL_t^{V_n}\|_* \right)\leq 1.
$$
Since 
$$
\cL_{t}^{A_{m_n}}=\cL_{t}^{V_n}\circ\cL_{t}^{A_{m_n}'}\circ\cL_t^{U_n}
$$
we conclude that 
$$
\|\cL_{t}^{A_{m_n}}\|_*\leq \|\cL_{t}^{A_{m_n}'}\|_*.
$$
Thus, its enough to show that 
\begin{equation}\label{suff A prime}
\int_{J}\|\cL_{t}^{A_{m_n}'}\|_*dt=O\left(\sqrt{L_n}\sig_n^{-1}\right).
\end{equation}

Next, let us write $A_{m_n}'=\{a'_n,a'_{n}+1,...,b'_n\}$. 
Then,  by Lemma \ref{CombLemma}(ii), for all $s\in A'_{m_n}$ and all $t\in J$ we can write 
$\DS
    tf_s=g_{t,s}+H_s-H_{s+1}\circ T_s+2\pi Z_{t,s}
$
where $\|g_{t,s}\|_{\al} \leq C(\ve)+c_0\del_1$, for some $C(\ve)$ such that $C(\ve)\to 0$ as $\ve\to 0$, and $c_0$ is a constant. 
In particular, if $t_0$ is the center of $J$ and $t=t_0+h\in J$, then for all $s\in A_{m_n}'$ we have 
$$
tf_s=t_0f_s+hf_s=(g_s+hf_s)+t_0H_s-t_0H_{s+1}+2\pi Z_s
$$
where $g_s=g_{t_0,s}$ and $Z_s=Z_{t_0,s}$.
Therefore, for any function $u$ we have
$$
\cL_{t}^{A_{m_n}'}u=e^{-it_0H_{b'_{n}}}\cL_{a'_{n}}^{b'_{n}-a'_{n}}(e^{iS_{a'_{n},b'_{n}}g+it_0H_{a'_{n}}+
ihS_{a'_{n},b'_{n}}f}u).
$$
Let 
$\DS 
\cA_{t}(u):=\cL_{a'_{n}}^{b'_{n}-a'_{n}}(e^{iS_{a'_{n},b'_{n}}g+ihS_{a'_{n},b'_{n}}f}u).
$
Then since $\DS \sup_j\|H_j\|_{\al}<\infty$ there is a constant $C>0$ such that 
$$
\|\cL_{t}^{A_{m_n}'}\|_{*}\leq C\|\cA_t\|_*.
$$
 Now,  by Proposition \ref{PrSmVarBlock} 
there exist
  constants $C_0>0$ and $c>0$ such that  if  $\del_1$ (and hence $|h|$) and $\ve$ are small enough
then 
$$\|\cA_t\|_*\leq C_0 e^{-cV_{n}(h)}$$
where $V_n(h)=\|S_{a_n',b_n'}(g+hf)\|_{L^2}^2$.
Thus
$$
\int_{J}\|\cL_{t}^{A_{m_n}'}\|_{*}dt=\int_{-\del_1/2}^{ \del_1/2}
\|\cL_{t_0+h}^{A_{m_n}'}\|_{*}dh\leq CC_0''\int_{-\del_1/2}^{ \del_1/2}
e^{-cV_{n}(h)}dh.
$$
Applying Lemma  \ref{Qqad L} with $Q(h)\!=\!V_n(h)\!=\!\|S_{a_{n},b_{n}}(g+hf)
\|_{L^2}^2$ and 
using Lemma~\ref{CombLemma}(i) we conclude that there  are constant $C',C''>0$ such that
$$
\int_{-\del_1/2}^{\del_1/2}\|\cL_{0,t_0+h}^{A_{m_n}'}\|_{*}dh\leq C'\left(\text{Var}(S_{a'_n,b'_n})\right)^{-1/2}\leq C''L_n^{{1/2}}\sig_n^{-1}
$$
where we have used that the quadratic form $\DS Q(h)$ is nonnegative, and \eqref{suff A prime} follows.
\end{proof}

\begin{proof}[Proof of Proposition \ref{case 2}]
By the submultiplicativity of the norm and Corollary \ref{CorNonExpanding} 
$$ \|\cL_{0,t}^{n}\|_*\leq \left(\prod_{k=1}^{\tilde L_n} \|\cL_t^{A_k}\|_*\right)\left(\prod_{j=1}^{L_n} \|\cL_t^{B_j}\|_*\right)
\leq (1-\eta(\ve))^{L_n} \|\cL_t^{A_{m_n}}\|_*.
$$
  Thus Lemma \ref{Norm Lem} gives
  $\DS
\int_{J}\|\cL_{0,t}^{n}\|dt\leq (1-\eta(\ve))^{L_n} {\sqrt{L_n}}\sig_n^{-1}
  $
  which is indeed $o(\sig_n^{-1})$ if $L_n$  diverges to infinity. 
\end{proof}

\subsection{Small number of contracting blocks}
\label{SSFewBlocks}
The third and last case we need to cover to complete the proof of \eqref{Suff J} is when $L_n$ is bounded.
\begin{proposition}\label{case 3}
 If $L_n$ is bounded then either 
\vskip0.1cm  
(i) $(f_j)$ is reducible  and, moreover, one can decompose
$$ f_{j}=g_{j}+H_{j-1}-H_{j}\circ T_{j-1}+(2\pi/t) Z_{t,j} $$
 with $t\in J$,\;\; $Z_{t,j}$ integer valued and  $g_j\circ T_0^j$ is a reverse martingale difference satisfying
 $\DS \sum_j \mathrm{Var}(g_j)<\infty$;
 or
 \vskip0.1cm  
(ii) 
$\DS
\int_{J}\|\cL_{0,t}^n\|_*dt=o(\sig_n^{-1}).
$
\end{proposition}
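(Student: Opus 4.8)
The plan is to exploit that, once $L_n$ stays bounded, the non-contracting blocks fill all but $O_\ve(1)$ of $\{0,\dots,n-1\}$, so the reduction machinery of Section~\ref{Sec 5} can be run essentially globally; the only question left is whether the resulting ``small part'' has bounded or divergent variance.

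\emph{Structural decomposition.} Each contracting block has length $\le 2D(\ve)$ and there are $\le\sup_nL_n$ of them, so $\fB$ covers $O_\ve(1)$ indices; hence the complement of $\bigcup\fB$ in $\{0,\dots,n-1\}$ is a union of $\tilde L_n=O_\ve(1)$ intervals, the long ones $A_k$ having good interiors $A_k'$ (dropping $O_\ve(1)$ boundary indices) with $\sum_k\mathrm{Var}(S_{A_k'})=\sig_n^2-O_\ve(1)$. Let $t_0$ be the centre of $J$. By Corollary~\ref{CutLemm} (via Lemma~\ref{A lemma}), for $s\in\bigcup_kA_k'$,
$$t_0f_s=g_s+H_s-H_{s+1}\circ T_s+2\pi Z_s,\qquad \|g_s\|_\al\le C(\ve)+c_0\del_1,$$
with $Z_s\in\bbZ$ and, crucially, $H$ uniformly H\"older and independent of $n$ and of the block. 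Since two admissible remainders differ by a $2\pi\bbZ$-valued function of $\sup$-norm $<2\pi$ (hence by $0$ once $\ve,\del_1$ are small), a diagonal extraction along a subsequence $n_k\to\infty$ produces sequences $(g_s),(Z_s)$ independent of $k$, with the identity above valid for all $s$ (set $Z_s:=0$ on the remaining finite set). Thus $f_s=\tfrac1{t_0}g_s+\tfrac1{t_0}(H_s-H_{s+1}\circ T_s)+\tfrac{2\pi}{t_0}Z_s$ is of the form \eqref{EqReduction} up to tightness of $(S_n(g/t_0))$.

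\emph{Dichotomy.} By \S\ref{CLT}, either $\sup_n\mathrm{Var}(S_ng)<\infty$ or $\mathrm{Var}(S_ng)\to\infty$. In the first case $(f_j)$ is reducible; feeding $g_j$ into the martingale--coboundary splitting \eqref{EqReductionH} and absorbing the gradient (and the $O(\del^j)$ mean correction, cf.\ Remark~\ref{Rem Red}) into $H$ gives conclusion~(i). So suppose $\mathrm{Var}(S_ng)\to\infty$. Fix $\del_1$ smaller than the minimal gap of the set $\{t\neq0:(f_j)\text{ is reducible to }\tfrac{2\pi}{t}\bbZ\}$, which is discrete by Theorem~\ref{Reduce thm} and Corollary~\ref{H corr} (and empty if $(f_j)$ is irreducible); then $J$ contains at most one such ``resonant'' $t_*$. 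If it contains $t_*$, rerun the structural step with base point $t_*$: by the definition of resonance the remainder at $t_*$ has Birkhoff sums of bounded variance, so $(f_j)$ is reducible and (i) holds with $t=t_*\in J$. Hence we may assume $J$ is non-resonant.

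\emph{Integral bound, and the main obstacle.} For $t=t_0+h\in J$ the term $2\pi Z_s$ disappears from $\cL_t^{A_k'}$, so Proposition~\ref{PrSmVarBlock} (applicable since $\|g_s+hf_s\|_\al\le\del_0$) and Corollary~\ref{CorNonExpanding} give
$$\|\cL_{0,t}^n\|_*\le C_\ve(1-\eta(\ve))^{L_n}e^{-c_2\psi_n(h)},\qquad \psi_n(h):=\sum_k\mathrm{Var}\big(S_{A_k'}(g+hf)\big),$$
a convex quadratic in $h$ with $\psi_n''\ge c\,\sig_n^2$. Writing $g_s+hf_s=tf_s-(H_s-H_{s+1}\circ T_s)-2\pi Z_s$ and using the $L^2$-triangle inequality, $\psi_n(h)^{1/2}=\|tS_{A'}f-2\pi S_{A'}Z\|_{L^2}+O_\ve(1)$ with $A'=\bigcup_kA_k'$; since $t$ is non-resonant this diverges, so $\psi_n(h)\to\infty$ for every fixed $h\in[-\del_1/2,\del_1/2]$. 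Granting the uniform version $\min_{|h|\le\del_1/2}\psi_n(h)\to\infty$, Lemma~\ref{Qqad L} yields $\int_J\|\cL_{0,t}^n\|_*\,dt\le C_\ve\sig_n^{-1}e^{-c_2\min_h\psi_n(h)}=o(\sig_n^{-1})$, which is (ii); the subsequence extraction is harmless because if (ii) failed for the full sequence one could pass to a bad subsequence and the whole argument would still force (i) or (ii) along it. The crux is exactly this uniformity: convexity of $\psi_n$ does not by itself prevent the near-minimisers $h_n=-\mathrm{Cov}(S_{A'}g,S_{A'}f)/\mathrm{Var}(S_{A'}f)$ from drifting, and one must use the convexity of the sequential pressure---the bound $\psi_n''\ge c\,\sig_n^2$, obtained from Proposition~\ref{PrSmVarBlock} via \eqref{2 der est}---to argue that a bounded value $\psi_n(h_n)$ along a subsequence, with $h_n\to h_*$, would make $t_0+h_*\in J$ resonant, a contradiction. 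I expect controlling this drift of the optimal twist parameter to be the main difficulty.
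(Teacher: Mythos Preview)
Your outline follows the paper's route but is more convoluted than needed and stops short of the decisive step.

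\textbf{Simplification.} The diagonal extraction across multiple $A_k'$ and the detour through the discreteness of the resonant set (Corollary~\ref{H corr}) are both avoidable. Since $L_n$ is non-decreasing in $n$ and bounded, it stabilises; hence there is a \emph{last} contracting block, with right endpoint $N_0$, and \emph{every} block of length between $D(\ve)$ and $2D(\ve)$ lying to the right of $N_0+k_0$ is non-contracting (else one could enlarge $\fB$). So Lemma~\ref{A lemma} gives the decomposition $t_0 f_j=g_j+\tilde H_j-\tilde H_{j+1}\circ T_j+2\pi\tilde Z_j$ for \emph{all} $j\ge N(\ve)$ at once, with a single tail interval and a single quadratic $Q_n(h)=\mathrm{Var}\big(S_{N(\ve),\,n-N(\ve)}(g+hf)\big)$. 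No extraction is needed, and you do not have to pre-select $\delta_1$ in terms of the (a priori unknown) resonant set.

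\textbf{The drift step.} Here is how the obstacle you flag is actually resolved. Write $Q_n(h)=\tilde\sig_n^2(h-h_n)^2+m_n$ with $m_n=\min Q_n$. If along every subsequence with $m_{n_j}$ bounded one has $|h_{n_j}|\ge\delta_1$, then $\min_{[-\delta_1/2,\delta_1/2]}Q_n\ge\tilde\sig_n^2\delta_1^2/4\to\infty$ and Lemma~\ref{Qqad L} gives (ii). Otherwise pass to a subsequence with $h_{n_j}\to h_0\in[-\delta_1,\delta_1]$ and $m_{n_j}\to\mathbf Q<\infty$. For fixed $n\le n_j$, split
\[
m_{n_j}=\mathrm{Var}\big(\tilde S_{n_j}(g+h_{n_j}f)\big)=Q_n(h_{n_j})+\mathrm{Var}\big(S_{n,\,n_j-n}(g+h_{n_j}f)\big)+2\,\mathrm{Cov}\big(\tilde S_n,\,S_{n,\,n_j-n}\big);
\]
the middle term is nonnegative and the covariance is $O(1)$ uniformly by exponential decay of correlations, so $Q_n(h_{n_j})\le m_{n_j}+2C$; letting $j\to\infty$ yields $Q_n(h_0)\le\mathbf Q+2C$ for \emph{every} $n$. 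Hence $\sup_n\mathrm{Var}\big(S_n(g+h_0 f)\big)<\infty$, and \cite[Theorem~6.5]{DH2} upgrades this to a martingale--coboundary splitting which, combined with the decomposition at $t_0$, gives conclusion~(i) directly at $t=t_0+h_0\in J$. No contradiction argument is required.
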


\begin{proof}

Suppose that $L_n$ is bounded, and let $N_0$ be the right end point of the last contracting block $B_{L_n}$. If $L_n=0$ we set $N_0=-1$. Set $N(\ve)=N_0+k_0+L(\ve)+k_2(\ve)+1$.
Then, for every $t\in J$ and $n\geq N(\ve)$ we have
$$
\|\cL_{0,t}^n\|_*=\|\cL_{N(\ve),t}^{n-N(\ve)}\circ\cL_{0,t}^{N(\ve)}\|_*\leq \|\cL_{N(\ve),t}^{n-N(\ve)}\|_*\|\cL_{0,t}^{N(\ve)}\|_{*}\leq \|\cL_{N(\ve),t}^{n-N(\ve)}\|_*
$$
where in the second inequality we have used \eqref{Norm bound}.

Now, by Lemma \ref{A lemma} applied with $A=\{N_0,...,n-1\}$ we see that  
there  are functions  $g_j,\tilde H_j, \tilde Z_j$ such that for all $j\geq N(\ve)$ we have
\begin{equation}\label{Redd}
t_0f_j=g_j+\tilde H_j-\tilde H_{j+1}\circ T_j+2\pi \tilde Z_j
\end{equation}
 where $t_0$ is the center of $J$ and $\DS \sup_j\|g_j\|_{\alpha}\leq C(\ve)$, with $C(\ve)\to 0$ as $\ve\to 0$. Hence, like in the previous cases, if we take $\ve$ and $\del_1$ small enough, then 
 it is enough  to bound the norm of the  operator $\cA_{n,t}$ given by
$$ 
\cA_{n,t}u=\cA_{n,t_0+h}u=\cL_{N(\ve)}^{n-N(\ve)}(e^{iS_{N(\ve),n-N(\ve)}(g+hf)}),
\text{ where } h=t-t_0. 
$$
Let $\del_0$  be the constant from Proposition \ref{PrSmVarBlock}.
Take $\ve$ and $\del_1$ small enough so that $\DS \sup_{|h|\leq\del_1}\sup_j\|g_j+hf_j\|_{\alpha}<\del_0$. 
 Applying Proposition \ref{PrSmVarBlock}  we have 
$$
\|\cA_{n,t_0+h}\|_*\leq C e^{-cQ_n(h)}$$
for some constant $c>0$
where, as before 
$$
 Q_n(h)=\text{Var}(\tilde S_n f)h^2+2h\text{Cov}(\tilde S_nf, \tilde S_n g)+\text{Var}(\tilde S_ng)
$$
where $\tilde S_nf=S_{N(\ve),n-N(\ve)}f$ and $\tilde S_n g=S_{N(\ve),n-N(\ve)}g$. Then $\tilde\sig_n:=\|\tilde S_nf\|_{L^2}\geq \sig_n-CN(\ve)$ for some constant $C$.
Thus, by Lemma \ref{Qqad L} there is a constant $A>0$ such that
\begin{equation}\label{l64}
    \int_{-\del_1/2}^{\del_1/2}\|\cA_{n,t_0+h}\|_*\leq 
A\sig_n^{-1} \exp\left(-c\min_{[-\delta_1/2, \delta_1/2]} Q_n(h)\right).
\end{equation}
Note that
$$
m_n=\min Q_n=\text{Var}(\tilde S_n(g+h_nf)) \geq 0
$$
where $h_n:=\mathrm{argmin 
 }Q_n=-\frac{\fb_n}{2\fa_n}$.
Thus, if $m_n\!\!\to\!\!\infty$ then \eqref{Suff J} holds. Let us suppose that 
$\DS \liminf_{n\to\infty} m_n<\infty.$ 
We claim that in this case either \eqref{Suff J} holds or $(f_j)$ is reducible to a lattice valued sequence of functions.
Before proving the claim, let us simplify the notation and write
$$
Q_n(h)=\sig_n^2(h-h_n)^2+m_n=\fa_n h^2+\fb_n h+\fc_n
$$
where $\fa_n=\tilde\sig_n^2.$ Thus $h_n=\mathrm{argmin 
 }Q_n=-\frac{\fb_n}{2\fa_n}$. 

We now consider two cases.
\vskip0.1cm
(1) For any subsequence  with $\DS \lim_{j\to\infty} m_{n_j}<\infty$ we have
 $|h_{n_j}|\geq \del_1$. Then 
 $$ \min_{[-\delta_1/2, \delta_1/2]} Q_{n_j}(h)\geq \frac{\fa_{n_j}\delta_1^2}{4}
  =\frac{
  \tilde\sig_{n_j}^2\delta_1^2}{4}
 $$
and so \eqref{Suff J} holds  by \eqref{l64}.
\vskip0.1cm
(2) It remains to consider the case when there is a subsequence 
$n_j$ such that $|h_{n_j}|\leq \delta_1$ and $\DS \bQ:=\lim_{j\to\infty} m_{n_j}<\infty.$
By taking further subsequence if necessary we may assume that the limit
$\DS \lim_{j\to\infty} h_{n_j}= h_0$ exists. Then for all $n$,
$$ Q_n(h_0)=\lim_{j\to\infty} Q_{n}(h_{n_j})=
\lim_{j\to\infty} \Var(\tilde S_{n}(g+h_{n_j} f))
$$$$
=\lim_{j\to\infty} \left(\Var(\tilde S_{n_j}(g+h_{n_j} f))-\Var(S_{n, n_j-n}f)
-2\text{Cov}(\tilde S_n (g+h_{n_j} f), S_{n, n_j-n}(g+h_{n_j} f))\right)
$$$$
\leq \lim_{j\to\infty} m_{n_j}-2\lim\inf_{j\to\infty} \text{Cov}(\tilde S_n (g+h_{n_j} f), S_{n, n_j-n}(g+h_{n_j} f))\leq \bQ+2C
$$ 
for some constant $C>0$, where the last inequality uses that $\left|\text{Cov}(f_j, f_{j+k}\circ T_j^k)\right|\leq c_0\del_0^k$ for some constants $c_0>0$ and $\del_0\in(0,1)$.
Since $Q_n(h_0)\leq \bQ+2C$ for all $n$ we obtain
$$ \limsup_{n\to\infty} \Var(\tilde S_n(g+h_0 f))\leq \bQ+2C.$$
Hence $\DS \sup_n\Var(S_n(g+h_0 f))<\infty$.
Thus, by \cite[Theorem 6.5]{DH2}   we can write
$$
h_{0}f_j+g_j=\mu_j(h_{0}f_j+g_j)+M_j+u_{j+1}\circ T_j-u_j
$$
with functions $u_j$ and  $M_j$ such that $\DS \sup_j\|u_j\|_{\al}$ and $\DS \sup_{j}\|M_j\|_{\al}$ are finite, $M_j\circ T_0^j$ is a reverse martingale difference with 
$\DS \sum_j\text{Var}(M_j)<\infty$. Combining this with \eqref{Redd} we conclude that 
$(t_0+h_0)(f_j)$ is reducible to a $2\pi \integers$ valued sequence of functions,
and the proof of Proposition \ref{case 3} is complete.
\end{proof}

\subsection{Proof of the main results in the irreducible case}
\label{Edge1}

 Combining the results of \S\S \ref{SSLrgeCB}--\ref{SSFewBlocks} we obtain \eqref{Suff} completing the proof of  Theorem \ref{LLT1}. \vskip1mm

To prove Theorem \ref{LLT Latt} we note that the analysis 
of \S\S \ref{SSLrgeCB}--\ref{SSFewBlocks}  (in particular the proof of 
Proposition \ref{case 3}) also shows that if $\cJ$ is an interval such that 
 $\int_{\cJ} \|\cL_{0, t}^n \|_{*}dt \neq 0$ then $(f_j)$ is reducible to $h\bbZ$
valued sequence for some $h$ with $\frac{2\pi}{h} \in \cJ.$
By the assumption of Theorem \ref{LLT Latt} such a reduction is impossible for $|h|>1$
(see Theorem \ref{Reduce thm}) and therefore \eqref{SuffLat} holds implying Theorem 
\ref{LLT Latt}.
\vskip1mm

 Theorem \ref{ThEdge1} follows by 
combining
  \cite[Proposition 7.1]{DH2}, the estimate \eqref{Suff} and \cite[Proposition 25]{DH} with $r=1$.

\section{Local limit theorem in the reducible case}\label{Red sec}

\subsection{The statement of the general LLT}

Let $\ka_0$ be a probability measure on $X_0$ which is absolutely continuous with respect to $\mu_0$ and $q_0=\frac{d\ka_0}{d\mu_0}$ is H\"older continuous with exponent $\al$.
Suppose that $(f_j)$ is a reducible sequence such that $\sig_n\to\infty$. 
Let
$a=a(f)$ be the largest positive number such that $f$ is reducible to an $a\bbZ$ valued sequence
 (such $a$ exists by Theorem \ref{Reduce thm}).  Let $\del=2\pi/a$ and  write 
\begin{equation}
\label{RedF}
\del f_j=\del\mu_j(f_j)+M_j+g_j-g_{j+1}\circ T_j+2\pi Z_j
\end{equation}
with $(Z_j)$ being an integer valued irreducible sequence, $g_j,M_j$ are functions 
such that
$\DS \sup_{j}\|g_j\|_\al<\infty$, $\DS \sup_j\|M_j\|_\al<\infty$, and $(M_j\circ T_0^j)$ is a reverse martingale difference with respect to the reverse filtration $(T_0^j)^{-1}\cB_j$ on the probability space $(X_0,\cB_0, \ka_0)$. 
Moreover, we have 
$\DS
\sum_{j}\text{Var}(M_j)<\infty.
$
Then by the martingale convergence theorem the limit $\DS \textbf{M}=\lim_{n\to\infty}S_n M$ exists. Set $\textbf{A}=\textbf{M}+g_0$. 
\begin{theorem}\label{LLT RED}
If \eqref{RedF} holds then
for every continuous function $\phi:\bbR\to\bbR$ with compact support, 
$$
\sup_{u\in a\bbZ}\left|\sqrt{2\pi}{ \sig_n}\bbE_{\ka_0}[\phi(S_n\!-\!u)]
\!\!-\!\!
\left(\!\! a\sum_{k}\int_{X_n}\!\!\bbE_{\ka_0}[\phi(k a+\textbf{A}\!-\!g_n(x))]
d\mu_n(x)\!\!\right)\!e^{-\frac{(u-\bbE[S_n])^2}{2\sig_n^2}}\right|
\!\!=\!\! o(1).
$$
\end{theorem}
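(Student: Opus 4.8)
The plan is to reduce the generalized lattice LLT to the irreducible lattice case (Theorem \ref{LLT Latt} / the characteristic function estimates of \S\ref{SSKeyPropZN}) by a Fourier-analytic argument that tracks the three reducible components in \eqref{RedF} separately. First I would pass to characteristic functions: since $\del f_j = \del\mu_j(f_j) + M_j + g_j - g_{j+1}\circ T_j + 2\pi Z_j$, the twisted transfer operators $\cL_{0,\xi}^n$ at frequencies $\xi$ near the resonant points $\frac{2\pi}{a}\bbZ = \del\bbZ$ do not decay, and near each such point $\xi = \del k_0 + \zeta$ we have, after factoring out the lattice part $e^{2\pi i k_0 Z_j}$ (which equals $1$), that $\cL_{0,\del k_0 + \zeta}^n$ behaves like the operator twisted by the non-lattice sequence $\zeta f_j$ together with the extra twist by $k_0(M_j + g_j - g_{j+1}\circ T_j)$ coming from writing $\del f_j$ in terms of its components. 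The coboundary $g_j - g_{j+1}\circ T_j$ contributes a boundary term $e^{i k_0 (g_0 - g_n\circ T_0^n)}$, and the martingale $M_j$, being an $L^2$-convergent reverse martingale, contributes (in the limit) the random variable $\mathbf{M} = \lim_n S_n M$; together these produce the factor $\mathbf{A} = \mathbf{M} + g_0$ and the average over $x\in X_n$ of $\phi(ka + \mathbf{A} - g_n(x))$ in the statement. The irreducibility of $(Z_j)$ together with $\sig_n\to\infty$ guarantees, via the analysis of \S\ref{SSKeyPropZN} applied to $(Z_j)$ (equivalently to $\zeta f_j$ for small $\zeta\neq 0$), that away from the resonant set the characteristic function integral is $o(\sig_n^{-1})$.

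Concretely, the steps in order would be: (1) Write $\sqrt{2\pi}\sig_n\bbE_{\ka_0}[\phi(S_n - u)]$ via the Fourier inversion formula as $\frac{\sig_n}{\sqrt{2\pi}}\int \hat\phi(\xi) e^{-iu\xi}\bbE_{\ka_0}[e^{i\xi S_n}]\,d\xi$; since $u\in a\bbZ$ and $\hat\phi$ is Schwartz (or at least integrable with the needed decay after a standard smoothing/truncation argument, exactly as in \S\ref{SSInt-LLT}), localize the $\xi$-integral to small neighborhoods of the points $\del k$, $k\in\bbZ$. (2) On the complement of these neighborhoods, use the reduction lemmas and the three-case block analysis of \S\S\ref{SSLrgeCB}--\ref{SSFewBlocks}, applied to the irreducible sequence $(Z_j)$ (note $\sig_n(Z)$ is comparable to $\sig_n(f)/a$ up to the bounded martingale/coboundary corrections, hence $\to\infty$), to conclude $\int_{\text{away}} \|\cL_{0,\xi}^n\|_* \,d\xi = o(\sig_n^{-1})$, killing those contributions. (3) Near $\xi = \del k$, substitute $\xi = \del k + \zeta$, use $e^{i\del k Z_j} \equiv 1$ (mod $2\pi$) so $e^{i\xi S_n} = e^{i\zeta S_n} e^{i k(\mathbf{M}_n + g_0 - g_n\circ T_0^n)}\cdot e^{i k \del \bbE\text{-correction}}$ where $\mathbf{M}_n = S_n M$, and apply the complex sequential Ruelle--Perron--Frobenius perturbative expansion (\eqref{RPFabove}, \cite[Theorem D.2]{DH2}, and the CLT scaling) to the $e^{i\zeta S_n}$ part to extract the Gaussian factor $e^{-(u-\bbE[S_n])^2/2\sig_n^2}$ after integrating $\zeta$ over $\bbR$ against $\hat\phi$; simultaneously pass to the limit in the convergent martingale $\mathbf{M}_n \to \mathbf{M}$ and collect the $g_n$-dependence into the stated $\int_{X_n}\bbE_{\ka_0}[\phi(ka + \mathbf{A} - g_n(x))]\,d\mu_n(x)$ average via the duality relation \eqref{dual} (which moves the $e^{-i k g_n\circ T_0^n}$ weight onto an integral against $\mu_n$). (4) Sum over $k\in\bbZ$; absolute convergence of $\sum_k$ follows from the decay of $\hat\phi$ and uniform bounds on the perturbative data. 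The uniformity in $u\in a\bbZ$ is built in because all estimates are uniform in the frequency shift $e^{-iu\xi}$.

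The main obstacle, as in \cite{DS} and as flagged in the outline of \S\ref{SSIntro}, will be step (3): correctly bookkeeping the interaction of the three components at the resonant points. In particular one must justify (a) that the perturbative expansion around each nonzero resonant point $\del k$ is valid with constants uniform in $k$ over the relevant range of $k$ (the range is effectively cut off by the decay of $\hat\phi$), and (b) the passage to the limit $S_n M \to \mathbf{M}$ inside the expectation commutes with the $n\to\infty$ Gaussian asymptotics — this needs the $L^2$ (indeed a.s., by martingale convergence) convergence of $S_n M$ together with an equicontinuity/uniform-integrability argument for $\bbE_{\ka_0}[\phi(\,\cdot + S_n M)]$, plus the fact that $g_n\circ T_0^n$ only enters through its distribution under $\ka_0$, which by equivariance and \eqref{dual} is comparable to its distribution under $\mu_n$. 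Handling the coboundary endpoint $g_n$ cleanly — it does not converge, so it genuinely survives in the limiting formula as the $\mu_n$-average — is the conceptual novelty relative to the stationary LLT, and I would treat it exactly by the duality relation \eqref{dual} as indicated, following \cite{DH SPA} as cited in the outline.
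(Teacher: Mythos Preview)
Your plan is essentially the paper's proof: Fourier inversion, localization to the resonant set $\del\bbZ$, the irreducible-case block analysis to kill the off-resonant contribution, and a perturbative RPF expansion near each $\del k$ to extract the Gaussian together with the $\mathbf{A}$ and $g_n$ factors.

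Two points where the paper's execution resolves the obstacles you flagged. First, the paper begins by reducing to $\phi$ with \emph{compactly supported} Fourier transform (via \cite[Theorem 10.7]{Bre}), so the sum over $k$ is finite and your uniformity-in-$k$ concern disappears; you never need decay of $\hat\phi$ to sum. Second, your obstacle (b) is handled not by an abstract equicontinuity argument but by an $\ell$-splitting: one writes
\[
\bbE_{\ka_0}[e^{itS_n}]=\mu_n\!\big[e^{-ikg_n}\,\cL_{\ell,n-\ell}^{(ih;t_0)}\,\cL_0^\ell(H_{ih,\ell}q_0)\big],
\quad H_{z,\ell}=e^{ik(g_0+S_\ell M)+zS_\ell f},
\]
where the twist of $\cL_s^{(z;t_0)}$ is $e^{ikM_s+zf_s}$. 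For $s\ge\ell$ large, $\|M_s\|_\al$ is small (since $\sum_j\Var(M_j)<\infty$ and Lemma \ref{al be}), so the combined twist $ikM_s+ihf_s$ is a genuine small perturbation of $\cL_s$ and the sequential RPF theorem applies with triplets $(\la_{t_0,s}(z),\eta^{(z)}_{t_0,s},\nu^{(z)}_{t_0,s})$ converging to $(1,\mathbf{1},\mu_s)$ as $(s,z)\to(\infty,0)$. The first $\ell$ steps are not perturbed at all; they are absorbed into $G_{\ell,z}=\cL_0^\ell(H_{z,\ell}q_0)$, and one lets $\ell\to\infty$ \emph{after} $n\to\infty$, using $S_\ell M\to\mathbf{M}$ a.s.\ to get $\nu^{(ih)}_{t_0,\ell}(G_{\ell,ih})\to\ka_0(e^{ik\mathbf{A}})$. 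This is precisely the mechanism you are groping for; the perturbation is of the \emph{joint} twist, not of $e^{i\zeta S_n}$ separately. Finally, the identification of the limit as $a\sum_k\int_{X_n}\bbE_{\ka_0}[\phi(ka+\mathbf{A}-g_n(x))]\,d\mu_n(x)$ goes through Poisson summation applied to $C_n(t)=\int\!\!\int\phi(t+\mathbf{A}(x)-g_n(y))\,d\ka_0(x)\,d\mu_n(y)$, which you did not write down but is the natural last step.
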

\begin{remark}
(i) To demonstrate the roles of $\textbf{A}$ and $g_n$ define $Q_n(x,y)=\textbf{A}(y)-g_n(x)$. To simply the notation we assume that $a=2\pi$. Now, if $Q_n$ converges to the  uniform distribution on $[0,2\pi]$ with respect to $d\ka_0(y)d\mu_n(x)$
then 
$$
a\sum_{k}\int_{X_n}\!\!\bbE_{\ka_0}[\phi(k a+\textbf{A}-g_n(x))]
d\mu_n(x)\to \int_{-\infty}^\infty g(x)dx.
$$
Thus, even though the sequence is reducible, the local law is Lebesgue and we get the non-lattice LLT. This is similar to the fact that if we add a sequence $\mathfrak{U_n}$ which converges in distribution to the uniform distribution on $[0,1]$   to a sum of iid integer valued random variables $S_n$ and if  $\mathfrak{U_n}$ and $S_n$ are asymptotically independent then $Y_n=S_n+\mathfrak{U}_n$ obeys the non-lattice LLT.

 On the other hand, if $\textbf{A}(y)-g_n(x)$ converges in distribution to a constant $b$ then we get an LLT similar to Theorem \ref{LLT Latt}, but with the local law being the counting measure supported on $b+a\bbZ$. In general, the set of possible limits in distribution of the sequence $Q_n$ determines the possible local laws along the appropriate subsequences.
\vskip0.1cm

(ii) Applying the theorem with $\tilde f_j=f_j+g_{j+1}\circ T_j-g_j$ instead of $f_j$ we obtain 
that 
$$
\sup_{u\in a\bbZ}\left|\sqrt{2\pi}{ \sig_n}\bbE_{\ka_0}
[\phi(S_n+g_n\circ T_0^n\!\!-\!\!g_0\!\!-\!\!u)]
-\left(a\sum_{k}\bbE_{\ka_0}[\phi(k a+\textbf{A}_1)]\right)e^{-\frac{(u-\bbE[S_n])^2}{2\sig_n^2}}\right|= o(1)
$$
where $\textbf{A}_1=\textbf{M}\text{ mod }2\pi$ 
(or $\textbf{A}_1=\textbf{M}$). Now, the sequence $Q_n$ in part (i) becomes the single random variable $\textbf{A}_1$, and the same discussion applies with the distribution of $\textbf{A}_1$ determining the local law after subtracting a coboundary.
\vskip0.1cm
(iii) Similarly, 
it will also follow that 
$$
\sup_{u\in a\bbZ}\left|\sqrt{2\pi}{ \sig_n}\bbE_{\ka_0}[\phi(S_n-u+g_n\circ T_0^n)]
\!\!-\!\!\left(a\sum_{k}\bbE_{\ka_0}[\phi(ka+\textbf{A})]\right)e^{-\frac{(u-\bbE[S_n])^2}{2\sig_n^2}}\right|
= o(1)
$$
and the same discussion applies.
\end{remark}

\begin{remark}
Note that Theorem \ref{LLT RED} implies Theorem \ref{LLT Latt} since
in that case \eqref{RedF} holds with $g_j=M_j=0.$ We gave a different proof
in  Section \ref{SSKeyPropZN} since the computations in the general case are significantly
more complicated.
\end{remark}

\subsection{Proof of Theorem \ref{LLT RED}}\label{Bp}
 It suffices to prove the theorem in the case $\ka_0(f_j\circ T_0^j)=0$ for all $j$, that is, $\bbE[S_n]=0$ for all $n$ because this could always be achieved by subtracting a 
constant from $f_j.$

The first step of the proof is standard. 
In view of \cite[Theorem 10.7]{Bre} (see also  \S 10.4 there
and Lemma IV.5 together with arguments of Section VI.4 in \cite{HH}), it is enough to prove the theorem for functions $\phi\in L^1(\bbR)$ whose Fourier transform has compact support. In particular, 
 the inversion formula holds 
 \begin{equation}\label{FI}
 2\pi\phi(x)=\int_{-\infty}^{\infty} e^{itx}\hat\phi(t)dt
 \quad\text{where}\quad
\hat \phi(t)=\int_{-\infty}^\infty e^{-itx}\phi(x)dx.  
\end{equation}

Next, let $L>0$ be such that $\hat\phi$ vanishes outside $[-L,L]$. Then by \eqref{FI}
we have 
$$
\sqrt{2\pi}\bbE_{\ka_0}[\phi(S_n-u)]
=\frac{1}{\sqrt{2\pi}}\int_{-\infty}^\infty e^{-itu}\hat\phi(t)\bbE_{\ka_0}[e^{itS_n}]dt=
\frac{1}{\sqrt{2\pi}}\int_{-L}^L e^{-itu}\hat\phi(t)\bbE_{\ka_0}[e^{itS_n}]dt.
$$

Divide $[-L,L]$ into intervals $J$ of length $\del_1$ for some small $\del_1$, such that each interval $J$ which intersects $\del\bbZ$ is centered at some point in $\del\bbZ$ (recall that 
$\del=\frac{2\pi}{a}$). Then 
$$
\sqrt{2\pi}\bbE_{\ka_0}[\phi(S_n-u)]=
\sum_{J}\frac{1}{\sqrt{2\pi}}\int_{J}e^{-itu}\hat\phi(t)\bbE_{\ka_0}[e^{itS_n}]dt.
$$
Now, because of Theorem \ref{Reduce thm}, the arguments in the irreducible case show that
if $J$ does not intersect $\del\bbZ$ and $\del_1$ is small enough then  
$$
\sup_u\left|\int_{J}e^{-itu}\hat\phi(t)\bbE_{\ka_0}[e^{itS_n}]dt\right|\leq\|\hat\phi\|_\infty \int_{J}\|\cL_t^n\|_*dt=o(\sig_n^{-1}).
$$
Thus, denoting $J_k=[k\del -\del_2,k\del +\del_2]$, 
where  $\del_2=\frac12\del_1$, we see that
\begin{equation}\label{GetR}
\sqrt{2\pi} \sig_n\bbE_{\ka_0}[\phi(S_n-u)]=\sum_{k}\frac{\sig_n}{\sqrt{2\pi}}\int_{J_k}e^{-itu}\hat\phi(t)\bbE_{\ka_0}[e^{itS_n}]dt+o_{n\to\infty}(1).
\end{equation}

The proof of Theorem \ref{LLT RED} is based on the following result.
\begin{proposition}
    \label{Lemma Claim}
    For each $k$ we have 
\begin{equation*}
\sup_{u\in \frac{2\pi}{\del}\bbZ}\left|\frac{\sig_n}{\sqrt{2\pi}}\int_{J_k}e^{-itu}\hat\phi(t)\bbE[e^{itS_n}]dt-e^{-\frac12 u^2/\sig_n^2}\mu_n(e^{-i k g_n})\ka_0(e^{ik\textbf{A}})\hat\phi(k\del)\right|=o_{n\to\infty }(1).   
\end{equation*}
\end{proposition}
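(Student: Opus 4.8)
\textbf{Proof strategy for Proposition \ref{Lemma Claim}.}

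The plan is to perturbatively expand $\bbE[e^{itS_n}]$ for $t$ in the small interval $J_k=[k\del-\del_2, k\del+\del_2]$ around the resonant point $k\del$, in the same spirit as the analysis of the characteristic function near zero in \S \ref{SSInt-LLT}, but now centered at a nonzero lattice point. Writing $t = k\del + s$ with $|s|\leq \del_2$, I would first use the reduction \eqref{RedF}: since $\del f_j = \del\mu_j(f_j) + M_j + g_j - g_{j+1}\circ T_j + 2\pi Z_j$ and $Z_j$ is integer valued, the term $e^{ik\cdot 2\pi Z_j}$ disappears, and (after reducing to the mean-zero case) one gets
$$
e^{itS_n} = e^{i k \del^{-1}(S_n(\del f))/1 \cdot \del^{-1}\cdots}
$$
— more precisely, $e^{ik\del \cdot S_n f} = e^{ik(S_n M + g_0 - g_n\circ T_0^n)}$ because the coboundary telescopes and $2\pi k Z_j$ contributes nothing mod $2\pi$. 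Hence
$$
\bbE_{\ka_0}[e^{itS_n}] = \bbE_{\ka_0}\!\left[e^{ik(S_nM + g_0)} e^{-ikg_n\circ T_0^n} e^{isS_nf}\right].
$$
Now $S_nM + g_0 \to \textbf{A}$ a.s.\ by the martingale convergence theorem, and the factor $e^{isS_nf}$ is the ``small-parameter'' twist to which the complex Ruelle--Perron--Frobenius machinery of \eqref{RPFabove} applies. The factor $e^{-ikg_n\circ T_0^n}$ needs to be transferred forward: using the duality \eqref{dual} repeatedly (or rather the adjoint $\cL$-operator identity), $\bbE_{\ka_0}[(e^{-ikg_n}\circ T_0^n)\cdot h\circ T_0^j\cdot(\cdots)]$ becomes an expression involving $\cL_{0,s}^n$ applied to $q_0$ paired against $e^{-ikg_n}d\mu_n$.

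The key steps, in order, are: (1) rewrite the integrand using \eqref{RedF} to isolate the convergent martingale part $\textbf{A}$, the coboundary endpoint $g_n\circ T_0^n$, and the genuinely small twist $e^{isS_nf}$; (2) apply the complex sequential RPF theorem \eqref{RPFabove} with the small parameter $z=is$ (and the sequence $u_j$ essentially $f_j$ rescaled) to get $\bbE_{\ka_0}[e^{-ikg_n\circ T_0^n}e^{isS_nf}\cdot(\text{leading part})] = \la_{0,n}(is)\cdot\big(\text{projections}\big) + O(\del^n)$, where $\la_{0,n}(is) = e^{\Pi_{0,n}(is)}$ and by \eqref{Pi Taylor} $|\la_{0,n}(is)| = e^{-\frac12 s^2\sig_n^2(1+o(1))}$; (3) substitute back, so that $\frac{\sig_n}{\sqrt{2\pi}}\int_{J_k} e^{-itu}\hat\phi(t)\bbE[e^{itS_n}]dt$ becomes, up to $o(1)$, $e^{-ik\del u}\hat\phi(k\del)\cdot\mu_n(e^{-ikg_n})\cdot\ka_0(e^{ik\textbf{A}})\cdot\frac{\sig_n}{\sqrt{2\pi}}\int_{|s|\leq\del_2} e^{-isu}\la_{0,n}(is)\,ds$; and (4) evaluate the Gaussian integral: since $u\in\frac{2\pi}{\del}\bbZ$ we have $e^{-ik\del u}=1$, and $\frac{\sig_n}{\sqrt{2\pi}}\int_{|s|\leq\del_2}e^{-isu}e^{-\frac12 s^2\sig_n^2}ds \to e^{-\frac12 u^2/\sig_n^2}$ (extending to all of $\bbR$ costs $o(1)$ uniformly in $u$, and the error from $|\la_{0,n}(is)| - e^{-\frac12 s^2\sig_n^2}$ is handled via \eqref{Pi Taylor} and Lemma \ref{Qqad L}). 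Combining (1)--(4) gives the claim.

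The main obstacle I expect is step (2): making the replacement $S_nM + g_0 \rightsquigarrow \textbf{A}$ rigorous inside an expectation that also carries the oscillatory factors $e^{-ikg_n\circ T_0^n}$ and $e^{isS_nf}$, uniformly in $u$ and with error genuinely $o(\sig_n^{-1})$ after multiplication by $\sig_n$. The subtlety is that $S_nM$ converges a.s.\ and in $L^2$ but not at an exponential rate, so one cannot simply bound $\|e^{ikS_nM} - e^{ik\textbf{A}}\|$ by something summable; instead I would split the orbit at an intermediate time $n - m(n)$ with $m(n)\to\infty$ slowly, use that $S_{n-m}M + g_0$ is measurable with respect to $(T_0^{n-m})^{-1}\cB_{n-m}$, apply the RPF estimate \eqref{RPFabove} only on the last $m(n)$ steps to decouple, and control the tail $S_nM - S_{n-m}M$ in $L^2$ by $\sum_{j\geq n-m}\Var(M_j)\to 0$. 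This is exactly where the ideas from \cite{DH SPA} and \cite{DS} referenced in the outline come in. A secondary technical point is keeping track of the density $q_0 = d\ka_0/d\mu_0$ through the transfer-operator manipulations, but since $\|q_0\|_\al<\infty$ this only produces harmless bounded factors that get absorbed into the projection $\ka_0(e^{ik\textbf{A}})$ in the limit.
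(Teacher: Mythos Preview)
Your overall strategy matches the paper's: use \eqref{RedF} to factor $e^{itS_n}$, apply a perturbative RPF expansion around the resonant point, and finish with a Gaussian integral. Steps (1), (3), (4) are essentially right. The gap is in step (2) and your proposed fix for the ``main obstacle.''

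The problem with your fix is twofold. First, the measurability claim is wrong: $S_{n-m}M=\sum_{j<n-m}M_j\circ T_0^j$ is \emph{not} $(T_0^{n-m})^{-1}\cB_{n-m}$-measurable (each $M_j\circ T_0^j$ is $\cF_j$-measurable in the \emph{decreasing} reverse filtration, so the sum is only $\cF_0$-measurable). Second, and more seriously, an $L^2$ bound on the tail $S_nM-S_{n-m}M$ gives at best an $o(1)$ error in the characteristic function uniformly in $s$; after integrating over $|s|\le\del_2$ and multiplying by $\sig_n$ this becomes $o(\sig_n)$, not $o(1)$. Equally, applying RPF ``only on the last $m(n)$ steps'' yields decay governed by $\Var(S_{n-m,m}f)$, not by $\sig_n^2$, so you never recover the correct Gaussian. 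The paper resolves this differently: it does \emph{not} replace $S_nM$ by $\textbf{M}$ before applying RPF. Instead it absorbs $M_s$ into the twist, working with operators $\cL_s^{(z;t_0)}g=\cL_s(e^{ikM_s+zf_s}g)$ viewed as analytic perturbations of $\cL_s$ in the pair $(M,z)$; this is legitimate for $s\ge\ell$ large because $\|M_s\|_\al\to 0$ (via $\sum\Var(M_j)<\infty$ and Lemma \ref{al be}). The split is at a fixed $\ell$ near the \emph{start}: on $[0,\ell]$ one just pushes $e^{ik(g_0+S_\ell M)+zS_\ell f}q_0$ through $\cL_0^\ell$ to a bounded-H\"older function $G_{\ell,z}$ (Lasota--Yorke), and on $[\ell,n]$ the twisted RPF gives $\la_{t_0,\ell,n}(z)\,\nu_{t_0,\ell}^{(z)}(G_{\ell,z})\,\eta_{t_0,n}^{(z)}+O(\theta^n)$. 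The martingale limit enters only at the very end: as $\ell\to\infty$ (after $n\to\infty$), $\nu_{t_0,\ell}^{(ih)}(G_{\ell,ih})\to\ka_0(e^{ik\textbf{A}})$ and $\mu_n(e^{-ikg_n}\eta_{t_0,n}^{(ih)})\to\mu_n(e^{-ikg_n})$ (Lemma \ref{Lemma Above}), while a separate computation (Proposition \ref{est prop}) shows $\Pi_{t_0,\ell,n}(ih)\approx -\tfrac12 h^2\sig_n^2$ with controlled third derivative. The double limit $\lim_{\ell\to\infty}\limsup_{n\to\infty}$ is what makes the $o(1)$ honest without ever needing the unavailable $o(\sig_n^{-1})$ pointwise replacement.
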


\noindent
Let us first complete the proof of the theorem relaying on Proposition \ref{Lemma Claim}. Since there are finitely many intervals $J_k$  inside $[-L, L]$, using \eqref{GetR} and 
 the proposition we get
$$
\frac{\sig_n}{\sqrt{2\pi}}\int_{J_k}e^{-itu}\hat\phi(t)\bbE_{\ka_0}[e^{itS_n}]dt=e^{-\frac12 u^2/\sig_n^2}\sum_{k}
\mu_n(e^{-ikg_n})\ka_0(e^{ik\bA})\hat\phi(k\del)+o_{n\to\infty}(1)
$$
uniformly in $u$. 
Next, notice that, for all $k$, 
$\DS
\mu_n(e^{-i\del k g_n})\ka_0(e^{ik\del \textbf{A}})\hat\phi(\del k)=\widehat{C_n}(\del k)
$
where 
$\DS
C_n(t)=\int_{X_0}\int_{X_n}\phi(t+\textbf{A}(x)-g_n(y)) d\ka_0(x) d\mu_n(y).
$

To complete the proof we use the Poisson summation formula to derive that
$$
\sum_k\widehat{C_n}(k\del)= a \sum_k C_n\left(k a\right).
$$

\subsection{Proof of Proposition \ref{Lemma Claim}}
Fix some $k$ and  denote $J=J_k$ and $t_0=k\del$.
Now, using \eqref{RedF} for $t=t_0+h=k\del+h\in J$, we have
$$
tf_j=k(M_j+g_{j}-g_{j+1}\circ T_j+2\pi  Z_j)+hf_j.
$$
Thus, 
$$
e^{itS_n}=e^{-ikg_n\circ T_0^n}e^{ik(S_n M+g_0)}e^{ih S_n f}.
$$
Next, take some $\ell<n$ and write 
$$
e^{itS_n}=e^{-ikg_n\circ T_0^n}e^{ik(S_n M-S_\ell M)}e^{ih(S_nf-S_\ell f)}
H_{ih,\ell} 
$$
where for all $z\in\bbC$,
$$
H_{z,\ell}=e^{ik(g_0+S_\ell M)+zS_\ell f}.
$$
Notice that for every function $q:X_0\to\bbR$,
$$
\cL_0^n(e^{itS_n}q)=e^{-ikg_n}\cL_{\ell}^{n-\ell}\left(e^{ik(S_{\ell,n-\ell}M+ihS_{\ell,n-\ell}f)}\cL_0^\ell (H_{ih,\ell} 
q)\right)
$$
where we recall that $\DS S_{\ell,n}f=\sum_{j=\ell}^{n-1}f_{j}\circ T_\ell^j$ ($S_{\ell,k}M$ is defined similarly). Notice that for every function $G$ we have 
$$
\cL_{\ell}^{n-\ell}(e^{ik(S_{\ell,n}M+ihS_{\ell,n}f)}G)=\cL_{\ell,n}^{(ih;t_0)}G
$$
 where $\DS
\cL_{s,n}^{(z;t_0)}=\cL_{s+n-1}^{(z;t_0)}\circ\cdots\circ\cL_{s+1}^{(z;t_0)}\circ\cL_{s}^{(z;t_0)}
$ and
$\cL_{s}^{(z;t_0)}(g)=\cL_s(e^{i k M_s+zf_s}g)$. 

Thus, recalling that  $\ka_0=q_0 d\mu_0$ we have
\begin{equation}\label{cf form}
 \bbE_{\ka_0}[e^{itS_n}]=\mu_0[e^{itS_n}q_0]=\mu_n[\cL_0^n(e^{itS_n}q_0)]
= \mu_n[e^{-ik g_n}\cL_{\ell,n-\ell}^{(ih;t_0)}G_{\ell,z}]
\end{equation}
where $G_{\ell,z}=\cL_0^\ell (H_{z,\ell}q_0)$.

 Next, consider the  
 Banach space $\cB_1$ of all sequences $u=(u_j)_{j\geq 0}$ of H\"older continuous functions $u_j:X_j\to\bbC$ such that 
 $\DS \|u\|:=\sup_j\|u_j\|_\al<\infty$. 
 Let the operator $\cA_j^{(u,z)}$ be defined by $\cA_j^{(u,z)} g=\cL_j(e^{ik u_j+zf_j})$.  We view these operators as  perturbations of the operators $\cL_j$. Then these operators are analytic in $(u,z)$ and are uniformly bounded in $j$. Moreover, $\cL_s^{(z;t_0)}=\cA_s^{(M,z)}$, where $M=(M_j)$.
This means that we can view the operators $\cL_s^{(z;t_0)}$ as analytic in $(M,z)$ perturbations of the operators $\cL_s$ (the perturbation is small if $s$ is large and $|z|$ is small). Thus, if $\ell$ is large enough (so that $\DS \sup_{s\geq \ell}\|M_s\|_\al$ is small) and $|z|$ is small enough by applying 
\cite[Theorem D.2]{DH2} with the operators $\cL_{s}^{(z;t_0)},s\geq \ell$, considered as small perturbations of  of the operators $\cL_s$, we get the following.
There are triplets consisting of a non-zero complex number $\la_{t_0,s}(z)$ a H\"older continuous function
$\eta_{t_0,s}^{(z)}$ and a complex bounded linear functional $\nu_{t_0,s}^{(z)}$ (on the space of H\"older functions) which are uniformly bounded, analytic in $z$ and
\begin{equation}\label{SG}
\cL_{\ell,n-\ell}^{(z;t_0)}=\la_{t_0,n-1}(z)\cdots \la_{t_0,\ell+1}(z)\la_{k,\ell}(z)\nu_{t_0,\ell}^{(z)}\otimes \eta_{t_0,n}^{(z)}
+O(\theta^n), \,0< \theta<1.
\end{equation}
Moreover, $\nu_{t_0,s}^{(z)}(\eta_{t_0,s}^{(z)})=\nu_{t_0,s}^{(z)}(\textbf{1})=1$.
Furthermore,
since $\DS \lim_{s\to\infty, z\to 0}\|kM_s+zf_s\|_\al=0$,
\begin{equation}\label{lam close}
\lim_{s\to\infty, z\to 0}|\la_{t_0,s}(z)-1|=0,
\end{equation}
\begin{equation}\label{h close}
\lim_{s\to\infty, z\to 0}\|\eta_{t_0,s}^{(z)}-1\|_\al=0
\end{equation}
and 
\begin{equation}\label{nu close}
\lim_{s\to\infty, z\to 0}\|\nu_{t_0,s}^{(z)}-\mu_s\|_\al=0.
\end{equation}

Setting $\la_{t_0,\ell,n}(z)=\la_{t_0,n-1}(z)\cdots
\la_{t_0,\ell+1}(z)\la_{t_0,\ell}(z)$,
we conclude that 
\begin{equation}\label{CharRep}
\bbE_{\ka_0}[e^{(ik\del+z)S_n}]=\mu_n(e^{-ikg_n}\eta_{t_0,n}^{(z)})
\nu_{t_0,\ell}^{(z)}(G_{\ell,z})\la_{t_0,\ell,n}(z)+O( \theta^n).
\end{equation}
From now on we will only consider complex parameters of the form $z=ih, h\in\bbR$.

\begin{lemma}
If $\ell$ is large enough and $|h|$ is small enough then for all $n$ large enough we have
\begin{equation}\label{uuup}
  |\la_{t_0,\ell,n}(ih)|\leq Ce^{-ch^2\sig_n^2}   
\end{equation}
for some constants $C,c>0$ and all $n$.
\end{lemma}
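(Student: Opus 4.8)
The estimate \eqref{uuup} is the statement that near the resonant point $z=0$ the leading eigenvalue of the twisted transfer operator associated with the potential perturbation $z\mapsto ikM_s+zf_s$ decays Gaussianly in $h$, uniformly in $\ell$ and $n$, provided $\ell$ is taken large (so the extra twist $ikM_s$ is negligible) and $|h|$ is small. The natural route is to reproduce, in the present ``$M$-twisted'' setting, the argument already used for the unperturbed pressures in Proposition~\ref{PrSmVarBlock} and Corollary~7.5 of \cite{DH2}: write $\Pi_{t_0,s}(z)=\ln\la_{t_0,s}(z)$, $\Pi_{t_0,\ell,n}(z)=\sum_{s=\ell}^{n-1}\Pi_{t_0,s}(z)$, and perform a second-order Taylor expansion of $\Pi_{t_0,\ell,n}$ around $z=0$ along the imaginary axis $z=ih$.

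\textbf{Key steps.} First I would record that, because $\cL_s^{(z;t_0)}=\cA_s^{(M,z)}$ is analytic in $(M,z)$ and the triplets are uniformly bounded and analytic, $\Pi_{t_0,s}(z)$ is analytic in a fixed disc $|z|\le r_0$ for all $s\ge\ell$ with $\ell$ large, with $\Pi_{t_0,s}(0)=\ln\la_{t_0,s}(0)$; by \eqref{lam close} these values tend to $0$, so $\sum_{s=\ell}^{n-1}\Pi_{t_0,s}(0)$ is under control once we separate the real part (which only contributes a harmless bounded multiplicative constant depending on $\ell$) — in fact the cleanest statement is that $|\la_{t_0,\ell,n}(0)|$ is bounded above by a constant, since each $|\la_{t_0,s}(0)|\le 1+o_{s\to\infty}(1)$ and one absorbs the finitely many early factors. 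Second, I would compute the first derivative: $\Pi_{t_0,\ell,n}'(0)=\sum_s \frac{d}{dz}\ln\la_{t_0,s}(z)\big|_{0}$, which is purely imaginary after multiplying by $i$ (it corresponds to the mean of $S_{\ell,n-\ell}f$ under the appropriate equivariant measure, which we may take to be $0$ after centering $f_j$ as in \S\ref{Bp}), so it contributes no exponential growth. Third — the heart of the matter — I would estimate the second derivative $\Pi_{t_0,\ell,n}''(ih)$. As in the proof of Proposition~\ref{PrSmVarBlock} (cf.\ \eqref{2 der est}, \eqref{3 der est}), one compares $\Pi_{t_0,\ell,n}''$ with a logarithmic second derivative of a genuine characteristic-function-type quantity $\bar\Lambda_{t_0,\ell,n}(z)=\ln\bbE[e^{zS_{\ell,n-\ell}f}\,(\text{twisted weight})]$, bounds the difference by a constant via \cite[Corollary~7.5]{DH2} applied to the perturbed operators, and then notes that at $z=0$ the leading term is $\mathrm{Var}(S_{\ell,n-\ell}f)=\tilde\sigma_n^2$, which by the remarks following \eqref{RedF} and \eqref{l64} satisfies $\tilde\sigma_n\ge\sigma_n-CN(\ve)$, hence $\tilde\sigma_n^2\ge\frac12\sigma_n^2$ for $n$ large. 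A uniform bound on the third derivative $|\Pi_{t_0,\ell,n}'''(ih)|=O(1+\tilde\sigma_n^2)$ on a small interval $|h|\le\delta_1$ — proven exactly as \eqref{3 der bound}, by splitting $\{\ell,\dots,n-1\}$ into blocks of comparable variance — then controls the Lagrange remainder, giving
\begin{equation*}
\left|\Pi_{t_0,\ell,n}(ih)+\tfrac12 h^2\,\mathrm{Var}(S_{\ell,n-\ell}f)\right|\le C_4 h^2+C_5|h|^3(1+\tilde\sigma_n^2),
\end{equation*}
and for $|h|$ small enough the right side is $\le \frac14 h^2\tilde\sigma_n^2 \le \frac14 h^2\sigma_n^2$ up to absolute constants. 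Taking real parts and exponentiating yields $|\la_{t_0,\ell,n}(ih)|\le C e^{-c h^2\sigma_n^2}$ as claimed.

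\textbf{Main obstacle.} The only genuinely delicate point is that the twist $ikM_s$ is \emph{not} small for fixed small index $s$ — only its tail $\sup_{s\ge\ell}\|M_s\|_\al$ is small, because $\sum_s\mathrm{Var}(M_j)<\infty$ forces $\|M_s\|_{L^2}\to 0$ and hence, by Lemma~\ref{al be}, $\|M_s\|_\al\to 0$. Thus the perturbative expansion \eqref{SG} and the estimates \eqref{lam close}--\eqref{nu close} are only available for $s\ge\ell$ with $\ell$ large, which is precisely why the statement demands ``$\ell$ large enough''; the factors $\la_{t_0,s}(ih)$ with $s<\ell$ simply do not enter $\la_{t_0,\ell,n}(z)$, so this causes no real difficulty once one is careful to start the product at $\ell$. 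The second, more technical, point is verifying that \cite[Corollary~7.5]{DH2} (comparison of the pressure's derivatives with those of a true cumulant generating function) applies verbatim to the $M$-perturbed operators; here one uses that $kM_s+zf_s$ has uniformly bounded Hölder norm and small tail, so the same block-decomposition argument that produced \cite[(7.2)]{DH2} goes through with the blocks chosen inside $\{\ell,\dots,n-1\}$ rather than $\{0,\dots,n-1\}$, exactly as in the proof of \eqref{3 der bound}.
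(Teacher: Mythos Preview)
Your approach is correct in spirit but takes a significantly more elaborate route than the paper. The paper's proof is essentially a one-liner: plug $\textbf{1}$ into both sides of the spectral representation \eqref{SG}; since $\nu_{t_0,\ell}^{(ih)}(\textbf{1})=1$ and, by \eqref{h close}, $\eta_{t_0,n}^{(ih)}$ is close to $\textbf{1}$, one gets $|\la_{t_0,\ell,n}(ih)|\le C\|\cL_{\ell,n-\ell}^{(ih;t_0)}\|_\al+O(\theta^n)$. Now apply Proposition~\ref{PrSmVarBlock} \emph{directly} to the functions $v_s=kM_s+hf_s$ (whose H\"older norms are small once $s\ge\ell$ with $\ell$ large and $|h|$ small), obtaining $\|\cL_{\ell,n-\ell}^{(ih;t_0)}\|_\al\le C_2 e^{-c_2\Var(S_{\ell,n-\ell}(kM+hf))}$; finally use $\sup_n\Var(S_nM)<\infty$ to replace this variance by $h^2\sigma_n^2$ up to a bounded additive error, and absorb the $O(\theta^n)$ via $\sigma_n^2=O(n)$. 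No Taylor expansion of the pressure is needed.

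Your route --- expanding $\Pi_{t_0,\ell,n}(ih)$ to second order with a third-derivative remainder --- is exactly what the paper carries out \emph{later}, in Proposition~\ref{est prop}, where the individual derivatives (not merely an upper bound on $|\la|$) are genuinely required for the asymptotics of $I_{k,\ell,n,u}$. So your argument anticipates machinery that the paper postpones; it would work, but for the present lemma the direct appeal to Proposition~\ref{PrSmVarBlock} is both shorter and avoids the derivative comparisons with \cite[Corollary~7.5]{DH2}.

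One small gap in your sketch: the claim that $|\la_{t_0,\ell,n}(0)|$ is bounded ``since each $|\la_{t_0,s}(0)|\le 1+o_{s\to\infty}(1)$ and one absorbs the finitely many early factors'' is insufficient as stated --- an infinite product of factors $1+o_s(1)$ can diverge. You need summability of $|\la_{t_0,s}(0)-1|$ over $s\ge\ell$, which does hold (this is precisely the content of Proposition~\ref{est prop}(ii), and ultimately rests on $\sum_s\Var(M_s)<\infty$), but it should be justified rather than asserted.
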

\begin{proof}
\eqref{uuup} 
follows from plugging in the function $\textbf{1}$ in both sides of \eqref{SG}, using \eqref{h close}, Proposition \ref{PrSmVarBlock} and that 
$\DS \sup_j\!\text{Var}(S_j M)\!\!<\!\!\infty$.    Note that we can absorb the term $O(\te^n)$ in $e^{-ch^2\sig_n^2}$ since  by  the exponential decay of correlations (\cite[Remark 2.6]{DH2}), $\sig_n^2=O(n)$.
\end{proof}

\begin{lemma}\label{Lemma Above}
(i) $\DS
\lim_{\ell\to\infty}\limsup_{h\to 0}\left|\nu_{t_0,\ell}^{(ih)}(G_{\ell,ih})-{\ka_0}[e^{ik(g_0+\textbf{M})}]\right|=0;
$

\vskip0.1cm
(ii)
$\DS
\lim_{\ell\to\infty}\limsup_{h\to 0}\sup_{n\geq \ell}\left|\mu_n(e^{-ikg_n}\eta_{t_0,n}^{(ih)})-\mu_n(e^{-it_0g_n})\right|=0.
$
\end{lemma}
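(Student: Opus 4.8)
The plan is to prove the two claims separately, using the convergence estimates \eqref{lam close}, \eqref{h close}, \eqref{nu close} together with the definitions $G_{\ell,z}=\cL_0^\ell(H_{z,\ell}q_0)$ and $H_{z,\ell}=e^{ik(g_0+S_\ell M)+zS_\ell f}$, and the fact that $\textbf{M}=\lim_{n\to\infty}S_n M$ exists (by the martingale convergence theorem, using $\sum_j\Var(M_j)<\infty$) so that $S_\ell M\to\textbf{M}$ in $L^2(\ka_0)$ and $\ka_0$-a.s. as $\ell\to\infty$.

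For part (i), first I would fix $\ell$ and let $h\to 0$. Since $z\mapsto H_{z,\ell}$ is continuous in the uniform norm on $X_0$ at $z=0$ (the exponent $zS_\ell f$ is bounded for fixed $\ell$), and $\cL_0^\ell$ is a bounded operator, we get $G_{\ell,ih}\to G_{\ell,0}=\cL_0^\ell(e^{ik(g_0+S_\ell M)}q_0)$ in $\|\cdot\|_\al$ as $h\to 0$, uniformly once $\ell$ is fixed. Next, by \eqref{nu close} we have $\|\nu_{t_0,\ell}^{(ih)}-\mu_\ell\|\to 0$ as $\ell\to\infty$ and $h\to 0$; since the $G_{\ell,0}$ are uniformly bounded in $\|\cdot\|_\al$, this lets us replace $\nu_{t_0,\ell}^{(ih)}(G_{\ell,ih})$ by $\mu_\ell(G_{\ell,0})$ up to an error vanishing as $\ell\to\infty$ and $h\to 0$. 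Then $\mu_\ell(G_{\ell,0})=\mu_\ell(\cL_0^\ell(e^{ik(g_0+S_\ell M)}q_0))=\mu_0(e^{ik(g_0+S_\ell M)}q_0)=\ka_0(e^{ik(g_0+S_\ell M)})$, using the invariance $\cL_0^{*\ell}\mu_\ell=\mu_0$ and $d\ka_0=q_0\,d\mu_0$. Finally, since $S_\ell M\to\textbf{M}$ in $L^2(\ka_0)$, dominated convergence gives $\ka_0(e^{ik(g_0+S_\ell M)})\to\ka_0(e^{ik(g_0+\textbf{M})})$ as $\ell\to\infty$, which is exactly the claimed limit.

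For part (ii), the argument is more direct: by \eqref{h close}, $\|\eta_{t_0,n}^{(ih)}-\textbf{1}\|_\al\to 0$ as $n\to\infty$ and $h\to 0$, uniformly, so $\sup_{n\ge\ell}\|\eta_{t_0,n}^{(ih)}-\textbf{1}\|_\al\to 0$ as $\ell\to\infty$ and $h\to 0$. Since the functions $e^{-ikg_n}$ are uniformly bounded (because $\sup_j\|g_j\|_\al<\infty$) and $\mu_n$ are probability measures,
\[
\left|\mu_n(e^{-ikg_n}\eta_{t_0,n}^{(ih)})-\mu_n(e^{-ikg_n})\right|\le \|\eta_{t_0,n}^{(ih)}-\textbf{1}\|_\infty\to 0
\]
uniformly in $n\ge\ell$; and $\mu_n(e^{-ikg_n})=\mu_n(e^{-it_0g_n})$ only after recalling that in the relevant normalization $k$ and $t_0=k\del$ play interchangeable roles in the exponent — more precisely one should read the statement with $t_0$ as written, and the identification is just the definition $t_0=k\del$ of how the exponent $k g_n$ was produced from $\del f_j$ in \eqref{RedF}; I would simply track the constant carefully so the two expressions literally coincide.

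The main obstacle I anticipate is bookkeeping the two limits $\ell\to\infty$ and $h\to 0$ in the correct order and making sure all the ``uniformly in $n$'' clauses in \eqref{lam close}--\eqref{nu close} are genuinely uniform once $\ell$ is large — in particular that the perturbative triplet estimates from \cite[Theorem D.2]{DH2}, applied with the tail sequence $\cL_s^{(z;t_0)}$, $s\ge\ell$, have constants that do not blow up as $\ell$ grows. Everything else reduces to continuity of $\cL_0^\ell$ in the perturbation parameter and to the $L^2$-convergence $S_\ell M\to\textbf{M}$, both of which are routine.
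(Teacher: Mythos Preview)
Your proposal is correct and follows essentially the same route as the paper's proof: for (i), replace $\nu_{t_0,\ell}^{(ih)}$ by $\mu_\ell$ via \eqref{nu close}, use duality $(\cL_0^\ell)^*\mu_\ell=\mu_0$ to rewrite $\mu_\ell(G_{\ell,\cdot})$ as an expectation under $\ka_0$, then pass to the limit $S_\ell M\to\textbf{M}$; for (ii), use \eqref{h close} directly. The one point you assert without justification is the uniform bound $\sup_\ell\|G_{\ell,0}\|_\al<\infty$ (in fact $\sup_\ell\sup_{|h|\le 1}\|G_{\ell,ih}\|_\al<\infty$), which is needed to make the replacement $\nu_{t_0,\ell}^{(ih)}\to\mu_\ell$ uniform in $\ell$; the paper obtains this from the Lasota--Yorke inequality (Lemma~\ref{ll1}), and you should too, since the input $e^{ik(g_0+S_\ell M)}q_0$ has H\"older norm that may grow with $\ell$. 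Your observation about $k$ versus $t_0$ in part (ii) is well taken: the exponent arising from \eqref{RedF} is $-ikg_n$, and the paper's own proof and the subsequent application in Proposition~\ref{Lemma Claim} both use $\mu_n(e^{-ikg_n})$, so the $t_0$ in the displayed statement is a typo.
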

 \begin{remark}
 Note  that 
$\DS
\ka_0[e^{ik(g_0+\textbf{M})}]=\ka_0[e^{ ik\textbf{A}}]
$
where  $\textbf{A}=(g_0+\textbf{M})\text{ mod }2\pi$.    
 \end{remark}

\begin{proof}
(i) In view of the Lasota-Yorke inequality (Lemma \ref{ll1}) we have 
\begin{equation}\label{esstt}
A:=\sup_{\ell}\sup_{|h|\leq1}\|G_{\ell,ih}\|_\al<\infty.  
\end{equation}
Now,
by \eqref{esstt} and \eqref{nu close}
we see that 
$$
\left|\nu_{t_0,\ell}^{(ih)}(G_{\ell,ih})-\mu_\ell(G_{\ell,ih})\right|\leq A\|\nu_{t_0,\ell}^{(ih)}-\mu_\ell\|_\al\to 0\,\,\text{ as }\,(\ell,h)\to (\infty,0).
$$
Next, since $(\cL_0^\ell)^*\mu_\ell=\mu_0$ we have 
$$
\mu_\ell(G_{\ell,ih})=\mu_0\left[e^{ik(g_0+ S_\ell M)+ih S_\ell f}q_0\right]. 
$$
Now, it is clear that  for every $\ell$,
$$
\lim_{h\to 0}\left|\mu_0[e^{ik(g_0+S_\ell M)+ih S_\ell f}q_0]-\mu_0[e^{ik(g_0+S_\ell M)}q_0]\right|=0. 
$$
In view of this estimate, to complete the proof of (i) it is enough to show that 
$$
\lim_{\ell\to \infty}\mu_0[e^{ik(g_0+S_\ell M)}q_0]=\mu_0[e^{ik(g_0+\textbf{M})}q_0],
$$
but this follows from the almost sure convergence of $S_\ell M$ to $\textbf{M}$ and the dominated convergence theorem.

(ii) By \eqref{h close} and since $\DS \sup_n\|g_n\|_{\infty}<\infty$ and 
$\DS \sup_{n}\sup_{|z|\leq r_1}\|\eta_{t_0,n}^{(z)}\|_\al<\infty$ (for some small $r_1$) we see that when $|h|\leq r_1$ we have
$$
\left|\mu_n(e^{-ikg_n}\eta_{t_0,n}^{(ih)})-\mu_n(e^{-ikg_n})\right|
=
\left|\mu_n\left(e^{-ikg_n}(\eta_{t_0,n}^{(ih)}-1)\right)\right|
\leq \|\eta_{t_0,n}^{(ih)}-1\|_\infty. 
$$
Now (ii) follows from \eqref{h close}, and the proof of the lemma is complete.
\end{proof}


Next, 
define $\Pi_{t_0,s}(z)=\ln \la_{t_0,s}(z), s\geq \ell$. Note that  $\Pi_{t_0,z}$ is well defined  when $\ell$  is large enough in view of \eqref{lam close}. Let 
$$
\Pi_{t_0,\ell,n}(z)=\sum_{s=0}^{n-1}\Pi_{t_0,s+\ell}(z).
$$
\begin{proposition}\label{est prop}
There exist constants $r_1,C_1>0$ and $\te\in(0,1)$ such that for every complex number $z$ with $|z|\leq r_1$ and all $\ell$ large enough and $n$ large enough
we have:
\vskip0.2cm
 (i) 
 $\DS
\left|\ln \bbE[e^{ikS_{\ell,n}M+zS_{\ell,n}f}]-\Pi_{t_0,\ell, n}(z)\right|\leq C_1|z|+o_{\ell\to \infty}(1)+O\left(\te^n\right);
 $
\vskip0.2cm

 (ii) 
 $\DS
\Pi_{t_0,\ell,n}(0)=o_{\ell\to\infty}(1)+O(\te^n);
 $
\vskip0.2cm

 (iii)
 $\DS
\frac{\Pi_{t_0,\ell,n}'(0)}{\sig_n}=o_{\ell\to\infty}(1)+O(\te^n);
 $
\vskip0.2cm

 (iv) $\DS\frac{\Pi_{t_0,\ell,n}''(0)}{\sig_n^2}=1+o_{\ell\to\infty}(1)+O(\te^n)$;
\vskip0.2cm

(v)
$
\DS\sup_{t\in[-r_1,r_1]}|\Pi_{t_0,\ell,n}'''(it)|\leq C_1\sig_n^2.
$
\end{proposition}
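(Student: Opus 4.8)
The plan is to read all five estimates off the spectral description \eqref{SG}, by comparing $\Pi_{t_0,\ell,n}(z)=\ln\la_{t_0,\ell,n}(z)$ with the genuine log–characteristic function
$$
\bar\Lambda_{\ell,n}(z):=\ln\bbE_{\mu_\ell}\!\left[e^{ikS_{\ell,n}M+zS_{\ell,n}f}\right]=\ln\mu_n\!\left(\cL_{\ell,n-\ell}^{(z;t_0)}\mathbf 1\right),
$$
the second equality coming from the cocycle identity and $\cL_s^*\mu_{s+1}=\mu_s$. Plugging $\mathbf 1$ into \eqref{SG} and using $\nu_{t_0,\ell}^{(z)}(\mathbf 1)=1$ gives $\bbE_{\mu_\ell}[e^{ikS_{\ell,n}M+zS_{\ell,n}f}]=\la_{t_0,\ell,n}(z)\,\mu_n(\eta_{t_0,n}^{(z)})+O(\te^n)$. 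Shrinking $r_1$ so that on the imaginary axis $|\la_{t_0,\ell,n}(ih)|\ge e^{-ch^2\sig_n^2}\ge\te^{n/2}$ (by \eqref{uuup} and $\sig_n^2=O(n)$), and using $|\mu_n(\eta_{t_0,n}^{(z)})|\ge\tfrac12$ for $\ell$ large (by \eqref{h close}), one gets after taking logarithms
$$
\bar\Lambda_{\ell,n}(z)=\Pi_{t_0,\ell,n}(z)+\rho_{\ell,n}(z)+O(\te^n),\qquad \rho_{\ell,n}(z):=\ln\mu_n(\eta_{t_0,n}^{(z)}),
$$
where $\rho_{\ell,n}$ is analytic and uniformly bounded on $\{|z|\le r_1\}$, $\rho_{\ell,n}(0)=o_{\ell\to\infty}(1)$ by \eqref{h close}, and $\rho_{\ell,n}^{(r)}=O(1)$ on $\{|z|\le r_1/2\}$ for $r\ge1$ by Cauchy's estimates. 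Part (i) is then immediate from $|\rho_{\ell,n}(z)|\le|\rho_{\ell,n}(0)|+C_1|z|$, and parts (ii)--(v) reduce to controlling $\bar\Lambda_{\ell,n}(0)$, $\bar\Lambda_{\ell,n}'(0)$, $\bar\Lambda_{\ell,n}''(0)$ and $\sup_{|t|\le r_1/2}|\bar\Lambda_{\ell,n}'''(it)|$, since after differentiation the remainders $\rho_{\ell,n}^{(r)}$ and $O(\te^n)$ are $O(1)$ and exponentially small.

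The analytic inputs I would use are: (a) $\|S_{\ell,n}M\|_{L^2(\mu_\ell)}\to0$ as $\ell\to\infty$, uniformly in $n$ — because $M_j\circ T_0^j$ are reverse martingale differences, hence pairwise uncorrelated, so $\|S_{\ell,n}M\|_{L^2(\ka_0)}^2=\sum_{j\ge\ell}\Var_{\ka_0}(M_j\circ T_0^j)\to0$, and passing from $\ka_0$ to $\mu_\ell$ costs only a bounded factor since $q_0$ is bounded above and away from $0$; (b) $|\bbE_{\mu_\ell}[S_{\ell,n}f]|=|\sum_{j\ge\ell}\mu_j(f_j)|=O(\del^\ell)$ by Remark \ref{Rem Red} (recall $\ka_0(f_j\circ T_0^j)=0$); (c) the moment/variance bounds $\|S_{\ell,n}f-\bbE S_{\ell,n}f\|_{L^4}=O(\sig_n)$ and $|\Var(S_{\ell,n}f)-\sig_n^2|=O(\ell)$, the latter from $S_nf=S_\ell f+(S_{\ell,n-\ell}f)\circ T_0^\ell$, $\Var(S_\ell f)=O(\ell)$, and exponential decay of correlations. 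Writing $X=S_{\ell,n}f$, $w=ikS_{\ell,n}M$, input (a) gives $\|e^w-1\|_{L^2}\le|k|\,\|S_{\ell,n}M\|_{L^2}=o_{\ell\to\infty}(1)$, so $\bbE[e^w]=1+o_{\ell\to\infty}(1)$; combined with the comparison identity this forces $\la_{t_0,\ell,n}(0)=1+o_{\ell\to\infty}(1)+O(\te^n)$ and hence $\Pi_{t_0,\ell,n}(0)=o_{\ell\to\infty}(1)+O(\te^n)$, which is (ii). Next, $\bar\Lambda_{\ell,n}'(0)=\bbE[Xe^w]/\bbE[e^w]$ and $\bar\Lambda_{\ell,n}''(0)=\bbE[X^2e^w]/\bbE[e^w]-\left(\bbE[Xe^w]/\bbE[e^w]\right)^2$; expanding $e^w=1+(e^w-1)$ and bounding the correction terms by Cauchy--Schwarz with (a)--(c) gives $\bar\Lambda_{\ell,n}'(0)=O(\del^\ell)+\sig_n\,o_{\ell\to\infty}(1)$ and $\bar\Lambda_{\ell,n}''(0)=\sig_n^2\bigl(1+o_{\ell\to\infty}(1)\bigr)+O(\ell)$. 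Dividing by $\sig_n$ and $\sig_n^2$ respectively and using $\sig_n\to\infty$ yields (iii) and (iv); here I read the ``$o_{\ell\to\infty}(1)$'' in the statement as permitted to also carry a term tending to $0$ as $n\to\infty$ for each fixed $\ell$ (the terms $O(\del^\ell)/\sig_n$, $O(\ell)/\sig_n^2$, $\rho_{\ell,n}^{(r)}(0)/\sig_n^r$), which is exactly the form the application in \S\ref{Bp} needs.

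The remaining and most substantial point is (v). Since $\bar\Lambda_{\ell,n}'''-\Pi_{t_0,\ell,n}'''=\rho_{\ell,n}'''+O(\te^n)=O(1)$ on $\{|z|\le r_1/2\}$, it suffices to prove $|\bar\Lambda_{\ell,n}'''(it)|\le C\sig_n^2$ for $|t|\le r_1/2$. For $\ell$ large the operators $\cL_s^{(z;t_0)}=\cA_s^{(M,z)}$, $s\ge\ell$, are uniformly bounded H\"older analytic perturbations of $\cL_s$ with small perturbation parameter, so the machinery of \cite[Appendix D, Corollary~7.5]{DH2} applies, and I would argue exactly as in the proof of \eqref{3 der bound}: decompose $\{\ell,\dots,n-1\}$ into consecutive sub-blocks on each of which $\Var$ of the $f$-part lies between two fixed large constants, apply the identity $(\ln F)'''=F'''/F^2-2F'F''/F^3-2F'F''/F^2+2(F')^3/F^3$ on each sub-block $I$ to $F(z)=\bbE[e^{ikS_IM+zS_If}]$ (the $M$-part adds only $O(1)$ because $\Var(S_{\ell,n}M)$ is bounded), and sum, obtaining $|\bar\Lambda_{\ell,n}'''(it)|\le C\bigl(1+\Var(S_{\ell,n}(kM+tf))\bigr)\le C(1+\sig_n^2)\le C_1\sig_n^2$ for $n$ large. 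I expect the main obstacle to be precisely this step: re-running the block–decomposition and pressure–convexity argument for the \emph{twisted} family, and checking that folding the fixed phase $e^{ikM_s}$ into the observable does not spoil the uniformity of the constants — which works because the relevant perturbation is $ikM_s+zf_s$, whose H\"older norm is small uniformly for $s\ge\ell$ with $\ell$ large and $|z|\le r_1$ small. A secondary nuisance, already noted above, is the consistent simultaneous bookkeeping of the $\ell\to\infty$ and $n\to\infty$ limits in (iii)--(iv), harmless but needing care.
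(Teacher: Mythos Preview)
Your approach is essentially the same as the paper's: plug $\mathbf 1$ into \eqref{SG}, compare $\Pi_{t_0,\ell,n}$ with the log–characteristic function $\Lambda_{\ell,n}$, transfer the comparison to derivatives via Cauchy, and handle (iii)--(iv) by writing $e^{ikS_{\ell,n}M}=1+(e^{ikS_{\ell,n}M}-1)$ with Cauchy--Schwarz and $\|S_{\ell,n}M\|_{L^2}\to0$, and (v) by the block decomposition of \cite[Proposition 7.1]{DH2}. Two small slips to fix: the lower bound $|\la_{t_0,\ell,n}(ih)|\ge e^{-ch^2\sig_n^2}$ you need to absorb the $O(\te^n)$ remainder does \emph{not} come from \eqref{uuup} (that is an upper bound) but from the lower bound in Proposition~\ref{PrSmVarBlock}; and your formula for $(\ln F)'''$ is miscopied --- the correct identity is $(\ln F)'''=F'''/F-3F'F''/F^2+2(F')^3/F^3$, though of course this does not affect the argument.
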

\begin{proof}[Proof of Proposition \ref{Lemma Claim} relying on Proposition \ref{est prop}]
By \eqref{CharRep}, uniformly in 
 $u\in \frac{2\pi}{\del} \bbZ$,
for all $\ell$ large enough we have
$$
\frac{\sig_n}{\sqrt{2\pi}}\int_{J_k}e^{-itu}\hat\phi(t)\bbE[e^{itS_n f}]dt
$$
$$
=\frac{\sig_n}{\sqrt{2\pi}}\int_{J_k}e^{-i(t-\del k)u}\hat\phi(t)\bbE[e^{itS_nf}]dt=\frac{\sig_n}{\sqrt{2\pi}}I_{k,\ell,n}(\del_2)
+O(\te^n)
$$
where 
$\DS
I_{k,\ell,n,u}(\del_2)=\int_{-\del_2}^{\del_2}
 F(n,k,h,\ell,u)
e^{\Pi_{k\del,\ell,n}(ih)}dh,
$
$$
F(n,k,h,\ell,u)=e^{-iuh}\hat\phi(k\del +h)\mu_n(e^{-ikg_n}\eta_{k\del,n}^{(ih)})\nu_{k\del,\ell}^{(ih)}(G_{\ell,ih}),
\quad\text{and}\quad
\del_2=\frac12\del_1.$$

Since $\sig_n=O(n)$ we have $\te^n=o(\sig_n^{-1})$. So  in order to prove 
 Proposition \ref{Lemma Claim},
it is enough to show that, for every $\ve>0$ there is an $\ell$ and an $N$ such that for all $n\geq N$ and all $u\in \frac{2\pi}{\del}\bbZ=a\bbZ$ we have
\begin{equation}\label{Goal}
\left|\sig_n I_{k,\ell,n,u}(\del_2)
-\sqrt{2\pi}e^{-\frac12 u^2/\sig_n^2}\mu_n(e^{-ik g_n})\mu_0(e^{ik\textbf{A}})\hat\phi(k\del )\right|<\ve.
\end{equation}

By Lemma \ref{Lemma Above} the term 
$F(n,k,h,\ell,u)$
is uniformly bounded in all the parameters $(n,k,h,\ell,u)$ if $\ell$ is large enough and $|h|$ is small enough. 
Now, by 
\eqref{uuup}, for all $\ell$ large enough and $h$ close enough to $0$ we  have
\begin{equation}\label{8.11 app}
\left|e^{\Pi_{k\del,\ell,n}(ih)}\right|\leq Ce^{-ch^2\sig_n^2}
\end{equation}
for some $c,C>0$ and all $n\in\bbN$.
Let $R>0$. Then if  also $|h|\geq R/\sig_n$ we have 
$$
\left|e^{\Pi_{k\del,\ell,n}(ih)}\right|\leq Ce^{-cR^2}.
$$
Thus, using the uniform boundedness of all the factors in $F(n,k,h,\ell,u)$ by taking $R$ and then $\ell$ large enough  we see that \eqref{Goal} will follow if  for all $n$ (large enough) we have
\begin{equation}\label{Goal1}
\sup_{u\in \frac{2\pi}{\del}\bbZ}\left|\sig_n I_{k,\ell,n,u,R}-\sqrt{2\pi}e^{-\frac12 u^2/\sig_n^2}\mu_n(e^{-ik g_n})\mu_0(e^{ik \textbf{A}})\hat\phi(k\del )\right|<\ve   
\end{equation}
where 
$$
I_{k,\ell,n,u,R}=\int_{|h|\leq R/\sig_n}F(n,k,h,\ell,u)dh.
$$
However, using  \eqref{8.11 app},  Lemma  \ref{Lemma Above}   and the continuity of $\hat\phi$ in order to prove \eqref{Goal1}
it is enough to show that for $R$ and $\ell$ large enough we have
\begin{equation}\label{Last est}
\sup_{u\in \frac{2\pi}{\del}\bbZ}\left|\sig_n\int_{|h|\leq R/\sig_n}e^{-iuh}e^{\Pi_{k\del,\ell,n}
(ih)}dh-\sqrt{2\pi}e^{-\frac12 u^2/\sig_n^2}\right|<\ve.   \end{equation}
In order to prove \eqref{Last est},
let us first write
$$
\sig_n\int_{|h|\leq R/\sig_n}e^{-iuh}e^{\Pi_{k\del,\ell,n}(ih)}dh=\int_{-R}^Re^{-iuh/\sig_n}e^{\Pi_{k\del,\ell,n}(ih/\sig_n)}dh.
$$
By Proposition \ref{est prop}(v)  and the Lagrange form of the second order Taylor remainder around $0$ of the function $ \Pi_{k\del,\ell,n}(ih)$
 we have 
$$
\Pi_{k\del,\ell,n}(ih/\sig_n)=\Pi_{k\del,\ell,n}(0)+(ih/\sig_n)\Pi_{k\del,\ell,n}'(0)-\frac{h^2}{2\sig_n^2}\Pi_{k\del,\ell,n}''(0)+O(|h|^3/\sig_n^3)\sig_n^2.
$$
Now, since $|h|\leq R$ the term $O(|h|^3/\sig_n^3)\sig_n^2$ is $o_{n\to\infty}(1)$ and thus it can be disregarded (uniformly in $u$). 
Next, by Proposition \ref{est prop}(iv) 
$$
\frac{\Pi_{k\del,\ell,n}''(0)}{\sig_n^2}=1+o_{\ell\to\infty}(1)+O(\te^n).
$$
Furthermore, by parts (ii) and (iii) of Proposition \ref{est prop}, the 
term $\Pi_{k\del,\ell,n}(0)+(ih/\sig_n)\Pi_{k\del,\ell,n}'(0)$
can be made arbitrarily close to $1$ when $\ell$ and $n$ are large enough. By taking $\ell=\ell(R)$ large enough we conclude that for all $n$ large enough
\begin{equation}
\label{IntR}
\sup_{u\in\frac{2\pi}{\del}\bbZ}\left|\sig_n\int_{|h|\leq R/\sig_n}e^{-iuh}e^{\Pi_{k\del,\ell,n}(ih)}dh-\int_{|h|\leq R}e^{-iuh/\sig_n}e^{-h^2/2}dh\right|<\frac12\ve.
\end{equation}
 Now 
Proposition \ref{Lemma Claim} follows by taking 
 $R$ so large that 
 $\DS
\sqrt{2\pi}\int_{|h|\geq R}e^{-h^2/2}dh<\frac12\ve,
 $
 taking $\ell=\ell(R)$ so large that \eqref{IntR} holds, and using that 
 $$
\int_{-\infty}^{\infty} e^{-i\al h}e^{-h^2/2}dh=\sqrt{2\pi}e^{-\al^2/2}
 $$
 for every real $\al$.
\end{proof}

\begin{proof}[Proof of Proposition \ref{est prop}]
(i) For $|z|$ small enough and $\ell$ large enough  we have 
$$
\bbE[e^{ikS_{\ell,n}M+zS_{\ell,n}f}]=\mu_\ell(\cL_{\ell,n-\ell}^{z;k\del}\textbf{1})=
\mu_n(\eta_{k\del,n}^{(z)})\la_{k\del,\ell,n}(z)+O(\te^n).
$$
By \eqref{h close} and since $\eta_{k\del,n}^{(z)}$ is analytic in both $z$ and $(ik M_j)_{j\geq \ell}$,
we see that 
$$
|\mu_n(\eta_{k\del,n}^{(z)})-1|\leq C|z|+o_{\ell\to\infty}(1)
$$
for some constant $C>0$. Hence we can take the logarithms of both sides to conclude that
$$
\ln \bbE[e^{ik S_{\ell,n}M+zS_{\ell,n}}]=\Pi_{k\del,\ell,n}(z)+O(|z|)+o_{\ell\to\infty}(1)+O(\te^n).
$$

(ii) Plugging in $z=0$ in the above we see that 
$$
\Pi_{k\del,\ell,n}(0)=\ln \bbE[e^{ik S_{\ell,n}M}]+o_{\ell\to\infty}(1)+O(\te^n).
$$
Since $M_j\circ T_0^j$ is a reverse martingale,
\begin{equation}\label{Conv}
\sup_{n\geq\ell}\|S_{\ell,n}M\|_{L^2}^2\leq\sum_{s\geq \ell}\text{Var}(M_s)=o_{\ell\to\infty}(1)
\end{equation}
and so 
$\DS
\lim_{\ell\to\infty}\sup_{n\geq\ell}\left|\ln \bbE[e^{ik S_{\ell,n}M}]\right|=0
$
proving (ii). 

(iii)+(iv)+(v). Let $\Lambda_{\ell,n}(z)=\ln\bbE[e^{ik S_{\ell,n}M+zS_{\ell,n}f}]$. Then by part (i), for every $z$ small enough and all $\ell$ large enough we have
$$
\left|\Lambda_{\ell,n}(z)-\Pi_{k\del,\ell,n}(z)\right|=O(|z|)+o_{\ell\to\infty}(1)+O(\te^n).
$$
Now, because the functions $\Lambda_{\ell,n}(z)$ and $\Pi_{k\del,\ell,n}(z)$ are analytic in $z$, using the Cauchy integral formula we see that  for $s=1,2,3$, in a complex neighborhood of the origin and uniformly in $\ell$ and $n$ we have,
\begin{equation}\label{Log diffs}
\left|\Lambda_{\ell,n}^{(s)}(z)-\Pi_{k\del,\ell,n}^{(s)}(z)\right|=O(|z|)+o_{\ell\to\infty}(1)+O(\te^n)
\end{equation}
where $g^{(s)}(z)$ denotes the  $s$-th derivative of a function $g$. 

To prove (iii), after plugging  in $z=0$ \eqref{Log diffs} with $s=1$  it is enough to show that 
\begin{equation}
\label{NumDenMom}
\left|\frac{\bbE[(S_{\ell,n}f)e^{ik S_{\ell,n}M}]}{\bbE[e^{ik S_{\ell,n}M}]}\right|\leq C\sig_n a_\ell
\end{equation}
for some constant $C>0$, with $a_\ell=o_{\ell\to\infty}(1)$.
By \eqref{Conv} we have
\begin{equation}\label{NumDenMom}
\lim_{\ell\to\infty}\sup_{n\geq \ell}\left|\bbE[e^{ik S_{\ell,n}M}]-1\right|=0.
\end{equation}
 Thus, it is enough to show that 
\begin{equation}\label{Now}
\left|\bbE[(S_{\ell,n}f)e^{ik S_{\ell,n}M}]\right|\leq C\sig_n a_\ell.
\end{equation}
To prove \eqref{Now}, we use that $\bbE[S_{\ell,n}f]=0$ to write 
$$
\bbE[(S_{\ell,n}f)e^{ik S_{\ell,n}M}]=\bbE[(S_{\ell,n}f)(e^{ik S_{\ell,n}M}-1)].
$$
Since $\DS |e^{ik S_{\ell,n}M}-1|\leq k|S_{\ell,n}M|$ we get
$$
\left|\bbE[(S_{\ell,n}f)e^{ik S_{\ell,n}M}]\right|\leq k \bbE[|(S_{\ell,n}f)(S_{\ell,n}M)|]\leq k\sig_n\|S_{\ell,n}M\|_{L^2}
$$ 
where the last step uses the  Cauchy-Schwartz inequality.
  Now \eqref{Now}
 follows from \eqref{Conv}.

Next we prove (iv). Like in the proof of (iii), using \eqref{Log diffs} with $z=0$ and $s=2$ it is enough to show that if $\ell=O(\sig_n)$ then
\begin{equation}\label{E}
|\Lambda_{\ell,n}''(0)\sig_n^{-2}|=o_{\ell\to\infty}(1).   
 \end{equation}
To prove \eqref{E} we first note that 
$$
\Lambda_{\ell,n}''(0)=\frac{\bbE[(S_{\ell,n}f)^2e^{ik S_{\ell,n}M}]}{\bbE[e^{ik S_{\ell,n}M}]}-\left(\Lambda_{\ell,n}'(0)\right)^2. $$
 Now, as shown in the proof of part (iii) we have 
 $$
\left(\Lambda_{\ell,n}'(0)\right)^2=(\sig_n^2)\cdot o_{\ell\to\infty}(1). 
 $$

 To complete the proof split
 $$
\bbE[(S_{\ell,n}f)^2e^{ik S_{\ell,n}M}]=
\bbE[(S_{\ell,n}f)^2(e^{ik S_{\ell,n}M}-1)]+\sig_{\ell,n}^2
 $$
 where $\sig_{\ell,n}^2=\text{Var}(S_{\ell,n}f)$. 
 By
 the exponential decay of correlations (\cite[Remark~2.6]{DH2}), 
 \begin{equation}\label{sig ell}
\sig_{\ell,n}^2= \sig_{n+\ell}^2-\sig_{\ell}^2+O(1)=
O(\sig_n^2)
 \end{equation}
  where the last step uses that $\sig_n\to\infty.$
By \cite[Proposition 3.3]{DH2}, we see that for all $p\geq 1$ 
 \begin{equation}\label{111}
 \|(S_{\ell,n}f)^2\|_{L^p}\leq c_p(1+\sig_{\ell,n})^2=O(\sig_n^2)
 \end{equation}
 where $c_p$ is a constant which does not depend on $\ell$ and $n$. By the 
 Cauchy-Schwartz inequality 
  $$
\left|\bbE[(S_{\ell,n}f)^2(e^{ik S_{\ell,n}M}-1)]\right|\leq  \|(S_{\ell,n}f)^2\|_{L^2}\|e^{ik S_{\ell,n}M}-1\|_{L^2}.
 $$ 
 Since $|e^{ik S_{\ell,n}{M}}-1|\leq k|S_{\ell,n}M|$, applying \eqref{Conv} 
and \eqref{111} with $p=2$ yields that  
 $$
\left|\bbE[(S_{\ell,n}f)^2(e^{ik S_{\ell,n}M}-1)]\right|\leq \sig_n^2o_{\ell\to\infty}(1).
 $$
 To complete the proof of (iv),  we use \eqref{NumDenMom}
  to control the denominator.  

 Finally, let us prove (v). This estimate essentially follows from  the proof of \cite[Proposition 7.1]{DH2}, but for the sake of completeness we will include some details. Let  $n>\ell$. First, like in the proof of \cite[Proposition 7.1]{DH2}
 we decompose $\{\ell,\ell+1,...,n\}$ into a union of disjoint sets $I_1,I_2,...,I_{m_n}$ such that $I_i$ is to the left of $I_{i+1}$, 
 $ m_n=m_n(\ell)\asymp \sig_{\ell,n}^2$ and the variance of 
 $\DS S_{I_m}=\sum_{j\in I_{m}}f_{j}\circ T_0^j,\, 1\leq  m\leq m_n$ is bounded above and below by two positive constants $A_1$ and $A_2$, which can be taken to be arbitrarily large. By taking $n$ large enough and using \eqref{sig ell}, 
we can ensure that $\sig_{n,\ell}^2 \asymp \sig_{n}^2$ and so
 $m_n\asymp \sig_{n}^2$.
 Next,
 set 
 $$
 \Lambda_{I_m}
 (z)=\ln\bbE[e^{ik S_{I_m}M+zS_{I_m}f}].
 $$
 Then,
 using part (i), together with the Cauchy integral formula for the derivatives of analytic functions,
  it is enough to show that there  are $C, \ve_0>0$ 
  such that for all $t\in[-\ve_0,\ve_0]$ and all $1\leq  m\leq  m_n$ we have
 \begin{equation}
 \label{ThirdDer}
\left|\Lambda'''_{I_m}(it)\right|\leq C.
 \end{equation}
  This was done in the proof of \cite[Proposition 7.1]{DH2} in the case $k=0$ 
  (when the term $S_{I_m}Z$ did not appear).
 In the present setting,
 using \cite[Proposition 6.7]{DH2} with the sequence $(M_j)$ we have 
$\DS \sup_n\|S_nM\|_{L^3}<\infty$, and so by the martingale convergence theorem 
$S_nM\to \textbf{M}$ in $L^3$. Consequently,
$\DS \max_{1\leq m\leq m_n(\ell)}\|S_{I_m}M\|_{L^{3}}\to 0$  as $\ell\to\infty$. Using again   \cite[Proposition 6.7]{DH2} but now with the sequence $(f_j)$ we see that 
$\DS \sup_m \|S_{I_m}f\|_{L^3}\!\!<\!\!\infty$.  
Using these estimates the proof of \eqref{ThirdDer} proceeds like in the case $k=0$.
 Namely, we  use the formula
\begin{equation}
\label{LogDer}
(\ln F)'''=\frac{F'''}{F}-\frac{3F'F''}{F^2}+\frac{2(F')^3}{F^3}.
\end{equation}
Taking $F(t)=\bbE[e^{ik S_{I_m}M+it S_{I_m}f}]$ and using that $\|S_{I_m}M\|_{L^3}$ and $\|S_{I_m}f\|_{L^3}$ are bounded by some constant independent of $m$, we see that the numerators in the  RHS of \eqref{LogDer}
 are uniformly bounded above. On the other hand, taking $t$ small enough and $\ell$ large enough we get $|F(t)|\geq \frac12$ and so the denominators are bounded away from $0$.
\end{proof}

\section{Two sided SFT}\label{Sec 9}

\subsection{Preliminaries}
\label{ScSFT-Gibbs}
Let $\tilde T_j:\tilde X_j\to \tilde X_{j+1}$ be a two sided non-autonomous  SFT and let $T_j:X_j\to X_{j+1}$ be the corresponding one sided one.
 We begin with a few remainders from \cite{DH2}.

Let $\pi_j:\tilde X_j\to X_j$ be given by 
$\DS
\pi_j((x_{j+k})_{k\in\bbZ})=(x_{j+k})_{k\geq 0}.
$

\begin{lemma} {\bf (Sequential Sinai Lemma)}[See \cite[Lemma B.2]{DH2}]\label{Sinai}
\\
\,
 Fix $\al\in(0,1]$ and let $\psi_j:\tilde X_j\to\bbR$ be uniformly H\"older continuous with exponent $\al$. Then there are uniformly H\"older continuous functions $u_j:\tilde X_j\to\bbR$ with exponent $\al/2$ and $\phi_j:X_j\to\bbR$  such that 
 $
\DS \psi_j=u_{j}-u_{j+1}\circ \sig_j+\phi_j\circ \pi_j.
 $
 Moreover, if $\|\psi_j\|_{\al}\to 0$ then $\|u_j\|_{\al/2}\to 0$. 
\end{lemma}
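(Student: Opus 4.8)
The plan is to mimic the classical Sinai construction (as in \cite{PP, Bowen}) but carried out sequentially, exploiting the fact that the two–sided shift spaces $\tilde X_j$ have uniformly contracting stable directions. First I would fix, for each $j$, a measurable ``section'' $\iota_j:X_j\to \tilde X_j$ assigning to each one–sided point $x=(x_{j+k})_{k\ge0}\in X_j$ a two–sided extension $\iota_j(x)=(\dots,a_{j-2},a_{j-1},x_j,x_{j+1},\dots)$, where the negative coordinates $a_{j+k}$, $k<0$, are chosen in a compatible, shift–equivariant way (for instance by fixing once and for all admissible left–infinite tails depending only on the symbol $x_j$ that is being prepended, so that $\iota_{j+1}\circ T_j$ and $\tilde T_j\circ\iota_j$ differ only in coordinates with index $<j+1$). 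The key point is that $\iota_j$ is Lipschitz into the stable-direction metric and that, because $\tilde T_j$ contracts the past uniformly, the points $\tilde T_j^{\,n}(y)$ and $\tilde T_j^{\,n}(\iota_j(\pi_j y))$ become exponentially close as $n\to\infty$, uniformly in $j$ and $y$.

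Next I would define the candidate coboundary by the telescoping series
\begin{equation}
\label{EqUdef}
u_j(y)=\sum_{n=0}^\infty\Bigl(\psi_{j+n}\bigl(\tilde T_j^{\,n} y\bigr)-\psi_{j+n}\bigl(\tilde T_j^{\,n}(\iota_j(\pi_j y))\bigr)\Bigr).
\end{equation}
Since $\psi_{j+n}$ is uniformly $\al$–H\"older and $\mathsf{d}_{j+n}(\tilde T_j^{\,n}y,\tilde T_j^{\,n}(\iota_j(\pi_j y)))\le C\rho^{\,n}$ for some $\rho\in(0,1)$ (uniform contraction of the past under the shift, from the definition of the two–sided metric), each summand is $O(\rho^{\al n})$, so the series converges absolutely and uniformly, giving $\sup_j\|u_j\|_\infty<\infty$. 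For the H\"older bound one splits the sum at an index $N\asymp \log(1/\mathsf d_j(y,y'))$: the tail is controlled by the geometric bound above, and the head is controlled because for two points $y,y'$ that agree in coordinates $|k|\le N$, the images $\tilde T_j^{\,n}y$ and $\tilde T_j^{\,n}y'$ (and likewise the $\iota_j$–corrected points) agree in a long block of coordinates, so each of the $O(N)$ initial terms is $O(\mathsf d_j(y,y')^{\al})$ times a geometric weight; summing the resulting bound yields $\|u_j\|_{\al/2}\le C$ uniformly, with the loss from exponent $\al$ to $\al/2$ coming exactly from the standard balancing of the head–versus–tail split (this is why one only gets $\al/2$, as in the classical lemma). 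Then one sets $\phi_j\circ\pi_j:=\psi_j-u_j+u_{j+1}\circ\tilde T_j$; a direct computation with \eqref{EqUdef} shows that $\psi_j-u_j+u_{j+1}\circ\tilde T_j$ equals $\psi_j$ evaluated at the $\iota_j\circ\pi_j$–corrected orbit plus an $\iota$–correction term, and the choice of $\iota$ above makes this expression depend only on the future coordinates $(x_{j+k})_{k\ge0}$, hence it descends to a function $\phi_j$ on $X_j$; its uniform H\"older continuity (with the original exponent $\al$, say, or $\al/2$) follows from that of $\psi_j$ and $u_j$ and the Lipschitz property of the sections.

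Finally, the addendum ``$\|\psi_j\|_\al\to0$ implies $\|u_j\|_{\al/2}\to0$'' is immediate from the construction: every estimate above on $\|u_j\|_{\al/2}$ is bounded by an absolute geometric constant times $\sup_{k\ge j}\|\psi_k\|_\al$ (because \eqref{EqUdef} at index $j$ only involves $\psi_{j+n}$ for $n\ge0$), and the latter tends to $0$. The main obstacle I anticipate is purely bookkeeping rather than conceptual: one must choose the sections $\iota_j$ so that they are simultaneously (i) uniformly Lipschitz, (ii) compatible with the shift in the precise sense needed for $\phi_j$ to be a genuine function of $\pi_j y$ only, and (iii) such that the corrected orbit contracts uniformly in $j$; since the $\tilde X_j$ are (non-autonomous) SFTs with uniformly bounded alphabet and uniform aperiodicity constant $M$, such sections exist by choosing, for each admissible symbol, a fixed admissible left tail, and all the required uniformity then follows from $\sup_j d_j<\infty$ and the uniform mixing hypothesis. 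Since the excerpt states this lemma as ``see \cite[Lemma B.2]{DH2}'', in the paper itself one would simply cite that reference; the above is the proof one would reconstruct if needed.
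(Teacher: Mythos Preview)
Your reconstruction is correct and is precisely the classical Sinai/Bowen argument adapted to the sequential setting; since the paper does not give a proof but merely cites \cite[Lemma B.2]{DH2}, your proposal matches what that reference does. The only point worth tightening is the section $\iota_j$: it need not be globally Lipschitz (it is discontinuous where the symbol $x_j$ changes), but this is harmless because when $y,y'$ disagree at the zeroth coordinate one has $\mathsf d_j(y,y')\ge 1/2$ and the H\"older estimate is trivial, while when they agree at coordinate $j$ the chosen pasts coincide and your head--tail splitting goes through exactly as written.
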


\begin{definition}
Let $\phi_j:X_j\to\bbR$ be a sequence of functions such that $\DS \sup_j \|\phi_j\|_\al<\infty$ for some $\al\in(0,1]$.  We say that a sequence of probability measures $(\mu_j)$ on $X_j$ is a sequential Gibbs measure for $(\phi_j)$ if: 

\vskip0.2cm
(i) For all $j$ we have $(T_j)_*\mu_j=\mu_{j+1}$;
\vskip0.2cm

(ii) There is a constant $C>1$ and a sequence of positive numbers $(\la_j)$ such that for all $j$ and every point $(x_{j+k})_k$ in $X_j$ we have 
$$
C^{-1}e^{S_{j,r}\phi(x)}/\la_{j,r}\leq \mu_j([x_j,...,x_{j+r-1}])\leq Ce^{S_{j,r}\phi(x)}/\la_{j,r}
$$
where 
$\DS
S_{j,r}\phi(x)=\sum_{s=0}^{r-1}\phi_{j+s}(T_{j}^s x)
$
and 
$\DS
\la_{j,r}=\prod_{k=j}^{j+r-1}\la_k.
$
\end{definition}

Sequential Gibbs measures on two sided shifts are defined similarly 
(see \cite[Appendix~B]{DH2}).

We say that two sequences $(\al_j)$ and $(\be_j)$
  of positive numbers are \textit{equivalent} if there is a sequence $(\zeta_j)$ of positive numbers which is bounded and bounded away from $0$ such that for all $j$ we have $\al_j=\zeta_j\beta_j/\zeta_{j+1}$.

\begin{theorem}
\label{ThSFT-Gibbs}
[See \cite[Theorem B.5]{DH2}]
    For every sequence of functions $\phi_j:X_j\to\bbR$,
    $j\in \bbZ$,
     (or $\phi_j:\tilde X_j\to\bbR$ for two sided shfits) such that $\DS \sup_j\|\phi_j\|_\al<\infty$ for some $\al\in(0,1]$ there exist  unique Gibbs measures $\mu_j$. Moreover, the sequence $(\la_j)$ is unique up to equivalence.
\end{theorem}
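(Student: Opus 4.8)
\textbf{Proof plan for Theorem \ref{ThSFT-Gibbs}.}

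The statement to be proved asserts existence and uniqueness of sequential Gibbs measures $\mu_j$ for a sequence of uniformly H\"older potentials $\phi_j$, and uniqueness of the normalizing sequence $(\lambda_j)$ up to equivalence. The plan is to obtain everything from the complex (here, real) sequential Ruelle--Perron--Frobenius theorem recalled in \S\ref{TPo}, namely the existence of the triple $(h_j,\nu_j,\lambda_j)$ satisfying $L_j h_j=\lambda_j h_{j+1}$, $L_j^*\nu_{j+1}=\lambda_j\nu_j$, $\nu_j(h_j)=1$, together with the exponential convergence \eqref{ExpConvSTF0}. First I would set $\mu_j=h_j\,d\nu_j$ and check the two defining properties of a sequential Gibbs measure. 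Property (i), equivariance $(T_j)_*\mu_j=\mu_{j+1}$, follows from the duality relation \eqref{dual} (equivalently, $\cL_j^*\mu_{j+1}=\mu_j$ after the standard change of potential $\phi_j\rightsquigarrow g_j$ with $\cL_j\mathbf 1=\mathbf 1$). Property (ii), the Gibbs inequalities, follows from a telescoping computation: for a cylinder $[x_j,\dots,x_{j+r-1}]$ one has $\mu_j([x_j,\dots,x_{j+r-1}]) = \nu_j(h_j \mathbf 1_{[x_j,\dots,x_{j+r-1}]})$, and using $\nu_j = \lambda_{j,r}^{-1}(L_j^r)^*\nu_{j+r}$ plus the bounded distortion of Birkhoff sums (coming from $\sup_j\|\phi_j\|_\al<\infty$ and the contraction of inverse branches, Lemma \ref{InvBranch}) one bounds the cylinder mass above and below by $C^{\pm1}e^{S_{j,r}\phi(x)}/\lambda_{j,r}$, with $C$ uniform in $j,r,x$ because $\inf_j\min h_j>0$, $\sup_j\|h_j\|_\beta<\infty$, and the distortion constant is uniform. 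For the two-sided case one first applies the Sequential Sinai Lemma (Lemma \ref{Sinai}) to push $\psi_j$ down to a one-sided potential $\phi_j\circ\pi_j$ plus a coboundary, notes that coboundaries do not affect the Gibbs class (they only change $h_j$), constructs the measure downstairs, and pulls it back through $\pi_j$; the coboundary term $u_j - u_{j+1}\circ\sigma_j$ only alters $C$ by a bounded factor since $\sup_j\|u_j\|_{\al/2}<\infty$.

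For uniqueness, suppose $(\mu_j')$ is another sequential Gibbs measure with normalizing sequence $(\lambda_j')$. I would first show $(\lambda_j)$ and $(\lambda_j')$ are equivalent: comparing the Gibbs bounds for $\mu_j$ and $\mu_j'$ on the same cylinder $[x_j,\dots,x_{j+r-1}]$ gives $C^{-2}\le \big(\lambda_{j,r}'/\lambda_{j,r}\big)\,\big(\mu_j/\mu_j'\big)\le C^2$ uniformly, and letting $r$ vary and using that $X_{j+r}$ is covered by boundedly many cylinders (degrees are uniformly bounded) forces $\lambda_{j,r}'/\lambda_{j,r}$ to be bounded and bounded away from $0$; writing $\lambda_{j,r}'/\lambda_{j,r}=\zeta_{j}/\zeta_{j+r}$-type telescoping — more precisely, extracting $\zeta_j=\lim_{r}\lambda_{j,r}'/\lambda_{j,r}$ along a suitable normalization, or arguing directly that $\lambda_j'/\lambda_j=\zeta_j/\zeta_{j+1}$ with $\zeta_j$ bounded above and below — yields the claimed equivalence. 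Once the normalizing sequences agree up to equivalence, the densities $d\mu_j'/d\mu_j$ are bounded above and below uniformly; I would then show this density must be $1$. The cleanest route is via the transfer operator: $\mu_j'=h_j'\,d\nu_j$ for some $h_j'$ with $\inf_j\min h_j'>0$, $\sup_j\|h_j'\|_\al<\infty$ (H\"older regularity of $h_j'$ follows from the Gibbs property by a standard argument using the exponential separation of cylinders), and $\cL_j^* \mu_{j+1}'=\mu_j'$ means $h_j'$ is a bounded equivariant density for the normalized operators $\cL_j$; applying \eqref{ExpConvSTF0} (in normalized form, $\cL_j^n g\to \mu_j(g)\mathbf 1$ uniformly) to $g=h_j'$ and using $\cL_j^n h_j' = h_{j+n}'\circ(\text{shift})$-type invariance forces $h_j'\equiv \mathbf 1$, i.e. $\mu_j'=\mu_j$. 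In the two-sided case uniqueness is again inherited from the one-sided case after the Sinai reduction, because the pullback $\pi_j^*$ and the conditional structure on fibers (developed in Section \ref{Sec 9}) identify two-sided Gibbs measures bijectively with one-sided ones for the reduced potential.

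The main obstacle I anticipate is the uniqueness half, specifically extracting the equivalence of $(\lambda_j)$ and $(\lambda_j')$ and then propagating ``densities bounded above and below'' to ``densities equal to one'' \emph{uniformly in $j$}, which is where non-stationarity bites: one cannot invoke ergodicity or a spectral gap for a single operator, and must instead lean on the uniform quantitative convergence \eqref{ExpConvSTF0}, the uniform H\"older bounds on all the auxiliary objects, and the uniform topological mixing (the constant $M$ with $A^{(j)}\cdots A^{(j+M)}>0$) to get the covering and distortion estimates with constants independent of $j$. A secondary technical point is verifying H\"older regularity of an \emph{a priori} only measurable equivariant density $h_j'$ directly from the Gibbs inequalities; this is standard in the stationary theory (ratios of cylinder masses along shadowing orbits), and the argument carries over verbatim since every constant involved is uniform in $j$.
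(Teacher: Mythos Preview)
The paper does not prove Theorem~\ref{ThSFT-Gibbs}; it is stated with the attribution ``[See \cite[Theorem B.5]{DH2}]'' and no argument is given here. So there is no in-paper proof to compare against.

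That said, your plan is the standard RPF route and is essentially what one finds in \cite{DH2}. Two comments. First, your line ``$\cL_j^*\mu_{j+1}'=\mu_j'$'' is not what equivariance gives: $(T_j)_*\mu_j'=\mu_{j+1}'$ translates, via the duality \eqref{dual}, into the \emph{forward} relation $\cL_j q_j=q_{j+1}$ for $q_j=d\mu_j'/d\mu_j$ (equivalently $L_j h_j'=\lambda_j h_{j+1}'$ for $h_j'=d\mu_j'/d\nu_j$), which is the identity you actually need. Second, and this is the genuinely non-stationary point, applying \eqref{ExpConvSTF0} to $h_j'$ forward in $n$ only shows $h_{j+n}'\!-\!h_{j+n}\to 0$; to conclude $h_j'=h_j$ for a \emph{fixed} $j$ you must run the convergence backward, writing $h_j'=\lambda_{j-n,n}^{-1}L_{j-n}^n h_{j-n}'$ and letting $n\to\infty$, which is exactly where the hypothesis $j\in\bbZ$ enters (cf.\ the remark immediately following the theorem about non-uniqueness when $j\ge 0$ only). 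Your write-up does not make this step explicit; it should. The H\"older regularity of $h_j'$ that this backward argument requires is, as you say, a routine bounded-distortion computation with constants uniform in $j$.
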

We note that the uniqueness holds when $X_j$ is defined for all $j\in\bbZ$. When it is only defined for $j\geq0$ then there are infinitely many ways to extend $X_j$ and the potentials $\phi_j$  to $j<0$, each of which results in a Gibbs measure.

\subsection{Conditioning}\label{ConditSec}
The proof of the LLT for the two sided shift uses  conditioning.
In this section we explain how this tool works.

Let $\psi_j\!\!:\tilde X_j\!\to\!\bbR$ be a sequence of functions such that 
$\DS \sup_j\!\|\!\psi_j\!\|_\al\!\!<\!\!\infty$ for some $0<\al\leq 1$, and let $\gamma_j$ be the corresponding sequential Gibbs measures, associated with a sequence $(\la_j)$. Let $\phi_j$ be the function from Lemma \ref{Sinai}. Let $\mu_j$ be the sequential Gibbs measure corresponding to $\phi_j$. Then $\mu_j$ is the restriction of $\gamma_j$ to the $\sig$-algebra on $Y_j$ generated by the coordinates indexed by $j+k$ for $k\geq 0$. 
 Let us recall the construction of Gibbs measure described in \S \ref{TPo}.
Define the operators $L_j$ by
$$
L_jg(x)=\sum_{y:\, T_jy=x}e^{\phi_j(y)}g(y).
$$
Then by \cite[Eq. (B3)]{DH2} there is a sequence of positive functions $h_j:X_j\to \bbR$ such that $\DS \inf_j\min_{x\in X_j} h_j(x)>0$ and $\DS \sup_j\|h_j\|_{\al/2}<\infty$, a sequence of probability  measures $\nu_j$ on $X_j$ such that $\nu_j(h_j)=1$ and a sequence of positive numbers $\la_j$ such that  $\DS 0<\inf_j\la_j\leq \sup_j\la_j<\infty$ and the following holds: 
$$
L_j h_j=\la_j h_{j+1}, \quad L_j^*\nu_{j+1}=\la_j \nu_j,
$$
and there are $C>0$ and $\del\in (0,1)$ such that for all $n$ and $j$ and all H\"older continuous functions $g$ with exponent $\al/2$ ,
\begin{equation*}
    \|(\la_{j,n})^{-1}L_j^n g-\nu_j(g)h_{j+n}\|_{\al/2}\leq C_0\|g\|_{\al/2} \del^n.
\end{equation*}
 Here
$$
L_{j}^n=L_{j+n-1}\circ\cdots\circ L_{j+1}\circ L_j, \la_{j,n}=\la_{j+n-1}\cdots \la_{j+1}\la_j.
$$
Then the unique sequential Gibbs measures $(\mu_j)$ corresponding to the sequence of potentials $(\phi_j)_{j\in\bbZ}$ are given by $\mu_j=h_j d\nu_j$ (see \cite[Appendix B]{DH2}), and the transfer operators of $(T_j)$ corresponding to $(\mu_j)$ are given by 
$$
\cL_jg(x)=\sum_{y:\, T_jy=x}e^{\tilde \phi_j(y)}g(y)
$$
where 
$\DS
\tilde \phi_j=\phi_j+\ln h_j-\ln h_{j+1}\circ T_j-\ln \la_j.
$
Then $\cL_j\textbf{1}=\textbf{1}$, $\cL_j^*\mu_{j+1}=\mu_j$ and $\cL_j$ satisfy the duality relation
$$
\int_{X_j}(f\circ T_j)\cdot gd\mu_j=\int_{X_{j+1}}f\cdot (\cL_j g) d\mu_{j+1} 
$$
for all bounded measurable functions $f$ and $g$. 

\begin{lemma}
$\DS \sup_j\|\tilde\phi_j\|_{\al/2}<\infty$.   
\end{lemma}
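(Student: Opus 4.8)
The statement to prove is that $\sup_j\|\tilde\phi_j\|_{\al/2}<\infty$, where $\tilde\phi_j=\phi_j+\ln h_j-\ln h_{j+1}\circ T_j-\ln\la_j$. The idea is simply to bound each of the four summands in the $\|\cdot\|_{\al/2}$ norm separately and invoke the triangle inequality, since $\sup_j\|u+v\|_{\al/2}\le\sup_j\|u\|_{\al/2}+\sup_j\|v\|_{\al/2}$.

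First, for the term $\phi_j$: by Lemma \ref{Sinai} the functions $\phi_j$ arising from the H\"older decomposition of $(\psi_j)$ satisfy $\sup_j\|\phi_j\|_{\al/2}<\infty$ (indeed $\phi_j$ is at least as regular as the $u_j$, which are uniformly H\"older of exponent $\al/2$; one gets $\|\phi_j\|_{\al/2}\le\|\psi_j\|_{\al/2}+\|u_j\|_{\al/2}+\|u_{j+1}\circ\sig_j\|_{\al/2}$, all uniformly bounded). For the terms $\ln h_j$ and $\ln h_{j+1}\circ T_j$: we are told in the paragraph preceding the lemma that $\inf_j\min_{x}h_j(x)>0$ and $\sup_j\|h_j\|_{\al/2}<\infty$; since $\ln$ is Lipschitz on the compact interval $[\inf_j\min h_j,\ \sup_j\max h_j]\subset(0,\infty)$, we get $\sup_j\|\ln h_j\|_{\al/2}<\infty$; and composition with the uniformly Lipschitz maps $T_j$ (Assumption \ref{AssPairing} guarantees a uniform Lipschitz constant $K_0$) only multiplies the H\"older constant by $K_0^{\al/2}$, so $\sup_j\|\ln h_{j+1}\circ T_j\|_{\al/2}<\infty$ as well. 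Finally, $\ln\la_j$ is a constant function, and $0<\inf_j\la_j\le\sup_j\la_j<\infty$, so $\sup_j\|\ln\la_j\|_{\al/2}=\sup_j|\ln\la_j|<\infty$.

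Combining these four bounds by the triangle inequality yields $\sup_j\|\tilde\phi_j\|_{\al/2}<\infty$, which is the claim.

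\textbf{Main obstacle.} There is essentially no hard step here; the only point requiring a little care is checking that $\ln$ really does act nicely on the relevant range of values — i.e. that the lower bound $\inf_j\min_x h_j(x)>0$ (stated in the quoted RPF estimate) genuinely holds, so that we stay away from the singularity of $\ln$ at $0$ uniformly in $j$. Once that uniform positivity is in hand, bounding the H\"older seminorm of $\ln h_j$ in terms of that of $h_j$ is routine, and likewise for the composition with $T_j$. So this lemma is really a bookkeeping consequence of the uniform estimates already established for the transfer-operator triple $(\la_j,h_j,\nu_j)$ and of Lemma \ref{Sinai}.
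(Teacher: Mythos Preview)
Your proof is correct and takes essentially the same approach as the paper's own proof: both bound the four summands of $\tilde\phi_j$ separately via the uniform positivity of $h_j$, the uniform $\al/2$-H\"older bound on $h_j$, and the two-sided bounds on $\la_j$. Your write-up is in fact more explicit than the paper's (you spell out the $\phi_j$ bound via Lemma~\ref{Sinai} and the composition with $T_j$), but the argument is the same.
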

\begin{proof}
Since $\DS \inf_j\min_{x\in X_j} h_j>0$ and $\DS \sup_j\|h_j\|_{\al/2}<\infty$,  the functions $\ln h_j$ are uniformly H\"older continuous (with respect to the exponent $\al/2$). Since $\DS 0<\inf_j\la_j\leq \sup_j\la_j<\infty$ we conclude that 
$\DS \sup_j\|\tilde\phi_j\|_{\al/2}<\infty$.  
\end{proof}

Next, taking a random point $x$ in $\tilde X_0$ which is distributed according to $\gamma_0$ we get a random sequence of digits. Denote the $j-$th random digit by 
$\mathfrak{X}_j$.
     Our next  result is a non-stationary version of Dobrushin-Lanford-Ruelle equality.
\begin{lemma}\label{DLR}
For every point $x=(x_{j+k})_{k\in\bbZ}\in \tilde X_j$ we have
$$
\gamma_j([x_j,...,x_{j+m-1}]| \mathfrak{X}_{j+m}=x_{j+m},\mathfrak{X}_{j+m+1}=x_{j+m+1},\dots)=e^{S_{j,m}\tilde\phi(x)}.
$$
\end{lemma}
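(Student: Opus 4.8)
The statement is a non-stationary version of the Dobrushin--Lanford--Ruelle equation, asserting that the conditional distribution of the first $m$ symbols given the tail $(\mathfrak{X}_{j+m},\mathfrak{X}_{j+m+1},\dots)$ equals $e^{S_{j,m}\tilde\phi(x)}$. The plan is to reduce everything to the transfer operator $\cL_j$, exploiting the two properties $\cL_j\mathbf{1}=\mathbf{1}$ and $\cL_j^*\mu_{j+1}=\mu_j$, and the duality relation. First I would recall that, by the discussion preceding the lemma, $\mu_j$ is the restriction of $\gamma_j$ to the $\sigma$-algebra generated by the coordinates with index $\geq j$, and that the $\sigma$-algebra generated by the tail $(\mathfrak{X}_{j+m},\mathfrak{X}_{j+m+1},\dots)$ on $\tilde X_j$ corresponds, under the coordinate projection $\pi_j$, to the pullback $(T_j^m)^{-1}\mathcal B_{j+m}$ of the Borel $\sigma$-algebra on $X_{j+m}$. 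Hence it suffices to compute the conditional expectation $\mathbb E_{\mu_j}[\,\mathbf 1_{[x_j,\dots,x_{j+m-1}]}\mid (T_j^m)^{-1}\mathcal B_{j+m}\,]$ on the one-sided shift.

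The key computational step is the identity, valid for all bounded measurable $G$ on $X_{j+m}$ and all bounded measurable $g$ on $X_j$,
$$
\int_{X_j} (G\circ T_j^m)\cdot g\, d\mu_j = \int_{X_{j+m}} G\cdot (\cL_j^m g)\, d\mu_{j+m},
$$
which follows by iterating the duality relation $\int (f\circ T_j) g\, d\mu_j = \int f\, (\cL_j g)\, d\mu_{j+1}$. Applying this with $g=\mathbf 1_{[x_j,\dots,x_{j+m-1}]}$, I would compute $\cL_j^m\mathbf 1_{[x_j,\dots,x_{j+m-1}]}$ explicitly: since $\cL_j^m h(z)=\sum_{y:\,T_j^m y=z} e^{S_{j,m}\tilde\phi(y)} h(y)$, and the cylinder $[x_j,\dots,x_{j+m-1}]$ selects exactly the unique preimage $y$ of $z$ whose first $m$ symbols are $x_j,\dots,x_{j+m-1}$ (when such a preimage exists), one gets
$$
\bigl(\cL_j^m\mathbf 1_{[x_j,\dots,x_{j+m-1}]}\bigr)(z) = e^{S_{j,m}\tilde\phi\,(\,[x_j,\dots,x_{j+m-1}] * z\,)},
$$
where $[x_j,\dots,x_{j+m-1}]*z$ denotes the point obtained by prepending the block $x_j,\dots,x_{j+m-1}$ to $z$ (defined whenever admissibility holds, and the whole expression being $0$ otherwise). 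Note that $S_{j,m}\tilde\phi$ at this prepended point depends only on the block $x_j,\dots,x_{j+m-1}$ and on $z_0=x_{j+m}$ (since $\tilde\phi_{j+s}$ is a function of the $s$-th image, whose relevant symbols lie among $x_{j+s},\dots$; in the SFT setting $\tilde\phi_{j+m-1}$ may depend on all of $z$, but $\tilde\phi_{j+s}$ for $s<m-1$ depends only on indices $\leq j+m-1$), so it is genuinely $(T_j^m)^{-1}\mathcal B_{j+m}$-measurable as a function of $z$. Combining the displayed duality identity with $G$ arbitrary then identifies $e^{S_{j,m}\tilde\phi(\cdot)}$ as the conditional expectation, which is exactly the claimed formula after transporting back to $\tilde X_j$ via $\pi_j$ and $\gamma_j$.

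The main obstacle I anticipate is bookkeeping rather than conceptual: one must be careful that $S_{j,m}\tilde\phi$ evaluated at the prepended point is a function only of $z$ (equivalently of the tail) and not of more, so that it legitimately serves as a conditional expectation with respect to the tail $\sigma$-algebra; this requires noting precisely how $\tilde\phi_{j+s}=\phi_{j+s}+\ln h_{j+s}-\ln h_{j+s+1}\circ T_{j+s}-\ln\lambda_{j+s}$ depends on coordinates, and in the SFT case using that $\phi$ is H\"older (so the value at the prepended point is well-defined and, more importantly, that all the relevant quantities only involve the already-specified symbols). A secondary point to handle carefully is the reduction from $\gamma_j$ on $\tilde X_j$ to $\mu_j$ on $X_j$ and the matching of the conditioning $\sigma$-algebras; this is where one invokes that $\mu_j$ is the pushforward/restriction of $\gamma_j$ under $\pi_j$ and that the tail coordinates of $\tilde X_j$ with nonnegative index generate $\pi_j^{-1}\mathcal B_{X_j}$, together with the fact that coordinates with negative index are conditionally determined (this is the content of the earlier Sinai-type reduction, and is precisely why the formula involves only $\tilde\phi$ and the forward dynamics). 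Once these measurability and reduction points are settled, the verification that $e^{S_{j,m}\tilde\phi}$ integrates correctly against every tail-measurable test function is a one-line application of the iterated duality relation.
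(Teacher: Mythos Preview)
Your proposal is correct and follows essentially the same approach as the paper: reduce from $\gamma_j$ to $\mu_j$ on the one-sided shift, use the iterated duality relation to identify $\mu_j(g\mid (T_j^m)^{-1}\mathcal B_{j+m})=(\cL_j^m g)\circ T_j^m$, and then take $g$ to be the cylinder indicator. Your measurability discussion is slightly muddled (each $\tilde\phi_{j+s}$ depends on \emph{all} forward coordinates, not only those up to index $j+m-1$) but also unnecessary, since $(\cL_j^m g)\circ T_j^m$ is trivially $(T_j^m)^{-1}\mathcal B_{j+m}$-measurable simply because $\cL_j^m g$ is a function on $X_{j+m}$.
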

\begin{proof}
We have 
$$
\gamma_j([x_j,...,x_{j+m-1}]|\mathfrak{X}_{j+m}=x_{j+m},\mathfrak{X}_{j+m+1}=x_{j+m+1},...)=
$$
$$
\mu_j([x_j,...,x_{j+m-1}]|\mathfrak{X}_{j+m}=x_{j+m},\mathfrak{X}_{j+m+1}=x_{j+m+1},...).
$$

\noindent
We will
show that for every bounded measurable function $g:X_j\to \bbR$ and every $m\in\bbN$
\begin{equation}\label{Cond0}
    \mu_j(g|(T_j^m)^{-1}\cB_{j+m})=(\cL_{j}^m g)\circ T_j^m 
\end{equation}
where $\cB_{k}$ is the Borel $\sigma$-algebra on $X_{k}$ (the 
$\sigma$-algebra generated by the cylinders).
The desired result follows  from \eqref{Cond0} by taking $g$ to be the indicator 
of the cylinder $[x_{j},...,x_{j+m-1}]$. 

 Next, we prove
\eqref{Cond0}. Using that $(\cL_j^m)^*\mu_{j+m}=\mu_j$ and that $\cL_j^m(g(h \circ T_j^m))=g\cL_j^n h$ for every function $h$, 
we see that
for every bounded measurable function $h:X_{j+m}\to\bbR$ 
$$
\int g (h\circ T_j^m)d\mu_{j}=
\int \cL_{j}^m(g (h\circ T_j^m))d\mu_{j+m}
=\int (\cL_{j}^m g) hd\mu_{j+m}=
\int ((\cL_{j}^m g)\circ T_j^m) h\circ T_j^md\mu_{j}
$$
where in the last inequality we have used that $(T_j^m)_*\mu_j=\mu_{j+m}$. Since the above holds for every function $h$ we conclude that $ \mu_j(g|
(T_j^m)^{-1}\cB_{j+m})=(\cL_{j}^m g)\circ T_j^m$ and \eqref{Cond0} follows.
\end{proof}

A key tool in the reduction of the LLT from the two sided shift to the one sided shift is the following result.

\begin{proposition}[Reguality of densities after conditioning on the past]\label{Cond Dens Lemma}
For every $j$, the conditional distributions (with respect to $\gamma_j$) of the coordinates $y_{j+k}, k\geq 0$  given the coordinates $y_{j+s}, s<0$ (namely, the past) is absolutely continuous with respect to the distribution of $y_{j+k}, k\geq 0$ (i.e. $\mu_j$). Moreover, there is a constant $C\geq 1$ such that the corresponding Radon-Nikodym density $p(y_j,y_{j+1},...|y_{j-1},y_{j-2},...)$ satisfies  
$$
C^{-1}\leq p(y_j,y_{j+1},...|y_{j-1},y_{j-2},...)\leq C
$$
and
\begin{equation}
\label{ConDDHolder}
\|p(\cdot|y_{j-1},y_{j-2},...)\|_{\al/2}\leq C
\end{equation}
for almost every point $y=(y_{j+k})_{k\in\bbZ}$.
\end{proposition}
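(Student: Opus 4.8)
The plan is to explicitly write down the conditional density using the Gibbs property together with Lemma \ref{DLR}, and then control it via the uniform transfer-operator estimates recalled in \S \ref{ConditSec}. First I would fix $j$ and a point $y=(y_{j+k})_{k\in\bbZ}$, and for $m\ge 1$ I would compute the conditional measure $\gamma_j(\,\cdot\,| y_{j-1},y_{j-2},\dots)$ of the cylinder $[y_j,\dots,y_{j+m-1}]$. Using the Markov/Gibbs structure one sees this is a limit (as we condition on longer and longer pasts) of ratios of cylinder measures of $\gamma_{j-N}$. Concretely, conditioning on the full past fixes the symbols $y_{j-N},\dots,y_{j-1}$ and by Lemma \ref{DLR} applied at time $j-N$ (or directly by the two-sided Gibbs property) the conditional probability of $[y_{j-N},\dots,y_{j+m-1}]$ given the symbols from $j+m$ onward equals $e^{S_{j-N,\,N+m}\tilde\phi(y)}$; dividing by the analogous expression for the past block alone and letting $N\to\infty$ yields a well-defined limit. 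The point is that $\tilde\phi$ is uniformly H\"older with exponent $\al/2$ (this is the Lemma immediately before Lemma \ref{DLR}), so the Birkhoff-type sums telescope in a controlled way and the dependence on the remote past decays geometrically; this both shows the limit exists and that the resulting object is a genuine Radon–Nikodym density with respect to $\mu_j$.

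Next I would identify this density in the cleanest form. Using the representation $\mu_j = h_j\,d\nu_j$ and the relation $L_j h_j=\la_j h_{j+1}$, $L_j^*\nu_{j+1}=\la_j\nu_j$, the conditional density can be written, up to the explicit factor coming from $h_j$ and $\la$'s, as a ratio
$$
p(y_j,y_{j+1},\dots\mid y_{j-1},y_{j-2},\dots)=\lim_{N\to\infty}\frac{(L_{j-N}^{N}\mathbf 1)\big(\text{\small past-determined point}\big)}{\text{normalization}},
$$
and the uniform exponential convergence $\|(\la_{j,n})^{-1}L_j^n g-\nu_j(g)h_{j+n}\|_{\al/2}\le C_0\|g\|_{\al/2}\del^n$ from \S \ref{ConditSec}, applied with $g=\mathbf 1$, gives that $L_{j-N}^N\mathbf 1/\la_{j-N,N}$ converges uniformly and with uniformly bounded H\"older norm to a strictly positive multiple of $h_j$. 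Since $\inf_j\min h_j>0$ and $\sup_j\|h_j\|_{\al/2}<\infty$ and $0<\inf_j\la_j\le\sup_j\la_j<\infty$, this immediately delivers the two-sided bound $C^{-1}\le p\le C$ and the H\"older bound \eqref{ConDDHolder}, with a constant $C$ uniform in $j$ and in the conditioning past. An alternative, more self-contained route is to bound $p$ directly from the Gibbs inequalities: for any cylinder $[y_j,\dots,y_{j+m-1}]$ the conditional measure and $\mu_j([y_j,\dots,y_{j+m-1}])$ are each comparable to $e^{S_{j,m}\tilde\phi(y)}$ up to multiplicative constants (for $\gamma_j$ one also needs uniform comparability of the partition functions over the past, which again follows from uniform H\"older continuity of $\tilde\phi$), so the ratio is bounded above and below uniformly; letting $m\to\infty$ and using that cylinders generate the Borel $\sigma$-algebra upgrades this to the density bound, and the H\"older control of $p$ then follows by comparing the densities at two points $y',y''$ sharing $y'_{j+k}=y''_{j+k}$ for $0\le k\le s$ and using that the corresponding $S_{j,m}\tilde\phi$ differ by $O(2^{-(\al/2)s})$ uniformly.

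I expect the main obstacle to be the bookkeeping in the limit over the past: one must show that the conditional probabilities of cylinders computed with longer and longer conditioning pasts form a Cauchy sequence at a geometric rate uniform in $j$ and in the tail of the past, and that the limiting set function extends to an absolutely continuous measure with the claimed density (rather than merely a finitely additive one). This is where the uniform exponential mixing of the one-sided transfer operators (equivalently, the uniform Gibbs constant for $\tilde\phi$) does the real work; once that convergence is in hand, the pointwise density bounds and the H\"older estimate \eqref{ConDDHolder} are routine consequences of the uniform positivity and boundedness of $h_j$, $\la_j$, and the H\"older norm of $\tilde\phi$. I would also remark (for use in \S \ref{Sec 9}) that the same computation shows $p$ depends on the past only through $(y_{j-1},y_{j-2},\dots)$ in a H\"older-continuous way, which is what makes the subsequent conditioning argument reducing the two-sided LLT to the one-sided one go through; but proving that refinement is not needed for the statement as written, so I would keep it to a remark.
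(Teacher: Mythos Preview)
Your outline has the right ingredients and is broadly compatible with the paper's argument, but you are vague exactly where the paper is sharp, and the vagueness hides the one genuine idea.

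The paper first gets the two--sided bound on $p$ essentially as in your ``alternative'' route: write
$\gamma_j(\Gamma\mid y_{j-1},\dots,y_{j-r})=\gamma_{j-r}([y_{j-r},\dots,y_{j+n-1}])/\gamma_{j-r}([y_{j-r},\dots,y_{j-1}])$,
apply the Gibbs inequality to both cylinders and take $r\to\infty$.  Your first paragraph tries to do this but invokes Lemma~\ref{DLR} in a confused way: you speak of ``dividing by the analogous expression for the past block alone,'' but the two DLR evaluations you write down are conditioned on \emph{different} futures (one on $y_{j+m},y_{j+m+1},\dots$, the other on $y_j,y_{j+1},\dots$), so their ratio is not any conditional probability.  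What you actually need here is just the Gibbs property, not DLR.

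For the H\"older bound the paper does something cleaner than either of your routes, and this is the step you are missing.  It uses Bayes' rule to flip the conditioning direction (equation~\eqref{Cond cruc}):
\[
\frac{\gamma_j([y_j,\dots,y_{j+m-1}]\mid y_{j-1},\dots,y_{j-r})}{\gamma_j([y_j,\dots,y_{j+m-1}])}
=\frac{\mu_{j-r}([y_{j-r},\dots,y_{j-1}]\mid y_j,\dots,y_{j+m-1})}{\mu_{j-r}([y_{j-r},\dots,y_{j-1}])}.
\]
Now the right-hand side conditions on the \emph{future}, and Lemma~\ref{DLR} applies directly to the numerator; sending $m\to\infty$ gives the explicit formula
\[
p_j(x\mid y_{j-1},\dots,y_{j-r})=\frac{e^{S_{j-r,r}\tilde\phi([y_{j-r}^{j-1},x])}}{\mu_{j-r}([y_{j-r},\dots,y_{j-1}])}\,\bbI(A^{(j-1)}_{y_{j-1},x_j}=1).
\]
Once you have this closed form, the H\"older bound is immediate: the denominator is a constant in $x$, and $S_{j-r,r}\tilde\phi([y_{j-r}^{j-1},\cdot])$ has $\al/2$-H\"older constant bounded uniformly in $j,r$ because the first $r$ coordinates are frozen; then $|e^t-e^s|\le(e^t+e^s)|t-s|$ together with the sup bound gives~\eqref{su}, and $r\to\infty$ finishes.

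Your RPF route (writing the density as a limit of $(\la_{j-N,N})^{-1}L_{j-N}^N\mathbf 1$) can be made to work and would yield essentially the same explicit formula, but as written you never identify what the ``past-determined point'' or the ``normalization'' are, so the argument is not a proof yet.  Your direct ``compare $S_{j,m}\tilde\phi$ at nearby points'' sketch for the H\"older bound is also salvageable, but without the explicit formula above you would need a separate argument to pass the H\"older control through the $m\to\infty$ and $r\to\infty$ limits, which you do not provide.  The Bayes flip is the shortcut that avoids all of this bookkeeping.
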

Note that Proposition \ref{Cond Dens Lemma} means that we can choose a version of the densities  
satisfying \eqref{ConDDHolder}.

\begin{proof}
We prove first that 
 $\gamma_j$ and $\gamma_j(\cdot|y_{j-1},  y_{j-2}, \dots)$
 are equivalent and that the densities are bounded and bounded away from $0$.
For every point $y\in Y_j$ and every cylinder of the form 
$\Gamma=[y_{j},...,y_{j+n-1}]$ and every $r>0$ we have 
$$
\gamma_j(\Gamma|y_{j-1},...y_{j-r})=\frac{\gamma_j([y_{j-r},...,y_{j+n-1}])}{\gamma_j([y_{j-r},...,y_{j-1}])}
=\frac{\gamma_{j-r}([y_{j-r},...,y_{j+n-1}])}{\gamma_{j-r}([y_{j-r},...,y_{j-1}])}.
$$
Applying the Gibbs property with the measure $\gamma_{j-r}$ to both cylinders 
$[y_{j-r},...,y_{j+n-1}]$ and $[y_{j-r},...,y_{j-1}]$,
we see that for some constant $C>1$ we have
$$
\gamma_j(\Gamma|y_{j-1},...y_{j-r})=C^{\pm 1} e^{S_{j,n}\tilde\phi(\pi_j(y))}
$$
where $a=C^{\pm 1}b$ means that $C^{-1}\leq  a/b\leq C$.
Applying again the Gibbs property with the measure $\gamma_j$ and the cylinder $\Gamma$ we also get that 
$$
\gamma_j(\Gamma)=C^{\pm 1}e^{S_{j,n}\tilde\phi(\pi_j(y))}.
$$
Taking $r\to\infty$ we conclude that the densities exist and they are uniformly bounded and bounded away from $0$.

Next, we prove that the densities  are  H\"older continuous functions of $(y_j,y_{j+1},...)$ uniformly in $y_{j-1}, y_{j-2},...$ (namely, the past). 
Fix some $m,r>0$. Then for every point $y\in Y_j$ we have
$$
\gamma_j([y_j,...,y_{j+m-1}]|y_{j-1},...,y_{j-r})=
\frac{\gamma_j([y_{j-r},y_{j-r+1},...,y_j,...,y_{j-m+1}])}{\gamma_j([y_{j-1},...,y_{j-r}])}
$$
$$
=\frac{\gamma_j([y_{j-r},...,y_{j-1}]|y_j,...,y_{j+m-1})\gamma_j([y_j,...,y_{j+m-1}])}{\gamma_j([y_{j-r},...,y_{j-1}])}.
$$
Therefore, 
\begin{equation}\label{Cond cruc}
    \frac{\gamma_j([y_j,...,y_{j+m-1}]|y_{j-1},...,y_{j-r})}{\gamma_j([y_j,...,y_{j+m-1}])}=
\frac{\gamma_j([y_{j-r},...,y_{j-1}]|y_j,...,y_{j+m-1})}{\gamma_j([y_{j-r},...,y_{j-1}])}
\end{equation}
$$
=
\frac{\mu_{j-r}([y_{j-r},...,y_{j-1}]|y_j,...,y_{j+m-1})}{\mu_{j-r}([y_{j-r},...,y_{j-1}])}.
$$
Thus, by Lemma \ref{DLR} 
\begin{equation}\label{T1}
   \lim_{m\to\infty}\frac{\gamma_j([y_j,...,y_{j+m-1}]|y_{j-1},...,y_{j-r})}{\gamma_j([y_j,...,y_{j+m-1}])}=\frac{e^{S_{j-r,r}\tilde \phi(\pi_{j-r}y)}}{\mu_{j-r}([y_{j-r},...,y_{j-1}])}. 
\end{equation}
On the other hand,  
$$
\frac{\gamma_j([y_j,...,y_{j+m-1}]|y_{j-1},...,y_{j-r})}{\gamma_j([y_j,...,y_{j+m-1}])}
\!\!=\!\!
\frac{1}{\gamma_j([y_j,...,y_{j+m-1}])}
\!\!\int_{[y_j,...,y_{j+m-1}]}p_j(y|y_{j-1},...,y_{j-r})d\mu_j(y)
$$
where $p_j(y|y_{j-1},...,y_{j-r})$ is the density of the coordinates indexed by $j+k$ for $k\geq0$ given the ones indexed by $y_{j-1},...,y_{j-r}$. Thus,
\begin{equation}\label{T2}
\lim_{m\to\infty}\frac{\gamma_j([y_j,...,y_{j+m-1}]|y_{j-1},...,y_{j-r})}{\gamma_j([y_j,...,y_{j+m-1}])}=p_j(y|y_{j-1},...,y_{j-r}),\,\,\,\gamma_j\,\,\text{ a.s.}
\end{equation}
Combining \eqref{T1} and \eqref{T2} we see that
$$
p_j(y|y_{j-1},...,y_{j-r})=\frac{e^{S_{j-r,r}\tilde \phi(\pi_{j-r}y)}}{\mu_{j-r}([y_{j-r},...,y_{j-1}])}. 
$$
The above formula shows that the distribution of $x=\pi_j(y)=(x_j,x_{j+1},...)$ given $y_{j-r}^{j-1}=(y_{j-1},...,y_{j-r})$
has density
$$
\frac{e^{S_{j-r,r}\tilde \phi([y_{j-r}^{j-1}, x])}}{\mu_{j-r}([y_{j-r},...,y_{j-1}])}\bbI(A^{(j-1)}_{y_{j-1},x_j}=1)
$$
with respect to $\mu_j$, where $[y_{j-r}^{j-1},x]=(y_{j-r},...,y_{j-1},x_j,x_{j+1},...)$
and $A^{(j)}$ are the incidence matrices of our shift.
Thus, the proof of the proposition will be complete if we prove that there is a constant $C_1>0$ such that for every $j$ and $r$ and all $x\in X_{j-r}$, 
\begin{equation}\label{su}
    \left\|\frac{e^{S_{j-r,r}\tilde \phi((x_{j-r},...,x_{j-1},\cdot))}}{\mu_{j-r}([x_{j-r},...,x_{j-1}])}\right\|_{\al/2}\leq C_1. 
\end{equation}
Indeed, once \eqref{su} is proven we can take $r\to\infty$ to get the result.

In order to prove \eqref{su}, we first notice that by the Gibbs property we have 
\begin{equation}\label{sup est}
       \left\|\frac{e^{S_{j-r,r}\tilde \phi((x_{j-r},...,x_{j-1},\cdot))}}{\mu_{j-r}([x_{j-r},...,x_{j-1}])}\right\|_{\infty}\leq C_2. 
\end{equation}
  Let
  $G_{\al/2}(\Psi)$ denote the H\"older constant of a function $\Psi$ corresponding to the exponent $\al/2$.
Since $\DS\sup_k\|\tilde\phi_k\|_{\al/2}\!\!<\!\!\infty$ we have 
$\DS
\sup_{x_{j-r},...,x_{j-1}}G_{\al/2}\left(S_{j-r,r}\tilde \phi((x_{j-r},...,x_{j-1},\cdot)\right)
\!\!\leq \!\! C_3
$
for some constant $C_3$ (since we ``freeze" the first $r$ coordinates).

Using that $|e^{t}-e^{s}|\leq (e^{t}+e^{s})|t-s|$ for all $t,s\in\bbR$ together with \eqref{sup est}, we obtain \eqref{su} with $C_1=2C_2C_3$, and the proof of the proposition is complete.
\end{proof}

\subsection{Reduciblity in the  two sided case}

Let $\gamma_j$ be  (sequential) Gibbs measures generated by some Gibbs measures $\mu_j$ on the one sided shifts $X_j$.  
Let $\psi_j:\tilde X_j\to\bbR$ be functions such that 
$\DS \sup_j\|\psi_j\|_\al<\infty$ for some $\al\in(0,1]$ and $\gamma_j(\psi_j)=0$ for all $j$. 
Consider the functions 
$\DS
S_n\psi=\sum_{j=0}^{n-1}\psi_j\circ \tilde T_0^j
$
as random variables on the probability space $(\tilde X_0, \text{Borel},\gamma_0)$.

\begin{proposition}\label{Red2sided}
 Let $\phi_j: X_j\to\bbR$ be the functions like in Lemma \ref{Sinai}. Then for every $h>0$ we have that $(\phi_j)$ is reducible to an $h\bbZ $-valued sequence iff $(\psi_j)$ is reducible to an $h\bbZ $-valued sequence.
\end{proposition}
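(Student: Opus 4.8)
The plan is to exploit the Sinai-type decomposition $\psi_j = u_j - u_{j+1}\circ\tilde T_j + \phi_j\circ\pi_j$ from Lemma \ref{Sinai}, which already reduces Birkhoff sums of $(\psi_j)$ to those of $(\phi_j\circ\pi_j)$ up to a H\"older coboundary $u_0 - u_n\circ\tilde T_0^n$. The key point is that adding a coboundary with uniformly bounded H\"older norm does not change reducibility: if $f_j$ is reducible to an $h\bbZ$-valued sequence with decomposition $f_j = \mu_j(f_j) + H_j + hZ_j$ (with $(S_nH)$ tight, $\sup_j\|H_j\|_\beta<\infty$, $Z_j$ integer valued), then $f_j + w_j - w_{j+1}\circ T_j$ is reducible with the same $h$ and the same $Z_j$, absorbing the coboundary into $H_j$. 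So the first step is to record this stability, and to note that since $\gamma_j(\psi_j)=0$ and $\gamma_j$ restricts to $\mu_j$ on the forward coordinates, we have $\mu_j(\phi_j) = \gamma_j(\phi_j\circ\pi_j) = 0$ as well (after possibly absorbing a constant, which again can be hidden in the coboundary as in Remark \ref{Rem Red}).

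The direction ``$(\phi_j)$ reducible to $h\bbZ$ $\Rightarrow$ $(\psi_j)$ reducible to $h\bbZ$'' is then essentially immediate: pull back the decomposition $\phi_j = \mu_j(\phi_j) + H_j + hZ_j$ via $\pi_j$ to get $\phi_j\circ\pi_j = H_j\circ\pi_j + h(Z_j\circ\pi_j)$, observe $H_j\circ\pi_j$ and $Z_j\circ\pi_j$ are uniformly H\"older on $\tilde X_j$ with $Z_j\circ\pi_j$ integer valued, check that $(S_n(H\circ\pi))$ is tight on $(\tilde X_0,\gamma_0)$ — this holds because its $L^2(\gamma_0)$ norm equals $\|S_nH\|_{L^2(\mu_0)}$ by the same restriction property, and (Remark \ref{Rem Red}) tightness is equivalent to $\sup_n\|S_nH\|_{L^2}<\infty$ — and then add the coboundary $u_j - u_{j+1}\circ\tilde T_j$ to obtain the decomposition for $\psi_j$, invoking the stability step above.

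The reverse direction ``$(\psi_j)$ reducible to $h\bbZ$ $\Rightarrow$ $(\phi_j)$ reducible to $h\bbZ$'' is the substantive one, and here is where conditioning enters. Starting from a decomposition $\psi_j = \gamma_j(\psi_j) + \tilde H_j + h\tilde Z_j$ on $\tilde X_j$, subtract the Sinai coboundary to get, on $\tilde X_j$, an identity $\phi_j\circ\pi_j = \hat H_j + h\tilde Z_j$ (modulo a coboundary and a constant) where now $\hat H_j$ is uniformly H\"older on $\tilde X_j$ with $(S_n\hat H)$ tight. The obstacle is that $\hat H_j$ and $\tilde Z_j$ a priori depend on the full two-sided coordinate, whereas the left side $\phi_j\circ\pi_j$ depends only on the forward coordinates; one must produce a decomposition of $\phi_j$ itself on $X_j$. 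The natural move is to condition on the past: fix a generic past, or integrate over the past with respect to $\gamma_0$, using Proposition \ref{Cond Dens Lemma} to control the conditional densities. Since $\phi_j\circ\pi_j$ and $h\tilde Z_j$ are, modulo coboundary, already functions depending mostly on the future (the coboundary terms $u_j$ contribute negligibly to cylinders far from the current coordinate, and $h\tilde Z_j$ is integer valued hence ``rigid'' — it must be locally constant once $\ell$ is large, as in the arguments of Lemma \ref{LmTDVar} and Remark \ref{Rem conn}), one averages the equation over the past coordinates against the conditional measure to obtain functions $H_j^\flat = \bbE_{\gamma_0}[\hat H_j \mid \text{future}]$ and an integer-valued $Z_j^\flat$ on $X_j$; the bounded, H\"older-controlled conditional densities of Proposition \ref{Cond Dens Lemma} guarantee $\sup_j\|H_j^\flat\|_{\al/2}<\infty$ and that $(S_nH^\flat)$ remains tight (its $L^2(\mu_0)$ norm is controlled by $\|S_n\hat H\|_{L^2(\gamma_0)}$ via conditional Jensen), while $Z_j^\flat$ stays integer valued because $\tilde Z_j$ is locally constant in the past coordinates for $j$ in the relevant range.

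The main obstacle, then, is making the ``average over the past'' rigorous while keeping the integer-valued part genuinely integer valued and the H\"older norms uniformly bounded: one cannot simply take a conditional expectation of $h\tilde Z_j$ and expect an integer. I would handle this exactly as in the proof of Lemma \ref{LmTDVar} and Remark \ref{Rem conn} — first show that the non-integer part ($\phi_j\circ\pi_j$, the coboundary, and $\hat H_j$) already agrees for two points sharing a long enough forward itinerary up to an exponentially small error, forcing $\tilde Z_j$ (suitably normalized) to be constant on such sets, hence a function of finitely many forward coordinates only; then the averaging over the past is vacuous on $\tilde Z_j$ and produces a genuine integer-valued $Z_j^\flat$ on $X_j$, while the remaining error terms are absorbed into $H_j^\flat$ using Lemma \ref{al be} as in the proof of \eqref{c3}. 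Combining this with Proposition \ref{Red2sided} immediately yields Theorem \ref{LCLT two sided} and Theorem \ref{LCLT MC}.
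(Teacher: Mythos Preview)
Your treatment of the easy direction ($(\phi_j)$ reducible $\Rightarrow$ $(\psi_j)$ reducible) is correct and matches the paper.

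For the hard direction, however, your constructive approach has a genuine gap. After writing $\phi_j\circ\pi_j=\hat H_j+h\tilde Z_j$ on $\tilde X_j$ with $\hat H_j$ uniformly H\"older and $(S_n\hat H)$ tight, you want to force $\tilde Z_j$ to depend only on forward coordinates by arguing that the non-integer part ``agrees for two points sharing a long enough forward itinerary up to an exponentially small error.'' But this is false for $\hat H_j$: it is H\"older on the \emph{two-sided} shift, and two points of $\tilde X_j$ that agree on all forward coordinates yet differ at coordinate $j-1$ have distance of order $2^{-1}$, so $\hat H_j$ can oscillate by $O(1)$ in the past variable. The identity $\phi_j\circ\pi_j=\hat H_j+h\tilde Z_j$ then only tells you that this $O(1)$ oscillation lies in $h\bbZ$, which does not make $\tilde Z_j$ past-independent. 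Averaging over the past then genuinely destroys integer-valuedness, and fixing a reference past neither covers all of $X_j$ nor preserves the tightness of the partial sums. The analogy with Lemma~\ref{LmTDVar} is misleading: there one works on the one-sided shift and compares points that are close in the only metric available; here the past direction provides an extra degree of freedom that is not contracted.

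The paper avoids this obstruction entirely by an indirect argument. Assume for contradiction that $(\phi_j)$ is irreducible; then, as in Corollary~\ref{H corr}, the twisted one-sided transfer operators satisfy $\|\cL_{j,t}^n\|_{\al/2}\to 0$ for every $t\neq 0$. On the other hand, reducibility of $(\psi_j)$ together with \cite[Lemma~6.3, Theorem~6.5]{DH2} yields a decomposition $\psi_j=u_j-u_{j+1}\circ\tilde T_j+M_j+hZ_j$ with $(M_j\circ\tilde T_0^j)$ a reverse martingale difference and $\sum_j\mathrm{Var}(M_j)<\infty$, so $\gamma_j(e^{itS_{j,n}M})$ is close to $1$ for large $j$ and $t=2\pi/h$. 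Writing $M_j=\phi_j+v_{j+1}\circ\tilde T_j-v_j-hZ_j$ and conditioning on the past via Proposition~\ref{Cond Dens Lemma} gives $|\gamma_j(e^{itS_{j,n}M})|\le C\|\cL_{j,t}^n\|_{\al/2}\to 0$, a contradiction. Thus the paper trades the hard problem of descending the integer part to $X_j$ for a soft spectral/characteristic-function argument, where conditioning is used only to bound an expectation rather than to produce an explicit one-sided decomposition.
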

\begin{proof}
First, it is clear that $(\psi_j)$ is reducible if $(\phi_j)$ is. Conversely, suppose that
 $(\psi_j)$ is reducible. 
 Then there are $h\!\!\neq\!\!0$ and 
 functions $H_j\!\!:\!\! \tilde X_j\!\!\to\!\! \mathbb{R}$, 
 $Z_j\!\!:\!\! \tilde X_j\!\!\to\!\! \mathbb{Z}$  
such that $\DS \sup_j
\|H_j\|_{\beta},<\infty$, $(S_n H)_{n=1}^\infty $ is  tight
and $\psi_j=H_j+hZ_j$ for all $j$. 
Applying \cite[Lemma 6.3 and Theorem 6.5]{DH2} 
with the sequence $(H_j)$ on the two sided shift (which is possible in view of Lemma \ref{Sinai})
  we can decompose
 $
 \psi_j=u_j-u_{j+1}\circ \tilde T_j+M_j+h Z_j,
 $
 where $M_j\circ \tilde T_0^j$ is a reverse martingale difference and 
 $\DS \sup_j\max(\|u_j\|_\beta,\|M_j\|_\beta)<\infty$. Moreover 
 $\DS \sum_{j}\text{Var}(M_j)<\infty$.
Now, since $M_j\circ \tilde T_0^j$ is a reverse martingale difference and 
$\DS \sum_{j}\text{Var}(M_j)<\infty$ 
we have that  with probability 1,
$S_{j,n}M$ can be made  arbitrarily small for large $j$. 
 Thus, by the Dominated Convergence Theorem we can ensure that
 $\bbE[e^{it S_{j,n}M }]$ is arbitrarily close to $1$ as $j\to\infty$,
 where $t=2\pi/h$. Now,  assume for the sake of contraction that $(\phi_j)$ is irreducible.  Then,  like in the proof of Corollary \ref{H corr} we see that the $\al/2$ H\"older operator
 norms of the transfer operators $\cL_{j,t}^n$ decays to $0$  as $n\to\infty$
  for every nonzero $t$, where $\cL_{j,t}(h)=\cL_j(he^{it\phi_j})$.
Next, we show that under this assumption for every $j$ we get that  $\bbE[e^{it S_{j,n}M }]\to 0$ as $n\to\infty$, which contradicts that   $\bbE[e^{it S_{j,n}M }]$ 
is close to $1$. This will complete the proof. 
In order to prove that $\bbE[e^{it S_{j,n}M }]\to 0$ as $n\to\infty$, let us first note that $M_j=\phi_j+v_{j+1}\circ \tilde T_j-v_{j}-hZ_j$ for some sequence of functions $v_j$ with 
$\DS \sup_j\|v_j\|_{\alpha/2}<\infty$. 
Conditioning on the past $y_{j-1},y_{j-2},...$ we have
$$
\gamma_j(e^{itS_{j,n} M})=
\gamma_0(e^{it S_{j,n} \phi+it v_{j+n}\circ\tilde T_j^n-it v_j})
$$
$$
=\int\left(\int_{X_0}e^{itS_{j,n}\phi(x)+itv_{j+n}(\tilde T_j^n(y^{-1},x))+itv_j(y^{-},x)}p_j(x|y^{-})d\mu_j(x)\right)d\gamma_j(y^{-})
$$
where $y^{-}=(...,y_{j-2},y_{j-1})$ and $p_j(x|y^{-})$ is the density of $\gamma_j$ 
conditioned on $x_k\!=\! y_k, k\!<\!j$, see \S \ref{ConditSec}.
 Next, since $\mu_j=(\cL_j^n)^*\mu_{j+n}$
for every realization $y^{-}$ we have 
$$
\int_{X_j}e^{itS_{j,n}\phi(x)+itv_{j+n}(\tilde T_j^n(y^{-1},x))+itv_j(y^{-},x)}p_j(x|y^{-})d\mu_j(x)
$$
$$
=\int_{X_{j+n}}e^{itv_{j+n}(y^{-},\cdot)}\cL_{j,t}^n(e^{it v_j(y^{-},\cdot)}p_j(x|y^{-}))d\mu_{j+n}.
$$
By Proposition \ref{Cond Dens Lemma}
we have $\|p_j(\cdot|y^{-})\|_{\al/2}\leq A$ for some constant $A$. Therefore
\begin{equation}\label{CharEst}
 |\gamma_j(e^{itS_{j,n}M})|\leq C\|\cL_{j,t}^n\|_{\al/2}.
\end{equation}
 By the foregoing discussion 
$ \DS \lim_{n\to\infty} \gamma_j(e^{itS_{j,n}M})=0$
  and the proof of the proposition is complete.
 \end{proof}

\subsection{Proof of Theorem \ref{LCLT two sided} in the irreducible case}
By
 Lemma \ref{Sinai} there are sequences of functions $\phi_j:X_j\to\bbR$ and 
 $u_j: \tilde X_j\to\bbR$ such that 
 $\DS \sup_j\|f_j\|_{\al/2}<\infty$,  $\DS \sup_j\|u_j\|_{\al/2}<\infty$ and 
 $\DS 
\psi_j=\phi_j\circ\pi_j+u_{j+1}\circ \sig_j-u_j.
 $
Let $\cL_{j}$ be the transfer operators corresponding to $\mu_j$ and for every $t\in\bbR$ let $\cL_{j,t}(g)=\cL_j(ge^{it \phi_j})$.

 As it was explained in \S \ref{SSInt-LLT},
 the non-lattice LLT 
and the first order expansions follow from  the  two results below.

\begin{lemma}\label{estt1}
There are constants $\del_0,C_0,c_0\!\!>\!\!0$ such that for every $t\!\in\![-\del_0,\del_0]$ we have
$$
|\gamma_0(e^{itS_n\psi})|\leq C_0e^{-c_0\sig_n^2 t^2}
$$
where $\sig_n=\|S_n\psi\|_{L^2}$.
\end{lemma}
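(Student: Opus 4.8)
\textbf{Proof strategy for Lemma \ref{estt1}.}
The plan is to reduce the estimate for the two sided shift to the corresponding estimate for the one sided shift, which is already available as a consequence of Proposition \ref{PrSmVarBlock} (via the Corollary following it establishing \eqref{Small t}, i.e.\ $\|\cL_{0,t}^n\|_*\leq C_2 e^{-c_2\sig_n^2 t^2}$ for $|t|\leq\del$). The key link is the decomposition $\psi_j=\phi_j\circ\pi_j+u_{j+1}\circ\sig_j-u_j$ from Lemma \ref{Sinai}, which telescopes: $S_n\psi=S_n(\phi\circ\pi)+u_n\circ\tilde T_0^n-u_0=(S_n^{(\phi)}\phi)\circ\pi_0+u_n\circ\tilde T_0^n-u_0$, where $S_n^{(\phi)}\phi=\sum_{j=0}^{n-1}\phi_j\circ T_0^j$ on the one sided shift. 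Hence $|\gamma_0(e^{itS_n\psi})|=|\gamma_0(e^{it(S_n^{(\phi)}\phi)\circ\pi_0}\cdot e^{it(u_n\circ\tilde T_0^n-u_0)})|$, so up to the bounded cocycle correction, the characteristic function of $S_n\psi$ on $(\tilde X_0,\gamma_0)$ is controlled by that of $S_n^{(\phi)}\phi$ on the one sided shift.

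The next step is to push the $\gamma_0$ expectation down to a $\mu_0$ expectation by conditioning on the past, exactly as in the proof of Proposition \ref{Red2sided} and using Proposition \ref{Cond Dens Lemma}. Writing $y^-=(\dots,y_{-2},y_{-1})$ for the past coordinates and $p_0(\cdot|y^-)$ for the conditional density of the future given the past, we get
\begin{equation}\label{EqCondPsi}
\gamma_0(e^{itS_n\psi})=\int\left(e^{-itu_0(y^-,\cdot)}\int_{X_n}e^{itu_n(y^-,\cdot)}\cL_{0,t}^n\!\left(e^{it u_0(y^-,\cdot)}p_0(\cdot|y^-)\right)d\mu_n\right)d\gamma_0(y^-),
\end{equation}
using $\mu_0=(\cL_0^n)^*\mu_n$ and the intertwining $\cL_0^n(g\cdot(h\circ T_0^n))=h\cdot\cL_0^n g$. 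By Proposition \ref{Cond Dens Lemma} the densities $p_0(\cdot|y^-)$ are uniformly bounded in the $\al/2$ H\"older norm, and since $\sup_j\|u_j\|_{\al/2}<\infty$ the functions $e^{it u_0(y^-,\cdot)}p_0(\cdot|y^-)$ have $\|\cdot\|_{\al/2}$ norm bounded uniformly in $y^-$ and $|t|\leq\del_0$. Therefore \eqref{EqCondPsi} gives
$$
|\gamma_0(e^{itS_n\psi})|\leq C\,\|\cL_{0,t}^n\|_{\al/2,T}=C\,\|\cL_{0,t}^n\|_*\leq CC_2\,e^{-c_2\,\bar\sig_n^2 t^2}
$$
for $|t|\leq\del_0$, where $\bar\sig_n^2=\text{Var}_{\mu_0}(S_n^{(\phi)}\phi)$ is the variance of the one sided Birkhoff sum.

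It remains to reconcile $\bar\sig_n^2$ with $\sig_n^2=\text{Var}_{\gamma_0}(S_n\psi)$. Since $S_n\psi$ and $(S_n^{(\phi)}\phi)\circ\pi_0$ differ by the uniformly bounded cocycle term $u_n\circ\tilde T_0^n-u_0$, a standard computation (expanding the variance and using that $\|u_j\|_\infty$ are uniformly bounded together with exponential decay of correlations, cf.\ \cite[Remark 2.6]{DH2} and \cite[Proposition 3.3]{BulLMS}) shows $|\sig_n^2-\bar\sig_n^2|\leq Q$ for some constant $Q$ independent of $n$; one also checks $\bar\sig_n$ computed against $\gamma_0$ versus $\mu_0$ agrees up to a bounded error because $\gamma_0$ restricted to the future $\sigma$-algebra is $\mu_0$. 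Consequently $e^{-c_2\bar\sig_n^2 t^2}\leq e^{c_2 Q t^2}e^{-c_2\sig_n^2 t^2}\leq e^{c_2 Q\del_0^2}e^{-c_2\sig_n^2 t^2}$ on $|t|\leq\del_0$, and absorbing the constant into $C_0$ yields the claim with $c_0=c_2$. The main obstacle is the bookkeeping around the conditioning identity \eqref{EqCondPsi} — making sure the density $p_0(\cdot|y^-)$ can legitimately be inserted as the argument of the one sided twisted transfer operator with norm bounds uniform in the conditioning — but this is precisely what Proposition \ref{Cond Dens Lemma} was set up to deliver, so it should go through cleanly.
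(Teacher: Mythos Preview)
Your proof is correct and follows essentially the same approach as the paper: reduce to the one-sided shift via Sinai's Lemma and conditioning on the past (Proposition \ref{Cond Dens Lemma}) to obtain $|\gamma_0(e^{itS_n\psi})|\leq C\|\cL_{0,t}^n\|_{\al/2}$, then invoke the one-sided estimate \eqref{Small t} together with $\|S_n\phi\|_{L^2}=\|S_n\psi\|_{L^2}+O(1)$. Your displayed identity \eqref{EqCondPsi} has a typo (the stray $e^{-itu_0(y^-,\cdot)}$ outside the inner integral and the sign on the inner $e^{itu_0}$ should be flipped), but the intended formula and the subsequent bound match the paper's argument from Proposition \ref{Red2sided} verbatim.
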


\begin{lemma}\label{estt2}
Let $\del_0$ be like in Lemma \ref{estt1}. 
Under the irreducibility assumptions of Theorem \ref{LCLT two sided} for every $T>\del_0$ we have 
$\DS
 \int_{\del_0\leq |t|\leq T} 
|\gamma_0(e^{it S_n\psi})|dt=o(\sig_n^{-1}).
$
\end{lemma}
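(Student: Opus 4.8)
The plan is to reduce Lemma \ref{estt2} for the two-sided shift to the already-established estimate \eqref{Suff} for the one-sided shift, using the conditioning machinery of \S \ref{ConditSec}. First I would use the Sinai-type decomposition $\psi_j = \phi_j\circ\pi_j + u_{j+1}\circ\sig_j - u_j$ from Lemma \ref{Sinai} to write
$$
\gamma_0(e^{itS_n\psi}) = \gamma_0\!\left(e^{itS_n(\phi\circ\pi) + it u_n\circ\tilde T_0^n - it u_0}\right),
$$
so that the $u$-terms only contribute bounded multiplicative factors. Then I would condition on the past: writing $y^- = (\dots,y_{-2},y_{-1})$ for the coordinates with negative index, and $p_0(x\mid y^-)$ for the conditional density of the future coordinates (which by Proposition \ref{Cond Dens Lemma} is bounded, bounded away from zero, and uniformly $\al/2$-H\"older in $x$), I would obtain
$$
\gamma_0(e^{itS_n\psi}) = \int\!\left(\int_{X_0} e^{itS_n\phi(x) + it u_n(\tilde T_0^n(y^-,x)) - it u_0(y^-,x)} p_0(x\mid y^-)\, d\mu_0(x)\right) d\gamma_0(y^-).
$$
Using $\mu_0 = (\cL_0^n)^*\mu_n$ and the intertwining $\cL_0^n(g\cdot(h\circ T_0^n)) = h\cdot\cL_0^n g$, the inner integral becomes $\int_{X_n} e^{it u_n(y^-,\cdot)} \cL_{0,t}^n\!\big(e^{-it u_0(y^-,\cdot)} p_0(\cdot\mid y^-)\big)\, d\mu_n$, exactly as in the proof of Proposition \ref{Red2sided}. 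Since the $\al/2$-H\"older norm of $e^{-it u_0(y^-,\cdot)} p_0(\cdot\mid y^-)$ is bounded uniformly in $y^-$ and in $|t|\le T$ (by Proposition \ref{Cond Dens Lemma} and uniform boundedness of $\|u_0\|_{\al/2}$), we get the pointwise bound
$$
|\gamma_0(e^{itS_n\psi})| \le C\, \|\cL_{0,t}^n\|_{\al/2,T}
$$
for all $t$ with $\del_0\le |t|\le T$, with $C$ independent of $n$ and $t$.

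Integrating this inequality over $\del_0 \le |t| \le T$ reduces the claim to $\int_{\del_0\le|t|\le T}\|\cL_{0,t}^n\|_* dt = o(\sig_n^{-1})$, which is precisely \eqref{Suff} — the content of Theorems \ref{LLT1} (established in \S\S\ref{SSLrgeCB}--\ref{SSFewBlocks}) applied to the one-sided system $(X_j, T_j, \mu_j, \phi_j)$. To invoke \eqref{Suff} I must check two things: that $(\phi_j)$ is irreducible as a sequence on the one-sided shift, and that the relevant variance $\|S_n\phi\|_{L^2(\mu_0)}$ still diverges. The first follows from the irreducibility hypothesis on $(\psi_j)$ via Proposition \ref{Red2sided} (if $(\phi_j)$ were reducible to some $h\bbZ$-valued sequence then so would $(\psi_j)$ be, contradicting irreducibility of $(\psi_j)$). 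For the variance, since $\psi_j$ and $\phi_j\circ\pi_j$ differ by the coboundary $u_{j+1}\circ\sig_j - u_j$, one has $\sup_n|\|S_n\psi\|_{L^2(\gamma_0)}^2 - \|S_n(\phi\circ\pi)\|_{L^2(\gamma_0)}^2| < \infty$, and $\|S_n(\phi\circ\pi)\|_{L^2(\gamma_0)} = \|S_n\phi\|_{L^2(\mu_0)}$ because $\mu_0$ is the pushforward of $\gamma_0$ under $\pi_0$ on the relevant $\sigma$-algebra; hence $\sig_n\to\infty$ is equivalent for the two systems, and the LLT hypothesis transfers. There is a minor bookkeeping point: the H\"older exponent on the one-sided side is $\al/2$ rather than $\al$ (because $\phi_j$ produced by Lemma \ref{Sinai} is only $\al/2$-H\"older in general), but all of \S\S\ref{Sec 5}--\ref{SSKeyPropZN} is stated for an arbitrary fixed exponent $\beta\in(0,1]$, so this is harmless; I would simply fix $\beta = \al/2$ throughout the argument.

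The main obstacle is making the conditioning step rigorous: one must justify that $\gamma_0$ disintegrates over the past $\sigma$-algebra with conditional measures that are absolutely continuous with respect to $\mu_0$, with a version of the density that is simultaneously measurable in $y^-$ and uniformly $\al/2$-H\"older in the future coordinate — this is exactly what Proposition \ref{Cond Dens Lemma} provides, so the work is in carefully applying it (choosing the good version of the density, Fubini across the disintegration, and handling the exponential weights $e^{\pm it u}$ which are bounded in sup-norm but need their H\"older norms controlled uniformly, using $|e^s - e^t| \le (e^{|s|}+e^{|t|})|s-t|$ and $\sup_j\|u_j\|_{\al/2}<\infty$). Once this is set up, Lemma \ref{estt1} follows in the same way using \eqref{Small t} in place of \eqref{Suff}: conditioning on the past gives $|\gamma_0(e^{itS_n\psi})| \le C\|\cL_{0,t}^n\|_* \le CC_2 e^{-c_2\sig_n^2 t^2}$ for $|t|\le\del_0$, where here I note that the variance comparison above also guarantees that the $\sig_n$ appearing in \eqref{Small t} may be replaced by $\|S_n\psi\|_{L^2(\gamma_0)}$ at the cost of changing the constants $c_2, C_2$.
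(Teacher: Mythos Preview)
Your proposal is correct and follows essentially the same route as the paper: derive the bound $|\gamma_0(e^{itS_n\psi})|\leq C\|\cL_{0,t}^n\|_{\al/2}$ via the conditioning argument (this is exactly \eqref{CharEst}, established in the proof of Lemma~\ref{estt1} by arguing as in Proposition~\ref{Red2sided}), transfer irreducibility from $(\psi_j)$ to $(\phi_j)$ via Proposition~\ref{Red2sided}, and then invoke \eqref{Suff} for the one-sided system. The paper's proof is simply more terse, citing \eqref{CharEst} and \eqref{Suff J} directly rather than re-deriving the conditioning step.
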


\begin{proof}[Proof of Lemma \ref{estt1}]
Arguing like in the proof of Proposition \ref{Red2sided}, we see that
there is a constant $C>0$ such that for all $t$ and $n$ we have
\begin{equation}\label{CharEst}
 |\gamma_0(e^{itS_n\psi})|\leq C\|\cL_{0,t}^n\|_{\al/2}.
\end{equation}
Now the result follows from the corresponding result in the one sided case, noting that $\|S_n\phi\|_{L^2}=\|S_n\psi\|_{L^2}+O(1)$.
\end{proof}

\begin{proof}[Proof of Lemma \ref{estt2}]
Since $\psi_j=\phi_j\circ \pi_j+u_{j+1}\circ \tilde T_j-u_j$, by Proposition \ref{Red2sided} 
the sequence of functions $(\phi_j)$ is also irreducible. 
Thus, the lemma follows from \eqref{CharEst} and 
 \eqref{Suff J} which holds in the irreducible case.
\end{proof}

\subsection{LLT in the reducible case}
Using Lemma \ref{Sinai} and the conditioning argument from \S \ref{ConditSec}, in the reducible case we can also prove an LLT similar to the one in Section~\ref{Red sec}. 
By Proposition \ref{Red2sided} we have $R(\phi)=R(\psi)$. In particular $a(\phi)=a(\psi)$, where $a(\cdot)$ was defined at the beginning of Section \ref{Red sec}. 
Now, 
the decay of the characteristic functions at the relevant points needed in the proof of Theorem \ref{LLT RED} can be obtained by repeating the arguments in the proof of Theorem \ref{LCLT two sided}. The second ingredient is to expand the characteristic functions around points in $(2\pi/a(\phi))\bbZ$. This is done by conditioning on the past and using an appropriate Perron-Frobenius theorem 
(see \cite[Theorem D.2]{DH}) for each realization on the past, and then integrating. In order not to overload the paper the exact details are left for the reader.



\section{Irreducible systems}\label{Irr Sec}

\subsection{The connected case}\label{PathConn}

Here we prove Theorem \ref{LLT2}. As we have explained before, it suffices to prove \eqref{Suff}. Hence Theorem \ref{LLT2} follows from the
estimate below.

\begin{proposition}\label{ExpDecP stretched}
 If all the spaces $X_j$ are  connected 
then for every $0<\del<T$ there are constants $c,C>0$ such that for all $n$,
\begin{equation}\label{expp}
\sup_{\del\leq |t|\leq T}\|\cL_{0,t}^n\|_{*}\leq Ce^{-c\sig_n}.
\end{equation}
Moreover, if $\sig_n\to \infty$ then $(f_j)$ is irreducible.
\end{proposition}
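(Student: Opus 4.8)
\textbf{Proof proposal for Proposition \ref{ExpDecP stretched}.}

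The plan is to combine the machinery of Section \ref{Sec 5} with Remark \ref{Rem conn}. The starting point is that, by Corollary \ref{CorNonExpanding}, for any block $I$ of length at least $k_0$ we have $\|\cL_t^I\|_*\le 1$, so to prove \eqref{expp} it suffices to exhibit $\gtrsim \sig_n$ disjoint ``contracting'' sub-blocks inside $I_n=\{0,1,\dots,n-1\}$, each of bounded length, on which the twisted transfer operator has norm at most $1-\eta(\ve)$ uniformly over $\del\le|t|\le T$. Equivalently — and this is the dichotomy I want to set up — either such a positive density of contracting blocks exists (giving \eqref{expp}), or there are long stretches of non-contracting blocks, and I will show that in the connected case the latter forces reducibility, which contradicts $\sig_n\to\infty$ together with irreducibility; more precisely, the latter case will be shown to be impossible once we know the variance diverges, because of the rigidity coming from connectedness.

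Here is how I would carry this out. First, fix $\ve$ small and partition $I_n$ greedily into consecutive blocks of length $D(\ve)$ (with small gaps of length $k_0$ between them, exactly as in \S\ref{SSCBDef}). Call a block non-contracting if $\sup_{t\in J}\|\cL_t^I\|_*>1-\eta(\ve)$ for the fixed small interval $J$ of length $\del_1$ (one handles the finitely many such $J$ covering $\{\del\le|t|\le T\}$ separately). On each non-contracting block, Corollary \ref{CutLemm} applies: for all indices $s$ in the ``interior'' of the block and all $t\in J$ we get a reduction
\begin{equation*}
tf_s = tg_s + tH_s - tH_{s+1}\circ T_s + 2\pi Z_s,
\end{equation*}
with $\|g_s\|_\al = O(\del_1)+c(L)$ small and, crucially by Remark \ref{Rem conn}, the integer-valued functions $Z_s$ are \emph{constants}. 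Second — and this is the new input specific to the connected case — I claim the number $L_n'$ of non-contracting blocks in the partition cannot be more than $O(\sqrt{\text{something}})$ without forcing a genuine reduction of $(f_j)$ over a correspondingly long range; concretely, whenever a long run of consecutive non-contracting blocks occurs, the per-coordinate reductions glue (the $H$'s match at shared endpoints, as emphasized in the Remark after Lemma \ref{LmTDVar}) into a decomposition $t_0 f_s = \tilde g_s + \tilde H_s - \tilde H_{s+1}\circ T_s + 2\pi m_s$ over that run, with $\tilde g_s$ of small $\|\cdot\|_\al$-norm and $m_s\in\bbZ$ constant. Pushing $\ve,\del_1\to0$ and extracting a limiting reduction (using the argument structure of Proposition \ref{case 3}, i.e. Lemma \ref{Qqad L} plus the martingale-coboundary decomposition \cite[Theorem 6.5]{DH2}) shows that if non-contracting blocks were \emph{not} of bounded total count relative to $\sig_n$, then $(f_j)$ would be reducible to a $\frac{2\pi}{t_0}\bbZ$-valued sequence — contradicting irreducibility, hence contradicting $\sig_n\to\infty$ via Corollary \ref{H corr}. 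Therefore the contracting blocks number $\gtrsim n/D(\ve)$, and since each block has bounded length while $\sig_n^2 = O(n)$ and (by the block decomposition of the variance, as in Lemma \ref{CombLemma}) the variance carried by the non-contracting blocks is $O(\sig_n^2)$ at most, one in fact gets $\gtrsim \sig_n$ contracting blocks; submultiplicativity then yields \eqref{expp}.

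I expect the main obstacle to be making the gluing-and-limit step fully rigorous: Corollary \ref{CutLemm} gives reductions block-by-block with constants $(g_s, Z_s)$ that a priori depend on $t$, $\ve$ and the block $I$, and one must check that the constant integer part $Z_s$ (available thanks to Remark \ref{Rem conn}) is genuinely consistent across adjacent blocks and across the limit $\ve\to0$, so that the $H$-telescoping survives and one lands in the hypothesis of \cite[Theorem 6.5]{DH2}. The cleanest route is probably to not argue by contradiction on block counts at all, but instead: run the proof of \eqref{Suff J} from \S\ref{SSKeyPropZN} verbatim (cases $L_n$ large / moderate / bounded), observing that in the connected setting the ``bounded $L_n$'' branch of Proposition \ref{case 3} produces, via Remark \ref{Rem conn}, a reduction with $Z_{t,j}$ constant; by irreducibility this branch is vacuous once $\sig_n\to\infty$, and a quantitative reading of the contracting-block count in the remaining branches (each contracting block kills a factor $1-\eta(\ve)$, and there are order $\sig_n$ of them because the moderate-case variance accounting forces $L_n\gtrsim\sig_n$) upgrades the $o(\sig_n^{-1})$ conclusion to the exponential bound \eqref{expp}. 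The ``moreover'' clause is then immediate: if $\sig_n\to\infty$ and $(f_j)$ were reducible, Theorem \ref{Reduce thm} and Remark \ref{Rem conn} would give a reduction to an $h_0\bbZ$-valued constant sequence $(Z_j)$, but constants have bounded Birkhoff sums only if $Z_j\equiv0$ eventually, i.e. $f_j$ would be center-tight, contradicting $\sig_n\to\infty$; hence $(f_j)$ is irreducible.
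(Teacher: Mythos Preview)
Your proposal has a genuine gap: the argument is circular, and it misses the direct mechanism that makes the connected case work.

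You repeatedly invoke irreducibility as a hypothesis (``by irreducibility this branch is vacuous once $\sig_n\to\infty$''; ``contradicting irreducibility, hence contradicting $\sig_n\to\infty$''), but irreducibility is the \emph{conclusion} of the proposition, not an assumption. You cannot rule out the small-$L_n$ branch by appealing to irreducibility and then turn around and deduce irreducibility from \eqref{expp}. Likewise, your claim that ``the moderate-case variance accounting forces $L_n\gtrsim\sig_n$'' is unsupported: the moderate case in \S\ref{SSKeyPropZN} is precisely $L_n\to\infty$ with $L_n\le c\ln\sig_n$, and nothing there produces $L_n\gtrsim\sig_n$.

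The key point you are missing is that in the connected case the small-$L_n$ branch can be handled \emph{directly and quantitatively}, with no contradiction argument. On the big non-contracting block $A_{m_n}'$ from Lemma \ref{CombLemma}, connectedness (Remark \ref{Rem conn}) makes each $Z_s$ a constant, so the decomposition $tf_s=g_{s,t}+t(H_s-H_{s+1}\circ T_s)+z_{s,t}$ telescopes to
\[
tS_{A_{m_n}'}f = S_{A_{m_n}'}g_t + (\text{constant}) + O(1),
\]
where the $O(1)$ comes from the bounded coboundary $tH$. Hence $\mathrm{Var}(S_{A_{m_n}'}g_t)$ differs from $t^2\,\mathrm{Var}(S_{A_{m_n}'}f)$ only by $O(1)$, and the latter is at least $\del^2\sig_n^2/(16L_n)$ by Lemma \ref{CombLemma}(i). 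Now apply Proposition \ref{PrSmVarBlock} to the operator built from the small functions $g_{s,t}$: this gives $\|\cL_t^{A_{m_n}'}\|_*\le C\exp(-c_2\,\mathrm{Var}(S_{A_{m_n}'}g_t))$. Splitting at a threshold $L_n\lessgtr c_*\sig_n$ (contracting blocks give $(1-\eta)^{L_n}$ when $L_n\ge c_*\sig_n$; the variance bound gives $\exp(-c'\sig_n/c_*)$ when $L_n<c_*\sig_n$) yields \eqref{expp} uniformly in $t\in J$. Irreducibility then follows a posteriori exactly as you say, via Corollary \ref{H corr}(iii): if $t\in\mathsf H\setminus\{0\}$ then $\|\cL_{j,t}^n\|$ would not decay, contradicting \eqref{expp}.

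Your ``moreover'' argument is morally right but phrased incorrectly: the point is not that ``constants have bounded Birkhoff sums only if $Z_j\equiv0$'', but that if $Z_j$ is constant then $\mathrm{Var}(S_nf)=\mathrm{Var}(S_nH)$, which is bounded by tightness, contradicting $\sig_n\to\infty$.
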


\begin{proof}
Like in the previous section, we can assume that $\mu_k(f_k)=0$ for all $k$.

Next,  fix a sufficiently small interval $J$ such that 
$J\cap (-\del_{0},\del_0)\!\!=\!\!\emptyset$ and let $L_n$  be 
 the number of contracting blocks as before. Then, it is enough to prove that 
\begin{equation}
\label{SupTL-Small}
\sup_{t\in J}\|\cL_{0,t}^n\|_{*}\leq Ce^{-c\sig_n}.
\end{equation}
If $L_n\geq  c\ln  \sig_n $ then  \eqref{SupTL-Small}
follows by repeating the proof of Proposition \ref{case 1} (note that the arguments  in \S \ref{SSLrgeCB}
 yield uniform in $t\in J$ bounds on the norms, and not only on average). 

Suppose next that $L_n\leq c \ln \sig_n$. Let us  reexamine to the proof  of Lemma \ref{Norm Lem} (in particular, we will use all the notations from there). Since all the spaces are  connected, combining
Lemma \ref{LmTDVar} and Remark \ref{Rem conn} we see that 
 that all the functions $Z_s$ appearing there are constants. Thus, for all $t=t_0+h\in J$  and $s\in A_{m_n}'$ we have
\begin{equation}\label{Chom}
   tf_s=g_{s,t}+tH_s-tH_{s+1}\circ T_s+z_{s,t}
\end{equation}
with $\|g_{s,t}\|_\al$ arbitrarily small,
and $z_{s,t}$ is a constant. Since $\DS \sup_s \|H_s\|_\al<\infty$ by 
Lemma~\ref{CombLemma} 
 there is a constant $c_1>0$ such that for all $n$ large enough we have 
$$
\|tS_{A_{m_n}'}\tilde g_{t}\|_{L^2}\geq c_1\sig_n L_n^{-1/2}\geq c_1c \sqrt{\sig_n}
$$
where $\tilde g_t=\{g_{s,t}+z_{s,t}: s\in I^{(n)}\}$. Note that $\|tS_{A_{m_n}'}\tilde g_{t}-tS_{A_{m_n}'} f\|_\infty\leq 2\sup_s\|H_s\|_\al$ and so $|\bbE[tS_{A_{m_n}'}\tilde g_{t}]|\leq 2\sup_s\|H_s\|_\al$. We conclude that for all $n$ large enough
$$
\sqrt{\text{Var}(tS_{A_{m_n}'}\tilde g_{t})}=\sqrt{\text{Var}(tS_{A_{m_n}'}g_{t})}\geq c_0 \sig_n-2\sup_s\|H_s\|_\al\geq \frac12  c_0\sig_n
$$
where $c_0=cc_1\delta_0$.
Since $\|g_{s,t}\|_\al$ are small, Proposition \ref{PrSmVarBlock} gives 
$\DS
\|\cL_{t,g_t}^{A_n}\|_*\leq e^{-\frac14 c_2 \sig_n}$ for some $c_2>0
$
where $\cL_{t,g_t}^{A_n'}$ is defined similarly to $\cL_{t}^{A_n'}$ but with $g_t$ instead of $tf$. 
 Now using \eqref{Chom} we obtain that
$\DS
\|\cL_{t}^{A_n'}\|_*\leq Ce^{-\frac14 c_2\sig_n}
$
for some constant $C>0$, and the proof of \eqref{expp} is complete.

Finally, we show that $(f_j)$ is irreducible. At the beginning of the  proof of Corollary~\ref{H corr}(iii) we showed that  if $t\in \mathsf{H}$ then the norms 
$\|\cL_{j,t}^n\|$ do not converge to $0$. On the other hand, by starting from $j$ instead of $0$ and then applying\footnote{Alternatively, note that the above proof of \eqref{expp} proceeds similarly if omit a few first iterates.}
\eqref{expp}  we get that for every given $t\not=0$ these norms must decay to $0$. Thus 
$\mathsf{H}=\{0\}$ and so by Corollary~\ref{H corr} the sequence  $(f_j)$ must be irreducible.
\end{proof}

 Note that the fact that the spaces are connected was only used in the derivation of \eqref{Chom}. 
We thus obtain the following result.

\begin{proposition}
\label{PrCHom}
Suppose that for each non-contracting block $B$ we have that \eqref{Chom} holds for $s\geq s(T), s\in B$.
If $\sig_n\to\infty$
then $(f_n)$ is irreducible.
\end{proposition}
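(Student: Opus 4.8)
\textbf{Proof plan for Proposition \ref{PrCHom}.}
The proof of Proposition \ref{ExpDecP stretched} establishes irreducibility by showing that $\mathsf H=\{0\}$, and inspecting that argument reveals that connectedness of the $X_j$ enters \emph{only} at the point where one upgrades the decomposition \eqref{Reduc} (equivalently \eqref{deccomp}) of Corollary \ref{CutLemm} to the stronger form \eqref{Chom}, in which the integer-valued functions $Z_s$ have been replaced by genuine constants $z_{s,t}$. Everything downstream of \eqref{Chom} is purely analytic and does not see the topology of the $X_j$. Hence the plan is simply to re-run the proof of Proposition \ref{ExpDecP stretched} verbatim, replacing the clause ``since all the spaces are connected, combining Lemma \ref{LmTDVar} and Remark \ref{Rem conn} we see that all the functions $Z_s$ are constants'' by ``by hypothesis, \eqref{Chom} holds for all $s\in B$, $s\ge s(T)$, in every non-contracting block $B$.''

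Concretely, I would proceed as follows. Fix $0<\delta<T$ and a sufficiently small interval $J$ with $J\cap(-\delta_0,\delta_0)=\emptyset$; it suffices to prove $\sup_{t\in J}\|\cL_{0,t}^n\|_*\le Ce^{-c\sigma_n}$ for some $c,C>0$ (and then, as in the last paragraph of the proof of Proposition \ref{ExpDecP stretched}, that the non-convergence of $\|\cL_{j,t}^n\|_*$ for $t\in\mathsf H$ shown in Corollary \ref{H corr}(iii) forces $\mathsf H=\{0\}$, whence irreducibility via Corollary \ref{H corr}). Let $L_n$ be the number of contracting blocks as in \S\ref{SSCBDef}. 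If $L_n\ge c\ln\sigma_n$ the desired bound follows exactly as in Proposition \ref{case 1} (noting those arguments give uniform-in-$t$ bounds, not just integrated ones). If $L_n\le c\ln\sigma_n$, apply Lemma \ref{CombLemma} to produce a block $A_{m_n}'$ with $\|S_{A_{m_n}'}f\|_{L^2}\ge \sigma_n/(4\sqrt{L_n})\gtrsim\sqrt{\sigma_n}$ inside which every length-$D(\ve)$ block ending in $A_{m_n}'$ is non-contracting; invoke the hypothesis \eqref{Chom} for each $s\in A_{m_n}'$ to write $tf_s=g_{s,t}+tH_s-tH_{s+1}\circ T_s+z_{s,t}$ with $\|g_{s,t}\|_\alpha$ arbitrarily small and $z_{s,t}$ a constant.

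Now the key computation, which is verbatim from the proof of Proposition \ref{ExpDecP stretched}: since $\sup_s\|H_s\|_\alpha<\infty$, the cobounding terms contribute $O(1)$ and the additive constants $z_{s,t}$ drop out of the variance, so $\sqrt{\operatorname{Var}(tS_{A_{m_n}'}(g_t+z_t))}=\sqrt{\operatorname{Var}(tS_{A_{m_n}'}g_t)}\ge c_0'\sigma_n$ for $n$ large (using $|t|\ge\delta$ and $\|S_{A_{m_n}'}f\|_{L^2}\gtrsim\sqrt{\sigma_n}\to\infty$). Applying Proposition \ref{PrSmVarBlock} to the small-norm sequence $(g_{s,t})$ gives $\|\cL_{t,g_t}^{A_{m_n}'}\|_*\le e^{-c_2\operatorname{Var}(S_{A_{m_n}'}g_t)/4}\le e^{-c\sigma_n}$, and feeding this back through \eqref{Chom} (the coboundary and constant factors change the operator only by a bounded multiplicative constant, and by $\|\cL_t^{I}\|_*\le1$ for the padding blocks of length $\ge k_0$ via \eqref{Norm bound}) yields $\sup_{t\in J}\|\cL_{0,t}^n\|_*\le Ce^{-c\sigma_n}$. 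The only genuine obstacle is a bookkeeping one: one must make sure the hypothesis ``\eqref{Chom} holds for $s\ge s(T)$, $s\in B$, for each non-contracting block $B$'' is precisely what Corollary \ref{CutLemm}/Lemma \ref{A lemma} feed into the block $A_{m_n}'$ — i.e. that the constants $s(T)$, $L(\ve)$, $k_2(\ve)$, $k_0$ are compatible, which is exactly how they were chosen in the connected case; no new estimate is needed.
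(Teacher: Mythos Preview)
Your proposal is correct and matches the paper's approach exactly: the paper does not give a separate proof of Proposition \ref{PrCHom}, instead simply noting (immediately before its statement) that ``the fact that the spaces are connected was only used in the derivation of \eqref{Chom},'' so the proposition follows by rerunning the proof of Proposition \ref{ExpDecP stretched} with \eqref{Chom} taken as hypothesis rather than derived via Remark \ref{Rem conn}. Your write-up spells out this rerun in more detail than the paper does, but the argument is identical.
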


\begin{proof}[Proof of Theorem \ref{Thm small}]
Taking $\alpha<\beta$ we obtain by Lemma \ref{al be} that $\|f_n\|_\alpha\to 0.$
Thus 
\eqref{Chom} holds with $H_s=z_{s,t}=0.$
\end{proof}

\subsection{Non-lattice LLT on the tori}
\label{SS2SideIrr}

\begin{proof}
[Proof of Theorem \ref{ThHyp}(b)]
Since the family $(T_n)$ is conjugated to a constant map it suffices to consider the case where $T_n\equiv T$ for all $n.$ Moreover by Franks-Manning Theorem
(\cite{Fr69, Man}) $T$ is conjugated to a linear map, so it suffices to prove the result when 
$T$ is linear $T(x) =\cA x$ mod 1 
where $\cA$ is a hyperbolic linear map
 (but the functions $(f_n)$ are different for different $n$ and they are only 
H\"older continuous).

Let $\Sigma$ be the symbolic system coding $T$ and define $F_n=f_n\circ \pi$ where
$\pi(\bx)$ is the point having symbolic expansion $\bx.$
(Below we denote by $x_n$ the symbols of $\bx$ and write $\bx_n=\sigma^n \bx.$) 
By Sinai's Lemma \ref{Sinai},
$F_n=\brF_n+\psi_n-\psi_{n+1}$ where $\brF_n$ depends only on 
indices $n+k$ for $k\geq 0.$ We want to show that $(F_n)$ is irreducible, which by 
Proposition~\ref{Red2sided} is equivalent to irreducibility of $(\brF_n).$

Given $\ell$ we say that orbits 
$\bx, \by, \bu, \bv$ form an {\em $(n, \ell)$ rectangle} if
$$x_{n-k}=u_{n-k}, \quad y_{n-k}=v_{n-k}, \quad
x_{n+k}=v_{n+k}, \quad y_{n+k}=u_{n+k}, \quad\text{for}\quad k\geq \ell.$$

Next, we recall the proof scheme of Theorem \ref{LLT1} (the non-lattice LLT). 
We divided the 
interval $[-T,T]\setminus(-\delta_0,\delta_0)$ into small intervals $J$. Then for each small interval the proof involved the number $L_n=L_n(J)$ of contracting blocks corresponding to $J$. More precisely, we had three cases: large, moderate and small number of blocks (i.e. $L_n$ is bounded).
That is, as a consequence of Propositions \ref{case 1}, \ref{case 2} and \ref{case 3} we saw that the non-lattice LLT can  fail only if $L_n$ is bounded (for some $J$) and $(F_n)$ is reducible. 	In particular, there is a constant $M>0$ such that every block of length larger than $M$  whose left end point exceeds $M$ is non-contracting. Henceforth, we suppose for the sake of contradiction that the non-lattice LLT fails. In what follows we will show that this assumption implies irreducibility, which yields the non-lattice LLT. It will follow that the non-lattice LLT holds.

Let us now fix large integers $\ell$ and $\brell$. Take a $\xi\in\bbR$ such that $\delta_0\leq |\xi|\leq T$, where $\delta_0$ and $T$ are two fixed positive numbers.
Let $n$ be large enough. Since $[n-\ell-\brell, n+\ell]$ is not a contracting block, by Lemma \ref{LmTDSmall}(a)  applied to the sum 
$\DS \bar S_N=\sum_{j=1}^N \brF_j$ we get
$$ \sum_{j=n-\ell-\brell}^{n+\ell} \left[\brF_j(\bx_j)+\brF_j(\by_j)-\brF_j(\bu_j)-\brF_j(\bv_j)\right]
=\sum_{j=n-\ell-\brell}^{n+\ell+\brell} \left[\brF_j(\bx_j)+\brF_j(\by_j)-\brF_j(\bu_j)-\brF_j(\bv_j)\right]
$$
\begin{equation}
\label{PreD}
=
h m(\bx, \by, \bu, \bv)+O(\theta^\brell)
\end{equation}
for some $\theta<1.$ Here $\DS h=\frac{2\pi}{\xi}$, 
$m(\cdot,\cdot,\cdot,\cdot)$ is an integer valued function  and the first equality holds
because for $j>n+\ell$ we have 
$\brF(\bx_j)=\brF(\bv_j)$ and $\brF(\by_j)=\brF(\bu_j).$

We claim that 
\begin{equation}
\label{NoWinding}
m(\bx, \by, \bu, \bv)=0.
\end{equation}
By   Lemma \ref{LmTDVar} (using the validity of \eqref{TempDecay} and that $n_0$ can always be increased) this is sufficient to conclude that \eqref{Chom}
holds for all $s$ large enough,
and by Proposition \ref{PrCHom} this is sufficient to prove irreducibility,
which yields the non-lattice LLT.\\ 

The proof of \eqref{NoWinding}
will be divided into several steps.
Given $(\bx, \by, \bu, \bv)$ as above let
$$ D_F(\bx, \by, \bu, \bv)=\sum_{j\in\mathbb{Z}} 
\left[F_j(\bx_j)+F_j(\by_j)-F_j(\bu_j)-F_j(\bv_j)\right].
$$
This series converges since $F_{n+k}(\bx_n)\!-\!F_{n+k}(\bv_n),$
$F_{n+k}(\by_n)\!-\!F_{n+k}(\bu_n)$, 
$F_{n-k}(\bx_n)\!-\!F_{n-k}(\bu_n)$, \\
and
$F_{n-k}(\by_n)-F_{n-k}(\bv_n)$ are exponentially small in $k.$
We note the following properties
$$ D_F(\bx, \by, \bu, \bv)=D_\brF(\bx, \by, \bu, \bv),$$
$$ D_F(\bx, \by, \bu, \bv)
=\sum_{|j-n|\leq \ell+\brell}
\left[F_j(\bx_j)+F_j(\by_j)-F_j(\bu_j)-F_j(\bv_j)\right]+O(\theta^\brell),
$$
$$ D_\brF(\bx, \by, \bu, \bv)
=\sum_{|j-n|\leq \ell+\brell}
\left[\brF_j(\bx_j)+\brF_j(\by_j)-\brF_j(\bu_j)-\brF_j(\bv_j)\right]+O(\theta^\brell). 
$$
Combining this with \eqref{PreD} we see that
\begin{equation}
\label{DF-M}
D_F(\bx, \by, \bu, \bv)=h m(\bx, \by, \bu, \bv)+O(\theta^\brell)
\end{equation}
and we shall use this identity to show that $m(\bx, \by, \bu, \bv)=0.$

Given orbits $\ba, \bb, \bc, \bd$ in $\T^d$ we say that they form 
{\em $(n, R)$ rectangle}
if $$c_n\in W^u(a_n)\cap W^s(b_n),\quad
d_n\in W^s(a_n)\cap W^u(b_n)$$
and the induced distances 
$\fd_u(a_n, c_n), \fd_u(b_n, d_n), \fd_s(a_n, d_n), \fd_s(b_n, c_n)$ are all smaller
than $R.$ We note that given $\ell$ there exists $R$ such that if 
$(\bx, \by, \bu, \bv)$ is an $(n, \ell)$ rectangle then 
$(\pi(\bx), \pi(\by), \pi(\bu), \pi(\bv))$ is an $(n, R)$ rectangle
and moreover
\begin{equation}
\label{DF-Df}
 D_F(\bx, \by, \bu, \bv)=D_f(\pi(\bx), \pi(\by), \pi(\bu), \pi(\bv)) .
 \end{equation}
The converse of the above statement need not be true, that is, if
$(\pi(\bx), \pi(\by), \pi(\bu), \pi(\bv))$ is an $(n, R)$ rectangle,
$(\bx, \by, \bu, \bv)$ is not necessarily an $(n, \ell)$ rectangle, 
 since $\pi^{-1}$ is not continuous (nor well defined). However, we shall use that the
converse
statement 
is close to being correct.

Recall that $Tx=\cA x$ mod 1. Thus given $(n, R)$ rectangle $(\ba, \bb, \bc, \bd)$ we
we can lift $(a_n, b_n, c_n, d_n)$ to get $\ta_n, \tb_n, \tc_n, \td_n, \ta_n^*\in \R^d$
so that
$$ \tc_n\in \ta_n+E^u , \quad \tb_n\in \tc_n+E^s, \quad \td_n\in \tb_n+E^u, 
\quad \ta_n^*\in d_n+E^s \quad \text{and}\quad \ta_n^*=\ta_n+k_n 
$$
with $k_n\in \Z^d,$ $\|k_n\|\leq 4R.$ Here $E^u$ and $E^s$ are expanding and contracting 
eigenspaces of $\cA.$ 
Indeed,  upon shifting the points $b_n, c_n, d_n$ by vectors in $\bbZ^d$ the rectangle, viewed as  a closed path from $a_n$ to $a_n$, can be lifted to a continuous piecewise linear path between $a_n$ and a point $\ta_n^*$ of the form $\ta_n^*=\ta_n+k_n$, with each linear part being in either the stable or the unstable direction.

Since  $E^u$ and $E^s$
are linear subspaces of complementary dimensions, given $a_n, b_n$ and $k_n$, the points
$c_n$ and $d_n$ and, hence, the whole orbits of these points are determined uniquely.
We shall denote the corresponding rectangle $\cR_n(a_n, b_n, k_n).$

\begin{lemma}
There exist $\eta\in(0,1)$ and $C>0$ such that
for all $n$, $L\in\bbN$ and $k$ 
$$\mathrm{mes}\left\{(a, b)\in\bbT^d\times\bbT^2: 
\cR_n(a, b, k) \text{ is not an image of an } (n, L)\text{ rectangle}\right\}\leq C\eta^L$$
\end{lemma}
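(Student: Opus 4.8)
The statement asserts that, apart from an exponentially small (in $L$) set of base points $(a,b)$, the rectangle $\cR_n(a,b,k)$ can be realized as $(\pi(\bx),\pi(\by),\pi(\bu),\pi(\bv))$ for an $(n,L)$ rectangle in the symbolic system. The obstruction to realizability is that although $\pi$ is surjective, it is not injective or continuous: a point of $\bbT^d$ that happens to lie on the boundary of the Markov partition (in either the stable or unstable direction) has several symbolic preimages, and for such a point the symbolic ``rectangle condition'' ($x_{n+k}=v_{n+k}$ and $y_{n+k}=u_{n+k}$ for $k\ge\ell$, etc.) may be impossible to arrange because the stable/unstable coordinates of the two points, while geometrically equal, are coded differently. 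So the plan is: (1) identify precisely the ``bad'' base points, namely those $(a,b)$ for which one of the four corners of the geometric rectangle $\cR_n(a,b,k)$ hits the partition boundary at a sufficiently early time; (2) bound the measure of this bad set by a geometric series in $L$.

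\textbf{Key steps.} First I would fix a Markov partition $\{\Pi_i\}$ for $\cA$ with small diameter producing the coding $\pi:\Sigma\to\bbT^d$, and recall that $\pi$ is a bijection off the backward- and forward-invariant null set $\cR=\bigcup_{m\in\bbZ}T^m(\partial^s\Pi\cup\partial^u\Pi)$, as in Proposition \ref{PrOneToOne}. Given the geometric rectangle $\cR_n(a,b,k)$ with corners $a_n,b_n,c_n,d_n$ at time $n$ (these four points, and hence their entire $T$-orbits, are determined by $(a,b,k)$ as noted in the excerpt), I would code each corner by a symbolic sequence. For the symbolic quadruple to be an $(n,L)$ rectangle it suffices that each corner stays in the interior of the partition elements it visits at all times $j$ with $|j-n|\ge L$; then the local product structure of the Markov partition lets one splice the itineraries of $a_n$ and $b_n$ to define the itineraries of $c_n$ and $d_n$, and outside the window $[n-L,n+L]$ the stable itinerary of $c_n$ agrees with that of $a_n$ (resp. $b_n$) and the unstable with that of $b_n$ (resp. $a_n$), exactly matching the symbolic rectangle relations. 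So the bad event is contained in
$$\bigcup_{|j-n|\ge L}\ \bigcup_{z\in\{a_n,b_n,c_n,d_n\}}\{T^j z\in \partial^s\Pi\cup\partial^u\Pi\}.$$
Second, I would estimate the measure of each of these events. The map $(a,b)\mapsto(a_n,b_n,c_n,d_n)$ is smooth with bounded Jacobian (it is essentially linear, built from the linear projections onto $E^s,E^u$), so it pushes Lebesgue measure on $\bbT^d\times\bbT^{2}$ forward to a measure with bounded density; hence it suffices to bound $\mathrm{mes}\{w:\ T^j w\in \partial^s\Pi\cup\partial^u\Pi\}$ for each corner $w$, expressed back in the base coordinates. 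Since $\partial^s\Pi$ is a finite union of pieces of local stable manifolds and $T$ expands the unstable direction by a definite factor $>1$, the set $T^{-j}(\partial^s\Pi)$ is a union of thin ``unstable-transverse'' slabs whose total measure decays like $\eta_0^{\,j}$ for some $\eta_0\in(0,1)$; symmetrically for $\partial^u\Pi$ using the backward dynamics. (Here one uses that $T$ is a hyperbolic toral automorphism so the foliations are linear and these transversality/decay estimates are elementary; for a general basic set one would invoke the analogous estimates for the SRB/Gibbs measure, but on the torus Lebesgue measure suffices.) Summing the geometric series $\sum_{|j-n|\ge L}\eta_0^{|j-n|}$ over the four corners gives the bound $C\eta^L$ with $\eta=\eta_0$, which is the claim.

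\textbf{Main obstacle.} The routine parts are the measure estimates for preimages of partition boundaries; these follow from linearity of the foliations and uniform hyperbolicity. The delicate point is the \emph{realizability} argument in step one: one must check that if all four corners avoid the partition boundary outside the window $[n-L,n+L]$, then the symbolic data can genuinely be assembled into an $(n,L)$ rectangle with the \emph{exact} matching of coordinates demanded by the definition (equalities of symbols for all $|j-n|\ge L$, with the window of free symbols having half-width $L$ on each side). This requires invoking the local product structure / bracket operation $[\cdot,\cdot]$ of the Markov partition and verifying that the bracket of the itineraries of $a_n$ and $b_n$ is well-defined and has the claimed tail behavior precisely because neither orbit touches the boundary — i.e. the corner $c_n=[a_n,b_n]$ and $d_n=[b_n,a_n]$ lift to honest partition-interior orbits. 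I expect this bookkeeping, matching the geometric rectangle's corner orbits with the combinatorial definition of $(n,L)$ rectangle and getting the window size right, to be where most of the care is needed; the constant $\eta$ will in the end be governed by the weaker of the expansion/contraction rates of $\cA$ together with the spectral gap $\delta$ appearing in Lemma \ref{Sinai}.
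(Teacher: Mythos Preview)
There is a genuine gap in your measure estimate. You write that the bad event is contained in $\bigcup_{|j-n|\geq L}\bigcup_{z}\{T^j z\in\partial^s\Pi\cup\partial^u\Pi\}$ and then claim that $\mathrm{mes}\bigl(T^{-j}(\partial^s\Pi)\bigr)$ decays like $\eta_0^{\,j}$. But $T$ is a toral automorphism and hence preserves Lebesgue measure, so $\mathrm{mes}\bigl(T^{-j}(\partial^s\Pi)\bigr)=\mathrm{mes}(\partial^s\Pi)=0$ for every $j$; your bad set has measure zero for the trivial reason, and the argument as written proves nothing. The underlying problem is that your sufficient condition---each corner lies in the interior of its partition element at times $|j-n|\geq L$---does not imply the symbolic rectangle property: for the symbols of $a$ and $c$ to agree at time $n-j$ you need $T^{-j}a_n$ and $T^{-j}c_n$ to lie in the \emph{same} element, not merely each in some interior.

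The fix, which is exactly what the paper does, is to work with shrinking neighborhoods of the boundary rather than the boundary itself. Since $c_n-a_n\in E^u$, the points $T^{-j}a_n$ and $T^{-j}c_n$ are at distance $O(\delta^{j})$ for $j>0$, so they lie in the same partition element unless $T^{-j}a_n$ falls into the $C\delta^{j}$-neighborhood $B_{C\delta^{j}}(\partial\cP)$; the analogous statement holds for $T^{j}b_n$ and $T^{j}c_n$. Thus the bad set for $(a,b)$ is contained in $\bigcup_{|j|\geq L}\cA^{j}\bigl(B_{C\delta^{|j|}}(\partial\cP)\bigr)\times\bbT^d$ together with its mirror image in the second factor. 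Now the estimate is nontrivial: $\mathrm{mes}\bigl(B_{C\delta^{j}}(\partial\cP)\bigr)=O(\delta^{j})$ because $\partial\cP$ is $(d-1)$-dimensional, $\cA$ is measure-preserving, and summing the geometric series gives $O(\eta^L)$. A side remark: the relevant $\delta$ here is the hyperbolicity rate of $\cA$, not anything coming from Lemma~\ref{Sinai}, which concerns cohomological reduction and plays no role in this estimate.
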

where $\cR_n(a, b, k)$ denotes the $R$-rectangle at time $n$ formed by $a,b$ and $k$.
\begin{proof}
To simplify notation we prove the lemma for $n=0$. 
Let  $c$ and $d$ be other two  points of the rectangle. Since the set of points with unique coding has full measure, 
see \cite{Bowen} (or Proposition \ref{PrOneToOne}
in the present paper),
we can assume that there are unique points $x$ and $y$ such that  $a=\pi(x)$ and $b=\pi(y)$. Let $j>0$. Note that
the distance between $\cA^{-j}a$  and $\cA^{-j}c$ does not exceed 
$C\delta^j$ for some constants $C>0$ and $\delta\in(0,1)$ since $a-c$ is in the unstable direction (\text{mod }1), and $\cA^{-j}$ contracts this direction exponentially fast in $j$. Reversing the roles of the stable and unstable directions we see that the the distance between $\cA^{j}b$  and $\cA^{j}c$   does not exceed $C\delta^{j}$. 
Thus, if $c$ does not have a coding $c=\pi(u)$ with $u_j=x_j$ for all $j\leq -L$ and $u_j=y_j$ for all $j\geq L$ then either the points  $\cA^{-j}a$  and $\cA^{-j}c$  belong to the  $C\delta^{|j|}$ neighborhood of the boundary $\partial\cP$ of the Markov partition for some $j\leq -L$ or 
 the points  $\cA^{j}b$  and $\cA^{j}c$  belong to the  $C\delta^{j}$ neighborhood of  $\partial\cP$ for some $j\geq L$.
In particular either $\cA^{-j}a$ is exponentially close to the boundary for some $j\leq -L$ or  $\cA^{j}b$ is exponentially close to the boundary for some $j\geq L$. In this case we have
$$
(a,b)\in\left(\bigcup_{j\leq -L}\cA^{j}(B_{C\delta^{|j|}}(\cP))\times\bbT^d\right)\bigcup\left(\bbT^d\times\bigcup_{j\geq L}\cA^{-j}(B_{C\delta^{|j|}}(\cP))\right):=A_L
$$
where  for every measurable set $E$ and $\varepsilon$ the set $B_\varepsilon(E)$ is the $\varepsilon$ neighborhood of $E$ (here we view $\cA$ is acting on $\bbT^d$).
Now, each set $B_{C\delta^{|j|}}(\cP)$ has measure $O(\eta^{|j|}$) for some $\eta\in(0,1)$. Moreover, $\cA^{j}$ is measure preserving. Thus 
$$
\mathrm{mes}(A_L)=O(\eta^{L}).
$$
By reversing the roles of  $a$ and $b$ and the roles of the stable and unstable directions we see that  if $d$ does not have a coding with the same symbols of $y$ with places $j\geq -L$ and the same symbols as $x$ for $x\geq L$ then $(b,a)$ belongs to $A_L$, and the proof of the lemma is complete.
\end{proof}
Now \eqref{DF-Df} shows that if $L=\hell$ is large enough
and $[n-\hell-\brell, n+\hell]$ is not a contracting block,
then  $D_f(\cR_n(a, b, k))$ is close to $h\Z$ 
for $(a, b)$ on a set of large measure. 
On the other hand the map
$(a, b)\mapsto D_f(\cR_n(a, b, k))$ is uniformly H\"older, 
whence
$D_f(\cR_n(a, b, k))$ can not be close to $h\Z$ 
without being close to a fixed
$h \fm_n(k)$ for all $(a, b)$ with $\|a-b\|\leq 4R.$ 

We next claim that 
\begin{equation}
\label{Additive}
 \fm_n(k_1+k_2)=\fm_n(k_1)+\fm_n(k_2) 
 \end{equation}
provided that $\|k_1\|, \|k_2\|, \|k_1+k_2\|\leq 4R.$
To see why this is true, consider the rectangles $\cR_n(0,0, k_1)$ and $\cR_n(0,0, k_2)=\cR_n(k_1,k_1, k_2)$. After lifting these rectangles  to continuous paths on $\mathbb R^d$  
we get a path from the origin with four legs in the stable and unstable directions, alternately. When projected to $\bbT^d$ this path becomes $\cR_n(0, 0, k_1+k_2)=\cR_n(0, k_1, k_1+k_2)$. 
Thus, $\cR_n(0, 0, k_1+k_2)$ is the union of 
$\cR_n(0,0, k_1)$ and $\cR_n(0,0, k_2)$, and so 
$\DS D_f(\cR_n(0,0, k_1))+D_f(\cR_n(0,0, k_2))=D_f(\cR_n(0, 0, k_1+k_2)).$
Since the left hand side is close to $\fm_n(k_1)+\fm_n(k_2)$ while the right hand side is close to 
$\fm_n(k_1+k_2)$, 
\eqref{Additive} follows.

\eqref{Additive} shows that there is an integer vector $\fq_n$ such that
\begin{equation} \label{QnNorm}
\|\fq_n\|<\sC
\end{equation}
 and
$\fm_n(k)=\langle k, \fq_n\rangle$, where $\sC$ is a constant.
We claim that $\fq_n=0$ and so $\fm_n\equiv0$.
Indeed, since $n$ is large enough neither $[n-\ell-\brell, n+\ell]$ nor 
$[n-1-\ell-\brell, n-1+\ell]$ are contracting blocks.
Then taking $(\ba, \bb, \bc, \bd)$ which form both $(n, R)$ and $(n-1, R)$ rectangles
and using that $a_{n+1}^*-a_{n+1}=\cA (a_n^*-a_n)$,
we conclude that
$\DS \fm_{n}(\cA k)=\fm_{n-1}(k)$  and so 
\begin{equation}
\label{QnRec}
\fq_{n-1}=\cA^* \fq_{n}. 
\end{equation}
On the other hand, since $\cA$ induces a hyperbolic automorphism of $\bbT^d$, there is 
$r\!\!=\!\!r(\sC)\!\!\in \!\!\mathbb{N}$ such that if 
$q\!\in\! \Z^d\setminus 0$ satisfies $\|q\|\leq \sC$ then $\|(\cA^*)^r q\|\geq \sC$. 
Indeed, every nonzero integer vector must have a component in the unstable direction since the eignevalues of $\cA$ are irrational. Now  take $n$ large enough so that the above  hold with $n-i$ instead of $n$ for all $0\leq i\leq r$. This is possible if $n-\bar\ell-\ell-r\geq M$, where $M$ was specified at the begging of the proof. 
 Iterating \eqref{QnRec} we get that $\fq_{n-r}=(\cA^*)^r \fq_{n}$ and so 
either $\fq_n=0$ or $\|\fq_{n-r}\|\geq \sC$. Since the second option contradicts
\eqref{QnNorm}, we conclude that $\fq_n\equiv 0$.
 Hence $\fm_n(k)\equiv 0$ for all $k$ with $\|k\|\leq 4R$.
Now \eqref{NoWinding} follows from \eqref{DF-M} and \eqref{DF-Df}.
\end{proof}


\renewcommand{\theequation}{D.\arabic{equation}}


\begin{thebibliography}{99}


\bibitem{AaD}
J. Aaronson and M. Denker, {\em Local limit theorems for partial sums of stationary sequences generated by 
Gibbs--Markov maps}, Stoch. Dyn. 
{\bf 1} (2001) 193--237.


\bibitem{ANV}
R. Aimino, M. Nicol and S. Vaienti, {\em Annealed and quenched limit theorems for random expanding dynamical systems}, Probab. Th. Rel. Fields {\bf 162} (2015) 233--274.



\bibitem{ARHW22} 
A. Algom, F. Rodrguez Hertz, Z. Wang 
{\em Logarithmic Fourier decay for self conformal measures,} 
J. LMS {\bf 106} (2022) 1628--1661.

\bibitem{ARHW23} 
A. Algom, F. Rodrguez Hertz, Z. Wang 
{\em Polynomial Fourier decay and a cocycle version of Dolgopyat's method for self conformal measures,}
arXiv:2306.01275.

\bibitem{ADDS} A. Avila, D. Dolgopyat, E. Duriev, O. Sarig
{\it The visits to zero of a random walk driven by an  irrational rotation,}
Israel J. Math. {\bf 207} (2015) 653--717.

\bibitem{ALS09} A. Ayyer, C. Liverani, M. Stenlund
{\em Quenched CLT for random toral automorphism,} 
Discrete Contin. Dyn. Syst. {\bf 24} (2009) 331--348. 




\bibitem{Bk95} V. Bakhtin 
{\it Random processes generated by a hyperbolic sequence of mappings-I,} 
Izvestiya Math. {\bf 44} (1995) 247--279.


\bibitem{Beck10} J. Beck  {\em Randomness of the square root of 2 and the giant leap,} 
Periodica Mathematica Hungarica. Part 1: {\bf 60} (2010) 137--242;
Part 2: {\bf 62} (2011) 127--246.

\bibitem{BeQu}
Y. Benoist, J. F. Quint {\em Random walks on reductive groups}, Springer, 
 2016.

\bibitem{BR83}
 E. A. Bender,  L. B. Richmond. {\em Central and local limit theorems applied to asymptotic
enumeration. II. Multivariate generating functions}. J. Combin. Theory {\bf 34} (1983) 255--265.

\bibitem{BCR16}
N. Berger, M. Cohen, R. Rosenthal {\em Local limit theorem and equivalence of dynamic and static points of view for certain ballistic random walks in i.i.d. environments,} Ann. Probab. {\bf 44} (2016) 2889--2979.

\bibitem{BLRB} 
F. Bonetto, J. L. Lebowitz, L. Rey-Bellet
{\em Fourier's law: a challenge to theorists,} 
in {\em Mathematical physics 2000} (edited by A. Fokas, A. Grigoryan, T. Kibble and B. Zegarlinski)
128--150, Imp. Coll. Press, London, 2000.

\bibitem{Br05} E. Breuillard {\em Distributions diophantiennes et theoreme limite local sur $\mathbb{R}^d$,} 
Probab. Theory Related Fields {\bf 132} (2005)  39--73. 


\bibitem{BRLLT05}
E. Breuillard, {\em Local limit theorems and equidistribution
of random walks on the Heisenberg group}. GAFA
{\bf 15} (2005) 35--82.

\bibitem{BRLLT23}
E. Breuillard, B. Timoth\'ee,  {\em The local limit theorem on nilpotent Lie groups}, 
arXiv:2304.14551.

\bibitem{BFRT}
H. Bruin, C. Fougeron, D. Ravotti, D. Terhesiu
{\em On asymptotic expansions of ergodic integrals for $\Z^ d$--extensions of translation flows,} arXiv:2402.02266.


\bibitem{Bowen}
R. Bowen {\em Equilibrium states and the ergodic theory of Anosov diffeomorphisms}, 2d
ed., Lecture Notes in Math., Springer, 2008.



\bibitem{Bre}
L. Breiman {\em Probability}, SIAM, Philadelphia (1992).


\bibitem{BU} M. Bromberg, C. Ulcigrai
{\em A temporal central limit theorem for real-valued cocycles over rotations,} 
Ann. Inst. Henri Poincare Probab. Stat. {\bf 54} (2018) 2304--2334.



\bibitem{Buzzi}
J. Buzzi, {\em Exponential Decay of Correlations for Random
Lasota--Yorke Maps}, Commun. Math. Phys. {\bf 208} (1999) 25--54. 


\bibitem{Can76}
E. R. Canfield, {\em Central and local limit theorems for the coefficients of polynomials of binomial type}, J. Combin. Theory {\bf 23} (1977) 275--290.


\bibitem{CaRa} R. Castorrini, D. Ravotti
{\em Horocycle flows on abelian covers of surfaces of negative curvature,}
preprint.



\bibitem{CP}
 Z. Coelho, W. Parry {\em Central limit asymptotics for shifts of finite type}, Israel J. Math.
 {\bf 69} (1990) 235--249.
 

 \bibitem{Cogburn}
 R. Cogburn, {\em On the central limit theorem for Markov chains in random environments}
Ann. Probab. {\bf 19} (1991) 587--604. 




\bibitem{CLBR} Conze J.--P., Le Borgne S., Roger M. 
{\em Central limit theorem for stationary products of toral automorphisms,} 
Discrete Contin. Dyn. Syst. {\bf 32} (2012) 1597--1626.

\bibitem{CR07} Conze J.--P., Raugi A. {\em Limit theorems for sequential expanding dynamical systems on $[0,1]$,} 
Contemp. Math. {\bf 430} (2007) 89--121.

\bibitem{CD24} K. Czudek, D. Dolgopyat 
{\em The central limit theorem and rate of mixing for simple random walks on the circle,}
arXiv:2404.11700.



\bibitem{DSL16} J. De Simoi, C. Liverani
{\em Statistical properties of mostly contracting fast-slow partially hyperbolic systems,} 
Invent. Math. {\bf 206} (2016) 147--227.

\bibitem{Dob} R. L. Dobrushin 
{\em Central limit theorem for non-stationary Markov chains,} Teor. Veroyatnost. i Primenen. 
{\bf 1}  (1956) part I: pp.
72--89; part II: pp. 365--425

\bibitem{DT77} R. L. Dobrushin, B. Tirozzi, 
{\em The central limit theorem and the problem of equivalence of ensembles,}
Comm. Math. Phys.  {\bf 54} (1977)  173--192.

\bibitem{DPZ} M. F. Demers, F. Pene, H.--K. Zhang
{\em Local limit theorem for randomly deforming billiards,} 
Comm. Math. Phys. {\bf 375} (2020) 2281--2334.

\bibitem{dLMM}
R. de la Llave,  J. M. Marco, R. Moriyon
{\em Canonical perturbation theory of Anosov systems and regularity results for the Livsic cohomology equation,}
Ann. of Math. {\bf 123}  (1986) 537--611.


 \bibitem{D98} D. Dolgopyat
 On decay of correlations in Anosov flows. Ann. of Math. {\bf 147} (1998) 357--390. 

\bibitem{D02} D. Dolgopyat
{\em On mixing properties of compact group extensions of hyperbolic systems,} Israel J. Math. {\bf 130} (2002) 157--205. 

\bibitem{D16} D. Dolgopyat
{\em A local limit theorem for sums of independent random vectors,} 
Electron. J. Probab. {\bf 21} (2016), \# paper 39, 15 pp.

\bibitem{DDKN1} Dolgopyat D., Dong C.,  Kanigowski A., Nandori P. 
{\it Mixing properties of generalized $(T, T^{-1})$ transformations,} Israel J. Math.
{\bf 247} (2022)  21--73.

\bibitem{DDKN2} Dolgopyat D., Dong C.,  Kanigowski A., Nandori P. 
{\it Flexibility of statistical properties for smooth systems satisfying the central limit theorem,}
Inventiones Math. {\bf 230} (2022) 31--120.

\bibitem{DG13} D. Dolgopyat, I. Goldsheid
 {\em Limit Theorems for random walks in a 1D random environment,} Arch. Math. 
 {\bf 101} (2013) 191--200. 

\bibitem{DG18} D. Dolgopyat, I. Goldsheid
 {\em Central limit theorem for recurrent random walks on a strip with bounded potential,}
Nonlinearity {\bf 31} (2018) 3381--3412.

\bibitem{DG21} D. Dolgopyat, I. Goldsheid
{\em Constructive approach to limit theorems for recurrent diffusive random walks on a strip,}
 Asymptot. Anal. {\bf 122} (2021) 271--325.


\bibitem{DH SPA}
D. Dolgopyat, Y. Hafouta,
{\em Edgeworth expansion for independent bounded integer valued random variables},  Stoch. Proc. Appl. {\bf 152} (2022), 486--532

\bibitem{DH} 
D. Dolgopyat, Y. Hafouta
{\em A Berry-Esseen theorem and Edgeworth expansions for uniformly elliptic inhomogeneous Markov chains}, 
Prob. Th. Rel. Fields
{\bf 186} (2023) 439--476.

\bibitem{DH2} 
D. Dolgopyat, Y. Hafouta.
{\em Rates of covergence in CLT 
for sequences of expanding maps,}
 arXiv:2401.08802.

\bibitem{DK20} D. Dolgopyat, D. Karagulyan  
{\em Dynamical random walk on the integers with a drift,}
Ann. Inst. Henri Poincare Prob. Stat. {\bf 59} (2023) 1642--1676.


\bibitem{DN16} D. Dolgopyat, P. Nandori 
{\em Nonequilibrium density profiles in Lorentz tubes with thermostated boundaries,} 
Comm. Pure Appl. Math. {\bf 69} (2016) 649--692.

\bibitem{DN-llt} D. Dolgopyat, P. Nandori 
{\em On mixing and the local central limit theorem for hyperbolic flows,}  
Erg. Th. Dyn. Syst.
{\bf 40} (2020) 142--174. 

\bibitem{DN-Mech} D. Dolgopyat, P. Nandori
{\em Infinite measure mixing for some mechanical systems,} 
 Adv. Math. {\bf 410} (2022), part B, paper \#108757, 56 pp.
 
 \bibitem{DNP}  D. Dolgopyat, P. Nandori, F. Pene
{\it Asymptotic expansion of correlation functions for $\integers^d$ covers of hyperbolic flows,}
 Ann. Inst. Henri Poincare Probab. Stat. {\bf 58} (2022) 1244--1283.

\bibitem{DS17} D. Dolgopyat, O. Sarig
{\em Temporal distributional limit theorems for dynamical systems,} 
J. Stat. Phys. {\bf 166} (2017) 680--713.

\bibitem{DS} D. Dolgopyat, O. Sarig
{\it Local limit theorems for inhomogeneous Markov chains,} 
Springer Lecture Notes in Math. {\bf 2331} (2023).



\bibitem{DFGTV} D. Dragicevic, G. Froyland, C. Gonzalez-Tokman, S. Vaienti
{\em A spectral approach for quenched limit theorems for random expanding dynamical systems,} 
Comm. Math. Phys. {\bf 360} (2018) 1121--1187.

\bibitem{DFGTV1} 
D. Dragicevic, G. Froyland, C. Gonzalez-Tokman, S. Vaienti
{\em  A spectral approach for quenched limit theorems for random hyperbolic dynamical systems}, TAMS, {\bf 373} (2020), 629-664. 




\bibitem{Du09} L. Dubois
{\em Projective metrics and contraction principles for complex cones,} J. Lond. Math. Soc. {\bf 79} (2009)
719--737.

\bibitem{Du11} L. Dubois
{\em An explicit Berry-Esseen bound for uniformly expanding maps on the interval,} 
Israel J. Math. {\bf 186} (2011) 221--250. 

\bibitem{EM22} E. O. Endo, V. Margarint
{\em Local central limit theorem for long-range two-body potentials at sufficiently high temperatures,}
J. Stat. Phys. {\bf 189} (2022), paper 34, 22 pp.


\bibitem{FL}
K. Fernando, C. Liverani, {\em 
Edgeworth expansions for weakly dependent random variables},
AIHP
 {\bf 389} (2022) 273--347.

\bibitem{FP}
K. Fernando, F. Pene, {\em Expansions in the local and the central limit theorems for dynamical systems}, CMP
 {\bf 389} (2022) 273--347.

\bibitem{Fr69} J. Franks
{\em Anosov diffeomorphisms on tori,}
Trans. AMS {\bf 145} (1969) 117--124.


\bibitem{GR92}
Z. Gao, L. B. Richmond 
{\em Central and local limit theorems applied to asymptotic enumeration IV: multivariate generating functions}, J. Computational and Applied Math. {\bf 41} (1992) 177--186.

\bibitem{Gor} M. I. Gordin
{\em The central limit theorem for stationary processes,}
Soviet Math. Dokl. {\bf 10} (1969) 1174--1176.

\bibitem{GO}
S. Gouezel, {\em Berry-Esseen theorem and local limit theorem for non uniformly expanding maps}, 
Ann. Inst. H. Poincare Prob. Stat.
{\bf 41} (2005) 997--1024.


 \bibitem{GH}
Y. Guivarch  J. Hardy, {\em Theoremes limites pour une classe de chaines de Markov et applications aux diffeomorphismes d'Anosov},
Ann. Inst. H. Poincare Prob. Stat. {\bf 24} (1988) 73--98.

\bibitem{Guiv2015}
Y. Guivarch {\em Spectral gap properties and limit theorems for some random walks and dynamical systems}, Proc. Sympos. Pure Math. {\bf 89} 
(2015)  279--310. 



\bibitem{HK}  Y. Hafouta, Yu. Kifer
{\em Nonconventional limit theorems and random dynamics,} 
World Scientific, Hackensack, NJ (2018) xiii+284 pp.



\bibitem{Nonlin}
Y. Hafouta, {\em Limit theorems for some time dependent dynamical systems},  
Nonlinearity {\bf 33}   (2020) 6421--6460.

\bibitem{BulLMS}
Y. Hafouta, {\em On Eagleson's theorem in the non-stationary setup}, 
Bull LMS {\bf 53} (2021) 991--1008.

\bibitem{YH YT}
Y. Hafouta, {\em Limit theorems for random non-uniformly expanding or hyperbolic maps with exponential tails}, 
Ann. Henri Poincare {\bf 23} (2022) 293-332.


\bibitem{YH Adv}
Y.Hafouta, {\em Explicit conditions for the CLT and related results for non-uniformly partially expanding random dynamical systems via effective RPF rates}, Adv. Math. {\bf 426} (2023)
paper 109109.


\bibitem{YH 23}
Y. Hafouta, {\em Effective (moderate) random RPF  theorems and applications to  limit theorems for non-uniformly expanding RDS}, arXiv:2311.12950



\bibitem{HaHe} P. Hall, C. C. Heyde
{\em Martingale limit theory and its application,} 
Harcourt  Publishers, New York--London (1980) xii+308 pp. 

\bibitem{HNTV} N. Haydn, M. Nicol, A. Torok, S. Vaienti 
{\em Almost sure invariance principle for sequential and non-stationary dynamical systems,} 
Trans. AMS {\bf 369} (2017) 5293--5316. 










\bibitem{HKN} P. Hebbar, L. Koralov, J.  Nolen
{\em Asymptotic behavior of branching diffusion processes in periodic media,} 
Electron. J. Probab. {\bf 25} (2020) paper \# 126, 40 pp.










\bibitem{HH}
H. Hennion, H. Herve, {\em Limit theorems for Markov chains and stochastic properties of dynamical systems by quasi-compactness}
 2001 - Springer
 
 
\bibitem{Hough2019}
R. Hough {\em The local limit theorem on nilpotent Lie groups},
PTRF {\bf 174} (2019) 761--786.
 
 \bibitem{Hw98}
 H.-K. Hwang  {\em Large deviations of combinatorial distributions II. Local limit theorems}, Ann. Appl. Prob. {\bf 8} (1998) 163--181.




 \bibitem{Kh49} A. I. Khinchin
 {\it Mathematical Foundations of Statistical Mechanics,} 
Dover, New York (1949) viii+179 pp. 


\bibitem{Kifer 1998}
Yu. Kifer, {\em Limit theorems for random transformations and processes in random
environments},
Trans. Amer. Math. Soc. {\bf 350} (1998) 1481--1518. 

\bibitem{LB06} S. Le Borgne
{\em Exemples de systmes dynamiques quasi-hyperboliques a decorrelations lentes,} 
C. R. Math. Acad. Sci. Paris {\bf 343} (2006) 125--128. 

\bibitem{LPRS} J. L. Lebowitz, B. Pittel, D. Ruelle, E. R. Speer 
{\em Central limit theorems, Lee-Yang zeros, and graph-counting polynomials,} 
J. Combin. Theory {\bf 141} (2016) 147--183.

\bibitem{LL15} F. Ledrappier, S. Lim
{\em Local Limit Theorem in negative curvature,}
 Duke Math. J. {\bf 170} (2021) 1585--1681.
 

\bibitem{Man} A. Manning
{\em There are no new Anosov diffeomorphisms on tori,}
Amer. J. Math. {\bf 96} (1974) 422--429.

\bibitem{MSU}
V. Mayer, B. Skorulski and M. Urba\'nski, {\em Distance expanding random mappings,
thermodynamical formalism, Gibbs measures and fractal geometry},
Lecture Notes in Math., Springer. {\bf 2036} (2011) 

\bibitem{MPP LLT1} 
F. Merlev\`ede,  M. Peligrad,  C. Peligrad,
{\em On the local limit theorems for $\psi$-mixing Markov chains,}
 ALEA {\bf 18} (2021) 1221--1239.

 \bibitem{MPP LLT2} 
 F. Merlev\`ede,  M. Peligrad,  C. Peligrad,
{\em On the local limit theorems for lower $\psi$-mixing Markov chains,}
ALEA  {\bf 18}  (2022) 1103--1121.




\bibitem{Na}  S. V. Nagaev {\em Some limit theorems for stationary Markov chains,}  
Teor. Veroyatnost. i Primenen. {\bf 2} (1957) 389--416.

\bibitem{NSV12} P. Nandori,  D. Szasz, T. Varju
{\em A central limit theorem for time-dependent dynamical systems,} 
J. Stat. Phys. {\bf 146} (2012) 1213--1220. 


\bibitem{OP19}
H. Oh, W. Pan
{\it Local mixing and invariant measures for horospherical subgroups on abelian covers,} 
 IMRN {\bf 19} (2019)  6036--6088. 
 
\bibitem{PP} W. Parry, M. Pollicott
{\em Zeta functions and the periodic orbit structure of hyperbolic dynamics,}
Asterisque {\bf 187-188}  (1990) 
268 pp.
 
\bibitem{Pelig}
M. Peligrad {\em Central limit theorem for triangular arrays of non-homogeneous Markov chains}, PTRF
{\bf 154} (2012) 409--428.
 
 \bibitem{PSZ24}
M. Peligrad, H. Sang, N. Zhang, 
{\em On the local limit theorems for linear sequences of lower psi-mixing Markov chains,}
Statist. Probab. Lett. {\bf 210} (2024) paper  110108, 11 pp.
 
 
 \bibitem{PS23} A. Procacci, B. Scoppola
{\em On the local central limit theorem for interacting spin systems,}
arXiv:2308.06178

 
 \bibitem{Rug}
H.H. Rugh, {\em Cones and gauges in complex spaces: Spectral gaps and complex
 Perron-Frobenius theory}, 
Ann. Math. {\bf 171} (2010) 1707--1752. 
 
\bibitem{SeVa} 
S. Sethuraman, S. R. S. Varadhan
{\em A martingale proof of Dobrushin's theorem for non-homogeneous Markov chains,} 
Electron. J. Probab. {\bf 10} (2005) 1221--1235. 

\bibitem{Wil} A. Wilkinson
{\em The cohomological equation for partially hyperbolic diffeomorphisms,}
Asterisque {\bf 358} (2013) 75--165.
 
\end{thebibliography}
\end{document}